\newtheorem{thm}{Theorem}[section]
\newtheorem{lem}[thm]{Lemma}
\newtheorem{prop}[thm]{Proposition}
\newtheorem{cor}[thm]{Corollary}
\newtheorem{conj}[thm]{Conjecture}
\newtheorem{ex}[thm]{Example}
\newtheorem{assump}[thm]{Assumption}
 \theoremstyle{remark} \newtheorem{rem}[thm]{Remark}
\theoremstyle{definition} \newtheorem{df}[thm]{Definition} 
\newtheorem{qs}[thm]{Question}
\titleformat*{\section}{\normalsize \bfseries \filcenter}
\titleformat*{\subsection}{\normalsize \bfseries }
\newcommand{\eps}{\varepsilon}
\newcommand{\wt}{\widetilde}
\newcommand{\bb}{\mathbb}
\DeclareMathOperator{\id}{id}
\DeclareMathOperator{\dg}{dg}
\DeclareMathOperator{\Hom}{Hom}
\DeclareMathOperator{\Log}{Log}
\DeclareMathOperator{\Ext}{Ext}
\DeclareMathOperator{\Simp}{Simp}
\DeclareMathOperator{\Morse}{Morse}
\DeclareMathOperator{\Cech}{\check{C}ech}
\DeclareMathOperator{\Coh}{Coh}
\DeclareMathOperator{\Pic}{Pic}
\renewcommand*\env@matrix[1][*\c@MaxMatrixCols c]{%
  \hskip -\arraycolsep
  \let\@ifnextchar\new@ifnextchar
  \array{#1}}
\begin{document}

\title{\normalsize \textbf{Monodromy of monomially admissible Fukaya-Seidel categories mirror to toric varieties}}
\author{ \normalsize Andrew Hanlon}
\date{}
\maketitle

\begin{abstract}
Mirror symmetry for a toric variety involves Laurent polynomials whose symplectic topology is related to the algebraic geometry of the toric variety. We show that there is a monodromy action on the monomially admissible Fukaya-Seidel categories of these Laurent polynomials as the arguments of their coefficients vary that corresponds under homological mirror symmetry to tensoring by a line bundle naturally associated to the monomials whose coefficients are rotated. In the process, we introduce the monomially admissible Fukaya-Seidel category as a new interpretation of the Fukaya-Seidel category of a Laurent polynomial on $(\bb{C}^*)^n$, which has other potential applications, and give evidence of homological mirror symmetry for non-compact toric varieties.
\end{abstract}

\section{Introduction}

Let $X^n$ be a smooth complete toric variety given by a fan $\Sigma \subset N_\bb{R} \simeq \bb{R}^n$ where $N \simeq \bb{Z}^n$ is a lattice and $N_\bb{R} = N \otimes_\bb{Z} \bb{R}$. Let $A \subset N \simeq \bb{Z}^n$ be the set of primitive generators of $\Sigma$. Associated to this data is the Hori-Vafa superpotential which is a Laurent polynomial on $(\bb{C}^*)^n$ of the form 
\[ W_\Sigma =  \sum_{\alpha \in A} c_\alpha z^\alpha \] 
where $z^\alpha = z_1^{\alpha_1} \hdots z_n^{\alpha_n}$ for $\alpha = (\alpha_1, \hdots, \alpha_n) \in A$. The coefficients $c_\alpha$ are traditionally real positive and determined by a choice of K\"{a}hler form on $X$ although they can be more generally interpreted as elements of a Novikov ring. The pair $( (\bb{C}^*)^n, W_\Sigma)$ is a mirror to the toric variety $X$ when $X$ is Fano. This statement can be justified as follows in terms of SYZ mirror symmetry. Viewing $X$ as a compactification of $(\bb{C}^*)^n$, which is self-mirror, the mirror superpotential is a count of Maslov index 2 discs that intersect the boundary divisor. The boundary divisor in $X$ is a union of divisors $D_\alpha$ corresponding to $\alpha \in A$. When $X$ is Fano, each of the $D_\alpha$ has positive Chern number and thus each Maslov index 2 disc intersects exactly one $D_\alpha$ resulting in the superpotential having the form above (for more details see for instance \cite{Ch}). In general, $W_\Sigma$ is only the leading term in the correct disc counting potential which is modified by the presence of nonpositive divisors. However, $W_\Sigma$ still encodes much of the information relevant to homological mirror symmetry (HMS) and has been used successfully in establishing HMS theorems for toric varieties; we now review some of the prior work on this topic.

Here, we treat $X$ as the B-model in the HMS conjecture. In other words, we look at a derived equivalence of categories between a Fukaya-Seidel type category for  $W_\Sigma$ and the category of coherent sheaves $\Coh(X)$.\footnote{A proof of HMS for toric varieties in the other direction is the subject of work in progress by M. Abouzaid, K. Fukaya, Y.-G. Oh, H. Ohta, and K. Ono} HMS in the direction that we consider was first established in certain examples by Seidel \cite{MoreVan} and Auroux-Katzarkov-Orlov \cite{AKO2}, but the general case was studied extensively by Abouzaid \cite{Ab1, Ab2} and Fang-Liu-Treumann-Zaslow \cite{FLTZ1, FLTZ2, FLTZ3, FLTZS}. In \cite{Ab1, Ab2}, the model for the Fukaya category is a slight variation on the Fukaya-Seidel category $\mathcal{FS}(W_\Sigma)$ introduced by Seidel \cite{SeBook, Lef1} following ideas of Kontsevich. Roughly speaking, the objects of the Fukaya-Seidel category are Lagrangians that fiber over $\bb{R}_{>0} \subset \bb{C}$ under the map $W_\Sigma$ outside of a compact subset. Abouzaid shows in \cite{Ab2} that there is a full subcategory of the derived Fukaya-Seidel category quasi-equivalent to a $\dg$-enhancement of $D^b \Coh(X)$ when $X$ is projective. This subcategory is expected to split-generate in the case that $X$ is Fano (see Lemma 5.2 of \cite{AuSpec} for some justification). In general, it is known that this subcategory cannot split-generate. For example, it is shown in \cite{AKO1} that for the non-Fano Hirzebruch surfaces $\bb{F}_m$ for $m \geq 3$ that the derived Fukaya-Seidel category is quasi-equivalent to $D^b \Coh (\bb{P}^2(1,1,m) )$ in which $D^b \Coh (\bb{F}_m)$ sits as a strict subcategory. This example is also discussed from our perspective in Section \ref{tropvcomb}.

In a different direction, it is shown in \cite{FLTZ2} that there is a derived equivalence between the category of torus equivariant coherent sheaves on $X$ and the category of constructible sheaves on $\bb{R}^n$, which should be viewed as the universal cover the base torus of $T^*T^n \simeq (\bb{C}^*)^n$, with microsupport in a Lagrangian skeleton $\bb{L}_\Sigma$ determined by $\Sigma$. The nonequivariant case is shown in certain cases in \cite{Ku1, SS, Tr} and has recently been shown to hold in great generality (without any Fano assumptions and in the singular case with coherent sheaves replaced by perfect complexes) in \cite{Ku2}. Categories of microlocal sheaves were first related to infinitesimally wrapped Fukaya category by \cite{N} and \cite{NZ}. More recently, an equivalence between partially-wrapped Fukaya categories and microlocal sheaf categories has been established by Ganatra-Pardon-Shende in \cite{GPS3}. Thus, we see that much progress has been made on homological mirror symmetry for toric varieties, but the relationship between the different models of the Fukaya-Seidel category used remains somewhat unclear as illustrated in further detail in Section \ref{relate}.

Here, we will introduce yet another Fukaya-Seidel type category $\mathcal{F}_\Delta (W_\Sigma)$ in order to study monodromy in the space of mirror Landau-Ginzburg models to $X$ where $\Delta$ is extra data called a monomial division and defined in Definition \ref{div}. In particular, we look at families of functions
\begin{equation} \label{moneqintro} W_\Sigma^{\theta, \alpha} = c_{\alpha} e^{i\theta}z^{\alpha} + \sum_{\beta \in A \setminus \{ \alpha \}} c_\beta z^{\beta} \end{equation}
and the autoequivalences that they induce on $\mathcal{F}_\Delta (W_\Sigma)$. We obtain the following result which is a combination of Example \ref{tensoronlines} and Theorem \ref{mirrorfunctors} in Section \ref{catmono}. 

\begin{thm} \label{mainthm} Suppose that $\Delta$ is adapted to $\Sigma$. The monodromy of the family $W_\Sigma^{\theta, \alpha}$ as $\theta$ goes from $0$ to $2 \pi$ induces an autoequivalence of $\mathcal{F}_\Delta(W_\Sigma)$. On the subcategory of Lagrangian sections, $\mathcal{F}^s_\Delta(W_\Sigma)$, this autoequivalence corresponds under mirror symmetry to $(\cdot) \otimes \mathcal{O}(D_\alpha)$. Moreover, the autoequivalences fit together to give an action of $\text{Pic}(X)$ on $\mathcal{F}_\Delta(W_\Sigma)$. 
\end{thm}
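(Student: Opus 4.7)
The plan is to realize the monodromy as an explicit symplectomorphism of $(\bb{C}^*)^n$ that preserves the monomially admissible structure, identify its action on the section subcategory with the toric line bundle twist, and then check that the relations of $\Pic(X)$ descend from $\bb{Z}^A$.

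For the first claim (that monodromy induces an autoequivalence), I would construct $\Phi_\alpha$ as the time-$2\pi$ map of symplectic parallel transport along the loop $\theta \in [0, 2\pi]$ in the space of coefficients. The crucial input is that $\Delta$ is adapted to $\Sigma$: on the cell of $\Delta$ where $z^\alpha$ controls admissibility, rotating $c_\alpha$ by $e^{i\theta}$ is geometrically realized by a translation of the argument torus $T^n$ in the direction dual to $\alpha$. Away from a neighborhood of the critical locus, this translation tautologically sends the model admissible behavior to itself; near the boundaries of cells of $\Delta$, one interpolates with a cutoff Hamiltonian chosen so that admissibility with respect to the new polynomial is preserved along the isotopy. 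The reverse loop yields an inverse symplectomorphism up to admissible Hamiltonian isotopy, so $F_\alpha := (\Phi_\alpha)_*$ is an autoequivalence of $\mathcal{F}_\Delta(W_\Sigma)$.

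For the second claim, I would use the standard toric dictionary in which a line bundle $\mathcal{O}_X(D)$ corresponds to a $\Sigma$-piecewise linear support function $\psi_D$ on $N_\bb{R}$, with mirror Lagrangian section given (after smoothing) by the graph of $d\psi_D$. Computing $\Phi_\alpha$ on such a graph cell-by-cell, the above translation adds the support function $\psi_{D_\alpha}$ of $D_\alpha$, producing the graph of $d\psi_{D+D_\alpha}$, which is precisely the section mirror to $\mathcal{O}_X(D) \otimes \mathcal{O}_X(D_\alpha)$. To upgrade this bijection on objects to a natural isomorphism of functors, I would construct intertwiners from continuation maps along the monodromy isotopy and compare them to the maps induced by tensoring with a fixed section of $\mathcal{O}(D_\alpha)$, which should agree up to a canonical homotopy because both are built from the same local translation data.

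For the third claim, extending by $\alpha \mapsto F_\alpha$ gives a homomorphism $\bb{Z}^A \to \mathrm{Aut}(\mathcal{F}_\Delta(W_\Sigma))$, and I must show it factors through $\Pic(X) = \bb{Z}^A/M$. Given $m \in M$, simultaneously rotating every coefficient by $c_\alpha \mapsto e^{i\theta \langle m, \alpha \rangle} c_\alpha$ is realized by the monomial reparametrization $z \mapsto e^{-i\theta m} z$ on $(\bb{C}^*)^n$, namely translation of the argument torus by $-\theta m$. This translation is a family of admissible symplectomorphisms connecting the composition $\prod_\alpha \Phi_\alpha^{\langle m,\alpha \rangle}$ to the identity, so the relations from principal divisors are satisfied and the action descends to $\Pic(X)$.

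The main obstacle is the careful construction of $\Phi_\alpha$ as a genuinely admissible symplectomorphism near the boundaries of cells of $\Delta$, where two monomials become comparable and the clean translation picture breaks down; the adapted hypothesis on $\Delta$ must be used essentially to carry out a consistent cutoff. A secondary technical point is promoting the object-level identification on sections to a natural isomorphism of $A_\infty$-functors, which will require checking that continuation-map structure constants agree with the tensor-product structure via a uniqueness or explicit-comparison argument.
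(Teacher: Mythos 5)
Your overall strategy matches the paper's: flow by explicitly constructed Hamiltonians supported on the moment map that, over each region $C_\alpha$, shift the argument in the direction dual to $\alpha$, and check that the additive relations on support functions descend to $\Pic(X)$ because a twisting Hamiltonian for a principal divisor is an integral linear function plus an admissible Hamiltonian. The translation/support-function picture on objects, the descent argument, and the insistence that adaptedness to $\Sigma$ is what makes the interpolating cutoffs consistent are all correct and essentially what the paper does (via "twisting Hamiltonians," Proposition \ref{smooth}, Corollary \ref{twistexist}, Corollary \ref{action}, and Proposition \ref{actionclasses}).

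The gap is in the step from the object-level matching to the statement about functors. Your plan there is to compare "intertwiners from continuation maps along the monodromy isotopy" with "maps induced by tensoring with a fixed section of $\mathcal{O}(D_\alpha)$," but multiplication by a section is a natural transformation from the identity to $(\cdot)\otimes\mathcal{O}(D_\alpha)$, not a natural isomorphism between two functors, so it cannot serve as the comparison datum you need; you have conflated the monodromy functor with the natural transformation induced by a defining section (the latter is a separate result, Theorem \ref{introtrans}). What is actually needed is an identification of the monodromy functor with $(\cdot)\otimes\mathcal{O}(D_\alpha)$ \emph{through} the HMS quasi-equivalence, which your plan never sets up. The paper's route avoids continuation maps entirely: because $\phi_{D_\alpha}$ depends only on moment coordinates, the Floer complexes $CF(L_0,L_1)$ and $CF(\phi_{D_\alpha}L_0,\phi_{D_\alpha}L_1)$ are canonically identified (with the pushforward almost complex structure), so after passing to the Morse model (Corollary \ref{FOcor}) the functor $F_{D_\alpha}$ literally acts as the identity on morphism complexes, since $CM^\bullet(f_1 - f_0 + \cdots) = CM^\bullet((f_1+H_{D_\alpha}) - (f_0+H_{D_\alpha}) + \cdots)$; the corresponding statement for $\Cech(X)$ is equally tautological, and this is what makes the comparison rigorous. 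A further point your proposal elides is that to get a strict group action (rather than one defined only up to quasi-isomorphism) one must build $\mathcal{F}_\Delta(W_\Sigma)$ with $G$-equivariant almost complex structures and a $G$-invariant object set so that the symplectomorphisms preserve the set of quasi-units on the chain level, as in Section \ref{catmono}; "symplectic parallel transport" along a loop of superpotentials is not itself a well-defined construction and must be replaced by the twisting-Hamiltonian flow.
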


The definition of a monomial division being adapted to the fan appears as Definition \ref{adapt}. A monomial division adapted to $\Sigma$ exists for surfaces and projective spaces for the standard K\"{a}hler form on $(\bb{C}^*)^n$.\footnote{By standard K\"{a}hler form on $(\bb{C}^*)^n$, we will always mean $\sum d\log|z_i| \wedge d\theta_i$. It should be noted that such a specific choice of ``standard K\"{a}hler form" is unavoidable due to the fact that monomial divisions do not behave well with respect to even the SL$(n,\bb{Z})$ action on $(\bb{C}^*)^n$ by monomial coordinate changes as evidenced by the second example in Figure \ref{notadapted}.} We do not know the extent to which monomial divisions adapted to the fan exist in general for the standard K\"{a}hler form, but there always exist toric K\"{a}hler forms on $(\bb{C}^*)^n$ for which monomial divisions adapted to the fan exist as shown in Corollary \ref{adaptedexists}.

The autoequivalences induced by the monodromy in \eqref{moneqintro} and described by Theorem \ref{mainthm} can be generalized (or composed) to give a functor $F_D$ on $\mathcal{F}_\Delta(W_\Sigma)$ corresponding under mirror symmetry to $(\cdot) \otimes \mathcal{O}(D)$ for any toric divisor $D$ on $X$. When $D$ is an effective divisor, we can say more. 

\begin{thm} \label{introtrans} If $\Delta$ is a monomial division adapted to $\Sigma$ and $D$ is an effective toric divisor on $X$, there is a natural transformation from the identity to $F_D$ coming from Floer theory. On the subcategory of sections $\mathcal{F}_\Delta^s(W_\Sigma)$, this natural transformation corresponds under mirror symmetry to multiplication by a defining section of $D$. 
\end{thm}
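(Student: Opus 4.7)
The plan is to first establish the theorem in the case $D = D_\alpha$ is a single toric prime divisor, then bootstrap to general effective $D$ using the multiplicativity of the $\Pic(X)$-action from Theorem \ref{mainthm}. For the irreducible case, recall that $F_{D_\alpha}$ is the monodromy autoequivalence coming from the path $W_\Sigma^{\theta, \alpha}$, $\theta \in [0, 2\pi]$. For each object $L$ of $\mathcal{F}_\Delta(W_\Sigma)$, this path produces a continuous family of Lagrangians $L_\theta$ interpolating between $L_0 = L$ and $L_{2\pi} = F_{D_\alpha}(L)$. I define the natural transformation $\eta_\alpha \colon \id \to F_{D_\alpha}$ on $L$ to be the continuation cocycle $c_L \in CF(L, F_{D_\alpha}(L))$ associated to this isotopy, constructed in the standard way by counting Floer strips whose boundary tracks the moving Lagrangian $L_\theta$.

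First I would show that the collection $\{c_L\}_L$ assembles into an $A_\infty$-natural transformation, i.e., commutes up to coherent homotopy with the Floer products. This is the standard PSS-type naturality of continuation maps, verified by examining parameterized moduli spaces of pseudoholomorphic polygons with boundary on the moving Lagrangians. The monomial admissibility framework developed earlier in the paper should provide the requisite compactness and boundary control for these moduli spaces, in direct parallel to how it was used to define $\mathcal{F}_\Delta(W_\Sigma)$ in the first place.

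The hard part is identifying $c_{L_0}$ with a defining section $s_{D_\alpha}$ under HMS, where $L_0 \in \mathcal{F}_\Delta^s(W_\Sigma)$ is the Lagrangian section mirror to $\mathcal{O}_X$. Under the identification $\Hom(L_0, F_{D_\alpha}(L_0)) \cong H^0(X, \mathcal{O}(D_\alpha))$, the right-hand side has a monomial basis indexed by the lattice points of the polytope $P_{D_\alpha}$, and $s_{D_\alpha}$ is distinguished as the unique (up to scalar) section vanishing along $D_\alpha$ to the correct order, equivalently the monomial corresponding to the vertex of $P_{D_\alpha}$ dual to the ray generated by $\alpha$. Rotating the coefficient $c_\alpha$ deforms $L_0$ by a shear in the monomial direction $\alpha$, producing a distinguished new intersection with $L_0$ which should correspond to precisely this vertex. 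I anticipate two viable strategies for this identification: either a direct local Floer computation using a model form of the monodromy near the relevant critical value, or a uniqueness argument based on a residual $T^n$-symmetry on the space of sections — both $c_{L_0}$ and $s_{D_\alpha}$ are characterized up to scalar by their weight under the natural $T^n$-action, and matching weights would force agreement.

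For a general effective divisor $D = \sum_\alpha a_\alpha D_\alpha$, the functor $F_D$ is the composition of the $F_{D_\alpha}^{a_\alpha}$ in any order coming from the concatenated path in the space of coefficient arguments, and the natural transformation $\eta_D$ is then the Floer-theoretic product of the $\eta_\alpha$, equivalently the single continuation cocycle for the concatenated isotopy. Under mirror symmetry this product corresponds to $\prod_\alpha s_{D_\alpha}^{a_\alpha} = s_D \in H^0(X, \mathcal{O}(D))$, a defining section of $D$. Naturality on an arbitrary Lagrangian section $L'$ corresponding to a line bundle $\mathcal{L}$ is then automatic from the $A_\infty$-naturality established above together with the fact that multiplication by $s_D$ is a morphism of $\mathcal{O}_X$-modules $\mathcal{L} \to \mathcal{L} \otimes \mathcal{O}(D)$ on the algebraic side.
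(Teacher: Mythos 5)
Your overall strategy --- constructing a natural transformation by counting discs with a moving-Lagrangian boundary condition over the isotopy $L_\theta$, then using a $T^n$-weight ($\bb{Z}^n$-grading) argument to pin down the image under HMS --- is in essence the paper's approach, and the decomposition of $D$ into prime divisors, while harmless, is unnecessary: the paper constructs the natural transformation directly for any effective $D$ since $\nabla H_D \cdot \alpha = -2\pi n_\alpha \leq 0$ already provides the sign needed for the maximum principle. Two genuine gaps remain.

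First, a natural transformation from the identity to $F_D$ cannot be constructed directly on the pre-localized category $\mathcal{F}_\Delta^\circ$: the zeroth order term would live in a $\Hom$ space that is set to zero (or to $\bb{K} \cdot e$) by the left-ordering, since $(L,\theta)$ and $(\phi_D(L), \theta)$ have the \emph{same} angle. Your plan to define $c_L \in CF(L, F_{D_\alpha}(L))$ as a continuation cocycle for the isotopy runs into exactly this: the unwrapped complex $CF(L, \phi_D(L))$ is not among the morphism spaces of $\mathcal{F}^\circ_\Delta$, and an attempt to force the definition on the localized category without an intermediary loses chain-level control. The paper resolves this by introducing the push-off functor $P_\eps$ (which rotates the angle by $\eps$), building a natural transformation $P_\eps \to F_D$ on the pre-localized category, and then using that $P_\eps$ is quasi-isomorphic to the inclusion after localization. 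You need some version of this auxiliary construction, and you have not identified it.

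Second, and more substantively, your weight-matching argument at the end is incomplete. Matching $T^n$-weights (equivalently, the $\bb{Z}^n$-grading) shows that the cohomology class of the zeroth-order term of the natural transformation on $\mathcal{L}$ lies in the degree-$0 \in \bb{Z}^n$ piece of $HF^0(\psi^\eps(\mathcal{L}), \phi_D(\mathcal{L})) \cong \Gamma(\mathcal{O}(D))$, which is rank one and spanned by the defining section. But this only tells you the class is a \emph{scalar multiple} of $s_D$; it does not rule out the scalar being zero. Establishing nonvanishing requires a separate argument --- in the paper, an energy/action estimate (analogous to the proof that quasi-units are invertible): one composes with a continuation map for a nonnegative Hamiltonian, shows the resulting map is upper-triangular in the action filtration with units on the diagonal, and finds a degree-$0$ class in the target that is represented by generators in the image. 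Without some such argument, the proof is not complete. A related gap: you assert naturality on arbitrary sections is "automatic," but the clean way to see this is that natural transformations $\id \to (\cdot) \otimes \mathcal{O}(D)$ are classified by $HH^0(\Cech(X), \mathcal{B}) \cong \Gamma(\mathcal{O}(D))$, so determining the zeroth-order term on the single object $\mathcal{L}$ suffices; invoking this reduction is what lets you avoid re-deriving compatibility on all other objects.
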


Theorem \ref{introtrans} is the subject of Section \ref{defsect} and appears as Proposition \ref{transexist} and Theorem \ref{sectionthm}. We now move to discussing an illustrative example of the monodromy induced by \eqref{moneqintro}. 

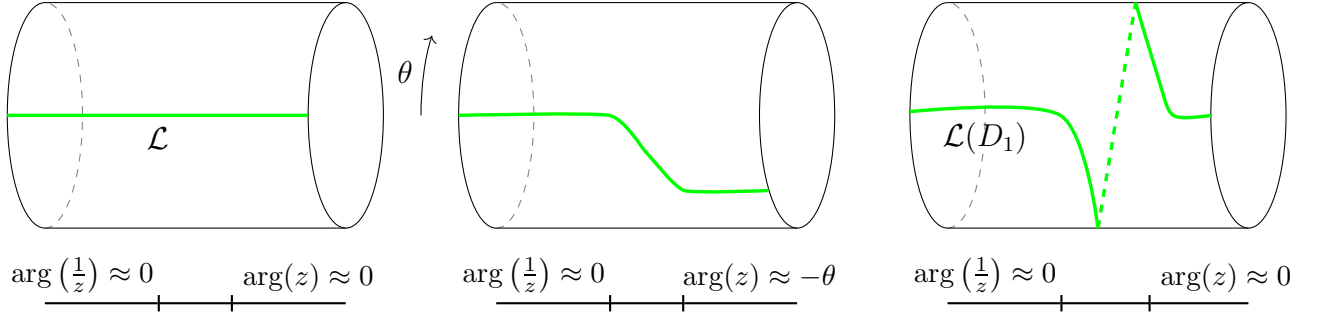
\begin{figure} 
\centering 
\begin{tikzpicture}
	\draw[dashed,color=gray] (-6,0) arc (-90:90:0.5 and 1.5);
	\draw (-6,0) -- (-2,0);
	\draw (-6,3) -- (-2,3);
	\draw (-6,0) arc (270:90:0.5 and 1.5);
	\draw (-2,1.5) ellipse (0.5 and 1.5);
	\draw (-6.5,1.5) -- (-2.5, 1.5) [color = green, line width = 1.3] node[font=\normalsize,midway,below, color= black] {$\mathcal{L}$};
	\draw[thick, |-|] (-4.5,-1) -- (-3.5,-1);
	\draw[thick] (-6,-1) -- (-4.5, -1) ;
	\draw (-5.5, -1) node [font=\small, above] {$\text{arg}\left(\frac{1}{z} \right) \approx 0$};
	\draw[thick] (-3.5, -1) -- (-2, -1); 
	\draw (-2.5, -1) node [font = \small, above] {$\text{arg}(z) \approx 0$};

	\draw[-> ,] (-1, 1.5 ) arc (180: 135 :0.5 and 1.5) node[font=\normalsize, midway, left] {$\theta$};
	\draw[dashed,color=gray] (0,0) arc (-90:90:0.5 and 1.5);
	\draw (0,0) -- (4,0);
	\draw (0,3) -- (4,3);
	\draw (0,0) arc (270:90:0.5 and 1.5);
	\draw (4,1.5) ellipse (0.5 and 1.5);
	\draw [green, line width = 1.3] plot [smooth, tension = 0.3] coordinates {(-0.5, 1.5) (1.5,1.5) (2,1) (2.5,0.5) (3.62, 0.5)};
	\draw[thick, |-|] (1.5,-1) -- (2.5,-1);
	\draw[thick] (0,-1) -- (1.5, -1) ;
	\draw (0.5, -1) node [font=\small, above] {$\text{arg}\left(\frac{1}{z} \right) \approx 0$};
	\draw[thick] (2.5, -1) -- (4, -1); 
	\draw (3.5, -1) node [font = \small, above] {$\text{arg}(z) \approx -\theta$};

	\draw[dashed,color=gray] (6,0) arc (-90:90:0.5 and 1.5);
	\draw (6,0) -- (10,0);
	\draw (6,3) -- (10,3);
	\draw (6,0) arc (270:90:0.5 and 1.5);
	\draw (10,1.5) ellipse (0.5 and 1.5);
	\draw [green, line width = 1.3] plot [smooth, tension = 0.5] coordinates {(5.5, 1.55) (7.5,1.5)  (8,0)  };
	\draw[dashed, green, line width = 1.3] plot [smooth, tension = 0.5] coordinates{(8, 0) (8.5,3)};
	\draw[green, line width = 1.3] plot [smooth, tension = 0.5] coordinates{(8.5, 3) (8.8, 2) (9, 1.5) (9.5, 1.5)} ;
	\draw (6.5, 1.2) node {$\mathcal{L}(D_1)$};
	\draw[thick, |-|] (7.5,-1) -- (8.7,-1);
	\draw[thick] (6,-1) -- (7.5, -1) ;
	\draw (6.5, -1) node [font=\small, above] {$\text{arg}\left(\frac{1}{z} \right) \approx 0$};
	\draw[thick] (8.7, -1) -- (10, -1); 
	\draw (9.7, -1) node [font = \small, above] {$\text{arg}(z) \approx 0$};
	
\end{tikzpicture}
\caption{Monodromy action of $e^{i\theta}z + \frac{1}{z}$, taking the mirror of $\mathcal{O}_{\bb{P}^1}$, $\mathcal{L} = \bb{R}_{>0}$, to the mirror of $\mathcal{O}_{\bb{P}^1}(1)$, $\mathcal{L}(D_1)$.} 
\label{p1ex}
\end{figure}

\subsection{Example: Projective Space} \label{introexample}

 In this section, we will analyze the monodromy action on a basis of thimbles which generate $\mathcal{FS}(W_\Sigma)$ even though our eventual proof will use the monomially admissible Fukaya-Seidel category which does not have thimbles as objects. Conjectural and established relationships between these categories are discussed in Section \ref{relate}. In fact, the embedding of the monomially admissible Fukaya-Seidel category of sections into Abouzaid's category of tropical Lagrangian sections exists for the mirror to projective space, and there is a more straightforward path to showing that Abouzaid's category generates $\mathcal{FS}(W_\Sigma)$ (see Section 7 of \cite{AuSpec}). See also Section \ref{thimblesec} for a further relation of thimbles to monomial admissibility.

The easiest example in which to see the monodromy of $W_\Sigma^{\theta, \alpha}$ in action is on the mirror to $\bb{P}^1$ (or a product of projective lines). In this case, the autoequivalence corresponds to a Dehn twist along one of the circles which is a fiber of the moment map. Figure \ref{p1ex} shows the twisting of the zero section, i.e., $\bb{R}_{>0}$, as one of the monomials is rotated in $W_\Sigma = z + \frac{1}{z}$. In that figure, we are using that $\text{arg}(e^{i\theta}z + 1/z) \approx \text{arg}(e^{i\theta} z)$ for $\log | z | \gg 0$ and $\text{arg}(e^{i\theta}z + 1/z) \approx \text{arg}(1/z)$ for $\log | z| \ll 0$. A monomial division will serve to change the $\approx$'s into equalities.

It is also possible to see the monodromy action on $\bb{P}^n$ in a basis of thimbles generating the Fukaya-Seidel category mirror to the full exceptional collection 
\begin{equation} \label{pnexceptional} \langle \mathcal{O}, \hdots, \mathcal{O}(n) \rangle \end{equation}
in $D^b \Coh(X)$. The critical points of the function
\[ z_1 + \hdots + z_n + \frac{1}{z_1 \hdots z_n} \]
are given by $z_1 = \hdots = z_n$ is a $(n+1)$th root of unity. Let $\zeta = e^{2\pi i / (n+1)}$. The critical values are $(n+1)\zeta^j$ for $j =0, \hdots, n$. The thimbles expected to correspond to the exceptional collection in \eqref{pnexceptional} lie over paths $\gamma_j \colon [0,1] \to \bb{C}$ from $(n+1)\zeta^j$ to a large positive real number which can be described as follows. The path $\gamma_0$ is the straight-line path on the real axis. Then, $\gamma_j$ is a path such that $\text{Arg}(\gamma_j(t)) \in [0, 2 \pi)$ is decreasing and $|\gamma_j(t) | > | \gamma_k(s)|$ when $\text{Arg}(\gamma_j(t)) = \text{Arg}(\gamma_k(s))$ for any $k < j$ and $s, t \in [0, 1)$. See the paths in Figure \ref{pnex} for an example. The fact that this basis of thimbles corresponds to the exceptional collection in \eqref{pnexceptional} for $n = 2$ can be deduced from Example (3B) in \cite{MoreVan} by performing an appropriate mutation. Moreover, this basis of thimbles should agree with the construction of $\mathcal{O}(k)$ in \cite{Ab1} up to Hamiltonian isotopy as noted in Section 7 of \cite{AuSpec}. 

Further, the critical points of 
\[ z_1 + \hdots + z_n + \frac{e^{i\theta}}{z_1 \hdots z_n} \]
are $z_1 = \hdots = z_n = e^{i\theta/(n+1)}\zeta^j$ with critical values $(n+1)e^{i\theta/(n+1)} \zeta^j$ for $j = 0, \hdots, n$. Thus, following the critical points and vanishing paths as $\theta$ varies from $0$ to $2\pi$ as shown in Figure \ref{pnex} clearly takes the thimbles corresponding to $\mathcal{O}, \hdots, \mathcal{O}(n-1)$ to those corresponding to $\mathcal{O}(1), \hdots, \mathcal{O}(n)$. Further, the thimble corresponding to $\mathcal{O}(n)$ is sent to a thimble that goes to the critical point above $1$ but with vanishing path that twists once around. By the description of the Serre functor in \cite{SympHoch}, one can see that this thimble corresponds to $\mathcal{O}(n+1)$. Thus, we see the functor $(\cdot) \otimes \mathcal{O}(1)$ as expected. Continuing this process and increasing $\theta$ further also yields the expected result by the same argument. 

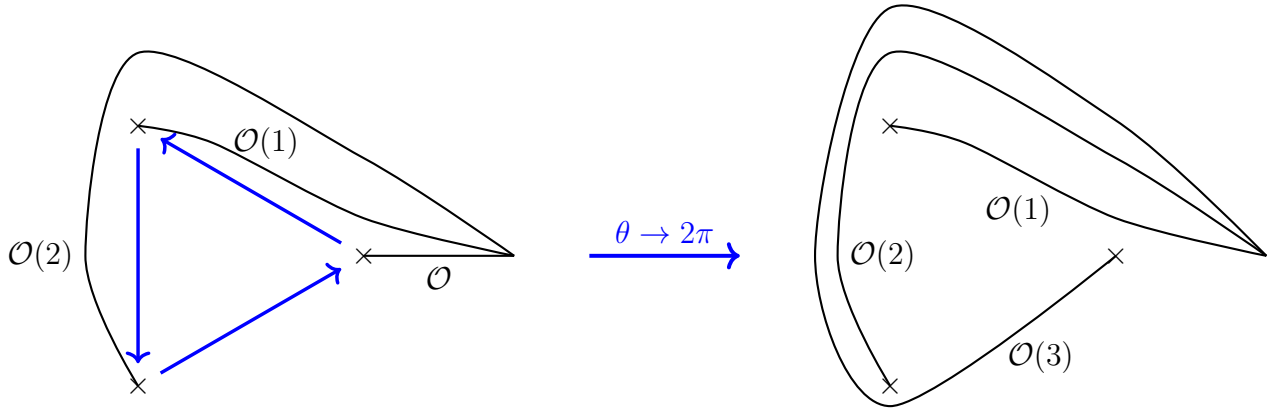
\begin{figure} 
\centering 
\begin{tikzpicture}

	\draw (2, 0) node {$\times$};
	\draw (-1, 1.73) node {$\times$};
	\draw (-1, -1.73) node {$\times$};
	\draw[thick] (2,0) -- (4,0) node[midway, below] {$\mathcal{O}$};
	\draw [thick] plot [smooth, tension = 0.5] coordinates {(-1, 1.73) (0, 1.5) (2,0.5) (4,0)};
	\draw (0.7, 1.5) node {$\mathcal{O}(1)$}; 
	\draw [thick] plot [smooth, tension = 0.5] coordinates {(-1, -1.73) (-1.7, 0) (-1, 2.7) (2,1.3) (4,0)};
	\draw (-2.3, 0) node {$\mathcal{O}(2)$}; 
	\draw [line width = 1.3 , blue, ->] (1.7, 0.17) -- (-0.7, 1.56);
	\draw [line width = 1.3, blue, ->] (-1, 1.43) -- (-1, -1.43);
	\draw [line width = 1.3, blue, ->] (-0.7, -1.56) -- (1.7, -0.17); 
	
	\draw[line width = 1.5, ->, blue] (5,0) -- (7,0) node[midway, above] {$\theta \to 2\pi$};
	
	\draw (12, 0) node {$\times$};
	\draw (9, 1.73) node {$\times$};
	\draw (9, -1.73) node {$\times$};
	\draw [thick] plot [smooth, tension = 0.5] coordinates {(9, 1.73) (10, 1.5) (12,0.5) (14,0)};
	\draw (10.7, 0.6) node {$\mathcal{O}(1)$}; 
	\draw [thick] plot [smooth, tension = 0.5] coordinates {(9, -1.73) (8.3, 0) (9, 2.7) (12,1.3) (14,0)};
	\draw (8.9, 0) node {$\mathcal{O}(2)$}; 
	\draw [thick] plot [smooth, tension = 0.5] coordinates {(12, 0) (9, -2) (8, 0) (9, 3.3) (12,1.8) (14,0)};
	\draw (11, -1.35) node {$\mathcal{O}(3)$};

\end{tikzpicture}
\caption{Monodromy action of $z_1 + z_2 + e^{i\theta}/z_1z_2$ on a basis of thimbles for the Fukaya-Seidel mirror of $\bb{P}^2$.} 
\label{pnex}
\end{figure}

\subsection{Outline of proof}

We now outline our approach to studying the monodromy of $W_\Sigma^{\theta, \alpha}$.  As already mentioned, we introduce Fukaya-Seidel categories particularly adapted to the problem using a monomial division to put a condition on the Lagrangians considered as objects. We call this condition monomial admissibility, and it appears in Definition \ref{adm}. As suggested by its name, monomial admissibility depends on the behavior of the monomials in $W_\Sigma$ allowing us to understand how the Lagrangians should change as the argument of a single coefficient varies. When the monomial division is adapted to the fan, we construct Hamiltonians $H_{D_\alpha}$ on $(\bb{C}^*)^n$ for each $\alpha \in A$ whose time $t$ flow takes Langrangians monomially admissible with respect to $W_\Sigma$ to Lagrangians monomially admissible with respect $W_\Sigma^{t, \alpha}$ for each $\alpha \in A$. We call these Hamiltonians twisting Hamiltonians (Definition \ref{twisthamdf}). By carefully setting up the Fukaya-Seidel category of monomially admissible Lagrangians, we obtain an action of the flows of the twisting Hamiltonians through essentially standard techniques in Floer theory. The natural transformations from Theorem \ref{introtrans} come from adapting to our setting the natural transformations coming from Hamiltonians in Section 10c of \cite{SeBook}. 

With the functors and natural transformations constructed, we still need to understand them through HMS. In order to do that, we restrict our attention to Lagrangian sections with respect to the moment map on $(\bb{C}^*)^n$. In fact, we show that any monomially admissible Lagrangian section can be obtained by flowing $\mathcal{L} = (\mathbb{R}_{>0})^n$ by a twisting Hamiltonian. Then, we translate the ideas of \cite{Ab2} to our setting in order to prove the HMS statement that the Fukaya-Seidel category consisting of monomially admissible Lagrangian sections is quasi-equivalent to a dg-enhancement of the category of line bundles on $X$. By closely examining this quasi-equivalence, we see that the functors induced by the flow of the twisting Hamiltonians act under the quasi-equivalence as tensor product with the appropriate line bundle by construction. Identifying the natural transformations requires the combination of an algebraic argument and restricting the presence of certain perturbed holomorphic disks using basic energy estimates in addition to the previously established understanding of the HMS quasi-equivalence.

\subsection{Relationship to other results and other definitions of Fukaya-Seidel categories}\label{relate}

The monodromy of $W_\Sigma^{\theta, \alpha}$ and its relationship to mirror symmetry can be viewed in several different lights. Theorem \ref{mainthm} is a generalization in the setting of mirrors to toric varieties of Seidel's description of the inverse Serre functor on Fukaya-Seidel categories as the total monodromy associated to the family of superpotentials $e^{i\theta} W_\Sigma$ in \cite{SympHoch}. It can also be seen as a part of the program of Diemer-Katzarkov-Kerr \cite{DKK1, DKK2} to understand the moduli space of Landau-Ginzburg models mirror to a Fano variety. From that perspective, it would be interesting to study the behavior of the monomially admissible Fukaya-Seidel category along other paths and loops in the moduli space. For instance, it may be possible to understand the mirrors to runs of the toric Mori program as described in \cite{BDFKK} in our framework. More abstractly, the framework of monomial divisions should be well-suited to understanding more of the natural B-side functors on the A-side of HMS. For example, blow-up functors can be described almost trivially for monomial divisions adapted to the fan, as in the microlocal setting. We plan to pursue these directions in future work. 

There are several different approaches to an A-side category for the mirror $W_\Sigma$ to a toric variety which have been mentioned earlier in the introduction. Assuming that $X$ is projective so that all categories considered are well-defined, we now recall or introduce notation for these categories and very roughly their definitions. Most traditionally, there is the Fukaya-Seidel category $\mathcal{FS}(W_\Sigma)$ as defined in \cite{SeBook, Lef1} whose objects are roughly Lagrangians that fiber over the real positive axis with morphisms computed by counterclockwise rotation of the input Lagrangian in the base. Seidel has shown that $\mathcal{FS}(W_\Sigma)$ is generated by thimbles when $W_\Sigma$ is Lefschetz. To $W_\Sigma$, Ganatra-Pardon-Shende in \cite{GPS1} associate a partially wrapped Fukaya category $\mathcal{W}( (\bb{C}^*)^n, W_\Sigma^{-1}(\infty))$ roughly defined as having Lagrangians that avoid $W_\Sigma^{-1}(\infty)$ and are conical on a Legendrian in the contact boundary and morphisms computed by pushing the input Lagrangian arbitrarily close to $W_\Sigma^{-1}(\infty)$ by positive wrapping, i.e., in the direction of Reeb flow in the contact boundary. Their definition is closely related to the partially wrapped Fukaya category associated to $W_\Sigma$ by Sylvan in \cite{Syl}. 

 While the previous two categories are defined for any Landau-Ginzburg model, there are categories more specifically defined for the toric setting. In \cite{Ab2}, Abouzaid primarily considers the $A_\infty$-pre-category $\mathscr{T}Fuk((\bb{C}^*)^n, M_{t,1})$ of Lagrangian sections of $\Log$ with boundary on a symplectic hypersurface $M_{t,1}$ which is the fiber of a tropical localization of $W_\Sigma$. On the other hand, Fang-Liu-Treumann-Zaslow in \cite{FLTZ1} define a singular Lagrangian skeleton 
 \[ \bb{L}_\Sigma = \bigcup_{\sigma \in \Sigma} \sigma^{\perp} \times \sigma \]
 of $(\bb{C}^*)^n \simeq T^*T^n$ and put forward the category $\mu\text{Sh}(\bb{L}_\Sigma)$ of constructible sheaves with microsupport in $\bb{L}_\Sigma$ as an A-side category for $W_\Sigma$.\footnote{Our sign conventions for $\bb{L}_\Sigma$ are opposite of those in \cite{FLTZ1} but rather agree with those used in \cite{GS}.} From \cite{GPS2}, a partially wrapped category Fukaya category $\mathcal{W}((\bb{C}^*)^n, \bb{L}_\Sigma)$ can also be defined for the singular Lagrangian $\bb{L}_\Sigma$ in a similar fashion to $\mathcal{W}( (\bb{C}^*)^n, W_\Sigma^{-1}(\infty))$. 

In general, we expect these categories to be related according to the following diagram where we label each arrow and discuss it below. 

\[ \begin{tikzcd} \mathcal{F}_{\Delta^t}(W_\Sigma) \arrow{d}{8} & & \mathcal{W}( (\bb{C}^*)^n, \bb{L}_\Sigma ) \arrow[leftrightarrow]{r}{3} & \mu\text{Sh}(\bb{L}_\Sigma) \\
\mathcal{FS}(W_\Sigma) \arrow[leftrightarrow]{r}{4} & \mathcal{W}( (\bb{C}^*)^n, W_\Sigma^{-1}(\infty)) \arrow{ur}{2} & \\
  \mathscr{T}\text{Fuk}((\bb{C}^*)^n, M_{t,1}) \arrow{u}{1}  & & 
\mathcal{F}^s_{\Delta^c}(W_\Sigma) \arrow{ll}{6} \arrow{r}{5} \arrow{uu}{7} & \mathcal{F}_{\Delta^c}(W_\Sigma) \end{tikzcd} \]

Some of the arrows are well-established and well-understood while others are less so. The reader should be aware that our assumption that $X$ is projective allows us to simplify things somewhat by ignoring certain dualities. In the diagram, $\Delta^t$ and $\Delta^c$ are tropical and combinatorial divisions as in Definitions \ref{tropdiv} and \ref{combdiv}, respectively. However, $\Delta^c$ could be replaced by any monomial division adapted to $\Sigma$ as the category of monomially admissible Lagrangian sections is invariant among such divisions (Proposition \ref{adaptedinvariant}).

Arrows 1-4 have been studied outside of the present work. The fact that arrow 1 represents an embedding essentially follows from the setup in \cite{Ab2} and the proof in \cite{Ab1} that the deformation of $W_\Sigma$ to its tropical localization is symplectically trivial. To construct the embedding, it is also necessary to deal with small technical differences in the definitions such as extending the objects of $\mathscr{T}\text{Fuk}((\bb{C}^*)^n, M_{t,1})$ to lie over arcs by parallel transport and dealing with the fact that $\mathscr{T}\text{Fuk}((\bb{C}^*)^n, M_{t,1})$ is an $A_\infty$-pre-category as in Appendix \ref{precat}. $\mathscr{T}\text{Fuk}((\bb{C}^*)^n, M_{t,1})$ is expected to split-generate $\mathcal{FS}(W_\Sigma)$ when $X$ is Fano. An embedding representing 2 comes from a pushforward functor defined in \cite{GPS1, GPS2} after seeing that the Legendrian piece of $\mathbb{L}_\Sigma$ in the contact boundary can be seen as a subset of the fiber at $\infty$ of a tropical localization of $W_\Sigma$. In fact, $\mathbb{L}_\Sigma$ is the core of this fiber when $X$ is Fano as shown in \cite{GS, Zhou}. It then follows from \cite{GPS2} that arrow 2 is a quasi-equivalence for Fano $X$. The derived equivalence represented by arrow 3 is a case of the main result of \cite{GPS3}. There is expected to be a quasi-equivalence corresponding to arrow 4. However, to carefully establish this quasi-equivalence, one has to deal with the differing requirements on objects at infinity. In particular, both categories are generated by thimbles when $W_\Sigma$ is Lefschetz (which is true for a generic choice of coefficients) as shown for $\mathcal{W}( (\bb{C}^*)^n, W_\Sigma^{-1}(\infty))$ in \cite{GPS2}, but Floer theory of the thimbles is computed with different requirements on the almost complex structure. To go between the two, a careful compactness argument for holomorphic disks is required which has yet to appear in the literature. Also, the arrow 4 appears more naturally when using $W_\Sigma^{-1}(-\infty)$ as a stop, but $\mathcal{W}( (\bb{C}^*)^n, W_\Sigma^{-1}(-\infty))$ can be (non-canonically) identified with $\mathcal{W}( (\bb{C}^*)^n, W_\Sigma^{-1}(\infty))$. 

Arrows 5-8 involve monomially admissible Fukaya-Seidel categories. Arrow 5 is an embedding that is simply the result of $\mathcal{F}_{\Delta^c}^s(W_\Sigma)$ being defined as a subcategory of $\mathcal{F}_{\Delta^c}(W_\Sigma)$. We expect that this subcategory always split-generates, but the appropriate closed-string theory to prove this has not been developed. There is an embedding representing arrow 6 when $\Delta^c$ is adapted to the fan with the standard K\"{a}hler form (it is not known whether this is always true as discussed in Section \ref{existencesect}). In fact, this embedding, when it exists, is a quasi-equivalence and is closely related to Conjecture \ref{thimbleconj} stating that mirrors to line bundles can be represented by thimbles. The construction of this map is outlined at the beginning of Section \ref{hms}. An embedding corresponding to arrow 7 can be constructed by modifying the construction of twisting Hamiltonians to make them homogeneous of degree 1 (see Remark \ref{conical}). This modification will cause angles that were previously controlled to lie in small intervals, but this is enough to construct sections that avoid $\bb{L}_\Sigma$. The details of this construction will appear in follow-up work in preparation. Morally, the embedding comes from the fact that the part of the contact boundary where the ends of monomially admissible Lagrangians with respect to $\Delta^c$ are required to lie can easily be seen to retract onto $\bb{L}_\Sigma$. Finally, the arrow 8 is the most conjectural. We expect that such an arrow exists and is a quasi-equivalence for an appropriate choice of toric K\"{a}hler form from Remark \ref{righthalf} and the computation and discussion in Section \ref{tropvcomb}. An explicit construction may need to pass through an embedding similar to arrow 7, but with respect to the FLTZ skeleton of an anticanonical model of $X$ that is actually a skeleton of the fiber. An alternative approach would be to pass through an embedding similar to that of arrow 6, but using a different tropical localization than Abouzaid.

Thus, we see that the monomially admissible Fukaya-Seidel category can act as an intermediary between the various existing A-model categories associated to $W_\Sigma$. In particular, the monomially admissible Fukaya-Seidel category for $\Delta^c$ is closely related to the combinatorially defined categories $\mathscr{T}\text{Fuk}((\bb{C}^*)^n, M_{t,1}), \mathcal{W}((\bb{C}^*)^n, \bb{L}_\Sigma),$ and $\mu\text{Sh}(\bb{L}_\Sigma)$, which are all known to be derived equivalent to $\Coh(X)$. For a tropical division, the monomially admissible Fukaya-Seidel category should correspond to the traditional Fukaya-Seidel category $\mathcal{FS}(W_\Sigma)$ and to $\mathcal{W}( (\bb{C}^*)^n, W_\Sigma^{-1}(\infty))$, which are expected to instead be derived equivalent to the category of coherent sheaves on an anticanonical model of $X$ following \cite{AKO1} and \cite{BDFKK}. In both cases, the monomially admissible Fukaya-Seidel category of Lagrangian sections gives a computable and entirely Floer-theoretic model that we expect can be naturally used in other HMS constructions.

\subsection{Organization}

The remainder of the paper is organized as follows. Section \ref{2} includes the basic definitions of and related to monomial divisions (\ref{monadmcond}), the definition and construction of twisting Hamiltonians (\ref{twistham}), the identification of Hamiltonian isotopy classes of monomially admissible Lagrangian sections with line bundles on $X$ (\ref{sectionclasses}), and the proof that monomial divisions adapted to the fan exist for an appropriate choice of toric K\"{a}hler form (\ref{existencesect}). Section \ref{3} provides background results on pseudoholomorphic discs (\ref{control} and \ref{Floer}) and defines the monomially admissible Fukaya-Seidel category (\ref{flocalize}). It also includes a simple example of HMS for ample line bundles (\ref{floercomp}), the proof of HMS for the category of monomially admissible Lagrangian sections (\ref{hms}), and some speculations on how the Fano condition relates to monomial divisions (\ref{tropvcomb}). Section \ref{4} contains the definition of the monodromy functors with the precise statement and proof of Theorem \ref{mainthm} (\ref{catmono}), the relevant definitions, statement, and proof of Theorem \ref{introtrans} (\ref{defsect}), HMS for the complements of toric divisors by localization (\ref{openhms}), and a conjecture that all line bundles are mirror to thimbles (\ref{thimblesec}). Finally, Appendix \ref{precat} shows that the localization approach to defining Fukaya-Seidel categories is equivalent to the approach using $A_\infty$-pre-categories.

\medskip

\noindent \textbf{Acknowledgements.} First and foremost, I would like to thank my advisor Denis Auroux for originally suggesting to work on this problem, his patience, and many useful suggestions and comments. I would also like to thank Mohammed Abouzaid for helpful discussions related to this work and the anonymous referee for various useful suggestions on improving the exposition. In addition, I am grateful to Harvard University for their hospitality while part of this work was completed. This work was partially supported by the Simons Foundation (grant \#385573, Simons Collaboration on Homological Mirror Symmetry), NSF grant DMS-1264662, and NSF RTG grant DMS-1344991.

\section{Monomially admissible Lagrangians and twisting Hamiltonians} \label{2}

We will set up a Fukaya-Seidel category for a Laurent polynomial $W = \sum_{\alpha \in A} c_\alpha z^\alpha$ on $(\bb{C}^*)^n$ where Lagrangians are subject to an admissibility condition constraining the behavior of the monomials in $W$ rather than that of $W$ itself.\footnote{It is not necessary in the definition to assume that $A$ is the set of primitive generators of a fan. It should also be noted that all of the information in $W$ plays a role even though Definition \ref{adm} only uses the elements of $A$ and the arguments of the $c_\alpha$. The norms of the $c_\alpha$ are important in Definition \ref{div}.} The fact that admissibility depends on the monomials makes this setup particularly apt for understanding the monodromy which is the subject of this paper. Before getting into the details of the definition of the category, we will define the notion of a monomial admissibility condition and study the geometry of monomially admissible Lagrangian sections. The precise setup of the Fukaya-Seidel category will be done in Section \ref{categories}.

Generally speaking, a monomial admissibility condition requires each monomial in $W$ to be real positive over a corresponding subset of $(\bb{C}^*)^n$ near infinity. The introduction of such a condition can be rationalized from the viewpoint of SYZ mirror symmetry. Recall that the SYZ construction produces the mirror of a manifold with a Lagrangian torus fibration as the total space of the dual torus fibration. The points of the mirror correspond to torus fibers with a unitary rank one local system. $(\bb{C}^*)^n$ admits a natural SYZ fibration over $\bb{R}^n$ and is self-mirror. It also sits as a dense open subset inside of the toric variety $X$. As we approach the boundary divisors, circles in the torus fibers collapse to a point. Thus, we should expect that the local systems comprising the SYZ dual fibers have to be trivial along the collapsing circle, that is, it is natural to require certain arguments to be zero or equivalently certain Laurent monomials to be real positive towards infinity. Note that the superpotential $W_\Sigma$ only naturally appears when considering SYZ mirror symmetry in the opposite direction (viewing $X$ as a symplectic manifold). This bit of philosophy also fits well with the partially wrapped Fukaya categories defined using stops by Sylvan \cite{Syl} or Liouville sectors by Ganatra-Pardon-Shende \cite{GPS1}.

With the basic definition of monomial admissibility in hand, we then turn to the construction of Hamiltonian functions that will rotate the arguments of Lagrangians at infinity in the desired way, which we will call twisting Hamiltonians. We will first see that the Hamiltonians are essentially forced upon us by the condition that they take $W_\Sigma$ monomially admissible Lagrangians to $W_\Sigma^{\theta, \alpha}$ monomially admissible Lagrangians when additional constraints are placed on the monomial division. We will then show that these Hamiltonians act appropriately on the Hamiltonian isotopy classes of monomially admissible Lagrangian sections. Finally, we will discuss the existence of monomial divisions satisfying the aforementioned constraints. 

\subsection{Monomial admissibility conditions} \label{monadmcond}

Fix a toric K\"{a}hler form $\omega$ on $(\bb{C}^*)^n$ with respect to the standard $T^n$ action and standard complex structure. In addition, fix a moment map $\mu \colon (\bb{C}^*)^n \to \bb{R}^n$. A monomial admissibility condition depends on a monomial division of the moment map image $\bb{R}^n$. Besides determining the notion of a monomially admissible Lagrangian, the subdivision will also be used to ensure the compactness of moduli spaces of discs needed to show that the Fukaya-Seidel category is well-defined.
\begin{df} \label{div} A monomial division $\Delta$ for $W$ and $\mu$ is an assignment of a closed set $C_\alpha \subset \bb{R}^n$ to each monomial in $W$ such that the following conditions hold.
\begin{enumerate}
\item The $C_\alpha$ cover the complement of a compact subset of $\bb{R}^n$.
\item There exist constants $k_\alpha \in \bb{R}_{>0}$ such that in the expression
\[ \max_{\alpha \in A} (|c_\alpha z^\alpha|^{k_\alpha}) \]
the maximum is always achieved by $|c_\alpha z^\alpha|^{k_\alpha}$ for an $\alpha$ such that $\mu(z) \in C_\alpha$.
\end{enumerate}
\end{df}

\begin{rem} The second condition in Definition \ref{div} makes sense as the moment map $\mu$ is $T^n$-invariant and thus a function of $|z_1|, \hdots, |z_n|$. In fact, $\mu = \Log$ for the standard symplectic form $\sum d\log|z_i| \wedge d\theta_i$ and changing toric K\"{a}hler form amounts to a change of coordinates on $\bb{R}^n$. 
\end{rem}

Both conditions in Definition \ref{div} are used in the argument in Section \ref{control} showing the compactness of moduli spaces of discs needed to obtain a well-defined Fukaya-Seidel category.  As we see in the examples below, the first condition arises naturally when working with complete toric varieties as we do in this paper. The second condition may at first seem somewhat artificial, but we will see in Section \ref{existencesect} that it also has geometric meaning. The following additional condition on a monomial division will play a central role in this paper.

\begin{df} \label{adapt} A monomial division $\Delta$ for $W_\Sigma$ is adapted to the fan $\Sigma$ if for each $\alpha \in A$, the moment map image of the complement of a compact subset of $\mu^{-1}(C_\alpha)$ is contained in the interior of the star of $\alpha$.\footnote{Here and throughout the paper, we use ``star of $\alpha$" to mean the star of the ray generated by $\alpha$ in the fan $\Sigma$.} 
\end{df}

\begin{rem} \label{ontomoment} If $\mu$ is onto, $\Delta$ being adapted to the fan reduces to the requirement that the complement of a compact subset of each $C_\alpha$ is contained in the interior of the star of $\alpha$.
\end{rem}

It should be noted that $\Delta$ being adapted to $\Sigma$ implies that for each $\alpha \in A$ and near infinity, the ray generated by $\alpha$ is contained in $C_\alpha$ and is not contained in $C_\beta$ for $\beta \neq \alpha$.  We will now see how a monomial division is used to constrain the asymptotic behavior of Lagrangians.

 A standard definition, originally due to Kontsevich, of an admissible Lagrangian is that outside of a compact subset its image under $W$ lies in $\bb{R}_{>0} \subset \bb{C}$. Motivated by the comments on SYZ mirror symmetry at the beginning of this section, we mimic this condition for monomials in their corresponding regions of the monomial division. 

\begin{df} \label{adm} A Lagrangian $L \subset (\bb{C}^*)^n$ is admissible with respect to a monomial division $\Delta$, or \textit{monomially admissible}, if over $\mu^{-1}(C_\alpha)$ the argument of $c_\alpha z^\alpha$ restricted to $L$ is zero outside of a compact set.
\end{df}

One can also imagine setting up a similar admissibility condition where the $C_\alpha$ cover $\bb{R}^n$ at infinity only in certain directions and Lagrangians are fully wrapped in the other directions. We will not undertake that geometric setup here, but we hint at it further in Section \ref{openhms}. 

For us, there are two key ``examples" of a monomial division and thus monomial admissibility when $W = W_\Sigma$ with $\Sigma$ a fan for a complete toric variety $X$ as in the introduction. The first of which we now discuss.

\begin{df} \label{tropdiv} A tropical division for $W_\Sigma$ has $C_\alpha$ equal to the region in $\bb{R}^n$ where 
\[ |c_\alpha z^\alpha| \geq (1- \delta) \max_{\beta \in A} (| c_\beta z^\beta|) \]
for a fixed $\delta \in [0,1]$.
\end{df}

\begin{rem} \label{righthalf} For small values of $\delta$, the $C_\alpha$ have smaller overlaps and there are more Lagrangians admissible with respect to the tropical division. For larger values of $\delta$, the monomial admissibility condition is harder to satisfy, but monomially admissible Lagrangians are closer to being admissible with respect to $W_\Sigma$, that is, closer to having $W_\Sigma$ be real positive on the Lagrangian outside of a compact set. For instance, $\delta \geq 1 - \frac{1}{N-1}$ where $N = | A |$ is the number of monomials in $W_\Sigma$ implies that the image of $W_\Sigma$ on the complement of a compact subset of a monomially admissible Lagrangian lies in the right half-plane.
\end{rem}

\begin{figure} 
\centering
\begin{tikzpicture}
\draw[yellow, fill=yellow, opacity = 0.5] (0,0) -- (-3, -2.5) -- (-3, -3) -- (-2.5, -3)  -- (0,0);
\draw [blue, fill=blue, opacity=0.5] (0,0) -- (3, 0.5) -- (3,-0.5) -- (0,0);
\draw[red, fill=red, opacity = 0.5] (0,0) -- (0.5,3) -- (-0.5, 3) -- (0,0);
\draw[green, fill=green, opacity = 0.5] (0,0) -- (3, -0.5) -- (3, -3) -- (-2.5, -3) -- (0,0); 
\draw[purple, fill=purple, opacity = 0.5] (0,0) -- (3, 0.5) -- (3,3) -- (0.5, 3) -- (0,0);
\draw[orange, fill = orange, opacity = 0.5] (0,0) -- (-0.5, 3) -- (-3, 3) -- (-3, -2.5) -- (0,0);
\draw (-2, -2.5) node {$C_{(-1,-1)}$};
\draw (2.4, 0) node {$C_{(1,0)}$};
\draw (0, 2.6) node {$C_{(0,1)}$};
\draw[->, thick] (0,0) --(1,0);
\draw[->, thick] (0,0) -- (0,1);
\draw[->, thick] (0,0) -- (-1,-1);
\draw[dashed] (1.97,0.33) arc (9.46:219.8:2);
\draw[dashed] (-0.96,-1.15) arc (-129.8:80.54:1.5);
\draw[dashed] (-0.41,2.47) arc (99.46:350.54:2.5);

\draw[yellow, fill=yellow, opacity = 0.5] (-8,0) -- (-6.5, -3) -- (-11, -3) -- (-11, 1.5)  -- (-8,0);
\draw [blue, fill=blue, opacity=0.5] (-8,0) -- (-6.5, -3) -- (-5,-3) -- (-5, 3) -- (-8,0);
\draw[red, fill=red, opacity = 0.5] (-8,0) -- (-11, 1.5) -- (-11, 3) -- (-5, 3) -- (-8,0);
\draw (-10, -2.5) node {$C_{(-1,-1)}$};
\draw (-5.6, 0) node {$C_{(1,0)}$};
\draw (-8, 2.6) node {$C_{(0,1)}$};
\draw[->,thick] (-8,0) --(-7,0);
\draw[->, thick] (-8,0) -- (-8,1);
\draw[->, thick] (-8,0) -- (-9,-1);

\end{tikzpicture}
\caption{Tropical and combinatorial divisions and fan for $\bb{P}^2$ with $\delta = 0, c_\alpha = 1$ for all $\alpha$, and $\mu = \Log$. In the second image, overlaps are colored by the mixture of the two colors and dashed arcs indicate the angles of the cones in the monomial division.} \label{firstexample}
\end{figure}
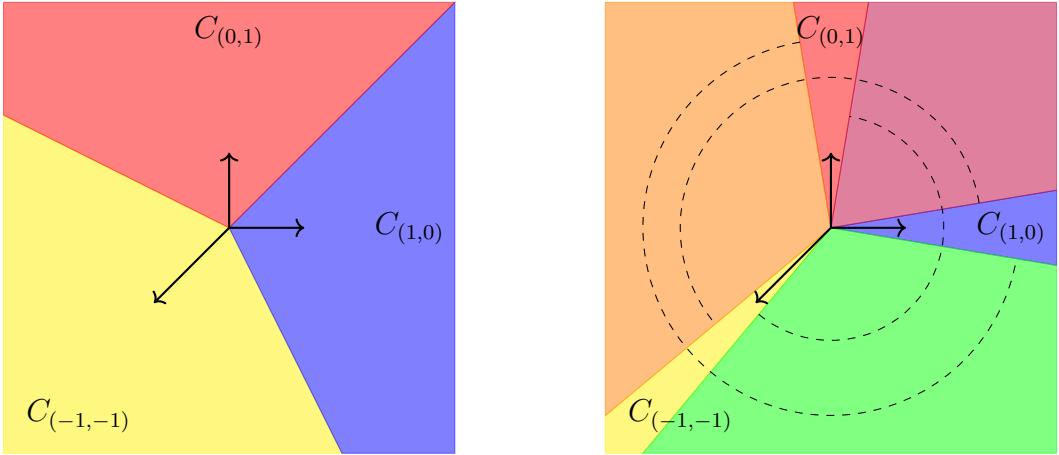

A tropical division is tautologically a monomial division with $k_\alpha = 1$ for all $\alpha \in A$ and should be thought of as a tropical interpretation of the Fukaya-Seidel category where only the dominating monomials are required to be real positive. When $\delta = 0$ and $\mu = \Log$, $C_\alpha$ is the region of the tropicalization of $W_\Sigma$ corresponding to $\alpha$. In particular, there will always be bounded or empty $C_\alpha$ when $X$ is not semi-Fano, that is, when there is a homology class which pairs negatively with $c_1(X)$. Our version of the Fukaya-Seidel category will then not ``see" the rays corresponding to those regions. We elaborate on this phenomenon in Section \ref{tropvcomb}. 

The possibility of bounded or empty $C_\alpha$ also shows that a tropical division is in general not adapted to the fan. Even when all $C_\alpha$ are unbounded (for example, when $X$ is Fano), a tropical division may still not be adapted to the fan. We will leave further discussion of when the tropical division is adapted to the fan to the more technical Section \ref{existencesect}. In particular, Corollary \ref{tropicalfanoadapts} in that section shows that there is always a toric K\"{a}hler form on $(\bb{C}^*)^n$ for which the tropical division is adapted to the fan for small $\delta$. 

We now move to introducing and discussing our second important ``example," which presents an issue that is in a sense dual to trying to find an adapted tropical division.

\begin{df} \label{combdiv} A combinatorial division for $W_\Sigma$ has $C_\alpha$ equal to a cone slightly inside the star of $\alpha$. More precisely, we have $x \in C_\alpha$ if $x$ is in the star of $\alpha$ and the angle between $x$ and any cone of $\Sigma$ not containing $\alpha$ is greater than or equal to $\eps$ for some small fixed $\eps > 0$.
\end{df}

The restrictions put on a monomially admissible Lagrangian with respect to the combinatorial division are a possible interpretation of what it means to be asymptotic to the Lagrangian skeleton in the setup of Fang-Liu-Treumann-Zaslow \cite{FLTZ3}. The slight shrinking of the star of $\alpha$ to obtain $C_\alpha$ in the combinatorial division is necessary for the existence of nontrivial smooth monomially admissible Lagrangians. The condition for $\Delta$ to be adapted to $\Sigma$ holds trivially for the combinatorial division. However, it is not clear when the combinatorial division satisfies the second condition of Definition \ref{div}, which is why we have been using quotes when referring to the combinatorial division as an example. Detailed discussion of this issue will again be postponed to Section \ref{existencesect}, which contains Corollary \ref{adaptedexists} showing that there is always a toric K\"{a}hler form on $(\bb{C}^*)^n$ for which the combinatorial division is adapted to the fan when $X$ is projective.

\subsection{Twisting Hamiltonians} \label{twistham}

Let $D = \sum_{\alpha \in A} n_\alpha D_\alpha$ be a toric divisor in $X$. Associated to $D$, we have a family of superpotentials 
\begin{equation} \label{monodromyeq} W_\Sigma^{\theta, D} = \sum_{\alpha \in A} c_\alpha e^{in_\alpha \theta} z^\alpha \end{equation}
as we considered earlier with $D = D_\alpha$. Suppose that the moment map coordinates on $\bb{R}^n$ for our toric K\"ahler form on $(\bb{C}^*)^n$ are $\mu = (\mu_1, \hdots, \mu_n)$ and that $H(\mu_1, \hdots, \mu_n)$ is a Hamiltonian function in the moment map coordinates. Since $\omega = \sum d\mu_i \wedge d\theta_i$ and $dH = \sum H_{\mu_i} d\mu_i$, we have that 
\[ X_H = \sum H_{\mu_i} \frac{\partial}{\partial \theta_i} \]
with the convention that $i_{X_H} \omega = - dH$. Therefore, the flow of $X_H$ is given by 
\[ \phi^t_H (\mu_1, \theta_1, \hdots, \mu_n, \theta_n) = \left( \mu_1, \theta_1 + tH_{\mu_1}, \hdots, \mu_n, \theta_n + tH_{\mu_n} \right). \]
Thus, we are led to define a special class of Hamiltonians whose time $t$ flow takes a $W_\Sigma$ monomially admissible Lagrangian to a $W_\Sigma^{2\pi t, D}$ monomially admissible Lagrangian.
\begin{df} \label{twisthamdf} A Hamiltonian $H = H(\mu_1, \hdots, \mu_n)$ is a twisting Hamiltonian for $D=\sum n_\alpha D_\alpha$ if 
\begin{equation} \label{twistcond} \nabla H \cdot \alpha = - 2\pi n_\alpha \end{equation}
outside of a compact set in each $C_{\alpha}$ where the gradient is taken with respect to the flat metric and $\cdot$ is the dot product. Moreover, we will call $H$ an \textit{admissible Hamiltonian} if $D$ is the empty divisor, i.e., $n_\alpha = 0$ for all $\alpha \in A$.
\end{df}

\begin{rem} The set of twisting Hamiltonians for a fixed divisor $D$ is an affine linear space in the sense that $tH_1 +(1-t)H_2$ satisfies \eqref{twistcond} for all $t \in \bb{R}$ when $H_1$ and $H_2$ satisfy \eqref{twistcond}. In particular, this set is convex. This convexity manifests itself in the fact that the images of a monomially admissible Lagrangian under the time-$1$ flows of any two twisting Hamiltonians for $D$ are Hamiltonian isotopic through monomially admissible Lagrangians.
\end{rem}

\begin{rem} The set of all twisting Hamiltonians form a group under addition with identity the Hamiltonian $H(\mu) = 0$, which is an admissible Hamiltonian.
\end{rem}

Note that $H$ being a twisting Hamiltonian for $D$ places strong restrictions on $H$ where the $C_\alpha$ overlap. In order to control these overlaps, we will assume that we are working with a combinatorial division and then observe that our construction produces a twisting Hamiltonian for $D$ for any monomial division adapted to the fan in Corollary \ref{twistexist}.\footnote{This is because the overlaps that occur in a monomial division adapted to the fan are always a subset of the overlaps in a combinatorial division.} In the case of a combinatorial division, $H$ is essentially determined up to smoothing outside of a compact set. For each maximal cone $\sigma = \langle \alpha_1, \hdots, \alpha_n \rangle$ of $\Sigma$, we must have that in the intersection $C_{\alpha_1} \cap \hdots \cap C_{\alpha_n}$, $H = 2\pi m_\sigma \cdot \mu$ where $m_\sigma$ is the unique vector satisfying $m_\sigma \cdot \alpha_i = - n_{\alpha_i}$ for $i = 1, \hdots, n$. The linear functions $2 \pi m_\sigma \cdot \mu$ naturally glue to a piecewise linear function $F_D$ given by $F_D(\mu) = 2 \pi m_\sigma \cdot \mu$ when $\mu \in C$ for each maximal cone $C$ of $\Sigma$. In other words, $F_D$ is the unique function that is linear on the maximal cones of $\Sigma$ and satisfies $F_D(\alpha) = -2 \pi n_\alpha$ for all $\alpha \in A$. After rescaling by $2\pi$, the piecewise linear function $F_D$ is more commonly known as the support function of the divisor $D$ or line bundle $\mathcal{O}(D)$ (see for instance Chapter 4 of \cite{CLS}). To construct a twisting Hamiltonian $H$, we need only to appropriately smooth the function $F_D$. 

\begin{rem} We are assuming throughout that $\Sigma$ is smooth, and we use that condition above to obtain $F_D$. The weakest condition on $\Sigma$ for which the construction will work for any divisor is that it is simplicial. 
\end{rem}

Before proceeding to the general construction of a smooth twisting Hamiltonian from $F_D$, let's consider the example of the divisor $D = D_{(1,0)}$ on $\bb{P}^2$. There are three maximal cones in the fan of $\bb{P}^2$: $\sigma_1 = \langle (1,0), (0,1) \rangle, \sigma_2 = \langle (1,0), (-1,-1) \rangle$, and $\sigma_3 = \langle (0,1), (-1, -1) \rangle$. In this case, we have that 
\[ F_D(\mu) = \begin{cases} - 2 \pi \mu_1 & \mu \in \sigma_1 \\ 2\pi ( \mu_2 - \mu_1) & \mu \in \sigma_2 \\ 0 & \mu \in \sigma_3 \end{cases}. \]
To construct $H$, we need to smooth $F_D$ near the rays of the fan and near the origin. Near the origin, there are no restrictions as the smoothing can be done in a compact set. Near a ray generated by $\alpha$, we need to perform the smoothing in a way that preserves equation \eqref{twistcond}. For instance, near the ray we $(1,0)$, we need to preserve $\partial H/ \partial \mu_1 = - 2 \pi$. Fortunately, the difference between $F_D$ on $\sigma_1$ and $F_D$ on $\sigma_2$ is function only of $\mu_2$ so such a smoothing is possible. Explicitly, we can take $H$ to be $-2\pi (u_1 +(-u_2)^+ )^+$ where $( \cdot )^+ \colon \bb{R} \to \bb{R}$ is a convex smooth function equal to $0$ when $t \ll 0$ and equal to $t$ when $t \gg 0$. Some level sets of the resulting twisting Hamiltonian are shown in Figure \ref{levelsets} along with the level sets of twisting Hamiltonians for other surfaces constructed in the same manner. In higher dimensions, it is possible to proceed similarly by observing that $F_D$ still varies between maximal cones by the normal coordinate and proceed by induction. However, we will use mollifier functions instead of a direct approach due to their general properties.

\begin{figure}
\centering 
\begin{tikzpicture}
\draw[thick, ->] (0,0) -- (1,0);
\draw[thick, ->] (0,0) -- (0,1);
\draw[thick, ->] (0,0) -- (-1,-1);
\draw[ultra thick, blue] plot [smooth, tension = 0.5] coordinates{(0.33, 1) (0.33, 0.1) (0, -0.33) (-0.67, -1)};
\draw[ultra thick, blue] plot [smooth, tension = 0.5] coordinates{(0.67, 1) (0.67, 0.1) (0, -0.67) (-0.33, -1)};
\draw[ultra thick, blue] plot [smooth, tension = 0.5] coordinates{(1, 1) (1, 0.1) (0.5, -0.5 ) (0, -1)};

\draw[thick, ->] (4,0) -- (5,0);
\draw[thick, ->] (4,0) -- (4,1);½
\draw[thick, ->] (4,0) -- (3,0); 
\draw[thick, ->] (4,0) -- (4,-1);
\draw[ultra thick, blue] (4.33,1) -- (4.33, -1);
\draw[ultra thick, blue] (4.67,1) -- (4.67, -1);
\draw[ultra thick, blue] (5,1) -- (5, -1);

\draw[thick, ->] (8,0) -- (9,0);
\draw[thick, ->] (8,0) -- (8,1);
\draw[thick, ->] (8,0) -- (9,1);
\draw[thick, ->] (8,0) -- (7,-1);
\draw[ultra thick, blue] (9.33,1) -- (7.33, -1);
\draw[ultra thick, blue] (9.67,1) -- (7.67, -1);
\draw[ultra thick, blue] (10,1) -- (8, -1);

\draw[thick, ->] (12,0) -- (13,0);
\draw[thick, ->] (12,0) -- (12,1);
\draw[thick, ->] (12,0) -- (13,1);
\draw[thick, ->] (12,0) -- (11,-1);
\draw[thick, ->] (12,0) -- (12,-1);
\draw[ultra thick, blue] plot [smooth, tension = 0.3] coordinates{(13.33, 1) (12.67, 0.33) (12.33, -0.1) (12.33, -1)};
\draw[ultra thick, blue] plot [smooth, tension = 0.3] coordinates{(13.67, 1) (13, 0.33) (12.67, -0.1) (12.67, -1)};
\draw[ultra thick, blue] plot [smooth, tension = 0.3] coordinates{(14, 1) (13.33 , 0.33) (13, -0.1) (13, -1)};

\end{tikzpicture}
\caption{Some level sets of a twisting Hamiltonian for the divisor $D_{(1,0)}$ on $\bb{P}^2, \bb{P}^1 \times \bb{P}^1$, and some blow-ups of $\bb{P}^2$ overlaid on the fans.} \label{levelsets}
\end{figure}
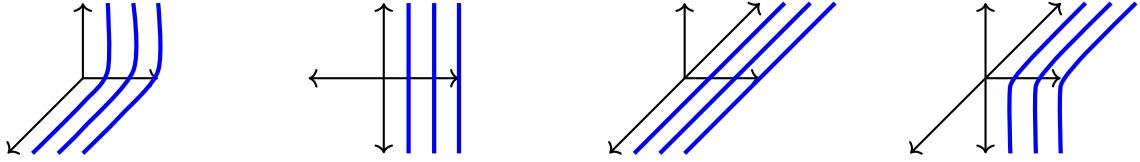

\begin{prop} \label{smooth} There exists a smooth function $H$ equal to $F_D$ outside of an arbitrarily small neighborhood of the facets of $\Sigma$. Moreover, $H$ can be constructed such that the directional derivatives of $F_D$ in the direction of the rays of the fan are preserved outside of arbitrarily small neighborhoods of the cones of the fan along which they are discontinuous.
\end{prop}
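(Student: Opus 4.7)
The plan is to construct $H$ by convolving $F_D$ with a standard radially symmetric smooth mollifier. Let $\rho_\eps \colon \bb{R}^n \to \bb{R}_{\geq 0}$ be a smooth even function with support in the ball $B_\eps(0)$ and $\int \rho_\eps = 1$, and define $H := F_D * \rho_\eps$. Since $\rho_\eps$ is smooth, so is $H$, and the claim is that this $H$ satisfies the desired properties as $\eps \to 0$, which gives the ``arbitrarily small neighborhood'' flexibility in the statement.

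First I would record the structure of $F_D$: by construction it is continuous (adjacent $m_{\sigma_1}, m_{\sigma_2}$ agree on the shared wall) and affine linear on each maximal cone $\sigma$ with $F_D(\mu) = 2\pi m_\sigma \cdot \mu$, so its non-smooth locus lies in the union of the facets of $\Sigma$. The key calculational input is that convolution with a radially symmetric mollifier of integral one preserves affine functions exactly: for $L(y) = a \cdot y + b$,
\[ (L * \rho_\eps)(\mu) = L(\mu) - a \cdot \int z \, \rho_\eps(z) \, dz = L(\mu), \]
using that the first moment of a radially symmetric function vanishes. Hence for any $\mu$ such that $B_\eps(\mu)$ is contained in a single maximal cone, $H(\mu) = F_D(\mu)$, so $H = F_D$ outside the $\eps$-neighborhood of the union of facets.

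For the second statement, differentiating under the integral sign (justified by local Lipschitzness of $F_D$) gives $\nabla H(\mu) \cdot \alpha = \int (\nabla F_D(\mu - z) \cdot \alpha) \, \rho_\eps(z) \, dz$, and since $\nabla F_D \cdot \alpha$ is piecewise constant, equal to $2\pi m_\sigma \cdot \alpha$ on each maximal cone $\sigma$, the integral collapses to the pointwise value whenever $\nabla F_D \cdot \alpha$ is constant on $B_\eps(\mu)$. The main observation — really the only non-formal ingredient — is that the jump $m_{\sigma_1} - m_{\sigma_2}$ of $\nabla F_D$ across a wall $\tau = \sigma_1 \cap \sigma_2$ must be normal to $\tau$, because continuity of $F_D$ along $\tau$ forces the two linear functions to agree there. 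Consequently, if the ray $\alpha$ lies in $\tau$, then $(m_{\sigma_1} - m_{\sigma_2}) \cdot \alpha = 0$, and $\nabla F_D \cdot \alpha$ is continuous across $\tau$. Therefore $\nabla F_D \cdot \alpha$ is discontinuous only along facets of $\Sigma$ that do not contain $\alpha$, and $\nabla H \cdot \alpha = \nabla F_D \cdot \alpha$ outside the $\eps$-neighborhood of this smaller union of facets, which is exactly the preservation statement.

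There is no serious obstacle beyond this continuity-across-walls observation; the remainder is a standard mollification computation, with radial symmetry of $\rho_\eps$ ensuring that no spurious affine drift is introduced. An alternative, more hands-on construction — mirroring the explicit $\bb{P}^2$ example preceding the proposition — would proceed by induction on the codimension of the strata of $\Sigma$, smoothing first in the direction transverse to each facet and exploiting the fact that locally $F_D$ depends only on coordinates transverse to the stratum; but the mollifier approach treats all rays $\alpha$ uniformly and avoids bookkeeping near codimension $\geq 2$ strata.
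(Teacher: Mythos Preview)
Your proof is correct and follows essentially the same approach as the paper: convolve $F_D$ with a symmetric mollifier supported in $B_\eps(0)$, use that convolution preserves affine functions to get $H = F_D$ away from the $\eps$-neighborhood of the facets, and use $\nabla H \cdot \alpha = (\nabla F_D \cdot \alpha) * \rho_\eps$ to preserve directional derivatives where $\nabla F_D \cdot \alpha$ is locally constant. Your explicit justification that the jump $m_{\sigma_1} - m_{\sigma_2}$ across a wall is normal to that wall (and hence $\nabla F_D \cdot \alpha$ is continuous across walls containing $\alpha$) is a nice addition that the paper leaves implicit.
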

\begin{proof} Let $\eta \colon \bb{R}^n \to \bb{R}$ be a smooth mollifier function supported on the ball around the origin of radius $1$ and let $\eta_\eps (\mu) = \eps^{-n} \eta(\mu/\eps)$. Further suppose that $\eta$ is symmetric, i.e., $\eta(-\mu) = \eta(\mu)$. We claim that
\[ H (\mu) = (F_D * \eta_\eps) (\mu) = \int_{\bb{R}^n} F_D (\mu - x) \eta_\eps(x) \, dx \]
has the desired properties. Indeed, it easy to check that $H = F_D$ outside of an $\eps$-neighborhood of the facets of $\Sigma$ since $F_D$ is linear in the components of the complement of the facets and $\eta_\eps$ is symmetric. In addition, the convolution satisfies 
\[ \nabla H \cdot v = (\nabla F_D \cdot v) * \eta_\eps \]
for any $v \in \bb{R}^n$ so directional derivatives are preserved outside of $\eps$ neighborhoods of the cones where they are discontinuous by the same argument.
\end{proof}

When $D = D_\alpha$ for some primitive generator $\alpha$, the resulting twisting Hamiltonian $H$ can be understood geometrically. $H$ will be a smooth function whose level sets are tangent to the boundary faces of the star of $\alpha$ as is the case in Figure \ref{levelsets}. 

\begin{rem} \label{concave} When $F_D$ is a concave function, i.e., $D$ is basepoint free, we can guarantee that the smoothing $H$ is also concave by choosing $\eta_\eps$ to be nonnegative. If $F_D$ is strictly concave in the sense of Definition 6.1.12 of \cite{CLS} (except the notions of concave and convex are flipped in \cite{CLS}), which means that $F_D$ is concave and
\[ F_D(\mu) = m_\sigma \cdot \mu \]
if and only if $\mu \in \sigma$, then $D$ is ample and the smoothing of $H$ with $\eta_\eps$ nonnegative is strictly concave away from the domains of linearity of $F_D$ in the sense that 
\[ H(t \mu_1 + (1-t) \mu_2) > tH(\mu_1) + (1-t)H(\mu_2) \]
for all $t \in (0,1)$ unless $\mu_1$ and $\mu_2$ lie in the same maximal cone of $\Sigma$. 
\end{rem}

Note that our construction of a twisting Hamiltonian $H$ from $F_D$ using Proposition \ref{smooth} is only guaranteed to satisfy equation \eqref{twistcond} in the star of $\alpha$ (where $\nabla F_D \cdot \alpha = - 2\pi n_\alpha$) away from an arbitrarily small neighborhood of the boundary facets of the star. Thus, we obtain the following.

\begin{cor} \label{twistexist} If $\Delta$ is adapted to the fan $\Sigma$ of a smooth toric variety $X$, then twisting Hamiltonians exist for any toric divisor $D$ in $X$. 
\end{cor}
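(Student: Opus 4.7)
My plan is to construct $H$ by smoothing the support function $F_D$ of $D$ with Proposition \ref{smooth}, and then to verify the defining condition \eqref{twistcond} by combining the adaptedness of $\Delta$ with the convolution formula from the proof of Proposition \ref{smooth}. The construction itself is essentially forced on us by the discussion immediately preceding the corollary, so the work is really in checking that nothing goes wrong over the possibly complicated overlaps of the $C_\alpha$.

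First I would form the piecewise-linear support function $F_D \colon \bb{R}^n \to \bb{R}$ as in the paragraph preceding the corollary: on each maximal cone $\sigma = \langle \alpha_1, \ldots, \alpha_n \rangle$ of $\Sigma$ I set $F_D(\mu) = 2\pi\, m_\sigma \cdot \mu$, where $m_\sigma$ is the unique vector with $m_\sigma \cdot \alpha_i = -n_{\alpha_i}$ (uniqueness and existence use that $\Sigma$ is smooth). The pieces glue continuously along shared facets by construction. Applying Proposition \ref{smooth} with a parameter $\eps>0$ to be chosen produces a smooth $H$ that equals $F_D$ outside an $\eps$-neighborhood of the facets of $\Sigma$ and whose directional derivative along any $\alpha$ is $\nabla H \cdot \alpha = (\nabla F_D \cdot \alpha) * \eta_\eps$.

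To check \eqref{twistcond}, I observe that $\nabla F_D \cdot \alpha \equiv -2\pi n_\alpha$ throughout the interior of the star of $\alpha$: on each maximal cone $\sigma$ containing $\alpha$, $\nabla F_D \cdot \alpha = 2\pi\, m_\sigma \cdot \alpha = -2\pi n_\alpha$, and these constant values agree continuously across any facet shared by two maximal cones of the star, since such a facet contains the ray through $\alpha$. Hence the only discontinuities of $\nabla F_D \cdot \alpha$ lie on the boundary of the star of $\alpha$. Because $\Delta$ is adapted to $\Sigma$, there is a compact $K$ such that $\mu(\mu^{-1}(C_\alpha) \setminus K) \subset \text{int}(\text{star}(\alpha))$; by enlarging $K$ and taking $\eps$ small I can arrange that $C_\alpha \setminus \mu(K)$ lies at distance greater than $\eps$ from the boundary of the star. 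The convolution formula then yields $\nabla H \cdot \alpha \equiv -2\pi n_\alpha$ on that set, which is exactly \eqref{twistcond}. Doing this simultaneously for each $\alpha \in A$ (finitely many) produces a single $\eps$ that works for all rays.

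The main obstacle is the final separation claim: the literal statement of adaptedness only places $C_\alpha$ in the open interior of the star at infinity, whereas the convolution argument requires a \emph{uniform} positive separation between $C_\alpha \setminus \mu(K)$ and $\partial(\text{star}(\alpha))$. This is where the conical geometry helps: the star is a closed cone and, after passing to directions on the unit sphere at infinity, the adaptedness condition forces the directional limits of $C_\alpha$ into a compact subset of the open star on the sphere, producing the required uniform angular separation and hence a uniform Euclidean separation away from a larger compact set. Once this mild strengthening is in hand, the rest of the argument is a direct appeal to Proposition \ref{smooth}.
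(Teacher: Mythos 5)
Your construction is identical to the paper's: mollify the piecewise-linear support function $F_D$ via Proposition~\ref{smooth}. The paper simply states the corollary as an immediate consequence of the preceding paragraph and the footnote reducing to the combinatorial case, whereas you explicitly flag the real subtlety: Proposition~\ref{smooth} gives $\nabla H \cdot \alpha = -2\pi n_\alpha$ only outside a fixed-width Euclidean $\eps$-neighborhood of $\partial(\mathrm{star}(\alpha))$, while the definition of ``adapted'' only puts $C_\alpha$ (away from a compact set) in the \emph{open} interior of the star. Identifying that gap is a genuine improvement in care over the paper's terse ``Thus, we obtain the following.''

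However, your proposed resolution does not work as stated. You claim that adaptedness forces the directional limits of $C_\alpha$ into a \emph{compact subset of the open star on the sphere}, and deduce uniform angular (hence Euclidean) separation. This is false: take $\mathrm{star}(\alpha)$ to be the closed upper half-plane and $C_\alpha \supset \{(x,y): y \ge 1\}$; then $C_\alpha$ lies entirely in the open star and is at constant Euclidean distance $1$ from the boundary, yet its directional limits include the boundary direction $(1,0)$. So the set of directional limits need not avoid $\partial(\mathrm{star}(\alpha)) \cap S^{n-1}$, and the angular-separation step collapses. Moreover, the implication is false in the other direction too: for a set like $\{(x,y): y \ge e^{-x}, x\ge 0\}$ the directional limits equal those of the previous example, but now the Euclidean distance to the boundary tends to $0$, so a single $\eps$ fails. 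In other words, adaptedness alone (as literally stated) does not give the uniform Euclidean separation that a fixed-width mollifier requires; what actually saves the argument in the paper's examples is that the $C_\alpha$ for tropical and combinatorial divisions (and for the divisions produced in Proposition~\ref{equivadapt}) are, outside a compact set, polyhedral cones translated by bounded amounts, and for such sets ``eventually inside the open star'' does imply a uniform lower bound on the distance to $\partial(\mathrm{star}(\alpha))$. If you want a proof that covers arbitrary $C_\alpha$ meeting only the literal definition of adapted, you would need to either add ``asymptotically conical'' (or an equivalent) as a hypothesis, or replace the constant-width convolution in Proposition~\ref{smooth} by a smoothing whose width shrinks toward the facets fast enough to stay inside $\bb{R}^n \setminus \bigcup_\alpha (C_\alpha \setminus K_\alpha)$; the directional-limit lemma you invoke is not a correct substitute for either.
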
 

\begin{rem} When $\Delta$ is not adapted to the fan, it is not possible to construct twisting Hamiltonians in general even when one can obtain a piecewise-linear approximation by examining the maximal overlaps. For example, there is no twisting Hamiltonian for the divisor $D_{(1,0)}$ for the tropical division on the right in Figure \ref{notadapted}. Any such function would have to be equal to $2\pi(-\mu_1 + 2\mu_2)$ on one boundary of  $C_{(1,0)}$ and equal to $2\pi(-\mu_1 + 3 \mu_2)$ on the other and vary from one boundary to the other in a way that depends only on $\mu_2$ in $C_{(1,0)}$. Comparing the boundary values along a segment parallel to the $\mu_1$-axis and recalling that $\nabla H \cdot (1,0) = - 2 \pi$ everywhere in $C_{(1,0)}$, we see that there is no such smooth function. In fact, the same argument will show that in this example there is no twisting Hamiltonian for any divisor with $n_{(1,0)} + n_{(2,1)} + n_{(-3,-1)} \neq 0$.
\end{rem}

\subsection{Hamiltonian isotopy classes of monomially admissible Lagrangian sections} \label{sectionclasses}

SYZ mirror symmetry predicts that Lagrangian sections of the torus fibration, which in our setting is the moment map on $(\bb{C}^*)^n$, should correspond to line bundles on $X$.  This is exactly the approach taken in \cite{Ab2} to construct mirror Lagrangians to line bundles on $X$, and we employ similar ideas here. Assuming that the monomial division is adapted to the fan, we show in this section that the Hamiltonian isotopy classes of monomially admissible Lagrangian sections (all of which are exact) are in bijection with line bundles on $X$ and that twisting Hamiltonians act as expected on the isotopy classes.

We will regard $(\bb{C}^*)^n$ as $T^* \bb{R}^n$ mod a fiberwise lattice. Under this identification, the moment map is projection to the base. In fact, we have a natural splitting $T^*\bb{R}^n = \bb{R}^n_{\mu} \times \bb{R}^n_\theta$ where $\mu = (\mu_1, \hdots, \mu_n)$ are the moment map coordinates and $\theta = (\theta_1, \hdots, \theta_n)$ are lifts of the arguments of $z_1, \hdots, z_n$. The fiberwise lattice is $2\pi \bb{Z}^n \subset \bb{R}^n_\theta$. 

 Any monomially admissible Lagrangian section $L$ lifts to a Lagrangian section of $T^*\bb{R}^n$. Further, any two lifts differ by a fixed element of $2 \pi \bb{Z}^n$ in each fiber. Also, any lift of $L$ over $C_\alpha$ satisfies $\alpha \cdot \theta|_L \in 2 \pi \bb{Z}$ at infinity where $\text{arg}(z^\alpha) = 0$. Thus, to any monomially admissible Lagrangian section and assuming that each $C_\alpha$ is unbounded, we can associate a function $\nu_L \colon A \to \bb{Z}$ defined up to an integral linear function on $\bb{R}^n$ by
 \begin{equation} \label{isotopyfunction} \nu_L (\alpha) = \frac{1}{2 \pi} \alpha \cdot \theta \end{equation}
for some $\theta$ such that $(\mu, \theta) \in L, \mu \in C_\alpha,$ and $\| \mu \| \gg 0$. 

\begin{prop} \label{bij} The Hamiltonian isotopy classes of admissible Lagrangian sections with respect to any monomial division $\Delta$ for $W_\Sigma$ adapted to $\Sigma$ are in bijection with $\text{Pic}(X)$. 
\end{prop}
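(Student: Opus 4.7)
The plan is to construct a map $\Phi$ from Hamiltonian isotopy classes of monomially admissible Lagrangian sections to $\Pic(X)$ by setting $\Phi(L) = [\nu_L]$, where $\nu_L$ is the function from \eqref{isotopyfunction}, and to prove that $\Phi$ is a bijection. Under the standard identification $\Pic(X) \cong \bb{Z}^A / M$ for smooth complete toric $X$, the class $[\nu_L]$ is a well-defined element of the target: different lifts of $L$ to $T^*\bb{R}^n$ differ by a fixed $m \in 2\pi \bb{Z}^n$ per fiber, changing $\nu_L(\alpha)$ by $\alpha \cdot m$, which is exactly the ambiguity modulo $M$. Invariance under Hamiltonian isotopy through monomially admissible sections is then automatic: along a family $L_t$, the value $\alpha \cdot \theta|_{L_t}$ over a connected neighborhood of infinity in $\mu^{-1}(C_\alpha)$ lies in a fixed discrete translate of $2\pi\bb{Z}$ (by the admissibility condition) and varies continuously in $t$, hence is constant.

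Surjectivity is where the assumption that $\Delta$ is adapted to $\Sigma$ enters most directly. For any toric divisor $D = \sum_{\alpha \in A} n_\alpha D_\alpha$, Corollary \ref{twistexist} produces a twisting Hamiltonian $H_D$. Apply its time-$1$ flow to the zero section $\mathcal{L} = \bb{R}^n \times \{0\}$, which is monomially admissible when the $c_\alpha$ are real positive since $z^\alpha$ is real positive along it. Because $H_D$ depends only on $\mu$, the flowed Lagrangian is the graph $\{(\mu, \nabla H_D(\mu))\}$, and the twisting condition $\alpha \cdot \nabla H_D = -2\pi n_\alpha$ over $C_\alpha$ near infinity shows $\nu_L(\alpha) = -n_\alpha$. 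Letting $D$ range over all toric divisors exhibits every class in $\Pic(X)$.

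For injectivity, suppose $[\nu_L] = [\nu_{L'}]$. After adjusting the lift of $L'$ by an element of $2\pi\bb{Z}^n$, we may assume $\nu_L = \nu_{L'}$ as functions on $A$. The sections $L$ and $L'$ lift to exact Lagrangians in $T^*\bb{R}^n$, hence graphs of $df_0$ and $df_1$ for smooth functions $f_0, f_1 \colon \bb{R}^n \to \bb{R}$. The linear interpolation $f_t = (1-t)f_0 + tf_1$ yields a Hamiltonian isotopy whose graphs $L_t$ descend to $(\bb{C}^*)^n$. The interpolation remains monomially admissible because over $C_\alpha$ sufficiently far from the origin,
\[ \alpha \cdot \nabla f_t = (1-t)(2\pi \nu_L(\alpha)) + t(2\pi \nu_{L'}(\alpha)) = 2\pi \nu_L(\alpha) \]
is independent of $t$, preserving the condition that $\arg(c_\alpha z^\alpha)|_{L_t}$ vanishes outside a compact set.

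The step requiring the most care is choosing a common compact exhaustion across the entire family $L_t$ such that monomial admissibility holds uniformly on the complement, and confirming that $\nu_L$ is a single well-defined integer on each $C_\alpha$ rather than potentially different values on different connected components near infinity. The former is routine since $[0,1]$ is compact, while the latter is controlled by the adaptedness hypothesis: near infinity $C_\alpha$ sits inside the star of $\alpha$, which is a connected cone, guaranteeing a unique value for $\nu_L(\alpha)$ (and hence the continuity argument of the first paragraph applies without ambiguity).
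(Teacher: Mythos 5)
Your proof is correct and follows essentially the same route as the paper: associate to each admissible section the integer-valued function $\nu_L$ from \eqref{isotopyfunction}, check that it descends to a well-defined class in $\Pic(X)$, get surjectivity by flowing $\mathcal{L} = (\bb{R}_{>0})^n$ under twisting Hamiltonians produced by Corollary~\ref{twistexist}, and get injectivity via the convex interpolation $f_t = (1-t)f_0 + tf_1$ of the primitives of the lifted sections. One remark on the last paragraph: the inference ``$C_\alpha$ near infinity sits inside the connected cone $\mathrm{int(star}(\alpha))$, hence $\nu_L(\alpha)$ is single-valued'' is not quite valid as stated, since containment of $C_\alpha$ in a connected set does not make $C_\alpha$ connected near infinity. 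The paper's proof elides this point as well; it is harmless for the tropical and combinatorial divisions (where $C_\alpha$ is convex near infinity), and in general one can enlarge to a containing combinatorial division, which only makes the admissibility condition more stringent and thus cannot change $\nu_L$.
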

\begin{proof} Since $X$ is smooth, a function $\nu \colon A \to \bb{Z}$ uniquely determines a piecewise linear integral function on $\bb{R}^n$ that is linear on the maximal cones of $\Sigma$. But, such a piecewise linear integral function on $\bb{R}^n$ is the support function for a unique toric divisor $\sum -\nu(\alpha) D_\alpha$ and two such divisors define the same line bundle if and only if their support functions differ by an integral linear function. Thus, we claim that the map $L \mapsto \nu_L$ gives the desired bijection. This map is well-defined since any Hamiltonian isotopy of monomially admissible Lagrangians lifts to a Hamiltonian isotopy preserving the function $\nu_L$ since this function is continuous and takes values in a discrete set. 

Now, for any support function $\nu$, we can construct a Lagrangian $\mathcal{L} \left( \sum - \nu(\alpha) D_\alpha \right)$ defined by applying the flow of the twisting Hamiltonian for $\sum - \nu(\alpha) D_\alpha$ to $\bb{R}_{>0}^n$. By first lifting $\bb{R}_{>0}^n$ to the zero section and following the isotopy induced by $H$, we get a lift of $\mathcal{L} \left( \sum - \nu(\alpha) D_\alpha \right)$ giving the desired support function.

Finally, suppose that $\nu_L$ and $\nu_{L'}$ differ by an integral linear function. That is, there exist lifts of $L$ and $L'$ such that the lifted arguments of $z^\alpha$ agree on all $C_\alpha$ for $|\mu(z)| \gg 0$. As exact Lagrangian sections of $T^* \bb{R}^n$, these lifts are of the form $df$ and $df'$. The convex isotopy 
\[ f_t = (1-t)f  + t f' \]
preserves monomial admissibility and induces the desired isotopy between $L$ and $L'$. 
\end{proof} 

\begin{rem} The lifts of $L$ correspond to equivariant structures on the mirror line bundle as in \cite{FLTZ2}, or equivalently a choice of toric divisor generating the line bundle, as these are in bijection with their support functions. Further, every lift of $L$ is of the form $dH$ for $H$ a twisting Hamiltonian for the divisor corresponding to the support function of the lift, and the Hamiltonian isotopy in the third paragraph of the proof of Proposition \ref{bij} is the flow of an admissible Hamiltonian.
\end{rem}

We have the following as an immediate corollary. 

\begin{cor} \label{action} The group of twisting Hamiltonians acts on the Hamiltonian isotopy classes of monomially admissible Lagrangian sections such that under the bijection of Proposition \ref{bij} a twisting Hamiltonian for a divisor $D$ acts as $(\cdot ) \otimes \mathcal{O}(D)$ when the monomial division is adapted to the fan.  
\end{cor}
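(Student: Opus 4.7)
The plan is to use Proposition \ref{bij} as the backbone: the bijection sends $L$ to the support function $\nu_L$, so it suffices to track how the time-$1$ flow $\phi^1_H$ of a twisting Hamiltonian $H$ for $D = \sum n_\alpha D_\alpha$ alters $\nu_L$, and then interpret the change as tensoring with $\mathcal{O}(D)$. First I would verify that the group $\mathcal{H}$ of twisting Hamiltonians (under addition, per the remark following Definition \ref{twisthamdf}) genuinely acts on Hamiltonian isotopy classes of monomially admissible sections. By construction in Section \ref{twistham}, a twisting Hamiltonian for $D$ carries a $W_\Sigma$-monomially admissible Lagrangian to a $W_\Sigma^{2\pi,D} = W_\Sigma$-monomially admissible one (the monodromy at $\theta = 2\pi$ returns to the original superpotential, so admissibility with respect to $\Delta$ is preserved); and the remark about convexity of the affine space of twisting Hamiltonians for a fixed $D$ shows that any two such choices produce Lagrangians Hamiltonian isotopic through monomially admissible sections. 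Hence $\phi^1_H$ descends to a well-defined map on isotopy classes, and the group law is respected because the time-$1$ flow of $H_1 + H_2$ agrees up to a monomially admissible Hamiltonian isotopy with $\phi^1_{H_1} \circ \phi^1_{H_2}$ (both flows are generated by Hamiltonians depending only on $\mu$ and hence commute).

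Next I would compute the effect on $\nu_L$. Recall from Section \ref{twistham} that the flow of $H(\mu)$ is $\phi^t_H(\mu,\theta) = (\mu, \theta + t\nabla H)$. Fixing a lift of $L$ and taking the corresponding lift of $\phi^1_H(L)$, the defining formula \eqref{isotopyfunction} gives, for $\mu \in C_\alpha$ with $\|\mu\| \gg 0$,
\[ \nu_{\phi^1_H(L)}(\alpha) = \frac{1}{2\pi} \alpha \cdot (\theta + \nabla H) = \nu_L(\alpha) + \frac{1}{2\pi}\nabla H \cdot \alpha = \nu_L(\alpha) - n_\alpha, \]
using the twisting condition \eqref{twistcond}. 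Under the identification from the proof of Proposition \ref{bij}, the class of $L$ corresponds to $\mathcal{O}\bigl(\sum -\nu_L(\alpha)D_\alpha\bigr)$, so the class of $\phi^1_H(L)$ corresponds to
\[ \mathcal{O}\Bigl(\sum -(\nu_L(\alpha) - n_\alpha) D_\alpha\Bigr) = \mathcal{O}\Bigl(\sum -\nu_L(\alpha)D_\alpha\Bigr) \otimes \mathcal{O}(D), \]
which is exactly the claim.

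The only mildly delicate point, and the one I would treat most carefully, is verifying that $\phi^1_H(L)$ is actually a monomially admissible section so that $\nu_{\phi^1_H(L)}$ is defined; this requires that the twisting condition \eqref{twistcond} hold on the entire unbounded part of each $C_\alpha$ (not just on the combinatorial-division overlap where $F_D$ was smoothed), which is guaranteed by Corollary \ref{twistexist} precisely because $\Delta$ is adapted to $\Sigma$. Everything else is essentially bookkeeping: the well-definedness up to Hamiltonian isotopy is the convexity remark, the group action property follows from additivity of flows of commuting moment-map-pulled-back Hamiltonians, and the identification with tensoring by $\mathcal{O}(D)$ is the one-line computation above.
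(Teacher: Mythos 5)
Your proof is correct and follows essentially the same route as the paper's: act by time-$1$ flows, note that the group law holds because the flows of moment-map Hamiltonians commute, and compute that $\nu_{\phi^1_H(L)}(\alpha) = \nu_L(\alpha) - n_\alpha$ via the twisting condition, which under the bijection of Proposition \ref{bij} is exactly tensoring by $\mathcal{O}(D)$. The extra care you take about well-definedness (admissibility preserved, independence of the choice of twisting Hamiltonian via convexity) is implicit in the paper but not a different argument.
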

\begin{proof} The twisting Hamiltonians act by their time-$1$ flows. Note that the time-$1$ flow of $H_1 + H_2$ is indeed the composition of the commuting time $1$-flows of $H_1$ and $H_2$ as twisting Hamiltonians only depend on the moment map coordinates.

In addition, the lifted time-$1$ flow $\phi^1_H$ of a twisting Hamiltonian $H$ for a divisor $\sum n_\alpha D_\alpha$  changes the lifted argument of $z^\alpha$ on a monomially admissible Lagrangian $L$ over $C_\alpha$ by $- 2 \pi n_\alpha$. Thus, if $\nu_L$ is a support function for a line bundle $V$ then $\nu_{\phi^1_H(L)}$ is a support function for ${V\otimes\mathcal{O}(\sum n_\alpha D_\alpha)}$ as desired.
\end{proof}

\begin{rem} \label{twistandpic} In our setting, every line bundle on $X$ is isomorphic to $\mathcal{O}(D)$ for some toric divisor $D$. Thus, Corollary \ref{action} shows that the group of twisting Hamiltonians surjects onto $\text{Pic}(X)$ with kernel the subgroup generated by admissible Hamiltonians and integral linear maps. This surjection factors as the map taking a twisting Hamiltonian $H$ onto the Lagrangian section that lifts to $dH$ followed by the bijection of Proposition \ref{bij}.
\end{rem}

In Section \ref{catmono}, we show that this action extends to an action by autoequivalences on the Fukaya-Seidel category of Lagrangian sections admissible with respect to a monomial division adapted to $\Sigma$ that corresponds under mirror symmetry to the action of $\text{Pic}(X)$ on the category of line bundles on $X$.

\subsection{Existence of monomial divisions adapted to the fan} \label{existencesect}

As we have now seen, the condition that a monomial division is adapted to the fan plays a crucial role in understanding monomially admissible Lagrangian sections. Thus, we would like to be able to answer the following question.

\begin{qs} \label{whencomb} Under what conditions on $c_\alpha, \Sigma$ and/or $\mu$ is there a monomial division for $W_\Sigma$ and $\mu$ adapted to $\Sigma$?
\end{qs}

\begin{rem} In Question \ref{whencomb}, we are not fixing the toric K\"{a}hler form, but do not list it in the parameters as its influence on the question only appears via its moment map $\mu$.
\end{rem}

In particular, we would like to know that there exist toric K\"{a}hler forms on $(\bb{C}^*)^n$ for which monomial divisions adapted to the fan exist. We will answer that more particular question, but we will leave Question \ref{whencomb} in full generality open. 

For fixed $\Sigma, \mu,$ and $c_\alpha$, the existence of a monomial division that is adapted to the fan is the same as the existence of a combinatorial division that is a monomial division due to the fact that any monomial division adapted to the fan is contained in a combinatorial division. Thus, Question \ref{whencomb} is equivalent to: under what conditions on $c_\alpha, \Sigma$ and/or $\mu$ is a combinatorial division a monomial division? The following proposition allows us to rephrase the question in a more concrete manner.

\begin{prop} \label{equivadapt} Fix $\Sigma, \mu$, and all $c_\alpha$. There exists a monomial division adapted to $\Sigma$ if and only if there exist constants $k_\alpha \in \bb{R}_{>0}$ for $\alpha \in A$ such that the maximum
\[ \max_{\alpha \in A} (|c_\alpha z^\alpha|^{k_\alpha}) \] 
is always achieved by $| c_\alpha z^\alpha |^{k_\alpha}$ for an $\alpha$ such that $\mu(z)$ is in the interior of the star of $\alpha$ outside of a compact subset of $(\bb{C}^*)^n$. 
\end{prop}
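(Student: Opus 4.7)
The plan is to prove both implications by essentially unwrapping the definitions. The forward direction is a short chain of implications, while the reverse direction requires an explicit construction of the $C_\alpha$ from the given constants $k_\alpha$.

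For $(\Rightarrow)$, suppose $\Delta = \{C_\alpha\}_{\alpha \in A}$ is a monomial division adapted to $\Sigma$ and let $k_\alpha$ be the constants from condition~(2) of Definition~\ref{div}. By Definition~\ref{adapt}, for each $\alpha$ there is a compact $K_\alpha \subset (\bb{C}^*)^n$ with $\mu(\mu^{-1}(C_\alpha) \setminus K_\alpha) \subset \mathrm{int}(\mathrm{star}(\alpha))$. I would then set $K = \bigcup_{\alpha \in A} K_\alpha$, which is compact since $A$ is finite: for any $z \notin K$, the maximum of $|c_\alpha z^\alpha|^{k_\alpha}$ is realized by some $\alpha$ with $\mu(z) \in C_\alpha$, and then $z \in \mu^{-1}(C_\alpha) \setminus K_\alpha$ forces $\mu(z) \in \mathrm{int}(\mathrm{star}(\alpha))$.

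For $(\Leftarrow)$, I would build a monomial division directly from the hypothesized $k_\alpha$. Since each $|c_\alpha z^\alpha|^{k_\alpha}$ is $T^n$-invariant, it descends to a continuous function on the relatively open subset $\mu((\bb{C}^*)^n) \subset \bb{R}^n$. Define
\[
\tilde{C}_\alpha = \bigl\{\, x \in \mu((\bb{C}^*)^n) : |c_\alpha z^\alpha|^{k_\alpha} = \max_{\beta \in A} |c_\beta z^\beta|^{k_\beta} \text{ at any } z \text{ with } \mu(z) = x\, \bigr\};
\]
as a finite intersection of sets cut out by continuous inequalities $f_\alpha \geq f_\beta$, this is closed in $\mu((\bb{C}^*)^n)$. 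Set
\[
C_\alpha = \overline{\tilde{C}_\alpha} \cup \bigl((\bb{R}^n \setminus \mu((\bb{C}^*)^n)) \cap \mathrm{star}(\alpha)\bigr),
\]
which is closed in $\bb{R}^n$. Since the $\tilde{C}_\alpha$ cover $\mu((\bb{C}^*)^n)$ and the stars cover $\bb{R}^n$ (by completeness of $\Sigma$), the $C_\alpha$ cover all of $\bb{R}^n$, so condition~(1) of Definition~\ref{div} holds; condition~(2) is immediate from the definition of $\tilde{C}_\alpha$. The adapted condition translates directly into the hypothesis.

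The only delicate point is that passing from $\tilde{C}_\alpha$ to $C_\alpha$ could in principle introduce preimages under $\mu$ that violate the adapted condition of Definition~\ref{adapt}. This is resolved by noting that $\tilde{C}_\alpha$ is already closed in $\mu((\bb{C}^*)^n)$, so $C_\alpha \cap \mu((\bb{C}^*)^n) = \tilde{C}_\alpha$ and hence $\mu^{-1}(C_\alpha) = \mu^{-1}(\tilde{C}_\alpha)$; no new preimages are introduced by either the closure operation in $\bb{R}^n$ or the enlargement outside $\mu((\bb{C}^*)^n)$. This bookkeeping is the main (and essentially only) obstacle in the argument.
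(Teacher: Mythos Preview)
Your proposal is correct and follows the same approach as the paper: the forward direction reads the $k_\alpha$ off from Definition~\ref{div} and Definition~\ref{adapt}, and the reverse direction defines each $C_\beta$ as the locus where $|c_\beta z^\beta|^{k_\beta}$ realizes the maximum. The paper's proof is a two-line version of exactly this.

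The one genuine difference is that you take more care with the case where $\mu$ is not surjective. The paper simply writes ``set $C_\beta$ to be the set where $|c_\beta z^\beta|^{k_\beta} = \max_{\alpha}(|c_\alpha z^\alpha|^{k_\alpha})$'' and leaves it at that, implicitly treating this as a closed subset of $\bb{R}^n$ covering the complement of a compact set. Your construction explicitly patches in $\mathrm{star}(\alpha)$ outside the image of $\mu$ and verifies that passing to the closure in $\bb{R}^n$ does not enlarge $\mu^{-1}(C_\alpha)$; this is the honest bookkeeping the paper's proof elides (and which becomes vacuous once one restricts, as the paper soon does, to $\mu$ onto --- see Remark~\ref{ontomoment} and Definition~\ref{radialdef}). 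Your argument that $\overline{\tilde C_\alpha}\cap\mu((\bb{C}^*)^n)=\tilde C_\alpha$ is correct and uses only that $\mu((\bb{C}^*)^n)$ is open in $\bb{R}^n$, which holds since $\mu=\Phi\circ\Log$ with $\Phi$ a local diffeomorphism.
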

\begin{proof} The existence of such $k_\alpha$ in the presence of a monomial division adapted to the fan is automatic from the definitions. For the reverse implication, simply set $C_\beta$ to be the set where
\[ | c_\beta z^\beta|^{k_\beta} = \max_{\alpha \in A} (|c_\alpha z^\alpha|^{k_\alpha}) \]
for each $\beta \in A$. 
\end{proof}

As a first attempt, one may wish to set $k_\alpha = 1$ for all $\alpha \in A$. That is, try to show that the tropical division is adapted to the fan with $\delta = 0$. Thus, we are led to simultaneously consider the following similar question. 

\begin{qs}\label{whentrop} Under what conditions on $c_\alpha, \Sigma, \delta$ and/or $\mu$  is the tropical division adapted to $\Sigma$? 
\end{qs}

As with Question \ref{whencomb}, we will leave the general answer to Question \ref{whentrop} open. As we have already observed, there is never a tropical division adapted to the fan if $X$ is not semi-Fano due to the presence of bounded $C_\alpha$'s. In fact, $X$ must be Fano when $c_\alpha = 1$ for all $\alpha \in A$ as in the semi-Fano case there is always a $C_\alpha$ such that $C_\alpha \setminus \cup_{\beta \neq \alpha} C_\beta$ has empty interior. 

Even in the Fano setting, the examples in Figure \ref{notadapted} show that there is no simple answer to Question \ref{whentrop}. In the first example in Figure \ref{notadapted}, we can obtain a tropical division that is adapted to $\Sigma$ by simply changing the $c_\alpha$. For example, we can take $c_{(1,1)} = e^{-1}$ and all other $c_\alpha = 1$. The second example in Figure \ref{notadapted} cannot be made to be adapted to the fan by a change in the $c_\alpha$ as every line segment with zero slope in $C_{(1,0)}$ has finite length.  

\begin{figure} 
\centering
\begin{tikzpicture}

\draw[yellow, fill=yellow, opacity = 0.5] (-8,0) -- (-6.5, -3) -- (-11, -3) -- (-11, 1.5)  -- (-8,0);
\draw [blue, fill=blue, opacity=0.5] (-8,0) -- (-6.5, -3) -- (-5, -3) -- (-5,0) -- (-8,0);
\draw[red, fill=red, opacity = 0.5] (-8,0) -- (-11, 1.5) -- (-11, 3) -- (-8, 3) -- (-8,0);
\draw[green, fill=green, opacity = 0.5] (-8,0) -- (-8, 3) -- (-5, 3) -- (-5,0) -- (-8,0);
\draw[->, thick] (-8,0) --(-7,0);
\draw[->, thick] (-8,0) -- (-8,1);
\draw[->, thick] (-8,0) -- (-9,-1);
\draw[->, thick] (-8,0) -- (-7, 1);
\draw (-10, -2.5) node {$C_{(-1,-1)}$};
\draw (-5.6, -2) node {$C_{(1,0)}$};
\draw (-10, 2.6) node {$C_{(0,1)}$};
\draw (-6, 2) node {$C_{(1,1)}$};

\draw[yellow, fill=yellow, opacity = 0.5] (0,0) -- (-1.2, 3) -- (-3, 3) -- (-3, -3)  -- (0.75, -3) -- (0,0);
\draw [blue, fill=blue, opacity=0.5] (0,0) -- (3, -3) -- (0.75, -3) -- (0,0);
\draw[red, fill=red, opacity = 0.5] (0,0) -- (3, -3) -- (3, 3) -- (-1.2,3) -- (0,0);
\draw (-1.5, -2) node {$C_{(-3,-1)}$};
\draw (1.2, -2) node {$C_{(1,0)}$};
\draw (1, 2.3) node {$C_{(2,1)}$};
\draw[->, thick] (0,0) --(1,0);
\draw[->, thick] (0,0) -- (2,1);
\draw[->, thick] (0,0) -- (-3,-1);

\end{tikzpicture}
\caption{The tropical divisions for a blowup of $\bb{P}^2$ (left) and a nonstandard fan for $\bb{P}^2$ (right) with $\delta = 0$, $c_\alpha = 1$ for all $\alpha$, and $\mu = \Log$ show that the tropical division need not be adapted to the fan.} \label{notadapted}
\end{figure}
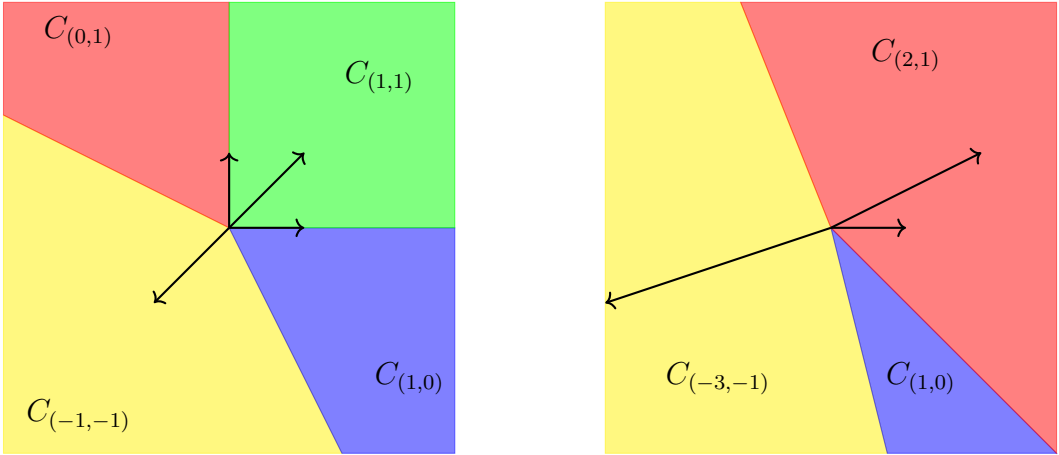 

The only other natural way to attempt to obtain a tropical division that is adapted to $\Sigma$ is via the choice of toric K\"{a}hler form and moment map. Note that a tropical division does not naturally vary with the choice of basis for the lattice $N$ when working with a fixed toric K\"{a}hler form as evidenced by the tropical divisions for $\bb{P}^2$ in Figures \ref{firstexample} and \ref{notadapted}. However, there is a choice of toric K\"{a}hler form which differs from the standard one by an affine linear map (the constant part can be absorbed in the choice of the origin for the moment map) and is invariant with respect to the choice of basis. Following a general principle that reduction should be mirror to restriction, we view our toric variety $X$ as a reduction of $\bb{C}^{N}$. Since a mirror to $\bb{C}^N$ is $( (\bb{C}^*)^N, w_1 + \hdots + w_N)$, we can view the mirror of $X$ as a restriction to $(\bb{C}^*)^n \hookrightarrow (\bb{C}^*)^N$ by the embedding
\[  (z_1, \hdots, z_n) \mapsto (c_{\alpha_1} z^{\alpha_1}, \hdots, c_{\alpha_N} z^{\alpha_N} )\]
where $\alpha_i$ is some ordering of the rays of the fan. Thus, we obtain a toric K\"{a}hler form by pulling back the standard symplectic form on $(\bb{C}^*)^N$. This construction works to produce a tropical admissibility condition which is adapted to $\Sigma$ for all toric Fano surfaces, but will not work in complete generality. Thus, we will eventually be led to more of a contrived construction.

First, let us return to Question \ref{whencomb}. In this case, we have additional freedom as we can still search for $k_\alpha$ as in Proposition \ref{equivadapt} when all other parameters are fixed. In particular, this will allow us to go beyond the Fano case and gives us hope that we can use the standard K\"{a}hler form with $\mu = \Log$. The following proposition does exactly that for surfaces. 

\begin{prop} \label{2dcomb} For toric surfaces ($n = 2$) with $\mu = \Log$ and all $c_\alpha$ nonzero, there is always a monomial division adapted to the fan.
\end{prop}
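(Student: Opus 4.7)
By Proposition \ref{equivadapt}, it is enough to exhibit positive constants $k_\alpha$ such that outside a compact set the pointwise maximum of $|c_\alpha z^\alpha|^{k_\alpha}$ is achieved by an $\alpha$ with $\Log(z)$ in the interior of the star of $\alpha$. Setting $\mu = \Log(z)$, I take logarithms and reduce to asking where the affine functions $k_\alpha \log|c_\alpha| + k_\alpha \alpha \cdot \mu$ are pointwise maximal. The constant terms $k_\alpha \log|c_\alpha|$ only produce bounded perturbations of the upper-envelope regions, so at infinity these regions coincide with the normal cones at the vertices of the polygon $P = \mathrm{conv}\{\beta_\alpha : \alpha \in A\}$ where $\beta_\alpha := k_\alpha \alpha$.

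My proposal is to take the uniform choice $k_\alpha = 1/|\alpha|$, so that the $\beta_\alpha$ are distinct unit vectors. Two facts then need to be checked. First, any finite collection of distinct points on a circle is in convex position, so each $\beta_\alpha$ is automatically a vertex of $P$ with full-dimensional normal cone. Second, for each cyclically adjacent pair $\alpha, \alpha^+$ of primitive ray generators, the outward normal to the chord $[\beta_\alpha, \beta_{\alpha^+}]$ is parallel to $\beta_\alpha + \beta_{\alpha^+} = \alpha/|\alpha| + \alpha^+/|\alpha^+|$, since the perpendicular bisector of a chord of a circle passes through the center. This outward normal is a strictly positive real linear combination of $\alpha$ and $\alpha^+$ and hence lies in the interior of the maximal cone $\langle \alpha, \alpha^+\rangle$ of $\Sigma$.

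Combining these, the normal cone at $\beta_\alpha$ in $P$ is bounded by two directions lying in the interiors of the two maximal cones of $\Sigma$ adjacent to $\rho_\alpha$, with $\rho_\alpha$ itself strictly between them; consequently this normal cone is contained in the interior of the star of $\alpha$, with uniform angular margin. The bounded translates introduced by the constants $k_\alpha \log|c_\alpha|$ are absorbed for $\|\mu\|$ large, and Proposition \ref{equivadapt} then delivers the desired monomial division adapted to $\Sigma$.

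The step I expect to be the most delicate is the identification of the outward edge normal with the direction of $\beta_\alpha + \beta_{\alpha^+}$; this is where the argument uses the two-dimensionality in an essential way, since the perpendicular-bisector-of-a-chord fact has no direct higher-dimensional analogue that would preserve the clean positive-combination description of the edge normals. Everything else is careful bookkeeping of angles, plus the observation that $k_\alpha = 1/|\alpha|$ is both universal (independent of the $c_\alpha$ and of the particular fan $\Sigma$) and strictly admissible.
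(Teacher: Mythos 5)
Your choice $k_\alpha = 1/\|\alpha\|$ is exactly the one the paper uses, so the core idea is the same. The packaging differs in a useful way. The paper verifies the single inequality
\[
t\left(\|\alpha\| - \frac{\alpha\cdot\beta}{\|\beta\|}\right) > \log|c_\beta| - \log|c_\alpha|
\]
along each ray $\rho_\alpha$ and stops there, leaving implicit the two-dimensional fact that strict dominance of $\alpha$ on $\rho_\alpha$ for every $\alpha$ forces the (asymptotically conical, convex) dominance region of $\alpha$ to sit between the two adjacent rays of $\Sigma$, hence inside the open star. You instead describe the dominance regions directly as the normal cones of the polygon $P = \mathrm{conv}\{\alpha/\|\alpha\|\}$, and you use the perpendicular-bisector-of-a-chord observation to place each edge normal strictly inside the corresponding maximal cone of $\Sigma$. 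That gives the containment with a uniform angular margin rather than just ray-by-ray dominance, which makes the absorption of the constants $k_\alpha\log|c_\alpha|$ at infinity (and the appeal to Proposition~\ref{equivadapt}) more transparent. Both arguments depend essentially on $n=2$; yours isolates the two-dimensional input cleanly in the chord-bisector step, while the paper's is hidden in the claim that checking on rays suffices. In short: same key lemma, same constants, but your version spells out the normal-fan geometry that the paper compresses into the ray computation.
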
 
\begin{proof}
In this setting, Proposition \ref{equivadapt} reduces the problem to finding positive constants $k_\alpha$ for $\alpha \in A$ such that 
\begin{equation} \label{2dcombeq} k_\alpha \Log(z) \cdot \alpha + \log |c_\alpha| > k_\beta \Log(z) \cdot \beta + \log |c_\beta| \end{equation}
when $\Log(z)$ lies on the ray generated by $\alpha$ for all distinct $\alpha, \beta \in A$ outside of a compact subset of $(\bb{C}^*)^n$. Finding such constants is always possible.  For instance, if we set 
\[ k_\alpha = \frac{ 1}{\| \alpha \| } \]
where $\| \cdot \|$ is the Euclidean norm and $\Log(z) = t\alpha$ for $t > 0$, then \eqref{2dcombeq} becomes
\[ t \left( \| \alpha \| - \frac{\alpha \cdot \beta}{\| \beta\|} \right) > \log| c_\beta | - \log | c_\alpha | \]
which holds for all $\beta \neq \alpha$ for large enough $t$ since the left-hand side is positive. 
\end{proof}

Figure \ref{hirzex} illustrates the importance of the choice of $k_\alpha$ in the proof of Proposition \ref{2dcomb}. It should be noted that the same choice of $k_\alpha$ will not work for higher dimensional fans which have narrow cones. For instance, in a $3$-dimensional cone generated by $\alpha, \beta,$ and $\gamma$, we always have
\[ \frac{v \cdot \gamma}{\| \gamma \|} \geq \max \left\{ \frac{v \cdot \alpha}{\| \alpha \|},  \frac{v \cdot \beta}{\| \beta \|} \right\} \]
for some $v \in \langle \alpha, \beta \rangle$ near infinity if $\alpha \cdot \gamma$ and $\beta \cdot \gamma$ are large enough relative to $\alpha \cdot \beta$. In particular, the argument cannot work when $X$ is not projective as Proposition \ref{adaptedgivesample} together with Proposition \ref{normalizecoeff} show that there is a monomial division adapted to the fan if and only if there is an embedding of the polytope of an ample divisor on $X$ into $\bb{R}^n$ such that the facet corresponding to $\alpha$ is contained in the open star of $\alpha$ for all $\alpha \in A$. The boundary of such a polytope is shown on the right-hand side of Figure \ref{hirzex} for the Hirzebruch surface $\bb{F}_3$. As there is no apparent formula or algorithm, we do not know in general if it possible to find the $k_\alpha$ for $\mu = \Log$. Thus, we will give ourselves additional flexibility by also allowing the toric K\"{a}hler form and moment map to vary.

\begin{figure}
\centering
\begin{tikzpicture}
\draw[->, thick] (0,0) -- (1, 3);
\draw[->, thick] (0,0) -- (0,1);
\draw[->, thick] (0,0) -- (0,-1);
\draw[->, thick] (0,0) -- (-1, 0);
\draw[ultra thick, blue] (4, -1) -- ( -1, 0.67) -- (-1, -1) -- (4, -1) ;

\draw[->, thick] (6,0) -- (7, 3);
\draw[->, thick] (6,0) -- (6,1);
\draw[->, thick] (6,0) -- (6,-1);
\draw[->, thick] (6,0) -- (5, 0);
\draw[ultra thick, blue] (5,1) -- (5, -1) -- ( 11.16, -1) -- (6.16, 1) -- (5,1) ;
\end{tikzpicture}
\caption{Level sets of $\max |z^\alpha|$ and $\max | z^\alpha |^{1/\| \alpha \| }$ under the Log projection on the fan of the corresponding Hirzeburch surface, $\bb{F}_3$.} \label{hirzex}
\end{figure}
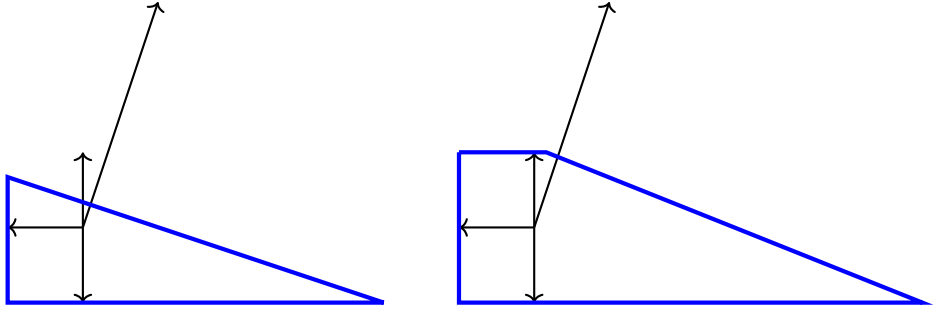

\begin{rem} Before proceeding to a more general setting, perhaps it is useful to briefly comment on why we might have expected to encounter Questions \ref{whencomb} and \ref{whentrop}. As we noted above for $\mu = \Log$, the existence of a monomial division adapted to the fan corresponds to finding an embedding of the polytope of an ample line bundle on $X$ into $\bb{R}^n$ for which the facet corresponding to each $\alpha \in A$ lies in the open star of $\alpha$. However, $\bb{R}^n$ is the base of our SYZ fibration and is naturally identified with $N_\bb{R}$ (see \cite{FLTZ2}). $N_\bb{R}$ contains the fan $\Sigma$ while the polytope naturally lives in a dual vector space. This is related to the fact that when doing SYZ mirror symmetry starting with $X$ as a symplectic manifold, one gets that the mirror is $W$ restricted only to the piece of $(\bb{C}^*)^n$ lying over a moment polytope of $X$. In that case, again some naturality issues arise as one then needs to perform some type of renormalization, which is often done via symplectic inflation along the boundary divisors, in order to make SYZ mirror symmetry involutive. This SYZ perspective also indicates that it is reasonable to expect that a solution involves choosing a nonstandard toric K\"{a}hler form.
\end{rem} 

In our search for monomial divisions adapted to the fan, we will restrict ourselves to a certain class of toric K\"{a}hler forms. Suppose that $\varphi \colon \bb{R}^n \to \bb{R}$ is a smooth strictly convex function with nondegenerate Hessian. Then, we have a toric K\"{a}hler form on $(\bb{C}^*)^n$ given by
\[ \omega_{\varphi} = \sum_{j} d \left( \frac{\partial \varphi}{\partial u_j} \right) \wedge d\theta_j .\]
Moreover, $\Phi \circ \Log$ is a moment map for $\omega_\varphi$ where $\Phi \colon \bb{R}^n \to \bb{R}^n$ is the Legendre transform of $\varphi$, i.e., the gradient of $\varphi$. Note that $\Phi$ is always injective by the strict convexity of $\varphi$ but need not be surjective in general. The lack of surjectivity is not an important geometric feature, and we will restrict our attention to $\varphi$ with bijective $\Phi$. Requirements on the behavior at infinity will be more important for obtaining geometric understanding of being adapted to the fan.

\begin{rem} \label{Kpotential} If $\omega$ is any toric K\"{a}hler form on $(\bb{C}^*)^n$ (with its standard complex structure), then we must have $\omega = \omega_\varphi$ for some smooth strictly convex $\varphi \colon \bb{R}^n \to \bb{R}$ as above.
\end{rem}

\begin{df} \label{radialdef} A toric K\"{a}hler form $\omega$ on $(\bb{C}^*)^n$ is radial if $\omega = \omega_\varphi$ for a smooth strictly convex function $\varphi \colon \bb{R}^n \to \bb{R}$ with nondegenerate Hessian such that the Legendre transform $\Phi$ of $\varphi$ is a bijection that takes rays to rays outside of a compact set. 
\end{df}

The standard symplectic form on $(\bb{C}^*)^n$ is radial and has $\varphi(u) = \frac{1}{2} (u \cdot u)$. The pullback form from $(\bb{C}^*)^N$ discussed earlier in this section is also radial with $\varphi (u) =  \sum_{\alpha \in A} (\alpha \cdot u)^2$. In line with these examples, we will mostly keep in mind the class of radial K\"{a}hler forms where $\varphi$ is a homogeneous function of degree $d> 1$ outside of a compact set. For such forms, $\Phi$ is homogeneous of degree $d-1$ and hence sends rays to rays. In this setting, $\Phi$ is automatically a bijection as shown in Proposition \ref{convsurj} below. Any smooth homogeneous function on $\bb{R}^n$ must be a polynomial as a consequence of Euler's homogeneous function theorem. However, we will need to work with a wider class of functions, which the condition that $\Phi$ takes rays to rays only in the complement of a compact subset allows us. 

\begin{prop} \label{convsurj} Suppose that $\varphi \colon \bb{R}^n \to \bb{R}$ is a strictly convex function that is homogeneous of degree $d > 1$ outside of a compact set.  Then, its Legendre transform $\Phi$ is surjective. 
\end{prop}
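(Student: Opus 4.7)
The plan is to show surjectivity by a coercivity/minimization argument: for any target $v \in \bb{R}^n$, I will produce a preimage as a minimizer of $f_v(x) := \varphi(x) - v \cdot x$. Since $\varphi$ is smooth and strictly convex, so is $f_v$; if $f_v$ attains its infimum at some point $x_v$, then $\nabla f_v(x_v) = 0$ gives $\Phi(x_v) = \nabla\varphi(x_v) = v$, establishing surjectivity. The entire argument thus reduces to verifying that $f_v$ is coercive, which in turn reduces to showing that $\varphi$ grows superlinearly at infinity.

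The crucial input—and what I expect to be the only real difficulty—is to show that $\varphi$ is strictly positive on every sufficiently large sphere. Fix $R$ large enough that the compact set $K$ outside of which $\varphi$ is homogeneous of degree $d$ sits strictly inside the ball $B_R$. For any unit vector $\hat{x}$, the one-variable function $\psi(t) := \varphi(tR\hat{x})$ satisfies $\psi(t) = t^d \varphi(R\hat{x})$ for all $t \geq 1$, so in particular $\psi''(1) = d(d-1)\varphi(R\hat{x})$. On the other hand, $\psi''(1) = (R\hat{x})^T (\mathrm{Hess}\,\varphi)(R\hat{x})(R\hat{x}) > 0$ by strict positive-definiteness of the Hessian. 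Since $d(d-1) > 0$, this forces $\varphi(R\hat{x}) > 0$ for every $\hat{x}$ on the unit sphere. Without this observation one could not rule out $\varphi$ tending to $-\infty$ along some ray, which is exactly why strict convexity of $\varphi$ (and not just convexity) enters the hypotheses.

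By compactness, $\varphi(R\hat{x}) \geq m$ for some constant $m > 0$ uniform in $\hat{x}$. Homogeneity then gives $\varphi(x) \geq m(\|x\|/R)^d$ whenever $\|x\| \geq R$. For any fixed $v \in \bb{R}^n$ we therefore obtain
\[ f_v(x) \geq m(\|x\|/R)^d - \|v\|\,\|x\| \longrightarrow +\infty \quad \text{as } \|x\| \to \infty, \]
since $d > 1$. Thus $f_v$ is continuous and coercive on $\bb{R}^n$, so it attains its infimum at some $x_v$, and the first-order condition produces the desired preimage $\Phi(x_v) = v$.

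An alternative packaging—perhaps closer to the local geometry of $\Phi$—is to note that $\Phi$ is a local diffeomorphism by nondegeneracy of the Hessian and is injective by strict convexity, so its image is open; then properness of $\Phi$ (equivalently $\|\Phi(x)\| \to \infty$ as $\|x\| \to \infty$) shows the image is also closed, hence all of $\bb{R}^n$ by connectedness. Combining the bound $\varphi(x) \geq m(\|x\|/R)^d$ with Euler's identity $x \cdot \nabla\varphi(x) = d\,\varphi(x)$ on the homogeneous region and Cauchy--Schwarz yields $\|\Phi(x)\| \geq dm\|x\|^{d-1}/R^d$, which gives properness. Either route reduces to the same key positivity fact from the second paragraph.
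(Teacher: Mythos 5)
Your proof is correct but takes a genuinely different route from the paper. The paper argues topologically: it shows the normalized Gauss map $u \mapsto \Phi(u)/\|\Phi(u)\|$ restricted to a large sphere $S_R$ is a continuous injection, hence an embedding, hence surjective onto $S^{n-1}$ (as $S^{n-1}$ cannot embed in $\bb{R}^{n-1}$); homogeneity of $\Phi$ outside $S_R$ then makes the image of $\Phi$ conical in each direction, so its complement is compact, which one must then argue is empty. Your approach instead is the standard variational one: establish $\varphi > 0$ on a large sphere, deduce superlinear growth since $d > 1$, conclude coercivity of $f_v = \varphi - v\cdot x$, and extract the preimage of $v$ as the minimizer. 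This is more elementary and self-contained, and it sidesteps the somewhat delicate last step of the paper's argument (passing from ``compact complement'' to ``empty complement''). Both approaches hinge on the same geometric fact — that $\varphi$ cannot be nonpositive in any direction on the sphere where homogeneity kicks in — and both use strict convexity to establish it.

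One point is slightly off in your write-up: to get $\psi''(1) > 0$ you invoke ``strict positive-definiteness of the Hessian,'' but that is not among the stated hypotheses of the proposition (only strict convexity is). It does hold in the ambient setting of radial K\"ahler forms where nondegeneracy of the Hessian is assumed, so this is a contextual slip rather than a fatal error, but as written your key positivity step uses an unstated hypothesis. The fix is easy and stays within strict convexity: for $t \geq 1$ you have $\psi(t) = t^d\,\varphi(R\hat{x})$. If $\varphi(R\hat{x}) < 0$ then $\psi$ is strictly concave on $(1,\infty)$, contradicting convexity of $\varphi$ along the line; if $\varphi(R\hat{x}) = 0$ then $\psi$ is constant on $[1,\infty)$, contradicting strict convexity. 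Hence $\varphi(R\hat{x}) > 0$ from strict convexity alone, and the rest of your argument goes through unchanged. Your alternative packaging via ``open $+$ closed $+$ connected'' really does need the nondegenerate Hessian (to make $\Phi$ a local diffeomorphism), so it should be flagged as relying on that additional contextual assumption.
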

\begin{proof} Take a sphere $S_R$ of large radius $R$ that contains the region where $\varphi$ is not homogeneous of degree $d$ in its interior. Consider the function $f \colon S_R \to S_1$ given by
\[ f(u) = \frac{\Phi(u)}{\| \Phi(u) \|} \]
which is well-defined due to the strict convexity of $\varphi$. The function $f$ is an embedding as 
\[\frac{\Phi(u)}{\| \Phi(u) \|} = \frac{\Phi(v)}{\| \Phi(v) \|} \]
for $u \neq v$ contradicts the injectivity of $\Phi$  due to the homogeneity of $\Phi$ outside of $S_R$.
Thus, $f$ must be surjective as there is no embedding of $S_R$ into $\bb{R}^{n-1}$. It follows that the complement of the image of $\Phi$ is compact. However, this implies that the complement must be empty as $\Phi$ is degree $1$ near infinity.
\end{proof}

As a result of the surjectivity assumption in Definition \ref{radialdef}, we will be using the characterization of adapted to the fan in Remark \ref{ontomoment} implicitly. We now proceed to studying monomial divisions adapted to the fan for this class of forms. First, we show that the choice of coefficients $c_\alpha$ is not relevant to the existence question.

\begin{prop} \label{normalizecoeff} Suppose that $\omega_\varphi$ is a radial toric K\"{a}hler form with moment map $ \mu = \Phi \circ \Log$. There exists a monomial division adapted to the fan for 
\[ W = \sum_{\alpha \in A} c_\alpha z^\alpha \]
with all $c_\alpha$ nonzero if and only if there exists a monomial division adapted to the fan for 
 \[ W^1 = \sum_{\alpha \in A} z^\alpha. \]
\end{prop}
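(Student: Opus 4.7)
The plan is to invoke Proposition \ref{equivadapt} to translate both sides of the equivalence into the existence of positive constants $\{k_\alpha\}_{\alpha\in A}$ with an asymptotic max condition. Taking logarithms and writing $u = \Log(z)$, I need to show that the existence of $\{k_\alpha\}$ for which $\max_\alpha k_\alpha(\log|c_\alpha|+\alpha\cdot u)$ is realized for $|u|\gg 0$ by some $\alpha$ with $\Phi(u)$ in the interior of the star of $\alpha$ is equivalent to the same property for the simpler function $\max_\alpha k_\alpha(\alpha\cdot u)$. My plan is to show that the same constants $\{k_\alpha\}$ witness both conditions, after possibly enlarging the compact set in question.

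To set up the comparison, let $V_\alpha := \Phi^{-1}(\text{interior of star of }\alpha)$, and let $T_\alpha,T'_\alpha\subset\bb{R}^n$ denote the closed regions on which $k_\alpha\,\alpha\cdot u$ and $k_\alpha(\log|c_\alpha|+\alpha\cdot u)$ respectively achieve their maxima over $\alpha$. The region $T_\alpha$ is a polyhedral cone, and I plan to show that $T'_\alpha$ is obtained from $T_\alpha$ by a bounded perturbation. Indeed, if $u\in T'_\alpha$ then $k_\alpha\,\alpha\cdot u \geq k_\beta\,\beta\cdot u - K$ for all $\beta$, where $K$ is a fixed constant depending only on the $c_\alpha$ and $k_\alpha$. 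Under radial rescaling $u\mapsto tu$ the slack becomes $K/t$, so the angular difference between $T'_\alpha\cap S^{n-1}_R$ and $T_\alpha\cap S^{n-1}_R$ on a sphere of radius $R$ tends to zero as $R\to\infty$.

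The key input from the radial assumption of Definition \ref{radialdef} is that $\Phi$ maps rays to rays outside a compact set, so for $|u|\gg 0$ the set $V_\alpha$ is the cone over a fixed open subset $U_\alpha\subset S^{n-1}$. The hypothesis that $\{k_\alpha\}$ witnesses the condition for $W^1$ says $T_\alpha\cap S^{n-1}\subset U_\alpha$; since $T_\alpha\cap S^{n-1}$ is closed and $U_\alpha$ is open in the compact sphere, there is a positive angular margin $\delta>0$ between them. Combining, for $R$ larger than a threshold determined by $K$ and $\delta$, the set $T'_\alpha\cap\{|u|=R\}$ lies within angular distance $\delta/2$ of $T_\alpha\cap\{|u|=R\}$, hence inside the cone over $U_\alpha$ and so inside $V_\alpha$. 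The converse implication is symmetric, exchanging the roles of $T_\alpha$ and $T'_\alpha$. The principal difficulty is making the spherical-margin estimate rigorous; the radial nature of $\Phi$ is exactly what converts a bounded additive perturbation into a negligible angular perturbation at infinity, so the argument would fail badly for general toric K\"ahler forms.
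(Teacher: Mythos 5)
Your $(\Leftarrow)$ direction ($W^1$ adapted $\implies W$ adapted, same $k_\alpha$) matches the paper's argument: the paper takes a vector $u_\alpha$ in the interior of the cone $T_\alpha$ (the paper calls it $V_\alpha$), observes that $T'_\alpha$ is carried into $T_\alpha$ by translating by $u_\alpha$, and then uses that the conical set $\Phi^{-1}(\mathrm{star}(\alpha))$ eventually contains a $\|u_\alpha\|$-neighborhood of $T_\alpha$; your angular-margin language on the sphere is the same estimate.

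The problem is the sentence ``the converse implication is symmetric, exchanging the roles of $T_\alpha$ and $T'_\alpha$.'' It is not symmetric, and this is where the work actually is. In the $(\Leftarrow)$ direction your hypothesis is about the \emph{cone} $T_\alpha$: since $T_\alpha$ is conical, $T_\alpha\cap S^{n-1}\subset U_\alpha$ follows directly from $\Phi(T_\alpha\setminus K)\subset\mathrm{int}(\mathrm{star}(\alpha))$, and compactness of $T_\alpha\cap S^{n-1}$ inside the open set $U_\alpha$ gives the positive margin $\delta$ that absorbs the bounded perturbation. In the $(\Rightarrow)$ direction your hypothesis is instead about the \emph{polyhedron} $T'_\alpha$, which is not a cone; its recession cone is $T_\alpha$. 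Knowing that the spherical slices $T'_\alpha\cap S^{n-1}_R/R$ lie in $U_\alpha$ for $R\gg 0$ only tells you that their limit, $T_\alpha\cap S^{n-1}$, lies in $\overline{U_\alpha}$. There is no $\delta>0$ produced by the hypothesis, so the ``within angular distance $\delta/2$'' step has nothing to stand on. Concretely: for a boundary direction $v$ of $T_\alpha$, the hypothesis does not even guarantee that the ray $\bb{R}_{>0}v$ meets $T'_\alpha$ --- if $v$ is tied between $T_\alpha$ and $T_\beta$ with $k_\beta\log|c_\beta|>k_\alpha\log|c_\alpha|$, that ray eventually lies in $T'_\beta$ and never in $T'_\alpha$ --- so you cannot conclude $\phi(v)\in\mathrm{int}(\mathrm{star}(\alpha))$ from the hypothesis.

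The paper does not claim the same $k_\alpha$ work in the $(\Rightarrow)$ direction. It instead cuts the polyhedral regions $T'_\alpha$ by a large level set of $g$ to get a compact polytope $Q$ with facets $F_\alpha\subset T'_\alpha$, uses that $\Phi(F_\alpha)$ is a \emph{compact} subset of the open cone $\mathrm{int}(\mathrm{star}(\alpha))$ (hence at positive distance from its boundary), and then defines new exponents $K_\alpha=(B/k_\alpha-\log|c_\alpha|)^{-1}$ so that the new region for $W^1$ is exactly the cone over $F_\alpha$; since $\Phi$ carries rays to rays, this cone maps into $\mathrm{int}(\mathrm{star}(\alpha))$. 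Replacing the recession cone by the cone over a compact slice is precisely what restores the margin you need. If you want to keep the same $k_\alpha$, you would need a separate argument (for instance: for $v\in T_\alpha\cap S^{n-1}$, pick the maximizer $\gamma$ of $k_\gamma(\gamma\cdot v)$ with largest $k_\gamma\log|c_\gamma|$, show the ray through $v$ is eventually in $T'_\gamma$, and invoke the adaptedness for $\gamma$) --- but that is a genuinely different argument than the one you sketched, and you'd still need to be careful about which of the several tied $\gamma$'s witness the existential max-condition in Definition \ref{div}.
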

\begin{proof} We will use the characterization of Proposition \ref{equivadapt}. 

$(\Rightarrow)$ Consider the function $g(z) = \max_{\alpha \in A}(|c_\alpha z^\alpha|^{k_\alpha})$ that provides a monomial division adapted to the fan for $W$. The set $Q = \{ \Log(z) \colon g(z) \leq e^B \}$ is a convex polytope in $\bb{R}^n$ with boundary facets $F_\alpha$ lying in the planes
\[ u \cdot \alpha = \frac{B}{k_\alpha} - \log(|c_\alpha |)  \]
 for each $\alpha \in A$.
 
 By assumption, we have $\Phi(F_\alpha) \subset \text{int(star}(\alpha))$ for all $\alpha \in A$ if $B$ is large enough. Now, consider the function $h(z) = \max_{\alpha \in A} (|z^\alpha|^{K_\alpha})$ where 
 \[ \frac{1}{K_\alpha} = \frac{B}{k_\alpha} - \log(|c_\alpha |) .\] 
 Since $\Phi$ takes rays to rays, we obtain that outside of a compact subset the image under $\mu$ of the region where $h(z) = | z^\alpha |^{K_\alpha}$ is the cone on $\Phi(F_\alpha)$ and is thus contained in the interior of the star of $\alpha$ for all $\alpha \in A$. Therefore, $h(z)$ guarantees a monomial division adapted to the fan for $W^1$. 
 
$(\Leftarrow)$ Suppose that $g(z) = \max_{\alpha \in A}(| z^\alpha|^{k_\alpha})$ gives a monomial division adapted to the fan for $W^1$. For each $\alpha \in A$, the cone $V_\alpha$ given by $k_\alpha(u \cdot \alpha) \geq k_\beta(u \cdot \beta)$ for all $\beta \in A$ satisfies $\Phi(V_\alpha \setminus K) \subset \text{int(star(}\alpha))$ for some compact subset $K$. As a result, $V_\alpha$ has non-empty interior. In particular, there is a vector $u_\alpha$ such that 
\[ k_\alpha(u_\alpha \cdot \alpha) > k_\beta (u_\alpha \cdot \beta) \]
for all $\beta \in A$. By rescaling if necessary, we can assume that 
\[ k_\alpha(u_\alpha \cdot \alpha) > k_\beta (u_\alpha \cdot \beta) + k_\alpha \log |c_\alpha| - k_\beta \log |c_\beta| \]
for all $\beta \in A$. Now consider the set $V_\alpha'$ defined by $k_\alpha ( u \cdot \alpha ) + k_\alpha \log | c_\alpha | \geq k_\beta(u \cdot \beta) + k_\beta \log|c_\beta| $. We see that if $u \in V_\alpha'$, then 
\[ k_\alpha(u + u_\alpha) \cdot \alpha \geq k_\beta(u + u_\alpha) \cdot \beta \]
for all $\beta \in A$, that is, $u + u_\alpha \in V_\alpha$. As a consequence, any $u \in V_\alpha'$ is within Euclidean distance $\|u_\alpha \|$ of $V_\alpha$. However, we can guarantee that outside of a compact set the distance from $V_\alpha$ to the boundary of the conical set $\Phi^{-1}(\text{star}(\alpha) )$ is larger than $\| u_\alpha \|$. It follows that $\Phi(V_\alpha') \subset \text{int(star(}\alpha))$ as desired. 
\end{proof}

As a result of Proposition \ref{normalizecoeff}, we can focus on studying the existence of a monomial division adapted to the fan when $c_\alpha = 1$ for all $\alpha \in A$. Also, the condition
\[ \mu( \{ z \, | \, |z^\alpha|^{k_\alpha} \geq |z^\beta|^{k_\beta} \text{ for all } \beta \in A \} ) \subset \text{int(star(}\alpha)) \]
is an open condition on the $k_\alpha$. Thus, we can always assume that all $k_\alpha$ are rational when a monomial division adapted to the fan exists. The following proposition gives us the key geometric data for understanding a monomial division adapted to the fan.

\begin{prop} \label{adaptedgivesample} Suppose that $g(z) = \max |z^\alpha|^{k_\alpha}$ with $k_\alpha \in \bb{Q}$ gives a monomial division adapted to the fan for
\[ W^1_\Sigma = \sum_{\alpha \in A} z^\alpha \]
and $\mu = \Phi \circ \Log$ for a radial toric K\"{a}hler form. If $N/k_\alpha \in \bb{N}$ for all $\alpha \in A$ and $N \gg 0$, then 
\[ Q = \{ \Log(z) \, | \, g(z) \leq e^N \} \]
is the polytope of an ample line bundle on $X$. 
\end{prop}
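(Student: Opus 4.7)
The plan is to translate the defining inequality for $Q$ into linear form in $\Log$-coordinates, then exploit the adapted-to-$\Sigma$ condition to identify the vertices of $Q$ with the maximal cones of $\Sigma$, and finally conclude ampleness from the resulting normal fan description.

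First I would observe that in $\Log$-coordinates $u \in \bb{R}^n$, the inequality $g(z) \leq e^N$ becomes $k_\alpha (u \cdot \alpha) \leq N$ for every $\alpha \in A$, so
\[ Q = \{u \in \bb{R}^n : u \cdot \alpha \leq N/k_\alpha \text{ for all } \alpha \in A\}. \]
Because $\Sigma$ is complete, $A$ positively spans $N_\bb{R}$, so $Q$ is bounded, and for $N$ large enough the origin lies in its interior. Thus $Q$ is a bounded, full-dimensional convex polytope whose (a priori possibly redundant) facets $F_\alpha = \{u \in Q : u \cdot \alpha = N/k_\alpha\}$ are indexed by $A$.

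The main step is to show that every vertex of $Q$ is of the form $v_\sigma$ for a unique maximal cone $\sigma = \langle \alpha_1, \hdots, \alpha_n \rangle$ of $\Sigma$, where $v_\sigma$ is determined by $v_\sigma \cdot \alpha_i = N/k_{\alpha_i}$. Let $v$ be a vertex at which the facets $F_{\alpha_1}, \hdots, F_{\alpha_k}$ meet, so $k \geq n$. Writing $\tilde{C}_\alpha = \{u : k_\alpha (u \cdot \alpha) = \max_\beta k_\beta (u \cdot \beta)\}$, the condition $v \in F_{\alpha_i} \subset Q$ says exactly that $v \in \tilde{C}_{\alpha_i}$ for each $i$, and by homogeneity the whole ray $\{tv : t > 0\}$ lies in each $\tilde{C}_{\alpha_i}$. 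Since $g$ gives a monomial division adapted to $\Sigma$, for $N$ large enough that $v$ lies outside the compact set of the adapted condition we obtain $\Phi(tv) \in \bigcap_{i=1}^{k} \text{int}(\text{star}(\alpha_i))$ for all large $t$. Using smoothness (hence simpliciality) of $\Sigma$, the interior of the star of $\alpha$ consists precisely of points in the relative interior of a cone of $\Sigma$ containing $\alpha$ as a ray; hence the intersection above is nonempty only if $\alpha_1, \hdots, \alpha_k$ are all rays of a common cone of $\Sigma$, which forces $k \leq n$. Combined with $k \geq n$ we conclude $k = n$ and $\sigma = \langle \alpha_1, \hdots, \alpha_n \rangle$ is a maximal cone, so $v = v_\sigma$.

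With the vertices pinned down, the normal fan $\Sigma_Q$ of $Q$ is complete (as $Q$ is bounded and full-dimensional) with maximal cones contained in the set of maximal cones of $\Sigma$; since both fans are complete, $\Sigma_Q = \Sigma$. The vertices $v_\sigma$ are lattice points in $M$ because $\{\alpha_1, \hdots, \alpha_n\}$ is a $\bb{Z}$-basis of $N$ by smoothness and each $v_\sigma \cdot \alpha_i = N/k_{\alpha_i}$ is an integer by hypothesis, so $Q$ is a lattice polytope with normal fan $\Sigma$. Finally, the support function of $Q$ is linear with value $v_\sigma \cdot (\cdot)$ on each maximal cone $\sigma$, and the $v_\sigma$ are pairwise distinct as vertices of a polytope, so this support function is strictly convex, which is exactly the statement that $Q$ is the polytope of an ample line bundle on $X$. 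The main obstacle is the vertex-identification step: extracting from the adapted-to-$\Sigma$ condition the combinatorial statement that the defining rays at any vertex lie in a common cone of $\Sigma$; once this is in hand, everything else is standard toric geometry.
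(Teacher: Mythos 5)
Your proof is correct and takes essentially the same approach as the paper's: both identify the facets $F_\alpha$ of $Q$ with the sets where $k_\alpha(u\cdot\alpha)$ achieves the maximum, deduce from the adapted-to-$\Sigma$ condition that $\Phi(F_\alpha)\subset\operatorname{int}(\operatorname{star}(\alpha))$ for $N$ large, conclude that nonempty facet intersections correspond to cones of $\Sigma$, and finish by completeness of both fans. The paper phrases the key step in terms of intersections $F_{\alpha_1}\cap\cdots\cap F_{\alpha_m}$ rather than vertices and leaves the lattice-polytope and support-function checks implicit, but these are presentational rather than substantive differences.
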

\begin{proof} $Q$ is the polytope of the line bundle $\mathcal{O} \left( \sum_{\alpha \in A} \frac{N}{k_\alpha}D_\alpha \right)$. To show that it is ample, it is enough to show that the normal fan to $Q$ is $\Sigma$. First, there is a possible facet $F_\alpha$ of $Q$ for each $\alpha \in A$ where $u \cdot \alpha = N/k_\alpha$. Thus, the rays of the normal fan are generated by a subset of $A$. 

Since $\Phi(F_\alpha) \subset \text{int(star(} \alpha))$ for all $\alpha \in A$, we have that $F_{\alpha_1} \cap \hdots \cap F_{\alpha_m} \neq \emptyset$ implies that $\langle \alpha_1, \hdots, \alpha_m \rangle$ is a cone in $\Sigma$. Therefore, the set of cones of the normal fan is a subset of the cones in $\Sigma$. However, both are complete fans and therefore must coincide.  
\end{proof}

The polytope $Q$ for a monomial division adapted to the fan for a Hirzebruch surface is shown on the right-hand side of Figure \ref{hirzex}. We immediately obtain the following corollary, which we mentioned earlier.

\begin{cor} If there is a monomial division adapted to the fan for $W_\Sigma$ with a radial toric K\"{a}hler form, then $X$ is projective.
\end{cor}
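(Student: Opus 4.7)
The plan is to combine Propositions \ref{normalizecoeff} and \ref{adaptedgivesample} to produce an ample line bundle on $X$, which immediately yields projectivity. The bulk of the work has already been done in those two results, so the proof proposal is mostly a matter of assembling them after a small preliminary normalization.

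First, I would start with a monomial division adapted to the fan $\Sigma$ for $W_\Sigma = \sum_{\alpha \in A} c_\alpha z^\alpha$ with respect to a radial toric K\"{a}hler form $\omega_\varphi$ and moment map $\mu = \Phi \circ \Log$. Since $W_\Sigma$ has all $c_\alpha \neq 0$ (these are the coefficients of the primitive generators of $\Sigma$ appearing in the superpotential), Proposition \ref{normalizecoeff} applies and yields a monomial division adapted to the fan for the ``normalized'' superpotential $W^1_\Sigma = \sum_{\alpha \in A} z^\alpha$. Equivalently, by Proposition \ref{equivadapt} there exist constants $k_\alpha \in \bb{R}_{>0}$ so that the maximum in $\max_{\alpha \in A}(|z^\alpha|^{k_\alpha})$ is achieved by $\alpha$'s lying on rays whose $\mu$-images sit in the interior of the star of $\alpha$ outside a compact set.

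Next, I would use the observation already made in the text that the ``adapted to the fan'' condition is an open condition on the $k_\alpha$: since strict inequalities among finitely many continuous functions of $(k_\alpha)_{\alpha \in A}$ persist under small perturbations, I may perturb the $k_\alpha$ slightly to assume $k_\alpha \in \bb{Q}_{>0}$ for all $\alpha \in A$ without destroying the property of being adapted. Then I choose a positive integer $N$ that is both divisible enough that $N/k_\alpha \in \bb{N}$ for every $\alpha \in A$, and large enough that $Q := \{\Log(z) \mid g(z) \leq e^N\}$, with $g(z) = \max_\alpha |z^\alpha|^{k_\alpha}$, lies in the regime where Proposition \ref{adaptedgivesample} is applicable.

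Finally, Proposition \ref{adaptedgivesample} identifies $Q$ with the polytope of an ample line bundle on $X$, namely $\mathcal{O}\bigl(\sum_{\alpha \in A} (N/k_\alpha) D_\alpha\bigr)$. The existence of an ample line bundle on the complete toric variety $X$ is equivalent to projectivity of $X$, completing the proof. The main obstacle, if any, is essentially cosmetic: ensuring that the perturbation of the $k_\alpha$ to rational numbers really does preserve the adapted condition, which follows from the fact that only finitely many strict inequalities govern the configuration (one for each pair $\alpha, \beta \in A$, evaluated in the complement of a large compact set where $\Phi$ is homogeneous and hence the condition reduces to finitely many conical tests).
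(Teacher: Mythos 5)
Your proof is correct and matches the paper's intended argument: the corollary is presented as an immediate consequence of Proposition \ref{adaptedgivesample}, with the normalization via Proposition \ref{normalizecoeff} and the perturbation to rational $k_\alpha$ both established in the surrounding discussion. You have simply spelled out the implicit chain of reductions.
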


Proposition \ref{adaptedgivesample} will allow us to relate the existence of a monomial division adapted to the fan to properties of a polytope. Eventually, we will use the construction of toric K\"{a}hler forms from \cite{Zhou}, which are introduced to solve the related issue of showing that the Lagrangian skeleton from \cite{FLTZ1, FLTZ2, FLTZ3, FLTZS} is indeed a relative Lagrangian skeleton for $( (\bb{C}^*)^n, W_\Sigma)$. Let us now proceed in that direction. The following definition is a slight modification of Definition 2.8 from \cite{Zhou}.

\begin{df} \label{adaptedpot} Let $P \subset \bb{R}^n$ be a convex polytope containing the origin. A strictly convex function $\varphi$ is adapted to $P$ if $\varphi$ is homogeneous outside of a compact set in the interior of $P$ and has a unique minimum in the interior of each face of $P$ with positive dimension and codimension.
\end{df}

In fact, Definition \ref{adaptedpot} can be rephrased in terms of $\Phi$.

\begin{prop} A strictly convex function $\varphi$ is adapted to $P$ if and only if $\varphi$ is homogeneous outside of a compact set in the interior of $P$ and $\Phi$ takes some point in the interior of each face of positive codimension to the normal cone to that face. 
\end{prop}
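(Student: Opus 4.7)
The homogeneity hypothesis appears on both sides of the biconditional, so the actual content is the equivalence between (A) ``$\varphi|_F$ has a unique minimum in the interior of $F$ for every face $F$ of positive dimension and codimension'' and (B) ``for every face $F$ of positive codimension, $\Phi$ maps some point of the interior of $F$ into $N_F$''. The implication (B) $\Rightarrow$ (A) is essentially immediate: if $\Phi(u) \in N_F$ for $u$ in the interior of $F$, then since $N_F \subset T_F^\perp$ one has $\Phi(u) \perp T_F$, so $u$ is a critical point of $\varphi|_F$, and strict convexity then makes it the unique global minimum of $\varphi|_F$, lying in the interior of $F$ by hypothesis.

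For the harder direction (A) $\Rightarrow$ (B), fix a face $F$ of positive dimension and codimension and let $u_F$ be the unique interior minimum of $\varphi|_F$. Criticality gives $\Phi(u_F) \perp T_F$, so $\Phi(u_F)$ lies in the linear span of $N_F$; the task is to upgrade this to membership in $N_F$ itself rather than the opposite cone. The key combinatorial input is that under the standard correspondence between cones of the normal fan and faces of $P$, the extreme rays of the polar cone $N_F^\ast \subset T_F^\perp$ are in bijection with the faces $F' \supsetneq F$ of dimension $\dim F + 1$, the extreme ray associated to $F'$ being the direction from the interior of $F$ into the interior of $F'$. For each such $F'$, let $u_{F'}$ be the unique interior minimum of $\varphi|_{F'}$ when $F' \neq P$ (which exists by (A) since $F'$ again has positive dimension and codimension), or the global minimum of $\varphi$ when $F' = P$ (which lies in the interior of $P$ by the homogeneity hypothesis). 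Since $u_F \in F \subset F'$, one has $\varphi(u_{F'}) \leq \varphi(u_F)$, so strict convexity yields
\[ \Phi(u_F) \cdot (u_{F'} - u_F) \; < \; \varphi(u_{F'}) - \varphi(u_F) \; \leq \; 0 . \]
The $T_F$-component of $u_{F'} - u_F$ pairs trivially with $\Phi(u_F)$, while its $T_F^\perp$-projection is a positive multiple of the extreme ray generator of $N_F^\ast$ corresponding to $F'$. Hence $\Phi(u_F)$ pairs non-positively with every extreme ray of $N_F^\ast$, and bipolar duality gives $\Phi(u_F) \in N_F$.

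The vertex case $F = \{v\}$ (which adaptedness does not constrain directly but which condition (B) requires) is handled in the same spirit: at $v$, $N_v^\ast = TC_v(P)$, and its extreme rays are the edge directions at $v$. For each edge $E \ni v$, condition (A) provides an interior minimum $u_E$ with $\varphi(u_E) \leq \varphi(v)$, so strict convexity gives $\Phi(v) \cdot (u_E - v) < 0$; combining over all edges at $v$ places $\Phi(v) \in N_v$. The main obstacle is really the combinatorial identification of extreme rays of $N_F^\ast$ with the one-dimension-larger faces of $F$ and the verification that the projection of $u_{F'} - u_F$ onto $T_F^\perp$ lands on the positive side of the corresponding ray rather than the negative side; once that matching is in place, strict convexity together with the monotonicity $\varphi(u_{F'}) \leq \varphi(u_F)$ does the rest.
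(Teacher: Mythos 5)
Your proof is correct and follows essentially the same route as the paper's: both constrain $\Phi$ at the interior minimum on $F$ by exploiting the interior minima of $\varphi$ on the faces $F' \supsetneq F$ of one dimension higher (taking $F' = P$ itself for facets). The paper packages the sign argument via Lagrange multipliers and the geometry of sublevel sets of $\varphi$ restricted to the affine span of $F'$, while you phrase it as the first-order strict convexity inequality $\Phi(u_F)\cdot(u_{F'}-u_F) < \varphi(u_{F'})-\varphi(u_F) \le 0$ combined with bipolar duality of cones; these are two presentations of the same estimate, with your version additionally making the vertex case explicit where the paper leaves it implicit.
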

\begin{proof} A critical point of $\varphi$ on the interior of a face must be a critical point of $\varphi$ restricted to the affine subspace generated by the vectors in the face. Thus, the critical point must be a unique minimum due to the strict convexity of $\varphi$. Let $x$ be such a minimum. Suppose that the face is given by $F_{\alpha_1} \cap \hdots \cap F_{\alpha_m}$ where $F_{\alpha_i}$ is a facet with outward pointing normal vector $\alpha_i$. By Lagrange multipliers, it follows that 
\[ \Phi(x) = \lambda_1 \alpha_1 + \hdots + \lambda_m \alpha_m \]
for $\lambda_i \in \bb{R}$.  It remains to show that $\lambda_i \geq 0$ for all $i \in \{1, \hdots, m \}$. Without loss of generality, we will show that $\lambda_1 \geq 0$. Consider the restriction $\wt \varphi$ of $\varphi$ to the affine subspace generated by the vectors in the face $F_{\alpha_2} \cap \hdots \cap F_{\alpha_m}$ or just $\wt \varphi = \varphi$ on $\bb{R}^n$ if $m = 1$. When $m = 1$, $\varphi$ must have a minimum on the interior of $P$ since it is strictly convex and homogeneous outside of $P$. Otherwise, $\wt \varphi$ has a minimum in the interior of $F_{\alpha_2} \cap \hdots \cap F_{\alpha_m}$ by assumption. Thus, the sublevel sets of $\wt \varphi$ are bounded convex sets that first meet the affine subspace generated by vectors in $F_{\alpha_1}$ at $x$. The gradient of $\wt \varphi$ must point outwards from these level sets and hence from $F_{\alpha_1}$. That is, $\lambda_1 \geq 0$. 
\end{proof}

We would like to characterize being adapted to the fan in a similar manner. 

\begin{prop} \label{amplegivesadapted} Suppose that $\omega = \omega_\varphi$ is radial and $P$ is a convex polytope with facets $F_\alpha$ that satisfy $u \cdot \alpha = h_\alpha$. Suppose further that $P$ contains the compact subset outside of which $\Phi$ sends rays to rays in its interior. If each cone $\langle \alpha_1, \hdots, \alpha_m \rangle$ of the normal fan intersects $\Phi(P)$ only in $\Phi(F_{\alpha_1}) \cup \hdots \cup \Phi(F_{\alpha_m})$ then there is a monomial division adapted to the fan for $W= \sum c_\alpha z^\alpha$ and $\mu = \Phi \circ \Log$ with $k_\alpha = 1/h_\alpha$. 
\end{prop}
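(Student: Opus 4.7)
The plan is to verify the criterion of Proposition \ref{equivadapt} with $k_\alpha = 1/h_\alpha$. A preliminary reduction via Proposition \ref{normalizecoeff} lets me assume $c_\alpha = 1$ for all $\alpha \in A$, since the converse implication in that proposition preserves the values of $k_\alpha$. It then remains to show that for every $z$ outside some compact subset of $(\bb{C}^*)^n$ there exists $\alpha \in A$ simultaneously achieving the maximum of $|z^\alpha|^{1/h_\alpha}$ and satisfying $\mu(z) \in \text{int}(\text{star}(\alpha))$.

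Setting $u = \Log(z)$ large enough that $u$ lies outside $P$ (possible because $P$ contains the compact non-radial region of $\Phi$, and hence the origin, in its interior), I would write $u = tv$ with $v \in \partial P$ and $t \geq 1$, so that $\Phi(u)$ is a positive multiple of $\Phi(v)$ by radiality. A short computation shows that the $\alpha$'s maximizing $u \cdot \alpha / h_\alpha$ are exactly those with $v \in F_\alpha$, and since $\text{int}(\text{star}(\alpha))$ is a cone from the origin, the claim for $u$ reduces to finding such an $\alpha$ with $\Phi(v) \in \text{int}(\text{star}(\alpha))$.

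To produce $\alpha$, I would let $\tau = \langle \gamma_1, \ldots, \gamma_l \rangle$ be the unique cone of $\Sigma$ containing $\Phi(v)$ in its relative interior. Applying the hypothesis to $\tau$ gives $\Phi(v) \in \tau \cap \Phi(P) \subseteq \bigcup_j \Phi(F_{\gamma_j})$, so $\Phi(v) \in \Phi(F_{\gamma_j})$ for some $j$; bijectivity of $\Phi$ then forces $v \in F_{\gamma_j}$. Taking $\alpha := \gamma_j$ gives $\alpha \in \tau$, so $\tau \subseteq \text{star}(\alpha)$, and the fact that $\Phi(v)$ lies in the relative interior of $\tau$ with $\alpha$ among its generators upgrades this to $\Phi(v) \in \text{int}(\text{star}(\alpha))$. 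Combined with $v \in F_\alpha$, this $\alpha$ satisfies both conditions of Proposition \ref{equivadapt}.

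The main subtlety will be producing a ray $\alpha$ landing $\Phi(v)$ in the interior of the star rather than merely in its closure: any $\alpha$ among the normal rays to the face of $P$ containing $v$ in its relative interior is automatically a maximizer, but only those also appearing as rays of the cone $\tau$ of $\Sigma$ containing $\Phi(v)$ in its relative interior land $\Phi(v)$ in the interior of the star. The hypothesis of the proposition is precisely what guarantees such an $\alpha$ exists, matching the combinatorics of $\Phi$ to $\Sigma$ in just the right way at each boundary face of $P$.
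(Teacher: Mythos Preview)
Your proposal is correct and follows essentially the same approach as the paper: both reduce to $c_\alpha = 1$ via Proposition~\ref{normalizecoeff}, identify the region where $|z^\alpha|^{1/h_\alpha}$ achieves the maximum with the cone on $F_\alpha$, and then use the hypothesis to conclude $\Phi(F_\alpha) \subset \text{int}(\text{star}(\alpha))$. Your pointwise organization (fixing $v \in \partial P$, picking the cone $\tau$ containing $\Phi(v)$ in its relative interior, and extracting a suitable generator $\gamma_j$) is a slightly more explicit unpacking of the paper's one-line contrapositive argument that a cone $\sigma$ can meet $\Phi(F_\alpha)$ only if $\alpha \in \sigma$, but the underlying logic is the same.
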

\begin{proof} By the proof of Proposition \ref{normalizecoeff}, we can assume that $W = W^1$.  Now, consider $g(z) = \max | z^\alpha |^{1/h_\alpha}$. Then, $\{ \Log(z) \, | \, g(z) \leq e \}$ is equal to $P$. Thus, the moment map image of the region where $g(z) = | z^\alpha |^{1/h_\alpha}$ is the cone on $\Phi(F_\alpha)$ outside of a compact set. By assumption, a cone $\sigma$ of the normal fan, which is complete, can only intersect $\Phi(F_\alpha)$ if $\alpha \in \sigma$. Therefore, $\Phi(F_\alpha) \subset \text{int(star(}\alpha))$ giving the desired result.
\end{proof}

We also have a converse to Proposition \ref{amplegivesadapted}. 

\begin{prop} \label{adaptedgivesample2} The polytope $Q$ from Proposition \ref{adaptedgivesample} satisfies the property that a cone $\langle \alpha_1, \hdots, \alpha_m \rangle$ of $\Sigma$ intersects $\Phi(Q)$ only in $\Phi(F_{\alpha_1}) \cup \hdots \cup \Phi(F_{\alpha_m})$. 
\end{prop}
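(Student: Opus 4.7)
The plan is to prove the equivalent assertion that for any $\beta \in A$ not a generator of $\sigma = \langle \alpha_1, \ldots, \alpha_m \rangle$, the intersection $\sigma \cap \Phi(F_\beta)$ is empty; this captures the substantive content of ``only in'' as used in the parallel Proposition \ref{amplegivesadapted}, where the analogous assumption was applied in the form ``$\sigma$ can only intersect $\Phi(F_\alpha)$ if $\alpha \in \sigma$.''

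First, I would observe that each facet $F_\beta$ lies inside the polyhedral cone
\[ W_\beta = \{u \in \bb{R}^n : k_\beta(u \cdot \beta) \geq k_\gamma(u \cdot \gamma) \text{ for all } \gamma \in A\}, \]
namely the region where $|z^\beta|^{k_\beta}$ dominates in $g(z) = \max_\alpha |z^\alpha|^{k_\alpha}$. Since the hyperplane $u \cdot \beta = N/k_\beta$ carrying $F_\beta$ recedes linearly in $N$ from the origin, taking $N$ sufficiently large places each $F_\beta$ outside the compact subset beyond which the ``adapted to $\Sigma$'' hypothesis provides $\Phi(W_\beta) \subset \text{int(star(}\beta))$. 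Hence $\Phi(F_\beta) \subset \text{int(star(}\beta))$ for every $\beta \in A$.

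Next, given $y \in \sigma \cap \Phi(F_\beta)$, let $\sigma_y \in \Sigma$ be the unique cone whose relative interior contains $y$ (which exists by completeness of $\Sigma$). Since $y \in \sigma$, the cone $\sigma_y$ is a face of $\sigma$, so each ray of $\sigma_y$ is among $\alpha_1, \ldots, \alpha_m$. On the other hand, $y \in \text{int(star(}\beta))$ together with the standard characterization that a point lies in the topological interior of $\text{star}(\beta)$ precisely when the minimal cone of $\Sigma$ containing it has $\beta$ as one of its rays forces $\beta$ to be a ray of $\sigma_y$. Combining these observations, $\beta$ must be one of the $\alpha_i$, as desired.

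The main point requiring care is the fan-theoretic characterization of $\text{int(star(}\beta))$ just invoked. It follows from the observation that a sufficiently small neighborhood of a point $y$ in the relative interior of $\sigma_y$ meets exactly those cones of $\Sigma$ having $\sigma_y$ as a face, and this collection of cones is contained in $\text{star}(\beta)$ if and only if $\beta \in \sigma_y$.
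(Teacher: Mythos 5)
Your proof is correct and follows essentially the same route as the paper's: you combine the inclusion $\Phi(F_\beta) \subset \text{int(star(}\beta))$ (which the paper cites as the hypothesis inherited from Proposition \ref{adaptedgivesample}, and which you re-derive from adaptedness and large $N$) with the fan-theoretic fact that a cone of $\Sigma$ can meet $\text{int(star(}\beta))$ only if $\beta$ is one of its rays. The only difference is that the paper asserts this second fact in a single line, whereas you unpack it via the minimal cone $\sigma_y$ whose relative interior contains a hypothetical intersection point --- a more explicit rendering of the same observation.
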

\begin{proof} We have that $\text{int(star(}\beta)) \cap \langle \alpha_1, \hdots, \alpha_m \rangle \neq \emptyset$ if and only if $\beta = \alpha_i$ for some $i \in \{ 1, \hdots, m \}$. But, $\Phi(F_\beta) \subset \text{int(star(}\beta))$ for all $\beta \in A$ by assumption. Therefore, $\Phi(F_\beta) \cap \langle \alpha_1, \hdots, \alpha_m \rangle \neq \emptyset$ implies that $\beta = \alpha_i$ for some $i \in \{1, \hdots, m\}$. 
\end{proof}

Thus, we see that $W_\Sigma$ admitting a monomial division adapted to the fan with $\omega_\varphi$ and $\mu = \Phi \circ \Log$ is characterized by a similar condition to $\varphi$ being adapted to the polytope of some ample line bundle on $X$ when $\varphi$ is homogeneous outside of a compact set. In fact, the two notions are equivalent in dimension $2$ and when $\Phi$ is linear, but the existence of a monomial division adapted to the fan is stronger in general. The fact that the two coincide when $\Phi$ is linear shows in particular that they are equivalent for $\mu = \Log$. Thus, the condition that there exists an ample line bundle on $X$ to which $\phi(u) = u \cdot u$ is adapted required for the construction in \cite{GS}  and left as an open problem in \cite{Zhou} is equivalent to the existence of a monomial division adapted to the fan with $\mu = \Log$, which we also leave open.

Now, we will move on to the construction of radial toric K\"{a}hler forms that admit a monomial division adapted to the fan. We follow the ideas in \cite{Zhou}, but somewhat modify and simplify the construction given there.

\begin{prop} \label{rhoexists} For any convex polytope $P$ containing the origin, there exists a smooth strictly convex function $\varphi$ with nondegenerate Hessian such that $\varphi$ is homogeneous of degree $2$ outside of a compact set in the interior of $P$ and its Legendre transform $\Phi$ satisfies that each cone $\langle \alpha_1, \hdots, \alpha_m \rangle$ of the normal fan to $P$ intersects $\Phi(P)$ only in $\Phi(F_{\alpha_1}) \cup \hdots \cup \Phi(F_{\alpha_m})$.
\end{prop}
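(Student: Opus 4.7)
The plan is to construct $\varphi$ by patching together quadratic functions indexed by the vertices of $P$ and then smoothing, following the overall strategy of \cite{Zhou} while adapting it to the stronger requirement on $\Phi$ here. Because $\varphi$ must be homogeneous of degree $2$ outside a compact subset of $\text{int}(P)$, I arrange each local quadratic piece on a conical region of $\bb{R}^n$ emanating from the origin so that radial scaling is respected.

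For each vertex $v$ of $P$, let $\sigma_v = \langle\alpha_1^v,\ldots,\alpha_n^v\rangle$ be the dual maximal cone in the normal fan, and set $\varphi_v(u) = \tfrac{1}{2}\sum_{j=1}^n(\alpha_j^v\cdot u)^2$. Writing $\varphi_v(u) = \tfrac{1}{2} u^T A_v u$ with $A_v = \tilde A_v \tilde A_v^T$ for $\tilde A_v$ the matrix with columns $\alpha_j^v$, the matrix $A_v$ is symmetric and positive-definite, and $\nabla\varphi_v(u) = \sum_j (\alpha_j^v\cdot u)\alpha_j^v$. On a facet $F_{\alpha_i^v}$ near $v$ we have $\alpha_i^v\cdot u = h_{\alpha_i^v} > 0$ and $\alpha_j^v\cdot u$ close to $h_{\alpha_j^v} > 0$ for all $j$, so $\nabla\varphi_v(u)$ is a strictly positive combination of the $\alpha_j^v$'s and lies in $\text{int}(\sigma_v) \subset \text{int}(\text{star}(\alpha_i^v))$. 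To globalize, I cover $\bb{R}^n\setminus K$ (for some compact $K\subset\text{int}(P)$) by conical regions $U_v$ emanating from the origin with $U_v\cap\partial P$ a neighborhood of $v$ in $\partial P$, and I arrange the boundaries between adjacent $U_v$ and $U_{v'}$ to lie on the hyperplane $\{(\alpha_n^v - \alpha_n^{v'})\cdot u = 0\}$, on which $\varphi_v = \varphi_{v'}$. Then I define $\tilde\varphi|_{U_v} = \varphi_v + \ell_v(u) + c_v$ with linear and constant corrections chosen so that $\tilde\varphi$ is continuous across these boundaries, and I extend smoothly through $K$ using a strictly convex function such as $\tfrac{1}{2}|u|^2$.

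The resulting $\tilde\varphi$ is continuous, convex, and degree-$2$ homogeneous outside $K$, but only $C^0$ along the cone boundaries. To obtain the desired smoothness, I apply a smoothing procedure that preserves convexity and degree-$2$ homogeneity outside a slightly larger compact set. One option is a degree-$2$ homogeneous smoothing via an angular convolution (smoothing the restriction of $\tilde\varphi$ to the unit sphere and then extending radially by $\varphi(u) = |u|^2\psi(u/|u|)$), or a log-sum-exp regularization with temperature scaling as $|u|^2$; alternatively one can apply Proposition~\ref{smooth} directly to the piecewise-linear support function underlying the construction and recover $\varphi$ via the Legendre transform. In any of these, smoothness and strict convexity with nondegenerate Hessian follow from the positive-definiteness of each $A_v$ combined with the convexity-preserving properties of the smoothing. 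The cone condition on $\Phi = \nabla\varphi$ then follows because, for $u\in F_\alpha$ outside the smoothed region, $\Phi(u)$ agrees with $\nabla\varphi_v(u)$ for a vertex $v$ with $F_\alpha\ni v$, which lies in $\text{int}(\text{star}(\alpha))$ by the local analysis, yielding the cone property as in the proof of Proposition~\ref{adaptedgivesample2}. The main obstacle is precisely this smoothing step: arranging for the smoothed function to be simultaneously $C^\infty$, strictly convex, and exactly degree-$2$ homogeneous outside a compact set requires a delicate construction, and is the technical heart of the argument.
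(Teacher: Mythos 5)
Your overall strategy (construct a local model near each vertex of $P$, patch, smooth) is in the spirit of Zhou's approach, but it runs into several problems that the paper's proof avoids, and the place where you admit you are stuck is exactly where the paper has a clean trick.

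First, the linear and constant corrections $\ell_v, c_v$ that you add to each $\varphi_v$ to achieve continuity across the cone boundaries are fatal to degree-$2$ homogeneity: any nonzero linear or constant term destroys it, so for homogeneity you would need $\ell_v = c_v = 0$ for every $v$, which forces the cone boundaries to lie exactly on the zero sets of $\varphi_v - \varphi_{v'}$. You assert that those boundaries can be placed on the hyperplanes $\{(\alpha_n^v - \alpha_n^{v'})\cdot u = 0\}$, but this is an unjustified combinatorial claim: those hyperplanes pass through the origin, whereas the affine hyperplane separating $v$ from $v'$ is $\{(\alpha_n^v - \alpha_n^{v'})\cdot u = h_{\alpha_n^v} - h_{\alpha_n^{v'}}\}$, so the two need not even contain the ray through the relevant edge of $P$ unless $h_{\alpha_n^v} = h_{\alpha_n^{v'}}$. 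In dimension $\geq 3$ one would also need all such hyperplanes to fit together into a fan, which you never argue.

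Second, and more centrally, you flag the smoothing step as "the technical heart" without resolving it. The paper's proof has a much simpler route that avoids your difficulty entirely. Instead of starting from quadratics and trying to smooth while staying degree-$2$ homogeneous, the paper starts with the degree-$1$ Minkowski gauge function $\varphi_P(u) = \frac{1}{h_\alpha}(\alpha\cdot u)$ on $\mathrm{cone}(F_\alpha)$ (which is globally defined, convex, and requires no patching), mollifies it by ordinary convolution to get a smooth convex $\varphi_\eps$ whose gradient lies in $\mathrm{int}(\mathrm{star}(\alpha))$ over each $\mathrm{cone}(F_\alpha)$, and then \emph{re-homogenizes}: $\widetilde\varphi_\eps$ is defined to be the unique degree-$2$ homogeneous function with the same $1$-level set as $\varphi_\eps$. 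The key observation you are missing is that any nonnegative degree-$d$ homogeneous function with $d > 1$ whose sublevel sets are convex and whose level sets are smooth is automatically convex and smooth away from the origin — so re-homogenizing after smoothing is painless, and no angular convolution or log-sum-exp gymnastics is needed. Strict convexity and nondegenerate Hessian then come from adding $\eps_2\|u\|^2$, and the compact set near the origin is smoothed by a separate standard argument. Your idea of "smoothing the piecewise-linear support function via Proposition~\ref{smooth} and recovering $\varphi$ via Legendre transform" gestures in a similar direction, but you would need the re-homogenization observation above to actually close the argument.
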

\begin{proof} By the proof of Proposition \ref{adaptedgivesample2}, it is enough to produce a strictly convex $\varphi$ with nondegenerate Hessian such that $\varphi$ is homogeneous of degree $2$ outside of a compact set in the interior of $P$ and has Legendre transform satisfying $\Phi(F_\alpha) \subset \text{int(star(}\alpha))$ for each ray $\langle \alpha \rangle$ of the normal fan. 

Suppose that each face $F_\alpha$ of $P$ is given by the equation $u \cdot \alpha = h_\alpha$. Note that all $h_\alpha$ are positive since the origin is in the interior of $P$. Let $\varphi_P$ be the continuous piecewise linear function given by $\varphi_P (u) = \frac{1}{h_\alpha} (\alpha \cdot u)$ when $u \in \text{cone}(F_\alpha)$ where $\text{cone}(F_\alpha)$ is the cone on $F_\alpha$. It follows from Lemma 6.1.5(d) of \cite{CLS} that $\varphi_P$ is convex.

If $\eta$ is a nonnegative and smooth mollifier function on $\bb{R}^n$ supported on the ball around the origin of radius $1$ and $\eta_\eps (x) = \eta (x/\eps)/\eps^n$, then $\varphi_\eps = \eta_\eps * \varphi_P$ is a smooth convex function. Suppose that $u \in \text{cone}(F_{\alpha})\setminus B_\eps (0)$ where $\text{cone}(F_{\alpha})$ is the cone on $F_\alpha$ and $B_\eps(0)$ is the ball of radius $\eps$ around $0$. If $u$ is in addition contained in the complement of the $\eps$-neighborhood of the boundary of $\text{cone}(F_{\alpha})$, then
\[ \nabla \varphi_\eps (u) = \int_{\bb{R}^n} \nabla \varphi_P( u- x) \eta_\eps(x) \, dx = \frac{1}{h_\alpha} \alpha \]
is in the interior of the star of $\alpha$. Now, assume instead that $u$ is in the $\eps$-neighborhood of the boundary of $\text{cone}(F_{\alpha})$. There is a $\rho$ such that $\| u \| \geq \rho > \eps$ implies $B_\eps (u)$ can only meet faces $F_{\beta_1}, \hdots, F_{\beta_m}$ such that $\langle \alpha, \beta_1, \hdots, \beta_m \rangle$ is a cone of the normal fan to $P$. Then, $\nabla \varphi_\eps (u)$ is a convex combination of $\alpha, \beta_1, \hdots, \beta_m$ and the coefficient on $\alpha$ is nonzero. Since $\langle \alpha, \beta_1, \hdots, \beta_m \rangle$ is a cone of the normal fan, any vector of this form lies in the interior of the star of $\alpha$.

Therefore, we have seen that if $u \in \text{cone}(F_{\alpha})\setminus B_\rho (0)$, then $\nabla \varphi_\eps (u) \in \text{int(star(}\alpha))$. Note that we can take $\rho \to 0$ as $\eps \to 0$ so we can assume in particular that $\nabla \varphi_\eps (u) \in \text{int(star(}\alpha))$ if $\varphi_\eps(u) = 1$.

Let $\wt \varphi_\eps$ be the homogeneous degree $2$ function whose $1$-level set coincides with that of $\varphi_{\eps}$. The function $\wt \varphi_\eps$ exists by the convexity of the $1$-sublevel set, which contains the origin. Since any nonnegative homogenous function of degree $d > 1$ with smooth level sets and convex sublevel sets is convex and smooth away from the origin, $\wt \varphi_\eps$ is a convex function that is smooth away from the origin. Suppose that $u \in \text{cone}(F_{\alpha})\setminus B_\eps (0)$ and $\varphi_\eps(u) =1$. Then, $\wt \Phi_\eps (u) = \nabla \wt \varphi_\eps (u) \in \text{int(star(}\alpha))$ for $\eps$ small enough since $\nabla \wt \varphi_\eps (u) = \lambda \nabla \phi_{\eps} (u)$ for $\lambda > 0$. Since $\nabla \wt \varphi_\eps$ is homogeneous of degree $1$, it follows that $\wt \Phi_\eps(u) \in \text{int(star(}\alpha))$ for all $u \in \text{cone}(F_\alpha)$ for small $\eps$. In particular, $\wt \Phi_\eps(F_\alpha) \subset \text{int(star(}\alpha))$ for all $\alpha \in A$ and small $\eps$. 

Now, consider the function $\varphi_{\eps_1, \eps_2} (u) = \wt \varphi_{\eps_1}(u) + \eps_2 \| u \|^2$ with $\eps_1, \eps_2 > 0$, which is homogeneous of degree $2$. Further, $\varphi_{\eps_1, \eps_2}$ is strictly convex and has nondegenerate Hessian. Since $\varphi_{\eps_1, \eps_2}$ is $C^\infty$ close to $\wt \varphi_{\eps_1}$ for $\eps_2$ small away from the origin, we have $\Phi_{\eps_1, \eps_2} (F_\alpha) \subset \text{int(star(}\alpha))$ for all $\alpha \in A$ and small $\eps_1$ and $\eps_2$.

Smoothing $\varphi_{\eps_1, \eps_2}$ in a small neighborhood of the origin, as can be done preserving strict convexity and nondegeneracy of the Hessian by \cite{Ghomi}, gives us the desired function $\varphi$.
\end{proof}

\begin{rem} In understanding the above proof, it may be helpful to observe that $\varphi_\eps$ is still linear and equal $\varphi_P$ away from the $\eps$-neighborhood of the boundaries of the domains of linearity if $\eta$ is in addition chosen to be symmetric. In particular, the $1$-level set of $\phi_\eps$ is a smoothing of $\partial P$ near the edges and corners and the $1$-level set of $\phi_{\eps_1, \eps_2}$ is a perturbation to make the sublevel set strictly convex.
\end{rem}

\begin{rem} The degree of homogeneity in Proposition \ref{rhoexists} is not important in the argument. We could have taken $\varphi$ to have any degree $d > 1$. 
\end{rem}

Combining Proposition \ref{rhoexists} with Proposition \ref{amplegivesadapted} immediately gives the following two corollaries which were the goal of this section.

\begin{cor} \label{adaptedexists} For any projective toric variety with fan $\Sigma$, there is a radial toric K\"{a}hler form on $(\bb{C}^*)^n$ for which $W_\Sigma$, with all $c_\alpha$ nonzero, admits a monomial division adapted to the fan with $\mu = \Phi \circ \Log$. 
\end{cor}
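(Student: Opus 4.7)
My plan is to assemble the corollary by chaining Propositions~\ref{rhoexists} and \ref{amplegivesadapted} through a suitable choice of polytope $P$. Since $X$ is projective, I would begin by selecting an ample toric divisor $D$ on $X$ and letting $P = P_D \subset \bb{R}^n$ denote its associated lattice polytope, translated if necessary so that the origin lies in its interior. By standard toric geometry the normal fan of $P$ is precisely $\Sigma$, the facets of $P$ are indexed by the primitive generators $\alpha \in A$, and each facet can be written as $F_\alpha = \{u \in \bb{R}^n : u \cdot \alpha = h_\alpha\}$ with $h_\alpha > 0$.

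Next I would apply Proposition~\ref{rhoexists} to this $P$ to produce a smooth strictly convex function $\varphi$ with nondegenerate Hessian that is homogeneous of degree $2$ outside a compact subset of the interior of $P$, and whose Legendre transform $\Phi$ has the key geometric property that for each cone $\langle \alpha_1, \ldots, \alpha_m \rangle$ of $\Sigma$, the intersection with $\Phi(P)$ is contained in $\Phi(F_{\alpha_1}) \cup \cdots \cup \Phi(F_{\alpha_m})$. I would then verify that $\omega_\varphi$ is indeed a radial toric K\"{a}hler form in the sense of Definition~\ref{radialdef}: the degree $2$ homogeneity of $\varphi$ outside a compact set forces $\Phi$ to be homogeneous of degree $1$ there, which makes $\Phi$ take rays to rays outside a compact set; strict convexity together with Proposition~\ref{convsurj} gives bijectivity of $\Phi$.

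With radiality established, I would apply Proposition~\ref{amplegivesadapted} to $P$ and $\mu = \Phi \circ \Log$ (rescaling $P$ if needed so that it contains the compact subset outside of which $\Phi$ sends rays to rays, which is harmless since scaling the polytope rescales the constants $h_\alpha$ uniformly). This immediately produces a monomial division adapted to the fan for $W_\Sigma$ with any nonzero coefficients $c_\alpha$, using $k_\alpha = 1/h_\alpha$; the coefficient independence is already incorporated into Proposition~\ref{amplegivesadapted} via Proposition~\ref{normalizecoeff}.

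Because the hard analytical work (the mollification argument and the convex combinations of facet normals producing the star inclusion) has already been packaged into Proposition~\ref{rhoexists}, and the geometric translation between the polytope condition and the monomial admissibility condition has been packaged into Proposition~\ref{amplegivesadapted}, no substantive obstacle remains. The only care needed is to ensure that the polytope, the convex function, and the moment map are chosen compatibly so that the hypotheses of Definition~\ref{radialdef} and Proposition~\ref{amplegivesadapted} are simultaneously met, which the above sequence arranges.
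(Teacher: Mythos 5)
Your proof is correct and follows the paper's own argument exactly: take the polytope of an ample divisor with normal fan $\Sigma$, apply Proposition~\ref{rhoexists} to build the homogeneous convex potential $\varphi$, check radiality via Proposition~\ref{convsurj}, and conclude with Proposition~\ref{amplegivesadapted}. The only superfluous step is the rescaling of $P$, since Proposition~\ref{rhoexists} already places the compact set where homogeneity fails in the interior of $P$, so the hypothesis of Proposition~\ref{amplegivesadapted} holds automatically.
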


\begin{cor} \label{tropicalfanoadapts} For any Fano toric variety with fan $\Sigma$, there is a radial toric K\"{a}hler form on $(\bb{C}^*)^n$ for which $W_\Sigma$, with all $c_\alpha$ nonzero, admits a monomial division adapted to the fan with $\mu = \Phi \circ \Log$ and $k_\alpha = 1$ for all $\alpha \in A$. That is, any tropical division for $W_\Sigma$ with small $\delta$ is adapted to the fan with $\mu = \Phi \circ \Log$. 
\end{cor}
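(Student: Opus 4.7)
The plan is to combine Propositions \ref{rhoexists} and \ref{amplegivesadapted} applied to the polytope of the anticanonical divisor. Since $X$ is Fano, $-K_X = \sum_{\alpha \in A} D_\alpha$ is ample, and following the convention in the proof of Proposition \ref{adaptedgivesample}, its polytope
\[ P = \{u \in \bb{R}^n : u \cdot \alpha \leq 1 \text{ for all } \alpha \in A\} \]
is a bounded convex polytope containing the origin in its interior, whose facet $F_\alpha$ is given by $u \cdot \alpha = h_\alpha$ with $h_\alpha = 1$ for every $\alpha \in A$.

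First I would apply Proposition \ref{rhoexists} to $P$ to produce a smooth strictly convex $\varphi$ with nondegenerate Hessian, homogeneous of degree $2$ outside a compact subset of $\text{int}(P)$, whose Legendre transform $\Phi$ sends each $F_\alpha$ into $\text{int(star}(\alpha))$. The associated $\omega_\varphi$ is a radial toric K\"ahler form with moment map $\mu = \Phi \circ \Log$. Feeding these data into Proposition \ref{amplegivesadapted} (which internally reduces to the case $c_\alpha = 1$ via Proposition \ref{normalizecoeff}) yields a monomial division adapted to $\Sigma$ for $W_\Sigma$ with $k_\alpha = 1/h_\alpha = 1$ for all $\alpha \in A$, establishing the first assertion.

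For the second assertion, after reducing to $c_\alpha = 1$ via Proposition \ref{normalizecoeff}, the $k_\alpha = 1$ monomial division just constructed assigns to $\alpha$ the region $C_\alpha^0 = \{z : |z^\alpha| \geq |z^\beta| \text{ for all } \beta \in A\}$, which is exactly the tropical division with $\delta = 0$. For small $\delta > 0$, the tropical region $C_\alpha^\delta$ is a thickening of $C_\alpha^0$ obtained by translating the defining half-space inequalities $u \cdot \alpha \geq u \cdot \beta$ in $\Log$-coordinates by the bounded amount $\log(1-\delta)$. Since $\Phi$ is positively homogeneous of degree $1$ outside a compact set, such bounded $\Log$-translations become angularly negligible at infinity under $\Phi$, and the strict containment $\Phi(C_\alpha^0) \setminus K \subset \text{int(star}(\alpha))$ already established then yields $\Phi(C_\alpha^\delta) \setminus K' \subset \text{int(star}(\alpha))$ on a larger compact complement $K'$ for $\delta$ sufficiently small.

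The main obstacle is this last extension from $\delta = 0$ to small positive $\delta$: the symmetric difference $C_\alpha^\delta \triangle C_\alpha^0$ is unbounded in $\Log$-coordinates, so strict containment is not automatically an open condition on $\delta$. Radiality of $\Phi$ is what makes the argument go through, reducing the question near infinity to a question of strict angular containment on the unit sphere, which is compact and hence stable under small perturbations; this is essentially the same quantitative reduction that appears in the proof of Proposition \ref{normalizecoeff}.
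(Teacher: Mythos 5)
Your proposal is correct and follows the same route the paper intends (the paper treats the corollary as an immediate consequence of Propositions \ref{rhoexists} and \ref{amplegivesadapted}): apply Proposition \ref{rhoexists} to the anticanonical polytope, where every $h_\alpha=1$, and then Proposition \ref{amplegivesadapted} produces a monomial division with $k_\alpha = 1/h_\alpha = 1$, reducing to $c_\alpha = 1$ via Proposition \ref{normalizecoeff}. Your identification that the ``that is'' clause ($\delta > 0$) requires a small extra argument beyond the $\delta = 0$ case given directly by Proposition \ref{amplegivesadapted} is a genuine observation that the paper leaves implicit, and the argument you supply is sound; it is in fact the same bounded-perturbation-of-a-cone estimate that already appears in the $(\Leftarrow)$ direction of Proposition \ref{normalizecoeff}, where the paper observes that the Euclidean distance from the cone $V_\alpha$ to $\partial\,\Phi^{-1}(\mathrm{star}(\alpha))$ grows without bound at infinity, so any fixed-width thickening (here by $|\log(1-\delta)|$) remains inside $\Phi^{-1}(\mathrm{int}(\mathrm{star}(\alpha)))$ outside a sufficiently large compact set. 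Phrasing the estimate via Euclidean distance as in that proof, rather than via angles on the sphere, makes the quantitative step slightly cleaner, but the content is identical.
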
 

\section{The Fukaya-Seidel category of a monomial division} \label{categories} \label{3}

The goal of this section is to define and study an $A_\infty$-category $\mathcal{F}_\Delta(W)$ with respect to a monomial division $\Delta$ for a Laurent polynomial $W$.  The setup is similar to that of Seidel \cite{SeBook, Lef1} except using that our Lagrangians project to $\bb{R}_{>0}$ by $c_\alpha z^\alpha$ in $C_\alpha$ rather than projecting everywhere to $\bb{R}_{>0}$ under $W$. Roughly, the objects of $\mathcal{F}_\Delta( W)$ are monomially admissible exact Lagrangians with extra data for gradings and orientations. Morphisms are defined by Floer cochain complexes after increasing the arguments of each monomial $c_\alpha z^\alpha$ of $W$ in its corresponding subset in the division, $C_\alpha$, near infinity on the source Lagrangian. Since it is technically difficult to implement the entire $A_\infty$ structure directly in this setup, we will use the localization approach of Abouzaid-Seidel in \cite{AbSloc}, which has been used to various extents in other recent works \cite{AbSKhov, AuSpec, GPS1, Ke}. 

\subsection{Bounding pseudohomolorphic discs} \label{control}

The first step in defining $\mathcal{F}_\Delta(W)$ is to show that the Floer theory of monomially admissible Lagrangians is well-defined. For that, the only special aspect of our setup is to show that pseudoholomorphic discs with boundary on monomially admissible Lagrangians are contained in a compact subset of $(\bb{C}^*)^n$.

Let $J$ be an $\omega$-compatible almost complex structure on $(\bb{C}^*)^n$ that agrees with the standard complex structure outside of a compact subset of $(\bb{C}^*)^n$ or more generally a monomially admissible complex structure defined as follows. 
\begin{df} 
An almost complex structure $J$ on $(\bb{C}^*)^n$ is admissible with respect to $\Delta$, or \textit{monomially admissible}, if for each $\alpha \in A$,  $z^\alpha$ is a $J$-holomorphic function in the complement of a compact subset of $\mu^{-1}(C_\alpha)$ .
\end{df}

All statements in this section also hold if $J$ is domain-dependent. Now, let $L_0, \hdots, L_k$ be exact, pairwise transverse Lagrangians such that $L_j$ is monomially admissible with respect to the monomial division $\Delta$ but for the function $W_j= e^{-i\theta_j} W$ with $ \theta_0 \neq \theta_1 \neq \hdots \neq \theta_k $. Let $p_i \in L_{i-1} \cap L_i$ for $i = 1, \hdots, k$ and $q \in L_0 \cap L_k$. Let $S$ be the closed unit disc with $k+1$ boundary punctures $z_0, z_1, \hdots, z_{k}$ ordered counterclockwise. 

\begin{prop} \label{comp1} There exists a compact subset of $(\bb{C}^*)^n$ that contains the image of any $J$-holomorphic map $u\colon S \to (\bb{C}^*)^n$ such that the image of the boundary region between $z_i$ and $z_{i+1}$ lies on $L_i$ for $i = 0, \hdots, k$ with $z_{k+1} = z_0$ and such that $u$ extends to a continuous map on the closed unit disc $D$ with $u(z_i) = p_i$ for $i = 1, \hdots, k$ and $u(z_0) = q$.
\end{prop}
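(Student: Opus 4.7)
The proof will be a maximum principle argument applied to the ``tropical height'' function
\[
g(z) \;=\; \max_{\alpha \in A} \, |c_\alpha z^\alpha|^{k_\alpha},
\]
with $k_\alpha$ the constants supplied by the monomial division $\Delta$.  First I note that $g$ is proper on $(\bb{C}^*)^n$: since $A$ consists of primitive generators of a complete fan, for any unbounded direction in $\mu$-coordinates some $\mu \cdot \alpha$ grows, which by condition 2 of Definition \ref{div} forces $g \to \infty$ as $z$ exits compact subsets of $(\bb{C}^*)^n$.  Consequently it suffices to bound $g \circ u$ uniformly from above by a constant depending only on the $L_i$, on $J$, on the fixed input points, and on $\Delta$.

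I will choose $R_0>0$ so that the sublevel set $\{g \le R_0\}$ contains the compact subsets of each $\mu^{-1}(C_\alpha)$ outside which $z^\alpha$ is $J$-holomorphic (by monomial admissibility of $J$), the compact subsets of each $L_i \cap \mu^{-1}(C_\alpha)$ outside which the argument condition for $L_i$ with respect to $W_i$ holds, and all the marked points $p_1,\ldots,p_k,q$.  Suppose toward a contradiction that $g \circ u$ exceeds $R_0$ somewhere, and pick $z_* \in S$ at which $g \circ u$ attains its maximum $M > R_0$.  By condition 2 of Definition \ref{div}, some $\alpha^* \in A$ achieves the maximum at $u(z_*)$ with $\mu(u(z_*)) \in C_{\alpha^*}$; by the choice of $R_0$, $u(z_*)$ lies in the region where $z^{\alpha^*}$ is $J$-holomorphic, and if $z_* \in \partial S$ then additionally in the region where the admissibility condition for $L_i$ in $\mu^{-1}(C_{\alpha^*})$ holds.

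Thus $F := c_{\alpha^*} z^{\alpha^*} \circ u$ is a holomorphic $\bb{C}^*$-valued function in a neighborhood of $z_*$ on the disc, and $k_{\alpha^*}\log|F|$ is harmonic there, bounded above by $\log g \circ u \le \log M$ with equality at $z_*$.  If $z_*$ is interior, the strong maximum principle for harmonic functions forces $\log|F|$, and hence $F$ itself by the open mapping theorem, to be locally constant near $z_*$.  If $z_*$ lies on the arc mapping to $L_i$, the admissibility condition guarantees that $F$ sends that arc into the ray $e^{i\theta_i}\bb{R}_{>0}$; Schwarz reflection across this ray then extends $F$ to a holomorphic function on a two-sided neighborhood of $z_*$ where $|F|$ still attains a local maximum, so the same conclusion applies.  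I then propagate the constancy of $F$ along the connected component of $u^{-1}\bigl(\mu^{-1}(C_{\alpha^*}) \setminus K_{\alpha^*}\bigr)$ containing $z_*$, with Schwarz reflection handling the boundary arcs.  The boundary of this component in $S$ must eventually meet either a puncture $z_j$ or a point at which $g \circ u = R_0$; in the first case $F$ takes the value $c_{\alpha^*}p_j^{\alpha^*}$ or $c_{\alpha^*}q^{\alpha^*}$, and in the second $|F|^{k_{\alpha^*}} \le g \circ u = R_0$.  Both possibilities yield $M \le R_0$, a contradiction.

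The main obstacle is the propagation step: the region $u^{-1}(\mu^{-1}(C_{\alpha^*})\setminus K_{\alpha^*})$ need not be connected, and the dominant monomial index $\alpha^*$ can switch as the moment-map image crosses from one $C_\alpha$ into another.  A cleaner way to organize the argument, which I expect to pursue, is to show directly that $\log g \circ u$ is subharmonic on $V := u^{-1}(\{g > R_0\})$---there it is locally a pointwise maximum of finitely many functions of the form $k_\alpha \log|c_\alpha z^\alpha| \circ u$, each harmonic wherever the corresponding $z^\alpha$ is $J$-holomorphic---and then to use the maximum principle for subharmonic functions together with monomial-by-monomial Schwarz reflection to rule out an interior or boundary maximum of $g \circ u$ exceeding $R_0$.
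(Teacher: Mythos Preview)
Your approach is essentially the paper's --- same function $g$, same maximum-modulus and Schwarz-reflection mechanism --- but you are working harder than necessary. The propagation-of-constancy step is not needed (and, as you note, is awkward because the dominant index $\alpha^*$ can switch). Your final paragraph is closer, but the paper frames it more simply still: rather than establishing subharmonicity of $\log g \circ u$ globally, just observe that any \emph{local} maximum of $g \circ u$ at a point $z_*$ with $g(u(z_*))$ large is automatically a local maximum of the single function $|c_{\alpha^*} z^{\alpha^*}|^{k_{\alpha^*}} \circ u$ for the $\alpha^*$ achieving the max there (this function is $\le g$ everywhere, with equality at $z_*$). Condition~2 of Definition~\ref{div} forces $\mu(u(z_*)) \in C_{\alpha^*}$, so outside the fixed compact exceptional set $z^{\alpha^*}$ is $J$-holomorphic near $u(z_*)$, and the maximum modulus principle (with Schwarz reflection across the ray $e^{i\theta_i}\bb{R}_{>0}$ on boundary arcs) rules out that local max immediately. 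Hence $g \circ u$ has no local maxima outside a fixed compact $K$. Since $u$ extends continuously to the closed disc, $g \circ u$ attains its global maximum, which must therefore land in $K$; the bound $g\circ u \le \max_K g$ follows at once. No contradiction setup, no component-tracking, no propagation.
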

\begin{proof} Let  $u$ be such a map. Consider a function 
\[ g(z) = \max_\alpha ( | c_\alpha z^\alpha |^{k_\alpha} ) \] 
satisfying the second condition of Definition \ref{div}. First, note that any local maximum of $g \circ u$ must be a local maximum of one of the $| c_\alpha z^\alpha| \circ u$. Hence, there are no local maxima of $g \circ u$ in the interior of the disc by the maximum modulus principle outside of the union of the compact subsets of each $C_\alpha$ where the functions $c_\alpha z^\alpha$ are not guaranteed to be holomorphic.

Further, the second condition of Definition \ref{div} implies that outside of a compact subset of each $C_\alpha$ the boundary points of the disc are mapped by $c_\alpha z^\alpha \circ u$ to a disjoint union of rays from the origin. But then, we can apply the Schwarz reflection principle to any of these boundary points to holomorphically extend $c_\alpha z^\alpha \circ u$ and conclude as above that $|c_\alpha z^\alpha| \circ u$ has no local maxima at these boundary points.  

We have obtained that after extending $u$ continuously to the boundary punctures the maximum of $g \circ u$ is achieved in a fixed compact subset $K$ of $(\bb{C}^*)^n$. Thus, the image of $u$ is contained in the compact subset $g^{-1} ([0, M])$ where $M$ is the maximum value of $g$ on $K$. 
\end{proof}

\begin{rem} \label{energy1} It will be useful to note for later that any map $u$ as in Proposition \ref{comp1} has an a priori energy bound by Stokes' theorem due to the exactness of the Lagrangians $L_0, \hdots, L_k$ when the almost complex structure is compatible with $\omega$. 
\end{rem}

\begin{rem} \label{conical} For unbounded Lagrangians inside of a Liouville domain, it is more common to argue that the images of holomorphic discs with boundary on these Lagrangians are contained in a compact set via a maximum principle for the radial coordinate in the convex ends as in \cite{Vi}. Although the proof of Proposition \ref{comp1} is similar in principle, the standard approach will not work as it requires the Lagrangians to be conical and Legendrian at infinity. Recall that we construct Lagrangian sections mirror to line bundles as the time-$1$ image of the real positive locus under the flow of a twisting Hamiltonian $H$ from Section \ref{twistham}. These Lagrangians are certainly conical but the condition that they are Legendrian with respect to $\lambda = \sum \mu_i \, d\theta_i$ is 
\[ \sum_{i=1}^n \mu_i \frac{\partial^2 H}{\partial \mu_i \partial \mu_j} = 0 \]
for all $j = 1, \hdots, n$ or more concretely, $Z(\nabla H) = 0$ where $Z$ is the Liouville vector field. Since $Z$ is simply the radial vector field on $\bb{R}^n$, the condition implies that $\nabla H$ is radially invariant. However, this would imply that $H$ is smoothed in conical regions, which cannot be done preserving monomial admissibility.
\end{rem}

In order to be able to push Lagrangians into the correct position and define all morphisms in $\mathcal{F}_\Delta (W)$ via localization, we will need to work in the more general setting of perturbed holomorphic maps. Let $H(\mu_1, \hdots, \mu_n)$ be a Hamiltonian on $(\bb{C}^*)^n$ that depends only on the moment map coordinates. Let $f\colon (\bb{C}^*)^n \to \bb{C}$ be a Laurent monomial $f(z) = z^\alpha = z_1^{\alpha_1} \hdots z_n^{\alpha_n}$ and $u\colon S \to (\bb{C}^*)^n$ be a map satisfying 
\begin{equation} \label{perturb} (du - X_H \otimes \beta)^{0,1}_J = 0 \end{equation}
where $\beta$ is a real one-form on $S$. Assume that $\nabla H \cdot \alpha$ is constant. Let $v = f \circ u \colon S \to \bb{C}$ and write $v = v_1 + iv_2$. The approach below is again a variation on older techniques (see for example Lemma 4.2 of \cite{Lef1}).

\begin{prop} \label{monmax} The function $|v|$ has no local maxima in the interior of $S$ if $f$ is holomorphic and ${(\nabla H \cdot \alpha) d\beta \leq 0}$. Further, $|v|$ has no local maxima on the boundary under the additional assumptions that $\beta|_{\partial S} = 0$ and $v$ maps each boundary component to a ray from the origin.
\end{prop}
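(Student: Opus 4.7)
The plan is to reduce the statement to a subharmonicity estimate for $\log|v|$ and then invoke the maximum principle. First, since $f = z^\alpha$ is $J$-holomorphic, $df$ is $\bb{C}$-linear and sends $(0,1)$-forms to $(0,1)$-forms, so applying $df$ to \eqref{perturb} yields a scalar equation $(dv - df(X_H) \otimes \beta)^{0,1} = 0$ on $S$. Using the coordinates $z_j = e^{\mu_j + i\theta_j}$ and $X_H = \sum_j H_{\mu_j} \partial_{\theta_j}$, a direct computation gives $df(X_H) = i(\nabla H \cdot \alpha)v = icv$, where $c := \nabla H \cdot \alpha$ is constant by hypothesis. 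Hence $v$ satisfies $\bar\partial v = ic\, v\, \beta^{0,1}$.

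On the open set $\{v \ne 0\}$, write $v = re^{i\phi}$ so that $\log v = \log r + i\phi$ and expand $\bar\partial \log v = ic\, \beta^{0,1}$ in local isothermal coordinates $(s,t)$ on $S$ into its real and imaginary parts:
\begin{align*}
\partial_s \log r - \partial_t \phi &= -c\beta_t, \\
\partial_t \log r + \partial_s \phi &= c\beta_s.
\end{align*}
Differentiating the first equation by $s$ and the second by $t$, the mixed partials of $\phi$ cancel upon addition, yielding $\Delta \log r = -c(\partial_s \beta_t - \partial_t \beta_s) = -c\, d\beta(\partial_s,\partial_t)$. The hypothesis $(\nabla H \cdot \alpha)\, d\beta \leq 0$ then makes $\Delta \log|v| \geq 0$, so $\log|v|$ is subharmonic on $\{v \neq 0\}$. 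The strong maximum principle rules out interior local maxima of $\log|v|$, hence of $|v|$; zeros of $v$ are local minima of $|v|$ (unless $v$ vanishes identically, an easily-handled trivial case via unique continuation), which settles the interior statement.

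For the boundary statement I propose to reduce to the interior case by Schwarz reflection. Near a chosen boundary component, pick local coordinates so it lies along $\{t=0\}$ with $\partial_s$ tangent; by multiplying $v$ by a unit constant $e^{-i\theta_0}$ (which preserves $|v|$ and the form of the equation) we may assume $v$ takes real nonnegative values on this component. Define $\wt v(s,t) := \overline{v(s,-t)}$ and extend $\beta$ by odd reflection of $\beta_s$ and even reflection of $\beta_t$, that is $\wt\beta_s(s,t) := -\beta_s(s,-t)$ and $\wt\beta_t(s,t) := \beta_t(s,-t)$. The assumption $\beta|_{\partial S} = 0$ forces $\beta_s|_{t=0} = 0$, so $\wt\beta$ is continuous across the boundary, and a direct check using $\overline{\bar\partial v} = \partial \bar v$ shows that $\wt v$ satisfies the analogous perturbed equation with $\wt\beta$ on the enlarged neighborhood. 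Moreover $(d\wt\beta)_{st}(s,t) = (d\beta)_{st}(s,-t)$, so the sign hypothesis $c\, d\wt\beta \leq 0$ persists. A boundary local maximum of $|v|$ would then produce an interior local maximum of $|\wt v|$, contradicting the interior case.

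The main technical point is the bookkeeping in the reflection argument: verifying that the specific odd/even reflections above are what is required for $\wt v$ to solve the perturbed equation with $\wt \beta$ in place of $\beta$, and confirming that the continuity of $\wt\beta$ across $\{t=0\}$ (which is precisely what $\beta_s|_{\partial S} = 0$ provides) is enough regularity for the subharmonicity computation to carry over to the extended domain. Everything else, including the invariance of $|v|$ under multiplication by a unit constant and the reduction of the multi-boundary-component case to this local picture, is formal.
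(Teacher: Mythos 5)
Your proof is correct but takes a genuinely different route from the paper's. Both you and the paper first reduce the perturbed equation on $v = f\circ u$ to the scalar $\bar\partial$-type equation $\bar\partial v = cv(-\beta(\partial_t) + i\beta(\partial_s))$. From there the paper computes $\Delta|v|^2$ directly, massaging the cross terms into perfect squares to reach $\tfrac{1}{2}\Delta|v|^2 = -c|v|^2 d\beta(\partial_s,\partial_t) + 2(\cdots)^2 + 2(\cdots)^2 \geq 0$, and then uses Hopf's lemma at the boundary after showing $\partial_t|v|^2 = c|v|^2\beta(\partial_s)$ there. You instead pass to $\log|v|$, for which the Laplacian identity $\Delta\log|v| = -c\,d\beta(\partial_s,\partial_t)$ falls out immediately from the CR equations with no cross-term gymnastics; the price is the separate (but standard, via the similarity principle) treatment of zeros of $v$. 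For the boundary you replace Hopf's lemma by Schwarz reflection. Your accounting is correct: the condition $\beta|_{\partial S} = 0$ kills $\beta_s$ along $t=0$, and together with $\arg v$ constant on the boundary this forces $\partial_t\log|v|(s,0^+) = 0$, so the even reflection $\log|\wt v|(s,t) = \log|v|(s,|t|)$ is $C^1$ across $t=0$; since it is $C^2$ with nonnegative Laplacian off $t=0$, its distributional Laplacian is $\geq 0$ and the interior maximum principle applies. Note that, for the boundary step, you do not even need the equation satisfied by $\wt v$ or the reflected one-form $\wt\beta$ — you only need subharmonicity of $\log|\wt v|$, which follows from symmetry of the Laplacian under $t\mapsto -t$ plus the $C^1$ matching. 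So the bookkeeping of the odd/even reflection of $\beta$, while correct, is dispensable. In short: you trade the paper's explicit Hessian computation and Hopf's lemma for a $\log$-subharmonicity identity and a reflection, getting a slightly shorter interior argument at the cost of handling zeros and of a regularity check at the boundary.
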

\begin{proof}
In order to establish an interior maximum principle for $|v|$, we will compute $\Delta |v|^2$. Let $z = s + it$ be a local holomorphic coordinate on $S$. Note that
\[ (dv)^{0,1} = dv + i dv i = \frac{\partial v}{\partial s} ds  + \frac{\partial v}{\partial t} dt + i \left( \frac{\partial v}{\partial s} ds \, i + \frac{\partial v}{\partial t} dt \, i \right) \]
\[ = \left( \frac{\partial v}{\partial s} + i \frac{\partial v}{\partial t} \right) ds + \left( \frac{\partial v}{\partial t} - i \frac{\partial v}{\partial s} \right) dt = \bar \partial v \, ds - i \bar \partial v \, dt \]
since $ds \, i = - dt$ and $dt \, i = ds$. Thus, 
\begin{equation} \label{delbar} \bar \partial v = (dv)^{0,1} (\partial_s ) .\end{equation}
Now, observe that 
\[ 0 = (du - X_H \otimes \beta)^{0,1}  = du - X_H \otimes \beta + J du i - J X_H \otimes \beta i \] 
implies that 
\[ du i = X_H \otimes \beta i + J du - JX_H \otimes \beta .\]
Thus, 
\[ (dv)^{0,1} = df \, du + i df \, du i = df \, du + i df ( X_H \otimes \beta i + J du - JX_H \otimes \beta) \]
\[ df(JX_H) \otimes \beta i + df(X_H) \otimes \beta \]
\[ = \Big( df(X_H) \beta(\partial_s) + df(JX_H) \beta(\partial_t) \Big)ds + \Big(df(X_H) \beta(\partial_t) - df(JX_H) \beta(\partial_s) \Big) dt \]
which combined with \eqref{delbar} gives
\begin{equation} \label{delbar2} \bar \partial v = df(X_H) \beta(\partial_s) + i df(X_H) \beta(\partial_t). \end{equation}
Note that in coordinates $(\mu_1, \theta_1, \hdots, \mu_n, \theta_n)$ on $(\bb{C}^*)^n$ and coordinates $( r, \theta)$ on $\bb{C}$, we have 
\[ df = \left( \begin{matrix} * \ 0 \ \hdots \ * \ 0 \\ 0 \ \alpha_1 \ \hdots \ 0 \ \alpha_n \end{matrix} \right) .\]
Thus, $df(X_H) = (\nabla H \cdot \alpha) \partial_\theta = i (\nabla H \cdot \alpha) v $ and $idf(X_H) = -(\nabla H \cdot \alpha) v$. From now on, we will denote $\nabla H \cdot \alpha$ by $c$. We have computed that \eqref{delbar2} simplifies to
\begin{equation} \label{delbar3} \bar \partial v = cv (-\beta(\partial_t) + i \beta(\partial_s) ) \end{equation}
which is equivalent to
\begin{equation} \label{CR1} \frac{\partial v_1}{\partial s} - \frac{\partial v_2}{\partial t} = - cv_1 \beta (\partial_t) - cv_2 \beta(\partial_s) \end{equation}
and
\begin{equation} \label{CR2} \frac{\partial v_1}{\partial t} + \frac{\partial v_2}{\partial s} = -cv_2 \beta(\partial_t) + cv_1 \beta(\partial_s). \end{equation}

Using \eqref{delbar3}, we have
\begin{equation} \label{lap} \Delta v = \partial \bar \partial v = c \partial v (-\beta(\partial_t) + i \beta(\partial_s) ) + cv\big( -d\beta(\partial_s, \partial_t) +i (\partial_t \beta(\partial_t) + \partial_s \beta(\partial_s))\big). \end{equation}
Note that by applying \eqref{delbar3} again, we see that
\[c \partial v ( -\beta(\partial_t) + i \beta(\partial_s) ) = c \left(2 \frac{\partial v}{\partial s} - \bar \partial v \right)(-\beta(\partial_t) + i \beta(\partial_s) ) \]
\[ = 2c \frac{\partial v}{ \partial s}( -\beta(\partial_t) + i \beta(\partial_s) ) - c^2 v ( -\beta(\partial_t) + i \beta(\partial_s) )^2 \]
\[ = 2c \frac{\partial v}{ \partial s}(- \beta(\partial_t) + i \beta(\partial_s) ) - c^2 v ( \beta(\partial_t)^2 - \beta(\partial_s)^2 - 2i \beta(\partial_t) \beta(\partial_s) ) \]
and \eqref{lap} expands to
\begin{equation} \begin{split} \label{lap2} \Delta v =  &2c \frac{\partial v}{ \partial s}(- \beta(\partial_t) + i \beta(\partial_s) ) - c^2 v \big( \beta(\partial_t)^2 - \beta(\partial_s)^2 - 2i \beta(\partial_t) \beta(\partial_s) \big) \\
 &+  cv\big( -d\beta(\partial_s, \partial_t) +i (\partial_t \beta(\partial_t) + \partial_s \beta(\partial_s))\big) \end{split} \end{equation}

Also, we observe that
\[ |\bar \partial v|^2 = \left| \frac{\partial v_1}{\partial s} - \frac{\partial v_2}{\partial t} + i \left( \frac{\partial v_1}{\partial t} + \frac{\partial v_2}{\partial s} \right) \right|^2 = | \nabla v_1 |^2 + |\nabla v_2 |^2 + 2 \left( \frac{\partial v_1}{\partial t} \frac{\partial v_2}{\partial s} - \frac{\partial v_1}{\partial s} \frac{\partial v_2}{\partial t} \right)  \]
but also by \eqref{delbar3}
\[ |\bar \partial v|^2 = c^2 |v|^2 | - \beta(\partial_t) + i \beta(\partial s) |^2 = c^2 |v|^2 (\beta(\partial_t)^2 + \beta(\partial_s)^2 ).\]
Therefore, 
\begin{equation} \label{normterm} | \nabla v_1 |^2 + |\nabla v_2 |^2 =  c^2 |v|^2 (\beta(\partial_t)^2 + \beta(\partial_s)^2 ) + 2 \left(  \frac{\partial v_1}{\partial s} \frac{\partial v_2}{\partial t} - \frac{\partial v_1}{\partial t} \frac{\partial v_2}{\partial s} \right) \end{equation}

Since $|v|^2 = v_1^2 + v_2^2$, we get that
\begin{equation} \begin{split} \label{nlap} &\frac{1}{2} \Delta |v|^2 = v \cdot \Delta v + | \nabla v_1 |^2 + |\nabla v_2|^2 \\
&= 2c \left[\frac{\partial v}{\partial s} (- \beta(\partial_t) + i \beta(\partial_s) ) \right] \cdot v - c|v|^2 d\beta(\partial_s, \partial_t)  + 2 c^2 |v|^2 \beta(\partial_s)^2 +  2 \left(  \frac{\partial v_1}{\partial s} \frac{\partial v_2}{\partial t} - \frac{\partial v_1}{\partial t} \frac{\partial v_2}{\partial s} \right) \end{split} \end{equation}
using \eqref{lap2}, \eqref{normterm}, and that $v \cdot iv = 0$. We now note that
\[ \frac{\partial v}{\partial s} (-\beta(\partial_t) + i \beta(\partial_s) ) = \left( - \frac{\partial v_1}{\partial s} \beta(\partial_t) - \frac{\partial v_2}{\partial s} \beta(\partial s) \right) + i \left( -\frac{\partial v_2}{\partial s} \beta(\partial_t) + \frac{\partial v_1}{\partial s} \beta(\partial_s) \right).  \]
Hence, 
\[ c \left[ \frac{\partial v}{\partial s} (-\beta(\partial_t) + i \beta(\partial_s) ) \right]  \cdot v +   \left(  \frac{\partial v_1}{\partial s} \frac{\partial v_2}{\partial t} - \frac{\partial v_1}{\partial t} \frac{\partial v_2}{\partial s} \right) \]
\[ = \frac{\partial v_1}{\partial s} \left( -c v_1 \beta(\partial_t) + cv_2 \beta(\partial_s) + \frac{\partial v_2}{\partial t} \right) + \frac{\partial v_2}{\partial s} \left( - cv_1 \beta(\partial_s) - cv_2 \beta(\partial_t) - \frac{\partial v_1}{\partial t} \right)\]
\[ = \left( \frac{\partial v_1}{\partial s} \right)^2  +2 \frac{\partial v_1}{\partial s} cv_2 \beta(\partial_s) + \left( \frac{\partial v_2}{\partial s} \right)^2 - 2 \frac{\partial v_2}{\partial s} cv_1 \beta(\partial_s)  \]
using \eqref{CR1} and \eqref{CR2}. Combining this with \eqref{nlap}, we finally obtain
\[ \frac{1}{2} \Delta |v|^2 = - c|v|^2 d\beta(\partial_s, \partial_t) + 2 \left( \frac{\partial v_1}{\partial s} + cv_2 \beta(\partial_s) \right)^2 + 2 \left( \frac{\partial v_2}{\partial s} - cv_1 \beta(\partial_s) \right)^2 \]

Thus, we see that $|v|^2$ is subharmonic given that $c d\beta \leq 0$ and hence cannot have any interior local maxima.

Now, suppose that we have that the image under $v$ of the boundary components of $S$ lie on arcs from the origin so that $\text{arg}(v)$ is a constant on the boundary. Near any point on a boundary component of $S$, we can assume that the coordinates $z = s + it$ are such that the boundary component is given by $t = 0$. We then have
\begin{equation} \label{constarg} 0 = \frac{\partial \text{arg}(v)}{\partial s} = \frac{1}{|v|^2} \left( -v_2 \frac{\partial v_1}{\partial s} + v_1 \frac{\partial v_2}{\partial s} \right) \end{equation}
on the boundary. Thus, we have 
\[ \frac{1}{2} \frac{\partial |v|^2}{\partial t} = v_1 \frac{\partial v_1}{\partial t} + v_2 \frac{\partial v_2}{\partial t}  = v_1 \left( - \frac{\partial v_2}{\partial s} - cv_2 \beta(\partial_t) + cv_1 \beta(\partial_s) \right) + v_2 \left( \frac{\partial v_1}{\partial s} + cv_1 \beta (\partial_t) + cv_2 \beta(\partial_s) \right) \]
\[ = c|v|^2 \beta(\partial_s) \]
on the boundary using \eqref{CR1}, \eqref{CR2}, and \eqref{constarg}. We have that $\beta(\partial_s) = 0$ since $\beta|_{\partial S}  = 0$. Thus, we see that there can be no local maxima of $|v|^2$ along the boundary by Hopf's lemma since we have already seen $\Delta |v|^2 \geq 0$.
\end{proof}

Now, suppose that we have strip-like ends near each puncture of $S$. More precisely, we have proper holomorphic embeddings $\epsilon_{j} \colon \bb{R}_{>0} \times [0,1] \to S$ such that $\epsilon_{j}^{-1}(\partial S) = \bb{R}_{>0} \times \{0,1\}$ and $\lim_{s \to \infty} \epsilon_{j} (s,t) = z_j$ for $j = 1, \hdots, k$ and the same with $\bb{R}_{<0}$ in the place of $\bb{R}_{>0}$ for $j = 0$. Fix weights $w_j \in \bb{R}$ for $j = 0,  \hdots, k $ and time-$w_j$ flow lines $p_j(t)$ of $X_H$ from $L_{j-1}$ to $L_j$ for $j = 1, \hdots, k$ and $p_0(t)$ a time $w_{0}$ flow line of $X_H$ from $L_0$ to $L_k$. Using Proposition \ref{monmax} and repeating the proof of Proposition \ref{comp1}, we obtain the following.

\begin{prop} \label{comp2} There exists a compact subset of $(\bb{C}^*)^n$ that contains the image of any solution of \eqref{perturb} such that $\nabla H \cdot \alpha$ is constant and $(\nabla H \cdot \alpha) d\beta \leq 0$ in $C_\alpha$ for all $\alpha \in A$, $\beta|_{\partial S} = 0$, $\epsilon_j^*\beta = w_j dt$, the image of the boundary region between $z_j$ and $z_{j+1}$ lies on $L_j$ for $j = 0, \hdots, k$ with $z_{k+1} = z_0$, and $\lim_{s \to \infty} u \circ \eps_j (s,t) = p_j (t)$ for $i = 0, \hdots, k$. 
\end{prop}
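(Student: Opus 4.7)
\textbf{Proof proposal for Proposition \ref{comp2}.} The plan is to adapt the proof of Proposition \ref{comp1} by replacing the maximum modulus principle and Schwarz reflection with Proposition \ref{monmax}, and by using the asymptotic convergence on strip-like ends to handle the ends. Fix constants $k_\alpha \in \bb{R}_{>0}$ as in Definition \ref{div} and set
\[ g(z) = \max_{\alpha \in A}\bigl(|c_\alpha z^\alpha|^{k_\alpha}\bigr). \]
As in the proof of Proposition \ref{comp1}, any local maximum of $g \circ u$ must be a local maximum of $|c_\alpha z^\alpha| \circ u$ for some $\alpha \in A$, and the second condition of Definition \ref{div} implies that such a maximum can only be attained at a point mapped into $\mu^{-1}(C_\alpha)$.

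Next I apply Proposition \ref{monmax} to $f = z^\alpha$ (rescaling by the nonzero constant $c_\alpha$ is harmless). The hypotheses are satisfied in the complement of a fixed compact subset $K_\alpha \subset (\bb{C}^*)^n$: $J$-holomorphicity of $z^\alpha$ outside $K_\alpha$ follows from monomial admissibility of $J$; the scalar $c = \nabla H \cdot \alpha$ is constant on $C_\alpha$ by assumption; $cd\beta \leq 0$ and $\beta|_{\partial S} = 0$ are assumed; and monomial admissibility of each Lagrangian $L_j$ with respect to $W_j = e^{-i\theta_j} W$ guarantees that $c_\alpha z^\alpha$ has constant argument $\theta_j$ along the part of $L_j$ lying outside $K_\alpha$, so that each boundary arc of $S$ whose image lies in $\mu^{-1}(C_\alpha) \setminus K_\alpha$ is sent by $c_\alpha z^\alpha \circ u$ into a ray from the origin. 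Proposition \ref{monmax} (both the interior statement and the boundary statement, the latter being essentially local via Hopf's lemma) therefore rules out local maxima of $|c_\alpha z^\alpha| \circ u$ at any interior or boundary point of $S$ whose image under $u$ lies outside the compact set $K := \bigcup_{\alpha \in A} K_\alpha$.

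The remaining possibility is that a supremum of $g \circ u$ could escape to one of the punctures. This is where the strip-like end data enters: the asymptotic condition $\lim_{s\to\pm\infty} u \circ \eps_j(s,t) = p_j(t)$ means that $u$ extends continuously over each puncture to the compact arc $p_j([0,1])$, which is contained in a fixed compact subset of $(\bb{C}^*)^n$ depending only on the Lagrangians, the Hamiltonian $H$, and the weights $w_j$. Hence $g \circ u$ extends continuously to the closed disc $\bar S$, and its global maximum is realised at a point whose image under $u$ lies in the fixed compact set $K \cup \bigcup_j p_j([0,1])$. Letting $M$ denote the supremum of $g$ on this set, the image of $u$ is contained in the compact sublevel set $g^{-1}([0,M])$, which proves the proposition.

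The only nontrivial step is checking that the hypotheses of Proposition \ref{monmax} genuinely hold in each $\mu^{-1}(C_\alpha)$ outside a compact piece, and this is where the definition of monomial admissibility for both $J$ and the Lagrangians $L_j$ is used in an essential way; everything else is a direct repetition of the argument of Proposition \ref{comp1} together with the standard use of strip-like end convergence to extend continuously over the punctures.
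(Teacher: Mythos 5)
Your proposal is correct and takes essentially the same approach as the paper, which simply states that Proposition~\ref{comp2} follows ``using Proposition~\ref{monmax} and repeating the proof of Proposition~\ref{comp1}''; you have spelled out that one-line remark carefully, including the correct use of the strip-like end asymptotics to control behavior near the punctures.
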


\begin{rem} \label{perturbenergy} It will again be important to have an a priori energy bound for the maps in Proposition \ref{comp2}. The conditions on $\beta$ in the strip-like ends and the limit points are really only needed for this energy bound and not in the proof of Proposition \ref{comp2}. For solutions of \eqref{perturb}, there are two notions of energy, geometric and topological, defined respectively by
\[ E^{\text{geom}}(u) = \int_S \frac{1}{2} \| du - X_H \otimes \beta \|^2 \, d\text{vol}_S = \int_S u^* \omega - u^*dH \wedge \beta \]
and
\[ E^{\text{top}} (u) = \int_S u^* \omega - d( u^*H \, \beta) = E^\text{geom}(u) - \int_S u^*H d\beta. \]
Similar to the unperturbed case, topological energy has an a priori energy bound by Stokes' theorem due to exactness of our Lagrangians. When $H d\beta \leq 0$, we have $E^\text{geom}(u) \leq E^\text{top}(u)$ and a bound of geometric energy. Unfortunately, this is not always the case in our desired applications. However, Proposition \ref{comp2} guarantees that $u$ cannot leave a compact subset of $(\bb{C}^*)^n$ so there exists $0 > K \in \bb{R}$ such that $u^*H \geq K$. With $d\beta \leq 0$, we then have
\[ E^\text{geom}(u) \leq E^\text{top}(u) + K \int_S d\beta  = E^\text{top}(u) + K ( - w_0 + w_1 + \hdots + w_n) \]
giving a bound independent of $u$ and $\beta$. 
\end{rem}

\begin{rem} \label{moving} We will also need to consider Floer solutions with moving boundary conditions. In particular, we will consider solutions of \eqref{perturb} with a boundary condition $L_z = \phi^{\psi(z)} (L)$ for $z \in \partial S$ where $\phi$ is the flow of a Hamiltonian $H$ with $\nabla H \cdot \alpha$ constant for all $\alpha \in A$ and $\psi\colon \partial S \to \bb{R}$ is a smooth  function which is locally constant in the strip-like ends (and monotonic in all applications). Further, we will require that $\beta|_{\partial S} = d \psi$ and $\epsilon_j^*\beta = 0$. 

In this setting, the interior maximum principle is exactly the same as in Proposition \ref{monmax} and requires only that $(\nabla H \cdot \alpha) d\beta \leq 0$. On the boundary $t = 0$ in coordinates $z = s + it$, we have 
\[ \psi'(s)(\nabla H \cdot \alpha) = \frac{ \partial \text{arg}(v)}{\partial s} \]
in place of \eqref{constarg} which leads to $\partial |v|^2/ \partial t = 0$ again giving the desired maximum principle on the boundary. The condition that $\beta|_{\partial S} = d \psi$ implies that $E^{top}(u)$ is indeed a topological quantity, and a similar argument to that of Remark \ref{perturbenergy} will give a uniform bound on geometric energy.
\end{rem}

\subsection{Floer theory of monomially admissible Lagrangians} \label{Floer}

We now define the Floer theory of exact monomially admissible Lagrangians with respect to a monomial division $\Delta$ for a Laurent polynomial $W$ in an essentially standard way which largely follows the setup of \cite{SeBook} where more details are given. 

Suppose that $L_0, L_1$ are transverse exact Lagrangians monomially admissible with respect to $\Delta$ and $W_j = e^{-i\theta_j}W$ for $j = 0,1$, respectively. Further, suppose that $\theta_0 \neq  \theta_1$. Let $\mathcal{J}(\Delta)$ be the space of $\omega$-compatible monomially admissible almost complex structures. Given a $J \in \mathcal{J}(\Delta)$ and $p, q \in L_0 \cap L_1$, define $\widehat{\mathcal{M}}(p, q; J)$ to be the moduli space of $J$-holomorphic strips $u\colon \bb{R} \times [0,1] \to (\bb{C}^*)^n$ satisfying  $u(\bb{R} \times \{ 0 \}) \subset L_0$,  $u(\bb{R} \times \{ 1 \}) \subset L_1$, $\lim_{s \to \infty} u(s,t) = p$, and $\lim_{s \to -\infty} u(s,t) = q$. Let $\mathcal{M}(p, q; J)$ be the quotient of $\widehat{\mathcal{M}}(p, q;  J)$ by the reparameterization action of $\bb{R}$. Let $\overline{\mathcal{M}}(p, q; J)$ be the Gromov compactification of $\mathcal{M}(p,q;J)$ obtained by adding broken holomorphic strips (there is no sphere or disc bubbling due to exactness) at the boundary. 

\begin{prop} \label{comp3} $\overline{\mathcal{M}}(p, q; J)$ is compact. 
\end{prop}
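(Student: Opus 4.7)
The proof is an essentially standard application of Gromov compactness once the two key a priori bounds are in place, both of which have been recorded earlier in the section. My plan is as follows.

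Take a sequence $[u_n] \in \mathcal{M}(p,q;J)$ with representatives $u_n \colon \bb{R} \times [0,1] \to (\bb{C}^*)^n$. First, by Proposition \ref{comp1} applied with $k=1$, the images $u_n(\bb{R} \times [0,1])$ all lie in a fixed compact subset $K \subset (\bb{C}^*)^n$ depending only on $L_0, L_1, p, q$. This is the essential new input; every other step is a black box. Second, by Remark \ref{energy1}, the exactness of $L_0, L_1$ together with Stokes' theorem gives a uniform a priori energy bound $E(u_n) \leq C$ independent of $n$.

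With a uniform $C^0$ bound and a uniform energy bound in hand, I would invoke the standard Gromov-Floer compactness theorem for pseudoholomorphic strips with Lagrangian boundary conditions, exactly as in Section 9 of \cite{SeBook}. Concretely: elliptic bootstrapping gives uniform $C^\infty_{\text{loc}}$ bounds away from points where energy concentrates. At points of energy concentration one applies the usual rescaling argument. Since $(\bb{C}^*)^n$ carries an exact symplectic form and $L_0, L_1$ are exact Lagrangians, any nonconstant holomorphic sphere or holomorphic disc would carry positive symplectic area equal to the integral of an exact form over a closed cycle (resp., with boundary on an exact Lagrangian), which must vanish; this rules out all sphere and disc bubbling. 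The only remaining source of noncompactness is that energy may escape along the ends of the strip, which corresponds precisely to strip breaking at intermediate intersection points of $L_0 \cap L_1$. After passing to a subsequence, the $u_n$ converge (up to reparameterization on each component) to a broken $J$-holomorphic strip
\[ (v_1, \ldots, v_m), \qquad v_i \in \mathcal{M}(x_{i-1}, x_i; J), \qquad x_0 = p, \ x_m = q, \]
which by definition is an element of $\overline{\mathcal{M}}(p,q;J)$.

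The main obstacle would normally be the $C^0$ bound, because $(\bb{C}^*)^n$ is noncompact and the Lagrangians are noncompact, so holomorphic strips could in principle escape to infinity. However, monomial admissibility is designed precisely to prevent this: the maximum modulus principle applied to each monomial $c_\alpha z^\alpha$, which is $J$-holomorphic on $\mu^{-1}(C_\alpha)$ outside a compact set and maps the boundary of the strip to positive real rays there, forces $|c_\alpha z^\alpha| \circ u_n$ to attain its maximum in a fixed compact region, and this was already carried out in Proposition \ref{comp1}. With this obstacle removed, the remainder of the argument is a direct citation of standard Gromov compactness in the exact setting, so no further delicate analysis is required.
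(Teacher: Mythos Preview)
Your proposal is correct and follows exactly the paper's approach: the paper's proof is the one-line statement that this is standard Gromov compactness once Proposition~\ref{comp1} confines all strips to a fixed compact subset of $(\bb{C}^*)^n$, and you have simply unpacked that sentence (together with the energy bound of Remark~\ref{energy1} and the exclusion of bubbling by exactness) in the expected way.
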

\begin{proof} This is a standard case of Gromov compactness after using Proposition \ref{comp1} to see that all discs in $\overline{\mathcal{M}}(p, q; J)$ lie in a compact subset of $(\bb{C}^*)^n$. 
\end{proof}

A generic $t$-dependent almost-complex structure $J = \{J_t\}_{t\in [0,1]}$ chosen in $\mathcal{J}(\Delta)$ will be regular in the sense that $\overline{\mathcal{M}}(p, q; J)$ is a smooth manifold (see for instance Section 9k in \cite{SeBook} and note that the restrictions on $J$ at infinity pose no problem since the curves are contained in a compact set; the transversality was first established in \cite{FHS}). We choose a regular $J$, which we denote $J_{L_0, L_1}$. With the a priori energy bound noted in Remark \ref{energy1}, Gromov compactness guarantees that the number of points in the zero-dimensional part of $\overline{\mathcal{M}}(p, q; J_{L_0, L_1})$, which consists of Maslov index zero strips and is the same as the zero-dimensional part of $\mathcal{M}(p, q; J_{L_0, L_1}),$ is finite.
 
Then, we define $CF(L_0, L_1)$ to be the vector space freely generated by the intersection points of $L_0$ and $L_1$, and we define a differential on $CF(L_0, L_1)$ by linearly extending
\[ m^1(p) = \sum_{q \in L_0 \cap L_1 } \# \mathcal{M}(p, q; J_{L_0, L_1}) q \]
where $p$ is a basis element and $\#$ is the count of points in the zero-dimensional part. The fact that $m^1$ is indeed a differential follows from analyzing the boundary of the $1$-dimensional part of $\overline{\mathcal{M}}(p, q; J_{L_0, L_1})$. 

The construction above works when over a field of characteristic two and produces ungraded chain complexes. To obtain graded complexes, we choose a holomorphic volume form on $(\bb{C}^*)^n$ which gives a phase map $L \to S^1$ for each Lagrangian. A grading on a Lagrangian is a choice of lift of the phase map to a map $L \to \bb{R}$. The obstruction to the existence of such a lift is the Maslov class in $H^1(L)$. Hence, simply connected Lagrangians always admit a grading. Note that the Lagrangian sections, which we are interested in as the mirrors to line bundles, all carry an essentially canonical grading given by $n$ minus the Morse index of their twisting Hamiltonian as explained in Example 2.10 in \cite{SeGraded}. The issue of defining Floer complexes over a field over characteristic not equal to $2$, such as $\bb{C}$ in our case, amounts to choosing an orientation for the moduli spaces above which can be done via a choice of Pin structure for each Lagrangian as detailed in Section 11j of \cite{SeBook}.

\begin{df} A Lagrangian brane is a Lagrangian submanifold with a choice of grading and Pin structure. Further, any adjective applied to a Lagrangian brane, such as monomially admissible, will mean to apply that adjective to the underlying Lagrangian submanifold.
\end{df}

We will often abuse notation and denote a Lagrangian brane the same as its underlying Lagrangian submanifold. We can also define higher structure maps on the chain complexes of Lagrangian branes in a similar way to how we defined $m^1$. For $d \geq 2$, let $\overline{\mathcal{R}}_{d+1}$ be the Deligne-Mumford-Stasheff compactification of the space of discs with $d +1$ marked points labeled $z_0, \hdots, z_d$ in counterclockwise order. Let $\overline{\mathcal{S}}_{d+1} \to \overline{\mathcal{R}}_{d+1}$ be the universal family over this space. We choose a family of strip-like ends
\[ \epsilon_{j}^{d+1} \colon \bb{R}_{>0} \times [0,1] \times \overline{\mathcal{R}}_{d+1} \to \overline{\mathcal{S}}_{d+1} \]
for $j = 1, \hdots, d$ and 
\[ \epsilon_{0}^{d+1}\colon \bb{R}_{<0} \times [0,1] \times \overline{\mathcal{R}}_{d+1} \to \overline{\mathcal{S}}_{d+1} \]
which are compatible in the sense that they determine boundary collars
\[ \overline{\mathcal{R}}_{d+1} \times \overline{\mathcal{R}}_{\ell+1} \times [R, \infty) \to \overline{\mathcal{R}}_{d+\ell} \]
for sufficiently large $R$ by gluing some $z_j$ with $j \geq 1$ in a disc in $\overline{\mathcal{R}}_{\ell+1}$ to $z_0$ in a disc in  $\overline{\mathcal{R}}_{d+1}$ along the positive strip-like end at $z_j$ and negative strip-like end at $z_0$ and that the induced strip-like ends on $\overline{\mathcal{R}}_{d+\ell}$ match those already chosen for that stratum. 

Now, suppose that $L_0, \hdots, L_d$ are mutually transverse exact monomially admissible Lagrangian branes with respect to $\Delta$ and $W_j = e^{-i \theta_j}W$ for $j = 0, \hdots, d$. Further, suppose that $\theta_j \neq \theta_\ell$ for $j \neq \ell$ and that we have chosen regular almost complex structures $J_{L_j, L_{j+1}}$ for $j = 0, \hdots, d-1$ and for the pair $L_0, L_d$. We can then choose inductively for every sequence $0 \leq j_0 < \hdots < j_\ell \leq d$ with $\ell \geq 2$ a family of almost complex structures
\[ J_{L_{j_0}, \hdots, L_{j_\ell}}\colon \overline{\mathcal{S}}_{\ell + 1} \to \mathcal{J}(\Delta) \]
which are compatible in the sense that they agree with the already chosen complex structures for pairs when labeling the boundary components by Lagrangians counterclockwise from $z_0$ and the restriction of $J_{L_{j_0}, \hdots, L_{j_{\ell + m -1}}}$ to any boundary stratum $\overline{\mathcal{R}}_{\ell+1} \times \overline{\mathcal{R}}_{m +1} \subset \overline{\mathcal{R}}_{\ell +m }$ is the already chosen almost complex structure on $\overline{\mathcal{S}}_{\ell+1}$ and $\overline{\mathcal{S}}_{m +1}$ using the appropriate Lagrangian labels for the broken disc.

With those choices in hand, let $p_j \in L_j \cap L_{j+1}$ for $j = 0, \hdots, d-1$ and $q \in L_0 \cap L_d$. We define $\overline{\mathcal{M}}(p_1, \hdots, p_d, q; J_{L_0, \hdots, L_d} )$ to be the set of pairs of $(r, u)$ where $r \in \overline{\mathcal{R}}_{d +1}$ and $u$ is a $J_{L_0, \hdots, L_d}$-holomorphic map on the fiber of $\overline{\mathcal{S}}_{d+1} \to \overline{\mathcal{R}}_{d+1}$ over $r$ which maps boundary components to $L_0, \hdots, L_d$ counterclockwise from $z_0$ (on the glued domain) and limit to the $p_j$ in the positive strip-like ends and any negative gluing strip-like end and to $q$ in the negative strip-like end at $z_0$. Applying the same reasoning in the proof of Proposition \ref{comp3} to $\overline{\mathcal{M}}(p_1, \hdots, p_d, q; J_{L_0, \hdots, L_d} )$ shows that this is a compact space. Moreover, it will also be a smooth manifold for a generic domain-dependent choice of $J_{L_0, \hdots, L_d}$. Thus, we can define maps
\[ m^d\colon CF^\bullet(L_{d-1}, L_d) \otimes \hdots \otimes CF^\bullet(L_0, L_1) \to CF^\bullet(L_0, L_d) [2-d] \]
by 
\[ m^d(p_d, \hdots, p_1) =  \sum_{q \in \mathcal{H}_{L_0, L_k}} \# \overline{\mathcal{M}}(p_1, \hdots, p_d,  q; J_{L_0, \hdots, L_d}) q. \]
These operations satisfy the $A_\infty$ relations due to the geometry of the boundary of the $1$-dimensional part of $\overline{\mathcal{M}}(p_1, \hdots, p_d, q; J_{L_0, \hdots, L_d} )$.

In this setting, the Floer cohomology groups are invariant in the ways that one might expect. 
\begin{prop} \label{invarianthf} For Lagrangian branes $L_0, L_1$ monomially admissible with respect to $\Delta$ and $W_{j}  = e^{-i\theta_{j}}W$ for $j \in \{0, 1\}$ and $\theta_0 \neq \theta_1$, the Floer cohomology $HF^\bullet(L_0, L_1)$ is invariant of choice of $J \in \mathcal{J}(\Delta)$ and admissible Hamiltonian isotopies of $L_0$ and $L_1$. 
\end{prop}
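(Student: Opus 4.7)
The plan is to prove Proposition \ref{invarianthf} by two standard continuation arguments, one handling changes of $J \in \mathcal{J}(\Delta)$ and one handling admissible Hamiltonian isotopies of the $L_j$. Both rely essentially on the compactness results of Propositions \ref{comp1} and \ref{comp2} together with the energy bounds of Remarks \ref{energy1}, \ref{perturbenergy}, and \ref{moving}, so the work consists mostly of checking that the continuation data can be chosen to stay inside the monomially admissible framework.

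First I would prove $J$-invariance. The space $\mathcal{J}(\Delta)$ is convex since $\omega$-compatibility and holomorphicity of each $z^\alpha$ on the complement of a compact subset of $\mu^{-1}(C_\alpha)$ are both convex conditions; hence any two regular $J_0, J_1$ may be joined by a smooth path $\{J_s\}_{s\in\bb{R}}$ in $\mathcal{J}(\Delta)$ with $J_s = J_0$ for $s \ll 0$ and $J_s = J_1$ for $s \gg 0$. Counting the zero-dimensional moduli of $s$-dependent $J_s$-holomorphic strips from $L_0$ to $L_1$ defines a continuation map $\Phi\colon CF^\bullet(L_0,L_1;J_0)\to CF^\bullet(L_0,L_1;J_1)$. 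The proof of Proposition \ref{comp1} applies uniformly in $s$: at any potential interior local maximum of $g\circ u$ the maximum modulus principle uses only the holomorphicity of the relevant $z^\alpha$ with respect to $J_s$, which holds for every $J_s\in\mathcal{J}(\Delta)$. Combined with the topological energy bound of Remark \ref{energy1}, Gromov compactness gives a well-defined $\Phi$. Running the same construction with a $[0,1]$-family of paths produces a chain homotopy between $\Phi$ composed with the reverse continuation map and the identity, in the standard manner.

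Next I would prove invariance under an admissible Hamiltonian $H$, i.e.\ one satisfying $\nabla H\cdot\alpha = 0$ outside a compact subset of each $C_\alpha$. Such an $H$ preserves monomial admissibility, because the time-$t$ flow changes the lifted argument of $z^\alpha$ over $C_\alpha$ by $t\,\nabla H\cdot\alpha = 0$ outside a compact set. To compare $CF^\bullet(L_0,L_1)$ with $CF^\bullet(\phi^1_H(L_0), L_1)$ I use the moving-boundary setup of Remark \ref{moving}: choose a monotone smooth $\psi\colon\bb{R}\to[0,1]$ equal to $0$ for $s\ll 0$ and $1$ for $s\gg 0$, a $1$-form $\beta$ on $\bb{R}\times[0,1]$ with $\beta|_{\bb{R}\times\{1\}}=0$, $\beta|_{\bb{R}\times\{0\}}=d\psi$, vanishing pullback along the strip-like ends, and $d\beta\le 0$, and count solutions of \eqref{perturb} with moving boundary $\phi^{\psi(s)}_H(L_0)$ along $\bb{R}\times\{0\}$ and fixed boundary $L_1$ along $\bb{R}\times\{1\}$. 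Because $\nabla H\cdot\alpha=0$ in each $C_\alpha$, the hypothesis $(\nabla H\cdot\alpha)\,d\beta\le 0$ of the maximum principle in Remark \ref{moving} is trivially satisfied, so the extension of Proposition \ref{comp2} confines every Floer solution to a fixed compact set and Remark \ref{perturbenergy} yields the uniform geometric energy bound. Reversing the roles of $H$ and $-H$ (with cut-off $1-\psi$ and sign change) gives the chain-homotopy inverse, and the analogous argument with the moving boundary placed on $\bb{R}\times\{1\}$ handles isotopies of $L_1$.

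Combining the two steps gives a canonical isomorphism, well defined up to chain homotopy, between the complexes associated to any two tuples $(J, L_0, L_1)$ related by admissible Hamiltonian isotopies and paths in $\mathcal{J}(\Delta)$. I expect the main technical obstacle to be verifying that none of the parametrized or moving-boundary moduli spaces introduce new compactness failures; but as indicated, this reduces to checking that the maximum principles of Propositions \ref{comp1} and \ref{comp2} are pointwise in the domain and apply uniformly in the continuation parameter, so the standard Floer-theoretic machinery proceeds without essential modification.
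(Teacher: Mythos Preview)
Your overall strategy matches the paper's: reduce to standard continuation arguments once compactness is secured by Propositions \ref{comp1}, \ref{comp2} and Remark \ref{moving}. However, your claim that $\mathcal{J}(\Delta)$ is convex is incorrect. The condition $J^2 = -\id$ is not preserved under affine combinations, and neither is $\omega$-compatibility; a convex combination $tJ_0+(1-t)J_1$ of two $\omega$-compatible almost complex structures is in general not an almost complex structure at all. Holomorphicity of $z^\alpha$ is indeed linear in $J$, but that alone does not help. The paper handles this by Lemma \ref{contractible}, which proves that $\mathcal{J}_\omega(\Delta)$ is contractible via the usual retraction through the space of Riemannian metrics restricted by the condition $g_z = (g_{J_0})_z$ on $(\ker d(c_\alpha z^\alpha))_z^\omega$. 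Once contractibility is established, your continuation argument for $J$-invariance goes through unchanged.

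For the Hamiltonian isotopy part you use moving boundary conditions (Remark \ref{moving}), whereas the paper instead uses continuation to a perturbed complex $CF^\bullet(L_0,L_1;H)$ via Proposition \ref{comp2} and then identifies this with $CF^\bullet(\phi_H(L_0),L_1)$ by a change of almost complex structure, invoking Lemma \ref{preservesJ} to guarantee the pushforward $(\phi_H)_* J$ stays in $\mathcal{J}(\Delta)$. Your route is equivalent and arguably a bit cleaner in that it avoids the explicit appeal to Lemma \ref{preservesJ}; the paper's route has the advantage of fitting directly into the Proposition \ref{comp2} framework without the separate boundary computation of Remark \ref{moving}.
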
 
\begin{proof} The statement follows from the standard continuation map argument given that the set of monomially admissible Hamiltonians is contractible, the set of monomially admissible almost complex structures adapted to a toric K\"{a}hler form on $(\bb{C}^*)^n$ is contractible, the continuation maps are well-defined for monomially admissible Hamiltonians, and the pushforward of $J \in \mathcal{J}(\Delta)$ by the flow monomially admissible Hamiltonian remains in $\mathcal{J}(\Delta)$ . The first statement follows from the observation that the set of monomially admissible Hamiltonians is convex, and the latter three follow from Lemma \ref{contractible}, Proposition \ref{comp2}, and Lemma \ref{preservesJ}, respectively. 
\end{proof}

\begin{rem} \label{continuedef} In the proof above and in what follows, a continuation map is a map from $CF^\bullet(L_0, L_1)$ to the perturbed complex $CF^\bullet(L_0, L_1; H)$ generated by flow lines of the Hamiltonian $H$ from $L_0$ to $L_1$. A continuation map is defined by counting solutions to \eqref{perturb} satisfying the conditions of Proposition \ref{comp2} on the strip $S = \bb{R}_s \times [0,1]_t$ with the additional requirements that $\beta = 0$  for $s \gg 0$ (at the input) and $\beta = dt$ for $s \ll 0$. The perturbed complex $CF^\bullet(L_0, L_1; H)$ can be identified with $CF^\bullet(\phi_H(L_0), L_1)$ after a change of almost complex structure. 
\end{rem}

We end this subsection by showing the set of monomially admissible almost complex structures adapted to a K\"{a}hler form on $(\bb{C}^*)^n$ is contractible. This is an important feature for the almost complex structures used in any Floer theory and was needed in the preceding proposition. 

\begin{lem} \label{contractible} Suppose $\omega$ is a K\"{a}hler form on $(\bb{C}^*)^n$. Then, the set 
\[ \mathcal{J}_\omega(\Delta) = \{ J \in \mathcal{J}(\Delta) | J \text{ is adapted to } \omega \} \]
is contractible. 
\end{lem}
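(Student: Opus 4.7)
The plan is to exhibit $\mathcal{J}_\omega(\Delta)$ as a convex subset of a topological vector space via the Cayley transform based at the standard complex structure $J_0$ on $(\bb{C}^*)^n$, from which contractibility is immediate. First, $\mathcal{J}_\omega(\Delta)$ is nonempty: $J_0$ is $\omega$-compatible since $\omega$ is a K\"{a}hler form with respect to $J_0$, and every Laurent monomial is globally $J_0$-holomorphic, so $J_0 \in \mathcal{J}_\omega(\Delta)$.

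I would then parametrize the space of all $\omega$-compatible almost complex structures by endomorphism-valued sections $\Psi$ of $T(\bb{C}^*)^n$ that anticommute with $J_0$, are symmetric with respect to the metric $g_0(\cdot, \cdot) = \omega(\cdot, J_0\cdot)$, and satisfy $\| \Psi \|_{g_0} < 1$ pointwise, via the Cayley transform $J = J_0 (I + \Psi)(I - \Psi)^{-1}$. This is a global bijection onto the space of $\omega$-compatible almost complex structures (a standard fact from symplectic linear algebra), and the set of admissible $\Psi$ is manifestly convex. For any Laurent monomial $z^\alpha$, the hyperplane $\ker(dz^\alpha)_p$ is symplectic and its $\omega$-orthogonal complement $W_\alpha^p$ is a $J_0$-invariant two-plane; a direct calculation shows $z^\alpha$ is $J$-holomorphic at $p$ if and only if $J|_{W_\alpha^p} = J_0|_{W_\alpha^p}$, and expanding the Cayley formula shows this is in turn equivalent to the pointwise linear condition $\Psi|_{W_\alpha^p} = 0$. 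Hence $J \in \mathcal{J}_\omega(\Delta)$ corresponds to $\Psi$ such that, for each $\alpha$, $\Psi|_{W_\alpha}$ vanishes outside some compact subset of $\mu^{-1}(C_\alpha)$. This condition is preserved under convex combinations: if $\Psi_0$ and $\Psi_1$ vanish appropriately outside compact sets $K_0$ and $K_1$, then any convex combination vanishes outside $K_0 \cup K_1$.

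Therefore $\mathcal{J}_\omega(\Delta)$ is identified with a nonempty convex subset (it contains $\Psi = 0$) of the space of smooth endomorphism-valued sections of $T(\bb{C}^*)^n$, and is thus contractible by a straight-line homotopy to $\Psi = 0$, corresponding to $J_0$. The main technical point requiring care is the pointwise linearization in the second paragraph: the direct computation that $(I + \Psi)(I - \Psi)^{-1}$ fixes $W_\alpha^p$ pointwise precisely when $\Psi$ annihilates $W_\alpha^p$, which uses the $J_0$-invariance of $W_\alpha^p$ together with the anticommutation of $\Psi$ with $J_0$.
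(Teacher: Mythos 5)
Your proof is correct, and it takes a genuinely different (though equally standard) route from the paper's. Both arguments begin with the same linearization: $z^\alpha$ is $J$-holomorphic at $p$ precisely when $J_p$ agrees with $(J_0)_p$ on the $\omega$-orthogonal of $\ker d(z^\alpha)_p$ (and this is where the $\omega$-compatibility of $J$ and the $J_0$-holomorphicity of $z^\alpha$ are both used, as you note at the end). After that the two proofs diverge. The paper's sketch passes through the auxiliary convex set $\mathcal{G}(\Delta)$ of Riemannian metrics agreeing with $g_{J_0}$ on the relevant planes at infinity, and then exhibits $\mathcal{J}_\omega(\Delta)$ as a retract of $\mathcal{G}(\Delta)$ via the usual $J \mapsto \omega(\cdot, J\cdot)$ map together with its polar-decomposition section; contractibility of a retract of a contractible space then finishes. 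Your proof instead applies the Cayley transform based at $J_0$, under which the $\omega$-compatible almost complex structures become the Siegel-disk parameters $\Psi$ (anticommuting with $J_0$, $g_0$-symmetric, $\|\Psi\|<1$), and the condition $J|_W = J_0|_W$ on a $J_0$-invariant plane $W$ becomes exactly $\Psi|_W = 0$ because $(I+\Psi)(I-\Psi)^{-1}v = v$ iff $\Psi v = 0$. This exhibits $\mathcal{J}_\omega(\Delta)$ directly as a convex subset containing $0$, avoiding the detour through metrics and the retract argument. What the paper's approach buys is that one never needs to verify surjectivity of a global parametrization; what your approach buys is a more explicit and self-contained contraction (the straight line to $J_0$). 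Both are standard devices for proving contractibility of spaces of compatible almost complex structures, and your handling of the monomial-admissibility constraint (the union $K_0 \cup K_1$ of compact sets) is correct.
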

\begin{proof}[Sketch of proof] If $z \in C_\alpha$ and $f(z) = c_\alpha z^\alpha$, then one can show that $f$ is $J$-holomorphic at $z$ if and only if $J_z = (J_0)_z$ on $(\ker df)^\omega$ where $J_0$ is the standard almost complex structure on $(\bb{C}^*)^n$.  

Thus, we can consider the set of metrics 
\[ \mathcal{G}(\Delta) = \{ g \in \{ \text{Riemannian metrics on } (\bb{C}^*)^n \} \, | \, g_z = (g_{J_0})_z \text{ on } (\ker d(c_\alpha z^\alpha) )_z^\omega \text{ if } \mu(z) \in C_\alpha \} \]
where $g_{J_0} = \omega( \cdot, J_0 \cdot)$. The set $\mathcal{G}(\Delta)$ is convex and hence contractible. In addition, there are maps $ \mathcal{J}_\omega(\Delta) \to \mathcal{G}(\Delta)$ and $\mathcal{G}(\Delta) \to \mathcal{J}_\omega(\Delta)$ such that the composition $\mathcal{J}_\omega(\Delta) \to \mathcal{G}(\Delta) \to \mathcal{J}_\omega(\Delta)$ is the identity. It follows that $\mathcal{J}_\omega(\Delta)$ is contractible.
\end{proof}

\subsection{The Fukaya-Seidel category via localization} \label{flocalize}

We will implement the localization procedure of \cite{AbSloc} in order to define $\mathcal{F}_\Delta(W)$ as a partially wrapped Fukaya category. The reader may also find the exposition in Chapter 10 of \cite{SeNotes} useful in understanding how to obtain Fukaya categories via localization. In order to proceed, we need to be able to flow Lagrangians that are monomially admissible with respect to $W$ to those monomially admissible with respect to $e^{-i\theta}W$. 

\begin{assump} \label{canassump} Assume that there exists a Hamiltonian $K_\Delta = K_\Delta (\mu_1, \hdots, \mu_n)$ on $(\bb{C}^*)^n$ such that $\nabla K_\Delta \cdot \alpha = 1$ outside of a compact set in each $C_\alpha$.
\end{assump}

Assumption \ref{canassump} always holds when $W = W_\Sigma$ and $\Delta$ is adapted to $\Sigma$ by Corollary \ref{twistexist} as in that case $K_\Delta$ is a twisting Hamiltonian for the canonical bundle written as $\sum -D_\alpha$ up to a factor of $2\pi$. In general, the time-$\theta$ flow of the Hamiltonian $K_\Delta$ will take Lagrangians monomially admissible with respect to $W$ to Lagrangians monomially admissible with respect to $e^{-i \theta}W$.

Let $O_\Delta$ be a countable set of monomially admissible Lagrangian branes with respect to $W$ and $\Delta$. Further, choose a Hamiltonian $K_\Delta$ as in Assumption \ref{canassump} and inductively choose a sequence $0 = \theta^0 < \theta^1 < \hdots < \theta^j < \hdots < \pi$ such that $\lim_{j \to \infty} \theta^j = \pi$ in a sufficiently generic way such that for all $L \in O_\Delta$, $\psi^{\theta^j}(L)$ intersects $\psi^{\theta^k}(L')$ transversely for all $\theta^k < \theta^j$ and all $L' \in O_\Delta$ where $\psi$ is the flow of $K_\Delta$. 

With that data, we define an $A_\infty$-category $\mathcal{F}_\Delta^\circ$ whose objects are pairs $(L, \theta)$ where $L$ is a Lagrangian brane in $O_\Delta$ and $\theta$ belongs to our chosen sequence of angles. Morphisms are defined by
\[ \Hom_{\mathcal{F}_\Delta^\circ} \big((L_0, \theta_0), (L_1, \theta_1) \big) = \begin{cases} CF^\bullet(\psi^{\theta_0}(L_0) , \psi^{\theta_1}(L_1)) & \theta_0 > \theta_1 \\ \mathbb{K} \cdot e_{(L_0, \theta_0)} & (L_0, \theta_0) = ( L_1, \theta_1) \\ 0 & \text{otherwise} \end{cases} \] 
where $e_{(L_0, \theta_0)}$ is a formal element of degree zero and $\mathbb{K}$ is a field ($\mathbb{K} = \mathbb{C}$ in our intended applications). Note that $(L_0, \theta_0) = (L_1, \theta_1)$ means not only that the angles and underlying Lagrangians agree but also the brane structures. The brane structures used to compute Floer cochain complexes are obtained via pushforward by the $\psi^{\theta_j}$ from those on the $L_j$. The structure of a strictly unital $A_\infty$-category on $\mathcal{F}_\Delta^\circ$ is given by defining the $e_{(L, \theta)}$ to be strict units and defining the only nontrivial structure maps 
\[ m^d\colon CF^\bullet( \psi^{\theta_{d-1}} (L_{d-1}), \psi^{\theta_d}(L_d) ) \otimes \hdots \otimes CF^\bullet( \psi^{\theta_0}(L_0), \psi^{\theta_1}(L_1) ) \to CF^\bullet( \psi^{\theta_{0}} (L_{0}), \psi^{\theta_d}(L_d) ) [2-d]\]
as in Section \ref{Floer} when $\theta_0 > \theta_1 > \hdots > \theta_k$ after making a consistent choice of strip-like ends and regular monomially admissible almost complex structures. 

Now, we observe that $\mathcal{F}_\Delta^\circ$ comes with a class of quasi-units $c_{(L, \theta \to \theta')} \in \Hom( (L, \theta'), (L, \theta))$ when $\theta' > \theta$ defined by counting solutions to \eqref{perturb} with $S$ the upper half plane equipped with a strip-like end, $H = K_\Delta$, moving boundary condition given by $\psi^{f(s)}(L)$ where $f\colon \partial S \to \bb{R}$ is $\theta'$ for $s \ll 0$ and $\theta$ for $s \gg 0$, and $\beta$ is a one-form on $S$ that vanishes in the strip-like end and satisfies  $d\beta \leq 0$ and $\beta|_{\partial S} = df$ (such elements appear for instance in Section 8k of \cite{SeBook} in a more standard setting). As usual, when we say we count such solutions, we mean that we count the number of elements in the zero-dimensional part of the moduli space of such maps for a generic domain-dependent almost complex structure which agrees with the one chosen for $\psi^{\theta}(L)$ and $\psi^{\theta'}(L)$ in the strip-like end. Compactness holds by Remark \ref{moving}. Compactifying the $1$-dimensional part of this moduli space and counting boundary points shows that the quasi-units are closed.  The multiplication map
\begin{equation} \label{multmap} m^2(\cdot, c_{(L, \theta \to \theta')})\colon CF^\bullet( \psi^{\theta}(L) , L') \to CF^\bullet( \psi^{\theta'}(L), L') \end{equation}
can be seen as a continuation map (well-defined by Proposition \ref{comp2} when the monomially admissible Lagrangians are transverse and disjoint at infinity)  by gluing and applying a change of coordinates to produce a constant boundary condition as in Section 8k of \cite{SeBook} for any $L'$ where both Floer cochain complexes are well-defined. In fact, with any sensible definition of $HF(L, L)$, we should have that the quasi-units are images of the (cohomological) identity morphism under continuation. Moreover, the quasi-units can be seen as the first order term of a natural transformation as in Section \ref{defsect}. The quasi-units have several additional properties. 

\begin{prop} \label{invariantqu} The following hold for the quasi-units defined in the paragraph above.
\begin{enumerate}[label=(\alph*)]
\item If $\theta'' > \theta' > \theta$, then $m^2( c_{(L, \theta \to \theta')}, c_{(L, \theta' \to \theta'')}) = c_{(L, \theta \to \theta'')}$ in $HF^0( \psi^{\theta''}(L), \psi^{\theta}(L) )$. 
\item The quasi-units are sent to quasi-units on the level of cohomology under the continuation map isomorphism in the proof of Proposition \ref{invarianthf}.
\item The multiplication map \eqref{multmap} is a quasi-isomorphism when $\theta$ and $\theta'$ are both larger than the admissibility angle of $L'$, i.e., when $\theta, \theta' > \tau$ where $L'$ is monomially admissible for $e^{-i\tau}W$. 
\end{enumerate}
\end{prop}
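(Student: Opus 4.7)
The plan is to handle the three statements in sequence, using the perturbed Floer equation \eqref{perturb} together with the compactness and energy bounds from Proposition \ref{comp2}, Remark \ref{perturbenergy}, and Remark \ref{moving} throughout. In all three cases, the relevant moduli spaces are parametrized families of solutions to \eqref{perturb} on half-planes or strips with moving boundary conditions driven by the flow $\psi$ of $K_\Delta$, and the key mechanism is a codimension-one gluing/degeneration analysis.

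For (a), I would construct a one-parameter family of domains $S_R$ for $R \in [0,\infty)$, each a half-plane with a single strip-like end at the unique puncture, equipped with a one-form $\beta_R$ satisfying $\beta_R|_{\partial S} = df_R$, $d\beta_R \le 0$, and vanishing in the strip-like end. Here $f_R \colon \partial S \to \mathbb{R}$ is a monotonically decreasing function from $\theta''$ to $\theta$ that for $R = 0$ looks like the boundary data defining $c_{(L,\theta \to \theta'')}$ and for $R \to \infty$ stretches apart into two regions where the boundary stays first at $\theta'$ and then drops to $\theta$. A neck-stretching limit identifies the $R\to\infty$ end with the gluing of the two half-planes defining $c_{(L,\theta \to \theta')}$ and $c_{(L,\theta' \to \theta'')}$, which is precisely the configuration computing $m^2(c_{(L,\theta\to\theta')}, c_{(L,\theta'\to\theta'')})$. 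Counting boundary points of the one-dimensional parametrized moduli space, whose compactness is provided by Remark \ref{moving}, yields the desired equality in $HF^0$.

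For (b), I would use the standard two-parameter moduli space argument that continuation maps respect distinguished classes: given a continuation one-form interpolating between two admissible Hamiltonians or almost complex structures, one builds a two-parameter family of half-planes combining the continuation parameter with the moving boundary condition defining the quasi-unit. Compactness and transversality are uniform across the family by Proposition \ref{comp2} and Lemma \ref{contractible} (which supplies contractible spaces of admissible data), and the codimension-one boundary of the resulting one-dimensional moduli space gives the required chain-level identity after cohomology.

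For (c), the crucial observation is that $m^2(\cdot, c_{(L,\theta\to\theta')})$ is itself a continuation map. Gluing the half-plane carrying the quasi-unit $c_{(L,\theta\to\theta')}$ onto the input end of the strip computing Floer theory between $\psi^{\theta'}(L)$ and $L'$ produces, after a standard change of coordinates as in Section 8k of \cite{SeBook}, a strip with moving boundary condition $\psi^{f(s)}(L)$ where $f$ decreases from $\theta'$ to $\theta$. This is exactly the continuation map from $CF^\bullet(\psi^\theta(L), L')$ to $CF^\bullet(\psi^{\theta'}(L),L')$. When $\theta, \theta' > \tau$, the entire family $\{\psi^t(L)\}_{t\in[\theta,\theta']}$ remains monomially admissible with respect to $e^{-it}W$ while $L'$ is monomially admissible with respect to $e^{-i\tau}W$, so at every time $t$ the pair $(\psi^t(L), L')$ has strictly positive angle difference $t - \tau$ and the Floer theory is well-defined throughout the isotopy. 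Hence $\{\psi^t(L)\}$ is an admissible Hamiltonian isotopy for the Floer theory with $L'$, and Proposition \ref{invarianthf} identifies the continuation map as a quasi-isomorphism.

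The main obstacle is (c): one must verify that the gluing identifying multiplication by the quasi-unit with a pure continuation map is valid, and that the parametrized moduli spaces remain compact with a uniform energy bound as the boundary data vary. The geometric compactness comes from Proposition \ref{comp2}, which applies uniformly since the Hamiltonian $K_\Delta$ and the admissibility conditions are fixed; the required lower bound on $u^*K_\Delta$ in Remark \ref{perturbenergy} then holds uniformly on the compact region containing all such $u$, yielding the needed geometric energy estimate and closing the argument.
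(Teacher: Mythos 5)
Parts (a) and (b) of your proposal match the paper's argument in substance: both are gluing arguments where the glued domain is isotoped to the domain defining a single quasi-unit (or where the compatibility of the continuation data with the moving boundary condition is established via a parametrized family). The paper phrases (a) as a single isotopy of half-plane data after gluing; your neck-stretching formulation is an equivalent codimension-one cobordism argument. For (b) you invoke the contractibility from Lemma~\ref{contractible}, which is the correct ingredient, and the gluing structure is the same as the paper's.

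For (c), however, there is a genuine gap. You argue that because the isotopy $\{\psi^t(L)\}_{t \in [\theta,\theta']}$ keeps the angle difference with $L'$ strictly positive, one can appeal to Proposition~\ref{invarianthf} to conclude the continuation map is a quasi-isomorphism. But Proposition~\ref{invarianthf} concerns \emph{admissible} Hamiltonian isotopies, i.e., flows of Hamiltonians $H$ with $\nabla H \cdot \alpha = 0$ near infinity in each $C_\alpha$. The Hamiltonian $K_\Delta$ is \emph{not} admissible --- by Assumption~\ref{canassump} it has $\nabla K_\Delta \cdot \alpha = 1$ near infinity in each $C_\alpha$, so its flow genuinely changes the asymptotic data of the Lagrangian (it shifts the admissibility angle). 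This is precisely what makes the construction nontrivial: the morphism spaces in $\mathcal{F}_\Delta(W)$ are a colimit over $\theta \to \pi$ exactly because flowing past the admissibility angle of the target \emph{does} change the Floer cohomology. The fact that the angle difference stays positive guarantees only that the continuation map is well-defined (Proposition~\ref{comp2}); it does not by itself give an inverse. The paper's proof supplies the missing content: it factors the isotopy $\psi^\theta(L) \to \psi^{\theta'}(L)$ into a compactly supported piece (for which an inverse continuation map exists) and a piece that is the identity on a compact set containing all intersection points with $L'$; for the latter it chooses $K_\Delta \ge 0$ so that $H\, d\beta \le 0$, giving $0 \le E^{\text{geom}} \le E^{\text{top}}$, and then argues via the action filtration that the continuation map is upper-triangular with ones on the diagonal, hence an isomorphism. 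Your proposal omits this action-filtration argument, and without it the claim that $m^2(\cdot, c_{(L,\theta\to\theta')})$ is a quasi-isomorphism is unproved.
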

\begin{proof} Both (a) and (b) are gluing arguments. For (a), one can see that gluing the domain of the multiplication map with that of $c_{(L, \theta \to \theta')}$ and  $c_{(L, \theta' \to \theta'')}$ at the two inputs gives the upper half-plane with a one-form and boundary condition that can be isotoped to those of $c_{(L, \theta \to \theta'')}$. See Figure \ref{gluequ}. Similarly, (b) involves gluing the domain of a quasi-unit to the input of the domain of a continuation map. Again, this will give a domain that can be isotoped to that of a quasi-unit. 

To see (c), first recall that the multiplication map \eqref{multmap} can be identified with a continuation map. The fact that a continuation map is an isomorphism in this case has analogues in more standard setups (see for instance Section 6.1 of \cite{Lef1}). Indeed, the same proof applies. The Hamiltonian isotopy from $\psi^\theta(L)$ to $\psi^{\theta'} (L)$ induced by $K_\Delta$ can be factored as a compactly supported Hamiltonian isotopy and a Hamiltonian isotopy which is the identity in a compact set containing all intersection points with $L'$. For the former, we can construct an inverse continuation map. For the latter, we can assume that the Hamiltonian $H = K_\Delta$ is nonnegative so that $H d \beta \leq 0$ so that $0 \leq E^{\text{geom}}(u) \leq E^{\text{top}}(u)$ (see Remark \ref{perturbenergy}) for any perturbed holomorphic map $u$ counted as part of the continuation map. From $E^{\text{top}}(u) \geq 0$, we obtain that the continuation map increases action, which is in this case simply the difference in values for primitives of the one-form $\lambda$ on the boundary Lagrangians at an intersection point. Moreover, a  continuation map solution $u$ can only preserve the action if it has topological and hence geometric energy zero, i.e., it is constant. It follows that the continuation map is an isomorphism on cohomology since it can be represented as an upper-triangular matrix with ones along the diagonal. 
\end{proof}

\begin{figure}
\centering 
\begin{tikzpicture}
	\draw (0,0.5) -- (3,0.5);
	\draw (0,2.5) -- (3,2.5);
	\draw (1.5, 3) node {$\psi^{\theta}(L) $};
	\draw (1.5, 0) node {$\psi^{\theta''}(L)$};
	\draw[dashed] (3, 0.5) arc (-90:90: 1);
	
	\draw (11,0) -- (13,0);
	\draw (11,1) -- (13,1);
	\draw (12, 1.5) node {$\psi^{\theta'}(L) $};
	\draw (12, -0.5) node {$\psi^{\theta''}(L)$};
	\draw[dashed] (13, 0) arc (-90:90: 0.5);
	
	\draw (11,2) -- (13,2);
	\draw (11,3) -- (13,3);
	\draw (12, 3.5) node {$\psi^{\theta}(L) $};
	\draw[dashed] (13, 2) arc (-90:90: 0.5);
	
	\draw plot [smooth, tension = 0.5] coordinates {(8, 2) (8.5 , 2.1) (10, 2.9) (10.5 ,3)};
	\draw plot [smooth, tension = 0.5] coordinates {(8, 1) (8.5 , 0.9) (10, 0.1) (10.5 ,0)};
	\draw (10.5 ,2) arc (90: 270 : 0.5);
	
	\draw[<->, thick] (5 , 1.5) -- (7, 1.5);
\end{tikzpicture}
\caption{Gluing construction in the proof of Proposition \ref{invariantqu}(a). The dashed lines denote the moving boundary, and the support of $\beta$ is in a small neighborhood of those lines.} 
\label{gluequ}
\end{figure}
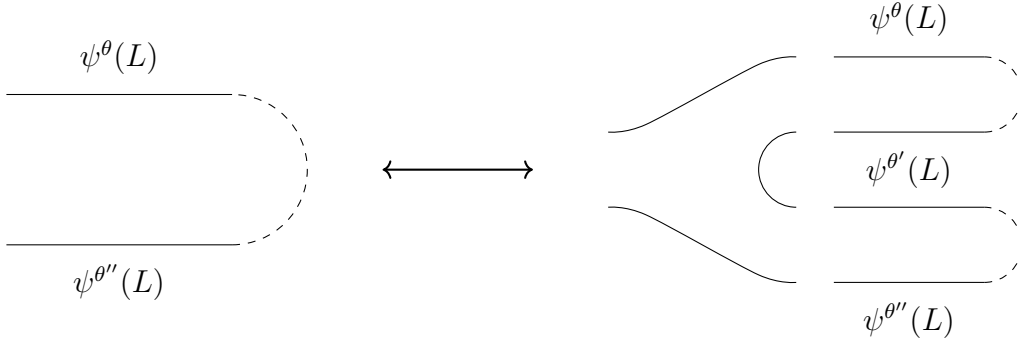 

Now, we define the monomially admissible Fukaya-Seidel category $\mathcal{F}_\Delta (W)$ on the objects of $O_\Delta$ to be the localization of $\mathcal{F}_\Delta^\circ$ at all the quasi-units. More precisely, if $Q$ is the set of quasi-units and $\text{Tw} \, \mathcal{F}_\Delta^\circ$ is the $A_\infty$-category of twisted complexes of objects of $\mathcal{F}_\Delta^\circ$, $\mathcal{F}_\Delta (W)$ has the same objects as $\mathcal{F}_\Delta^\circ$ and has morphisms as in the image of
\[ \mathcal{F}_\Delta^\circ \to \text{Tw} \, \mathcal{F}_\Delta^\circ \to \text{Tw} \, \mathcal{F}_\Delta^\circ / \text{Cones}(Q) \]
where $\text{Cones}(Q)$ is the set of all cones of the morphisms in $Q$ and the quotient is taken in the sense of Lyubashenko-Ovsienko \cite{LO}. In particular, the quotient morphism spaces for $A_\infty$-categories $\mathcal{B} \subset \mathcal{A}$ are defined by
\[ \Hom_{\mathcal{A}/\mathcal{B}} (Z_0,Z_1)  = \bigoplus_{\substack{ p \geq 0 \\ B_1, \hdots, B_p \in \mathcal{B}}} \Hom_\mathcal{A} (B_p, Z_1)[1] \otimes \hdots \otimes \Hom_\mathcal{A} (B_1, B_2)[1] \otimes \Hom_\mathcal{A} (Z_0, B_1) \]
with the bar differential and higher products. The construction takes a strict unit to a strict unit. Therefore, $\mathcal{F}_\Delta (W)$ is strictly unital. The quotient functor $Q\colon \mathcal{A} \to \mathcal{A}/\mathcal{B}$ is the inclusion of $\Hom_\mathcal{A} (Z_0,Z_1)$ as the $p = 0$ term. The quotient enjoys the universal property that for any other $A_\infty$-category $\mathcal{C}$, the $A_\infty$ functor
\[ \circ Q\colon \text{Fun}(\mathcal{A}/\mathcal{B}, \mathcal{C}) \to \text{Fun}(\mathcal{A}, \mathcal{C}) \]
is full and faithful on homology and with image equal up to quasi-isomorphism to the subcategory of functors which are zero in $\text{Fun}(\mathcal{B}, \mathcal{C})$. 

It is useful to understand the morphisms in $\mathcal{F}_\Delta (W)$. First, we note that $(L, \theta)$ and $(L, \theta')$ are quasi-isomorphic in $\mathcal{F}_\Delta(W)$ as we have inverted the quasi-units. Thus, there is map from the elements of $O_\Delta$ onto the quasi-isomorphism classes of objects of $\mathcal{F}_\Delta(W)$. 
\begin{prop} \label{localhoms} The two maps in
\[ H^\bullet \Hom_{\mathcal{F}_\Delta(W)} (L_0, L_1) \to \lim_{\theta \to \pi} HF^\bullet ( \psi^\theta (L_0), L_1)  \leftarrow HF^\bullet ( \psi^\eps (L_0), L_1) \]
are isomorphisms for all $L_0, L_1 \in O_\Delta$ and any $\eps > 0$ for which $\psi^\eps(L_0)$ and $L_1$ intersect transversely.
\end{prop}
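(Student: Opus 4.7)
The plan is to handle the two arrows separately. The right-hand arrow is essentially a consequence of Proposition~\ref{invariantqu}(c): since $L_1$ is monomially admissible at angle $0$ and $\psi^\theta(L_0)$ at angle $\theta > 0$ for every $\theta \geq \eps$ in our chosen sequence, composition with the quasi-unit $c_{(L_0, \theta \to \theta')}$ induces a quasi-isomorphism $HF^\bullet(\psi^\theta(L_0), L_1) \to HF^\bullet(\psi^{\theta'}(L_0), L_1)$ for all $\eps \leq \theta < \theta' < \pi$ in the sequence. Thus the directed system whose limit appears in the statement consists entirely of isomorphisms, and each of its terms maps isomorphically onto the limit.

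For the left-hand arrow, the goal is a controlled computation of the Lyubashenko--Ovsienko bar complex defining the Hom in the localization. Since all the quasi-units become invertible in $\mathcal{F}_\Delta(W)$ by construction, the object $(L_0, 0)$ is quasi-isomorphic to each $(L_0, \theta)$ via $c_{(L_0, 0 \to \theta)}$, so pre-composition with this quasi-unit produces a quasi-isomorphism
\[ \Hom_{\mathcal{F}_\Delta(W)}((L_0, 0), (L_1, 0)) \xrightarrow{\sim} \Hom_{\mathcal{F}_\Delta(W)}((L_0, \theta), (L_1, 0)) \]
for any $\theta > 0$ in our sequence. The main step is then to show that for $\theta > 0$ the quotient functor induces a quasi-isomorphism
\[ CF^\bullet(\psi^\theta(L_0), L_1) = \Hom_{\mathcal{F}_\Delta^\circ}((L_0, \theta), (L_1, 0)) \xrightarrow{\sim} \Hom_{\mathcal{F}_\Delta(W)}((L_0, \theta), (L_1, 0)), \]
and then to chase these identifications to identify $H^\bullet\Hom_{\mathcal{F}_\Delta(W)}(L_0, L_1)$ with a term of the directed system and hence with its limit.

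The main obstacle is thus verifying the quasi-isomorphism in the previous display, which amounts to showing that the $p = 0$ summand of the bar complex captures all the cohomology. The strategy is to filter the bar complex by the number of cones of quasi-units through which a path passes and to show, using Proposition~\ref{invariantqu}(c) inductively, that each intermediate cone $\mathrm{Cone}(c_{(L, \theta_1 \to \theta_2)})$ with $\theta_1 > 0$ contributes a contractible subcomplex. Concretely, morphisms into and out of such a cone are computed by Floer complexes between Lagrangians whose admissibility angles all remain positive, and on these the connecting map $c$ acts as the continuation quasi-isomorphism of Proposition~\ref{invariantqu}(c), so the cone is acyclic against such test objects and the path can be contracted off. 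This style of bar-complex reduction is standard in the localization approach to (partially) wrapped Fukaya categories, as in \cite{AbSloc, GPS1}, and the present argument follows the same scheme with the Floer-theoretic input supplied by Propositions~\ref{comp2} and~\ref{invariantqu}.
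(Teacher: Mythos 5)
Your argument for the right-hand arrow coincides with the paper's: both invoke Proposition~\ref{invariantqu}(c) to see that all the maps in the directed system (and the inclusion of any particular term) are isomorphisms. For the left-hand arrow, the paper simply cites Lemma~7.18 of \cite{SeNotes}, which packages the statement that the localized morphism complex is a homotopy colimit of the Floer complexes along the wrapping maps, and then checks the single hypothesis required by that lemma, namely that $\lim_{\theta\to\pi} HF^\bullet((L,\theta), C) = 0$ whenever $C$ is the cone of a quasi-unit, which again reduces to Proposition~\ref{invariantqu}(c). Your sketch is an attempt to reprove the content of that lemma by filtering the bar complex and contracting off the intermediate cones, which is the same underlying mechanism; citing the ready-made result is simply cleaner. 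The one thing I would push you on is that your filtration argument, as stated, glosses over the convergence of the associated spectral sequence (the sum defining $\Hom_{\mathcal{A}/\mathcal{B}}$ is infinite, both in the length $p$ of the bar and, for fixed $p$, in the choice of intermediate cones), and over the bookkeeping of which cones actually contribute at a given angle; you also implicitly use a left-handed analogue of Proposition~\ref{invariantqu}(c) for $m^2(c, \cdot)$ in addition to the stated $m^2(\cdot, c)$. These are precisely the technical points that Lemma~7.18 of \cite{SeNotes} takes care of, so the cleanest route is to cite it and verify the vanishing hypothesis, as the paper does.
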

\begin{proof} For the map on the left, see Lemma 7.18 in \cite{SeNotes}. In fact, a stronger chain level statement that the morphism chain complex itself is a homotopy colimit of the Floer cochain complexes is proved there. The condition that 
\[ \lim_{\theta \to \pi} HF^\bullet((L, \theta), C) = 0 \]
whenever $C$ is the cone of a quasi-unit needed in the cited proof follows from Proposition \ref{invariantqu}(c). 

The fact that the map on the right is an isomorphism also follows from Proposition \ref{invariantqu}(c).
\end{proof}

Although many choices were made in constructing $\mathcal{F}_\Delta(W)$, the following proposition along with standard arguments (such as in Section 10a of \cite{SeBook}) on the independence of $\mathcal{F}_\Delta^\circ$ from choices made in its definition show that the only choices of consequence for the quasi-equivalence type of $\mathcal{F}_\Delta(W)$ are the elements of $O_\Delta$ and the monomial division $\Delta$.

\begin{prop} \label{choices} The quasi-equivalence type of $\mathcal{F}_\Delta (W)$ is independent of the following choices made in defining the quasi-units: strip-like ends, monomially admissible almost complex structures, $K_\Delta$, generic sequence of angles, and one-form $\beta$. 
\end{prop}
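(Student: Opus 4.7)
The plan is to organize the proof by the type of choice, and in each case reduce invariance to a combination of (i) standard continuation-map techniques at the level of $\mathcal{F}_\Delta^\circ$ and (ii) the universal property of the Lyubashenko-Ovsienko localization. The key recurring principle will be that two $A_\infty$-functors between categories of the form $\mathcal{F}_\Delta^\circ$ which agree up to homotopy and which carry quasi-units to morphisms that become invertible in the localization descend to quasi-equivalences between the localized categories $\mathcal{F}_\Delta(W)$.

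I would first dispatch the two ``traditional'' Floer-theoretic choices: the strip-like ends and the monomially admissible almost complex structures. The space of consistent choices of strip-like ends is contractible, and, by Lemma \ref{contractible}, so is $\mathcal{J}_\omega(\Delta)$. Any two choices may thus be connected by a smooth path, and the associated parametrized moduli problems define an $A_\infty$-quasi-equivalence $\mathcal{F}_\Delta^\circ \to (\mathcal{F}_\Delta^\circ)'$ by the standard continuation/homotopy argument, using Proposition \ref{comp1} and Proposition \ref{comp2} to keep everything in a compact region. The key additional ingredient needed for the localization is Proposition \ref{invariantqu}(b), which says that such a quasi-equivalence sends quasi-units to (representatives of) quasi-units, so it descends to a quasi-equivalence of the localizations.

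Next I would handle $K_\Delta$ and the one-form $\beta$ simultaneously, since they only enter in the definition of the quasi-units. The set of admissible $K_\Delta$ is an affine subspace of smooth functions on $(\bb{C}^*)^n$, hence convex and contractible; for a fixed $K_\Delta$ and fixed endpoints $\theta<\theta'$, the space of admissible one-forms $\beta$ on the upper half-plane (vanishing in the strip-like end, with $\beta|_{\partial S}=df$ and $d\beta\le 0$) is also convex. Given two sets of choices $(K_\Delta,\beta)$ and $(K_\Delta',\beta')$, I would construct an auxiliary $A_\infty$-category $\widetilde{\mathcal{F}}_\Delta^\circ$ containing both collections of objects $(L,\theta)$ and $(L,\theta)'$ (defined via the two flows) as well as ``comparison'' morphisms between them obtained from solutions of \eqref{perturb} for a one-parameter family interpolating the two Hamiltonians and one-forms. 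Compactness of these moduli spaces is provided by the moving-boundary version of Proposition \ref{comp2} described in Remark \ref{moving}. The comparison morphisms are closed of degree zero, and a direct gluing argument analogous to Proposition \ref{invariantqu}(a) shows that after localizing at the quasi-units of either side they become mutually inverse isomorphisms; hence both inclusions $\mathcal{F}_\Delta^\circ, (\mathcal{F}_\Delta^\circ)' \hookrightarrow \widetilde{\mathcal{F}}_\Delta^\circ$ descend to quasi-equivalences of the localizations.

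Finally, I would address the generic sequence of angles $\{\theta^j\}$. Given two such sequences, I would pass to a third ``interleaving'' sequence containing (small generic perturbations of) both, chosen so that the required transversality conditions continue to hold. The inclusion of either original sequence into the interleaved one is a fully faithful $A_\infty$-embedding of $\mathcal{F}_\Delta^\circ$-categories. After localization, every object $(L,\theta)$ of the larger category is quasi-isomorphic via a quasi-unit (or its formal inverse) to an object coming from the smaller sequence, so the induced functor is also essentially surjective on $\mathcal{F}_\Delta(W)$; combining the two comparisons through the common refinement yields a quasi-equivalence between the localizations for the original sequences. I expect the main obstacle to lie in the previous paragraph, specifically in verifying coherently that the ``interpolation'' morphisms between the two sets of quasi-units are well-defined chain-level elements and that their composition is homotopic to the identity in $\mathcal{F}_\Delta^\circ$ up to chains killed by the localization; the geometric input is clear, but bookkeeping the moving-boundary energy estimates of Remark \ref{perturbenergy} together with the sign conventions and the bar construction in the definition of quotient morphism spaces is the step where the argument is most delicate.
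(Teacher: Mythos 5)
Your overall strategy --- dispatch contractible families of choices first, then build auxiliary comparison categories for $K_\Delta$, then interleave sequences of angles --- is broadly the same as the paper's, and your handling of $K_\Delta$ (a doubled category containing objects for both flows, with comparison quasi-units between them) and of the sequence of angles (a common refinement/interleaving) is essentially what the paper does. However, you are missing the single algebraic observation that does most of the work in the paper and makes your first two paragraphs unnecessary: \emph{the quasi-equivalence type of a Lyubashenko-Ovsienko quotient $\mathcal{A}/\mathrm{Cones}(Q)$ depends only on the cohomology classes of the morphisms in $Q$}, a fact the paper cites from Section 3.1.3 of \cite{GPS1}. Once you have this, the strip-like ends, the monomially admissible almost complex structures, \emph{and} the one-form $\beta$ are all dispatched in one sentence, since they only affect the chain-level representative of a quasi-unit and the space of such choices is contractible (Lemma \ref{contractible} for the almost complex structures; convexity for $\beta$). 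There is no need to construct continuation $A_\infty$-quasi-equivalences $\mathcal{F}_\Delta^\circ \to (\mathcal{F}_\Delta^\circ)'$ here --- indeed for these choices the underlying category $\mathcal{F}_\Delta^\circ$ does not change at all, only the quasi-unit cochain does --- and building such functors via parametrized moduli spaces is heavier machinery than the situation calls for. Because you miss this principle, you also place the ``delicate step'' in the wrong spot: you worry about bookkeeping moving-boundary energy estimates and bar-complex signs for the $K_\Delta$/$\beta$ interpolation, but the paper never interpolates $\beta$ at all, and the $K_\Delta$ comparison is handled by a sandwich of fully faithful inclusions into the doubled category $\mathcal{F}_\Delta^\circ(K^1_\Delta \sqcup K^2_\Delta)$, shown to be quasi-equivalences by combining Proposition \ref{invarianthf} with Proposition \ref{localhoms}, rather than by an explicit mutual-inverse gluing argument in the localization. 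In short: your proposal is structurally on track and would likely lead to a proof, but it is substantially longer than needed, and the real enabling step --- the cohomology-class invariance of the quotient --- should be stated up front and used to trivialize three of the five choices at once.
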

\begin{proof} The quasi-equivalence type of the quotient depends only on the cohomology classes of the quasi-units (a proof can be found in Section 3.1.3 of \cite{GPS1}). The cohomology classes of quasi-units are independent of the choice of strip-like ends, monomially admissible almost complex structure, and one-form $\beta$ because the space of choices is contractible. 

Assume that $K^1_\Delta$ and $K^2_\Delta$ are two choices of global twisting functor. Then, we can take a generic sequence of angles such that for all $L \in O_\Delta$, $\psi_{1}^{\theta^j}(L)$ and $\psi_{2}^{\theta^j}(L)$ intersect $\psi_{1}^{\theta^k}(L')$ and $\psi_{2}^{\theta^k}(L')$ transversely for all $\theta^k < \theta^j$ and all $L' \in O_\Delta$. We can then define an  $A_\infty$-category $\mathcal{F}_\Delta^\circ (K^1_\Delta \sqcup K^2_\Delta)$ as above except equipping objects with the additional data of whether to flow by $\psi_1$ or $\psi_2$. Since $K^1_\Delta$ and $K^2_\Delta$ differ by an admissible Hamiltonian, we can define quasi-units increasing angles as above that also possibly the twisting flow between $\psi_1$ and $\psi_2$. Then, $\mathcal{F}_\Delta (W; K_\Delta^1)$ and $\mathcal{F}_\Delta (W; K_\Delta^2)$ fully and faithfully map to the localization of $\mathcal{F}_\Delta^\circ (K^1_\Delta \sqcup K^2_\Delta)$ at such quasi-units. Moreover, these two inclusions are quasi-equivalences by combining Proposition \ref{invarianthf} and Proposition \ref{localhoms}. Thus, it remains only to deal with the choice of generic sequence of angles.

Suppose that are $\{ \theta^j \}$ and $\{ \wt\theta^j \}$ are two choices of sequences of angles satisfying the conditions outlined above needed to define monomially admissible Fukaya-Seidel categories $\mathcal{F}_\Delta (W; \{ \theta^j \})$ and $\mathcal{F}_\Delta (W; \{ \wt\theta^j \})$. Then, one can find another sequence $\{ \widehat\theta^j \}$ satisfying the conditions to define another such category such that a sequence consisting of the elements of $\{ \theta^j \} \cup \{ \widehat\theta^j \}$ and a sequence consisting of the elements of $\{ \wt\theta^j \} \cup \{ \widehat\theta^j \}$ both also satisfy the conditions for defining monomially admissible Fukaya-Seidel categories. Then, we have inclusions
\[ \mathcal{F}_\Delta (W; \{ \theta^j \}) \to \mathcal{F}_\Delta (W; \{ \theta^j \} \cup \{ \widehat\theta^j \}) \leftarrow  \mathcal{F}_\Delta (W; \{ \widehat\theta^j \} ) \to  \mathcal{F}_\Delta (W; \{ \wt \theta^j \} \cup \{ \widehat\theta^j \}) \leftarrow \mathcal{F}_\Delta (W; \{ \wt\theta^j \} )\]
that are all quasi-equivalences by Proposition \ref{localhoms}. 
\end{proof}

In addition, we could upgrade Proposition \ref{invarianthf} to show that the quasi-isomorphism type of $\mathcal{F}_\Delta(W)$ depends only on the admissible Hamiltonian isotopy classes of the Lagrangian branes in $O_\Delta$. With that in mind, there is a particular choice of objects of interest to us. 

\begin{df} The category of monomially admissible Lagrangian sections $\mathcal{F}_\Delta^s (W_\Sigma)$ is the monomially admissible Fukaya-Seidel category $\mathcal{F}_\Delta (W_\Sigma)$ with $O_\Delta$ consisting only of Lagrangian sections equipped with the canonical grading of a section and containing a Lagrangian section in each admissible Hamiltonian isotopy class. 
\end{df}

In the case that $W = W_\Sigma$ and the division is adapted to the fan, we can construct $\mathcal{F}^s_\Delta(W_\Sigma)$ by taking $O_\Delta$ to be the set of $\phi_D(\mathcal{L})$ for every toric divisor $D$ on  $X$ where $\phi_D$ is the time-$1$ flow of a twisting Hamiltonian for $D$ and $\mathcal{L} = (\bb{R}_{>0})^n$ is the zero section.  In that case,  Proposition \ref{bij} shows that $\mathcal{F}_\Delta^s (W_\Sigma)$ indeed contains all admissible Hamiltonian isotopy classes of sections with some repeated when several divisors correspond to the same line bundle on $X$. In Section \ref{catmono}, we will make restrictions on the $\phi_D$, but the choice of $\phi_D$ is up to admissible Hamiltonian so does not affect the quasi-isomorphism type of $\mathcal{F}_\Delta^s (W_\Sigma)$. Even more is true.

\begin{prop} \label{adaptedinvariant} For any $\Delta$ and $\Delta'$ adapted to $\Sigma$, $\mathcal{F}^s_\Delta(W_\Sigma)$ and $\mathcal{F}_{\Delta'}^s(W_\Sigma)$ are quasi-equivalent.
\end{prop}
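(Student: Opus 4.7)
The plan is to realize both categories on a common set of objects and common auxiliary data, then identify them directly. The key geometric observation is that the twisting Hamiltonians $H_D$ constructed in Proposition \ref{smooth} and Corollary \ref{twistexist} satisfy the stronger condition $\nabla H_D \cdot \alpha = -2\pi n_\alpha$ outside of a compact set in the entire interior of the star of $\alpha$, not merely on $C_\alpha$: the smoothing of the piecewise linear function $F_D$ is performed in an arbitrarily small neighborhood of the facets of $\Sigma$, so the twisting condition is preserved everywhere away from those facets. Consequently, a single $H_D$ serves as a twisting Hamiltonian for $D$ with respect to any monomial division adapted to $\Sigma$, and taking $D = -\sum_\alpha D_\alpha$ produces a Hamiltonian $K$ that plays the role of $K_\Delta$ and $K_{\Delta'}$ in Assumption \ref{canassump} simultaneously.

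I would then set $\mathcal{L}(D) = \phi^1_{H_D}(\bb{R}_{>0}^n)$ for each toric divisor $D$ and take $O = \{\mathcal{L}(D)\}$ as the object set for both categories. Each $\mathcal{L}(D)$ is admissible for both $\Delta$ and $\Delta'$ by construction, and by Proposition \ref{bij} applied to each division separately, $O$ represents every admissible Hamiltonian isotopy class of sections in both settings. Using $O$ together with the common $K$, a common sequence of angles, and a common family of strip-like ends---all permitted by Proposition \ref{choices}---the auxiliary categories $\mathcal{F}_\Delta^{s,\circ}$ and $\mathcal{F}_{\Delta'}^{s,\circ}$ are built from identical objects, identical perturbed Lagrangians $\psi^\theta(\mathcal{L}(D))$, and identical intersection points.

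The only apparent remaining difference is the admissibility condition on the almost complex structures used to compute Floer theory. The set of almost complex structures that are simultaneously $\Delta$- and $\Delta'$-admissible, i.e., for which $z^\alpha$ is holomorphic outside a compact subset of $\mu^{-1}(C_\alpha \cup C'_\alpha)$ for each $\alpha$, is non-empty and contractible by a direct adaptation of Lemma \ref{contractible}, since joint admissibility of $J$ just corresponds to the associated metric being standard on the $\omega$-orthogonal of $\ker d(c_\alpha z^\alpha)$ over the enlarged set $\mu^{-1}(C_\alpha \cup C'_\alpha)$. With such a $J$, the compactness arguments of Section \ref{control} apply unchanged using either division's function $g(z) = \max_\alpha (|c_\alpha z^\alpha|^{k_\alpha})$, since the boundaries satisfy both admissibility conditions at once. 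Choosing $J$ and all remaining perturbation data identically in both constructions, the Floer cochain complexes, higher $A_\infty$ operations, and quasi-units coincide strictly between $\mathcal{F}_\Delta^{s,\circ}$ and $\mathcal{F}_{\Delta'}^{s,\circ}$, and localization yields the desired quasi-equivalence of $\mathcal{F}^s_\Delta(W_\Sigma)$ with $\mathcal{F}^s_{\Delta'}(W_\Sigma)$.

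The main obstacle is the construction of the joint admissibility data---specifically verifying that the contractibility and compactness results from Section \ref{control} and Lemma \ref{contractible} go through with both $\Delta$ and $\Delta'$ imposing constraints simultaneously. This rests on the fact that both divisions constrain the argument of the same monomial $z^\alpha$ in their respective regions near the ray generated by $\alpha$, so the constraints are compatible rather than competing.
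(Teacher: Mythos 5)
Your proposal is correct and is in the same spirit as the paper's proof, but organized differently. The paper's proof is very short: it observes that both $\Delta$ and $\Delta'$, being adapted to $\Sigma$, are contained in a common combinatorial division $\Delta^c$ (with cones close to the stars), and then argues that the objects of $\mathcal{F}^s_{\Delta^c}(W_\Sigma)$ already hit every admissible Hamiltonian isotopy class with respect to each of $\Delta$ and $\Delta'$, so all three categories are quasi-equivalent. What you do instead is bypass the intermediary $\Delta^c$ and directly construct $\mathcal{F}^s_\Delta(W_\Sigma)$ and $\mathcal{F}^s_{\Delta'}(W_\Sigma)$ from literally identical data: the same objects $\{\mathcal{L}(D)\}$ (built from twisting Hamiltonians whose gradient condition holds on the whole star, hence over both $C_\alpha$ and $C'_\alpha$), the same $K$ satisfying Assumption \ref{canassump} for both divisions, and almost complex structures admissible over the enlarged regions $\mu^{-1}(C_\alpha \cup C'_\alpha)$ so that they lie in $\mathcal{J}(\Delta)\cap\mathcal{J}(\Delta')$. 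Both routes hinge on the same underlying geometric fact---for adapted divisions, the $C_\alpha$'s lie in the open star of $\alpha$ at infinity, so twisting Hamiltonians and almost complex structure constraints arise from the star structure and are therefore compatible across divisions---but the paper's route lets it reuse the single-division machinery wholesale via the inclusion of $\mathcal{F}^s_{\Delta^c}$ and Propositions \ref{invarianthf}, \ref{localhoms}, while your route requires the extra (and correctly executed) verification that the jointly admissible almost complex structures are non-empty and contractible and that compactness goes through using either division's exhaustion function. The payoff of your approach is a slightly stronger conclusion (a strict identification of specific representatives rather than a zig-zag of quasi-equivalences); the payoff of the paper's approach is brevity.
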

\begin{proof} Both $\Delta$ and $\Delta'$ are contained in a combinatorial division $\Delta^{c}$ (by taking the cones in $\Delta^{c}$ as close to filling the whole stars as necessary). Then, the objects of $\mathcal{F}^s_{\Delta^c}(W_\Sigma)$ represent every admissible Hamiltonian isotopy class of Lagrangian sections with respect to $\Delta$ and $\Delta'$. Hence, the categories are quasi-equivalent.
\end{proof}

We expect that $\mathcal{F}^s_\Delta(W_\Sigma)$ generates $\mathcal{F}_\Delta(W_\Sigma)$ whenever the former is quasi-equivalent to the subcategory of Lagrangian sections of the latter, i.e., whenever the set $O_\Delta$ used to define $\mathcal{F}_\Delta(W_\Sigma)$ contains an object in each admissible Hamiltonian isotopy class of Lagrangian sections. Thus, we also expect $\mathcal{F}_\Delta(W_\Sigma)$ to be independent of  $\Delta$ among monomial divisions adapted to $\Sigma$. 

\subsection{A preliminary Floer cohomology computation} \label{floercomp}

In this subsection, we perform a computation to illustrate how some morphism spaces in $\mathcal{F}^s_\Delta (W_\Sigma)$ can be explicitly matched with morphisms of line bundles in $D^b\Coh(X)$ when $\Delta$ is adapted to $\Sigma$. We will see that this matching extends to an equivalence of categories using a less direct approach in Section \ref{hms}.

Suppose that $D$ is an ample divisor on $X$. Since $\Delta$ is adapted to $\Sigma$, we can find a twisting Hamiltonian $H_D$ for $D$ with flow $\phi_D^t$ by Corollary \ref{twistexist}. Our goal is to compute
\[ H^\bullet \Hom_{\mathcal{F}^s_\Delta (W_\Sigma)} (\mathcal{L} , \mathcal{L}(k D)) \]
for $k \in \bb{N}$ where $\mathcal{L} = (\bb{R}_+)^n$ and $\mathcal{L}(kD) = \phi_D^k(\mathcal{L})$ as before. By Proposition \ref{localhoms}, we are attempting to compute $HF^\bullet (\psi^{\eps}( \mathcal{L}), \mathcal{L}(kD))$ for a small $\eps > 0$. The generators of the underlying chain complex correspond to the intersection points of any lift of $\psi^{-\eps}(\mathcal{L}(kD))$ with $\bb{R}^n \times 2 \pi \bb{Z}^n \subset T^*\bb{R}^n$. In particular, the generators correspond to $\mu \in \bb{R}^n$ such that $\nabla (kH_D - \eps K_\Delta) (\mu) \in 2\pi\bb{Z}^n$. We first analyze the behavior of $\nabla H_D$ itself. 

\begin{lem} \label{polytope} Suppose that $\Delta$ is adapted to $\Sigma$ and $D = \sum_{\alpha} n_\alpha D_\alpha$ is an ample divisor. There is a twisting Hamiltonian $H_D$ such that the image of $\nabla H_D$ is  $2\pi P$ where $P$ is the polytope associated to the divisor $D$ given by $u \cdot \alpha \geq -n_\alpha$ for all $\alpha \in A$. Moreover, we can ensure that $\nabla H_D$ maps injectively to the interior of $2\pi P$ and has nondegenerate Hessian on the preimage of the interior. 
\end{lem}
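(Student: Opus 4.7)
The plan is to take $H_D$ to be the convolution smoothing $F_D * \eta_\eps$ from Proposition \ref{smooth} for a nonnegative symmetric mollifier $\eta_\eps$, and to verify each claim using the ampleness of $D$ and convex analysis.

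By Proposition \ref{smooth}, this $H_D$ is a smooth twisting Hamiltonian for $D$, and by Remark \ref{concave}, ampleness of $D$ makes $F_D$ strictly concave (with distinct vertices $\{m_\sigma\}$ indexed by maximal cones of $\Sigma$), so $H_D$ is concave and strictly concave along any line segment not contained in a single domain of linearity of $F_D$. Differentiating under the integral yields
\[
\nabla H_D(x) = \sum_{\sigma\text{ maximal}} 2\pi m_\sigma \int_\sigma \eta_\eps(x-y)\, dy,
\]
expressing $\nabla H_D(x)$ as a convex combination of the vertices $\{2\pi m_\sigma\}$ of $2\pi P$; hence $\nabla H_D(\bb{R}^n) \subset 2\pi P$, and each vertex is attained at any $x$ in the $\eps$-interior of the corresponding maximal cone.

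For surjectivity onto the whole of $2\pi P$, I would argue face by face. For each codimension-$k$ cone $\tau = \sigma_1 \cap \cdots \cap \sigma_r$ of $\Sigma$ with dual $k$-dimensional face $F_\tau = \mathrm{conv}\{m_{\sigma_i}\}$ of $P$, points $x$ in a small neighborhood of an interior point of $\tau$ away from lower-dimensional cones have only the weights for $\sigma_1,\ldots,\sigma_r$ nonzero, so $\nabla H_D(x) \in 2\pi F_\tau$; as $x$ varies over a $k$-dimensional transverse slice through $\tau$, the weights continuously attain each extreme configuration (concentrating on a single $\sigma_i$ at the far end of the slice in direction $\sigma_i$), and a Brouwer degree argument using symmetry of $\eta_\eps$ shows the induced map covers $2\pi F_\tau$. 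Taking the union over all $\tau$ gives $\nabla H_D(\bb{R}^n) = 2\pi P$.

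For injectivity and nondegeneracy of the Hessian on the preimage $\mathcal{U} = \nabla H_D^{-1}(\mathrm{int}(2\pi P))$, the key is that $\mathcal{U}$ is bounded and concentrates near the origin. Indeed, if $|x|$ is large enough that the $\eps$-ball of $x$ meets either a single maximal cone or at most a single codimension-$1$ cone of $\Sigma$, then $\nabla H_D(x)$ lies on a vertex or an edge of $2\pi P$, respectively, and more generally for $|x|$ large $\nabla H_D(x)$ must lie on a proper face of $2\pi P$ (since higher-codimension cones of $\Sigma$ emanate from and meet only at the origin). Thus $\mathcal{U}$ is contained in a bounded neighborhood of the origin, where the $\eps$-ball meets sufficiently many codimension-$1$ cones of $\Sigma$ that their normals span $\bb{R}^n$; since the distributional Hessian of $F_D$ contributes a negative semidefinite rank-one matrix in the normal direction at each codimension-$1$ face, the smoothed $\mathrm{Hess}(H_D)(x)$ is a sum of such rank-one contributions in spanning directions and is therefore negative definite on $\mathcal{U}$. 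Injectivity then follows from the tangent-plane characterization of concavity: if $\nabla H_D(x_1) = \nabla H_D(x_2)$ with $x_1 \neq x_2$ in $\mathcal{U}$, concavity forces $H_D$ to be affine on the segment $[x_1, x_2]$, so by Remark \ref{concave} the segment lies in a single maximal cone $\sigma$, but then $\nabla H_D$ is constantly equal to the vertex $2\pi m_\sigma$, contradicting $\nabla H_D(x_i) \in \mathrm{int}(2\pi P)$.

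The main obstacle is the combinatorial bookkeeping: verifying that $\nabla H_D$ covers every face of $2\pi P$ from the correct piece of $\bb{R}^n$, and that the codimension-$1$ face normals contributing near points of $\mathcal{U}$ span $\bb{R}^n$. Both rely on the interplay between the stratification of $\Sigma$ and the face structure of $P$ that is guaranteed by ampleness, but requires careful tracking of which maximal cones lie within distance $\eps$ of any given $x$ rather than being a purely local statement.
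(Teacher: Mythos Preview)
Your construction and overall strategy coincide with the paper's: take $H_D = F_D * \eta_\eps$ with a nonnegative symmetric mollifier, express $\nabla H_D$ as a convex combination of the vertices $2\pi m_\sigma$, and analyze the Hessian as a sum of rank-one contributions coming from codimension-one walls. The differences are in how you close the two hard steps.

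For surjectivity, your face-by-face Brouwer degree argument is replaced in the paper by a one-line appeal to a convex-analysis fact (Corollary~2 of \cite{Minty}): the gradient image of a smooth concave function is \emph{nearly convex}, i.e., contains the interior of its convex hull. Since the vertices $2\pi m_\sigma$ are attained, the interior of $2\pi P$ is automatically in the image; then boundedness of the preimage of the interior (which you also observe) and continuity give the full polytope. This bypasses all the combinatorial bookkeeping you flagged.

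For nondegeneracy of the Hessian on $\mathcal{U}$, you correctly identify that one needs the normals of the walls meeting $B_\eps(\mu)$ to span $\bb{R}^n$, but you do not supply the argument. The paper's key trick is an ordering: write the maximal cone containing $\mu$ as $\sigma = \langle \alpha_1,\dots,\alpha_n\rangle$, let $d_j$ be the distance from $\mu$ to $\partial\,\text{star}(\alpha_j)$, and reorder so that $d_1 \le \dots \le d_n$. For each $j$ pick a wall $\sigma_j^1 \cap \sigma_j^2 \subset \partial\,\text{star}(\alpha_j)$ realizing $d_j$, with normal $\nu_j$. Then for $\ell > j$ the ball $B_{d_j}(\mu)$ lies in $\text{star}(\alpha_\ell)$, forcing $\alpha_\ell$ to lie on both sides of wall $j$, so $\nu_j \cdot \alpha_\ell = 0$; since $\nu_j \cdot \alpha_j \neq 0$, the $\nu_j$ form a triangular system and are linearly independent. (The paper also imposes that $\eta_\eps$ be strictly positive on the interior of its support, which is what makes ``$\mu \in \mathcal{U}$'' equivalent to ``$B_\eps(\mu)$ not contained in any $\text{star}(\alpha)$''; you should add this hypothesis.)

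Finally, your separate injectivity argument has a gap: from ``$H_D$ affine on $[x_1,x_2]$'' and Remark~\ref{concave} you get that $x_1,x_2$ lie in the same maximal cone $\sigma$, but this does \emph{not} force $\nabla H_D(x_i) = 2\pi m_\sigma$ unless the $x_i$ are in the $\eps$-interior of $\sigma$, which points of $\mathcal{U}$ never are. The fix is simpler than your argument: once the Hessian is negative definite at $x_1 \in \mathcal{U}$, the function is strictly concave along the segment near $x_1$, contradicting affineness. This is how the paper proceeds (``injectivity on $U$ follows from nondegeneracy of the Hessian'').
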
 
\begin{proof} First, recall that $H_D$ can be constructed as $F_D * \eta_\eps$ where $F_D$ is the $2\pi$ times the piecewise-linear support function of $D$ and $\eta_\eps$ is a smooth, nonnegative, and symmetric mollifier supported on $B_\eps(0)$ for a small $\eps > 0$ as in the proof of Proposition \ref{smooth}. On each maximal cone $\sigma$ in $\Sigma$, we have that $F_D(\mu) = 2 \pi m_\sigma \cdot \mu$ where $m_\sigma \cdot \alpha = - n_\alpha$ for all $\alpha \in \sigma$. The function $H_D$ is concave as noted in Remark \ref{concave}.

Since $\nabla H_D = (\nabla F_D) * \eta_\eps$ and $\eta_\eps$ is nonnegative, the image of $\nabla H_D$ lies in the convex hull of the vectors $2 \pi m_\sigma$. The convex hull of the $m_\sigma$ is in fact $P$ because $D$ is ample (see for instance Theorem 6.1.7 of \cite{CLS}). The vertices $2 \pi m_\sigma$ of $2 \pi P$ lie in the image of $\nabla H_D$ as $H  _D = F_D$ away from the $\eps$-neighborhood of the union of cones of $\Sigma$ with positive codimension. Further, Corollary 2 of \cite{Minty} implies that the image of $\nabla H_D$ is nearly convex (contains the interior of its convex hull). Thus, the interior of $2\pi P$ is in the image of $\nabla H_D$.

If $\nabla H_D (\mu)$ lies in the interior of $2 \pi P$ then the ball $B_\eps(\mu)$ is not contained in the star of $\alpha$ for any $\alpha \in A$. It follows that the set of $\mu \in \bb{R}^n$ mapping to the interior of $2 \pi P$ is bounded. Combining that with the fact that $\nabla H_D$ maps onto the interior of $2 \pi P$, we have that $2 \pi P$ is contained in and hence equal to the image of $\nabla H_D$. 

Now, we will show that $\nabla H_D$ is injective and has nondegenerate Hessian on $U = \nabla H_D^{-1}(\text{int}(2 \pi P))$. For this, we will require that $\eta_\eps$ is strictly positive on the open ball of radius $\eps$ around the origin, i.e., the interior of its support. Under this condition, $\mu \in U$ if and only if the ball $B_\eps(\mu)$ is not contained in $\text{star}(\alpha)$ for any $\alpha \in A$. 

Since $H_D$ is concave, injectivity on $U$ follows from nondegeneracy of the Hessian. Suppose that $B_\eps(\mu)$ intersects $k$ walls $\sigma_1^1 \cap \sigma_1^2, \hdots, \sigma_k^1 \cap \sigma_k^2$ where $\sigma_{j}^1, \sigma_{j}^2$ are maximal cones of $\Sigma$. Let $\nu_j = m_{\sigma_j^1} - m_{\sigma_j^2}$ be a normal vector to $\sigma_j^1 \cap \sigma_j^2$ for $j =1, \hdots, k$. Note that $\nu_j \neq 0$ for all $j$ since $D$ is ample. Using integration by parts and that $\nabla H_D = (\nabla F_D) * \eta_\eps$, we have that the Hessian is given by
\[ \text{Hess}(H_D) (\mu) = \sum_{j=1}^k \frac{1}{\| \nu_j \|} \left( \int_{\sigma_j^1 \cap \sigma_j^2 \cap B_\eps(\mu)} \eta_\eps \, d\text{vol} \right) A_j \]
where $A_j$ is the matrix with entries $(A_j)_{\ell s} = - (\nu_j)_\ell (\nu_j)_s$ for $j, s = 1, \hdots, n$. That is, $\text{Hess}(H_D) (\mu) = \sum_j x_j A_j$ where the $x_j$ are positive constants and the $A_j$ are negative semidefinite matrices with $\ker(A_j) = \text{span} (\nu_j )^\perp = \text{span}( \sigma^1_j \cap \sigma^2_j)$. 

To show that $\text{Hess}(H_D)$ is nondegenerate in $U$, it is enough to show that some subset of the $\nu_j$ span $\bb{R}^n$ when $\mu \in U$ by our computation of the Hessian. Suppose that $\mu \in U$ and $\mu \in \sigma$ where $\sigma = \langle \alpha_1, \hdots, \alpha_n \rangle$ is a maximal cone of $\Sigma$ (we have implicitly used that $X$ is smooth). In order for $B_\eps(\mu)$ not to be contained in $\text{star}(\alpha)$ for any $\alpha \in A$, we must have in particular that $B_\eps(\mu)$ is not contained in $\text{star}(\alpha_j)$ for $j = 1, \hdots, n$. For $j = 1, \hdots, n$, let $d_j = \inf \{ r | B_r(\mu) \cap \partial \text{star}(\alpha_j) \neq \emptyset \}$ and let $\sigma^1_j \cap \sigma^2_j$ be a wall in $\partial \text{star}(\alpha_j)$ that is tangent to the sphere of radius $d_j$  around $\mu$ and has normal $\nu_j$ as above. In our notation, $\sigma^1_j$ is a maximal cone of $\Sigma$ with $\alpha_j$ as one of its generators, and $\sigma^2_j$ is a maximal cone of $\Sigma$ that does not have $\alpha_j$ as a generator but contains all other generators of $\sigma^1_j$. By reindexing, we can assume that $d_\ell \geq d_j$ for $\ell \geq j$. As a result, the open ball $B_{d_j}(\mu)$ is contained in $\text{star}(\alpha_\ell)$ for $\ell>j$. In particular, $\alpha_\ell$ is a generator of $\sigma^1_j$ implying that $\alpha_\ell \in \sigma^2_j$ as well. Therefore, $\alpha_\ell \in \sigma^1_j \cap \sigma^2_j$ for $\ell > j$. Combining this with the fact that $\nu_j \cdot \alpha_j \neq 0$, we see that the vectors $\nu_j$ for $j =1, \hdots, n$ are linearly independent. 
\end{proof}

Note that the function $k H_D - \eps K_\Delta$ with $k \neq 0$ is still a smoothing of a stricly concave ($k>0$) or strictly convex ($k < 0$) piecewise linear function for small enough $\eps$, where strict convexity/concavity are defined as in Remark \ref{concave}. Thus, the arguments of Lemma \ref{polytope} apply to show that we can take the image of $k \nabla H_D - \eps \nabla K_\Delta$ to be the polytope given by $u \cdot \alpha \geq 2\pi(-kn_\alpha - \eps)$ for all $\alpha \in A$ if $k > 0$ and by $u \cdot \alpha \leq 2\pi( -kn_\alpha - \eps)$ for all $\alpha \in A$ if $k < 0$ where $D = \sum n_\alpha D_\alpha$. For $k > 0$, this amounts to moving each facet of $2\pi kP$ by affine length $2\pi \eps$ outward. For $k < 0$, this amounts to moving each facet of $2\pi k P$ by affine length $2\pi \eps$ inward. See Figure \ref{floercomps}. Since we can guarantee that the Hessian is nondegenerate in the interior of these polytopes, all the intersection points corresponding to interior points in $2 \pi \bb{Z}^n$ are transverse. Moreover, they all lie in degree $0$ if $k > 0$ or in degree $n$ if $k < 0$. As a result, the differential must vanish in either case.

\begin{figure}
\centering
\begin{tikzpicture}
\draw[->, thick] (0,0) -- (1, 0);
\draw[->, thick] (0,0) -- (0,3);
\draw[->, thick] (0,0) -- (0,-1);
\draw[->, thick] (0,0) -- (-3, 0);
\draw[ultra thick, blue] (-2.2, 2.4) -- ( 0.4, -0.2) -- (-2.2, -0.2) -- (-2.2, 2.4) ;
\draw[blue, fill = blue, opacity = 0.3] (-2.2, 2.4) -- ( 0.4, -0.2) -- (-2.2, -0.2) -- (-2.2, 2.4) ;
\draw[red] (0,0) node {$\times$};
\draw[red] (-2,0) node {$\times$};
\draw[red] (-2,2) node {$\times$};
\draw[red] (0,2) node {$\times$};

\draw[->, thick] (6,2) -- (9, 2);
\draw[->, thick] (6,2) -- (6,3);
\draw[->, thick] (6,2) -- (6,-1);
\draw[->, thick] (6,2) -- (5, 2);
\draw[ultra thick, blue] (6.4 ,1.8 ) -- ( 7.8 ,0.4 ) -- (7.8, 1.8) -- (6.4 , 1.8) ;
\draw[blue, fill = blue, opacity = 0.3] (6.4 ,1.8 ) -- ( 7.8 ,0.4 ) -- (7.8, 1.8) -- (6.4 , 1.8);
\draw[red] (6,2) node {$\times$};
\draw[red] (6,0) node {$\times$};
\draw[red] (8, 2) node {$\times$};
\draw[red] (8,0) node {$\times$};
\end{tikzpicture}
\caption{Argument/fiber projections (images of the gradients) of the lifts of the form $d(H_D - \eps K_\Delta)$ of $\psi^{-\eps}(\mathcal{L}(D_{(1,0)}))$ and $\psi^{-\eps}(\mathcal{L}(-D_{(1,0)}))$ with $X = \mathbb{P}^2$. Elements of $2 \pi \bb{Z}^2$ are shown in red. } \label{floercomps}
\end{figure}
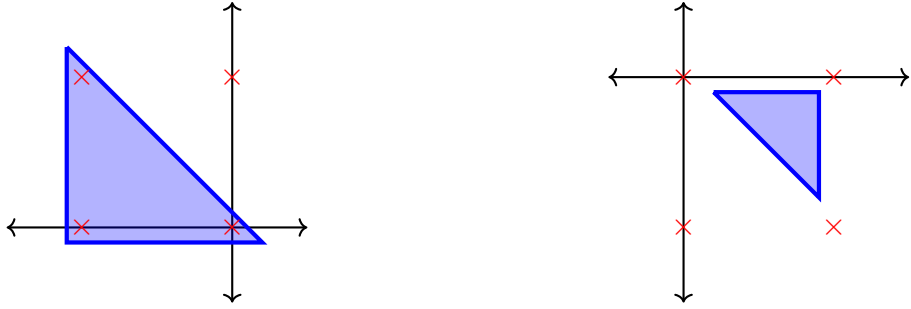

Therefore, we have obtained
\[ H^\bullet \Hom_{\mathcal{F}^s_\Delta (W_\Sigma)} (\mathcal{L} , \mathcal{L}(k D)) = \begin{cases} \bb{C}\langle kP \cap \bb{Z}^n \rangle & \bullet = 0 \\ 0 & \bullet \neq 0 \end{cases} \]
if $k > 0$ and
\[ H^\bullet \Hom_{\mathcal{F}^s_\Delta (W_\Sigma)} (\mathcal{L} , \mathcal{L}(k D)) = \begin{cases} \bb{C}\langle (kP \setminus \partial (kP) ) \cap \bb{Z}^n \rangle & \bullet = n \\ 0 & \bullet \neq 0 \end{cases} \]
if $k < 0$ where $P$ is the polytope associated to $D$. That is, we have obtained, outside of the case $k = 0$, the following proposition.

\begin{prop} \label{ampleexample} Suppose that $\Delta$ is adapted to $\Sigma$ and $D$ is an ample divisor on $X$. We have an isomorphism of graded vector spaces
\[ H^\bullet \Hom_{\mathcal{F}^s_\Delta (W_\Sigma)} (\mathcal{L} , \mathcal{L}(k D)) \cong \Ext^\bullet_X (\mathcal{O}, \mathcal{O}(kD)). \]
\end{prop}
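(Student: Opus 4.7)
The plan is to reduce the computation via Proposition \ref{localhoms} to an explicit calculation of $HF^\bullet(\psi^{\eps}(\mathcal{L}), \mathcal{L}(kD))$ for a sufficiently small $\eps > 0$, and then to match the resulting graded vector space with the B-side $\Ext$ groups using standard toric cohomology. After lifting $\psi^{-\eps}(\mathcal{L}(kD))$ to a section of $T^*\mathbb{R}^n$ of the form $d(kH_D - \eps K_\Delta)$, intersection points with $\mathbb{R}^n \times 2\pi\mathbb{Z}^n$ are in bijection with $\mu \in \mathbb{R}^n$ satisfying $\nabla(kH_D - \eps K_\Delta)(\mu) \in 2\pi\mathbb{Z}^n$, so the whole computation is controlled by the image of this gradient map.

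For $k \neq 0$ I would apply Lemma \ref{polytope}, observing that $kH_D - \eps K_\Delta$ is still a mollifier smoothing of a strictly concave (for $k>0$) or strictly convex (for $k<0$) piecewise linear support function, and that its gradient image is the polytope $2\pi kP$ with each facet pushed outward by $2\pi\eps$ (if $k>0$) or inward by $2\pi\eps$ (if $k<0$). The same argument used in the lemma produces injectivity onto the interior and a nondegenerate Hessian there, so every generator is transverse and all generators sit in a single Maslov degree: $0$ when $k>0$ (sections on the positive side) and $n$ when $k<0$ (the Morse index flips). In either case the Floer differential vanishes for degree reasons, yielding the counts
\[
\dim HF^0 = |kP \cap \mathbb{Z}^n|\ \text{for}\ k>0, \qquad \dim HF^n = |\mathrm{int}(kP) \cap \mathbb{Z}^n|\ \text{for}\ k<0.
\]
The case $k=0$ needs a separate remark but is handled by the same gradient-image analysis applied to $-\eps K_\Delta$ alone: for $\eps$ small enough the image is a small polytope containing $0$ whose only lattice point is the origin, producing a single generator in degree $0$ and giving $H^\bullet \cong \mathbb{C}$ concentrated in degree $0$.

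To match with the B-side, I would invoke standard toric cohomology. For $D$ ample, Demazure (Kodaira) vanishing gives $H^i(X, \mathcal{O}(kD)) = 0$ for $i>0$, $k>0$, and $H^0(X, \mathcal{O}(kD))$ has a basis indexed by lattice points in the polytope $kP$, matching the $k>0$ Floer count. For $k<0$, Serre duality reads $H^i(X, \mathcal{O}(kD)) \cong H^{n-i}(X, \mathcal{O}(-kD + K_X))^\vee$; since $-kD + K_X$ has polytope equal to $(-k)P$ contracted inward by one unit along each facet, the nontrivial contribution is in degree $n$ with dimension equal to the number of interior lattice points of $(-k)P$, again matching. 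The case $k=0$ is trivial since $\Ext^\bullet(\mathcal{O},\mathcal{O}) = H^\bullet(X,\mathcal{O}) = \mathbb{C}$ in degree $0$ on a smooth projective toric variety.

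The main obstacle is ensuring that Lemma \ref{polytope} genuinely controls the gradient image for $kH_D - \eps K_\Delta$ rather than just $H_D$; this is where ampleness of $D$ enters essentially, through the strict concavity of the support function that makes the lemma's construction faithfully realize the polytope. The rest is bookkeeping: transversality of the intersections (automatic from nondegeneracy of the Hessian on the interior) and a degree check to force the differential to vanish.
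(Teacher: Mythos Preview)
Your argument for $k\neq 0$ is essentially identical to the paper's: the same reduction via Proposition~\ref{localhoms}, the same lift to $T^*\bb{R}^n$, the same application of Lemma~\ref{polytope} to $kH_D-\eps K_\Delta$, the same degree argument forcing the differential to vanish, and the same match with toric cohomology. The paper in fact states the proposition only after establishing the $k\neq 0$ cases, explicitly deferring $k=0$.

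Your treatment of $k=0$, however, has a gap. You assert that because the image of $-\eps\nabla K_\Delta$ is a small set whose only lattice point is the origin, there is a \emph{single} generator in degree $0$. But the generators are the preimages of $0$ under $-\eps\nabla K_\Delta$, i.e.\ the critical points of $K_\Delta$, and there is no reason for $K_\Delta$ to have a unique critical point unless $X$ is Fano (so that the support function of $-\sum D_\alpha$ is strictly concave and Lemma~\ref{polytope} applies). In general $K_\Delta$ may have several critical points, not all of index $0$, and you need the Floer differential to compute the answer. The paper handles this by observing that holomorphic strips lift to $T^*\bb{R}^n$, so $HF^\bullet(\psi^\eps(\mathcal{L}),\mathcal{L})\cong HF^\bullet_{T^*\bb{R}^n}(\eps\,dK_\Delta,\bb{R}^n)\cong H^\bullet(\bb{R}^n;\bb{C})$, using that $K_\Delta$ is outward-pointing at infinity so that the Floer cohomology recovers the cohomology of the base regardless of the number of critical points.
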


Let us now discuss the case $k = 0$ that has thus far been neglected. If $X$ is Fano, we can of course again apply Lemma \ref{polytope}. In general, the only possible generators of $CF(\psi^{\eps}(\mathcal{L}), \mathcal{L})$ correspond to intersection points of $\eps dK_\Delta$ and the zero section $\bb{R}^n \subset T^*\bb{R}^n$. Moreover, holomorphic discs lift to $T^*\bb{R}^n$. Thus, one can see that
\[ H^\bullet \Hom_{\mathcal{F}^s_\Delta (W_\Sigma)} (\mathcal{L} , \mathcal{L}) \cong HF^\bullet_{T^*\bb{R}^n} ( \eps d K_\Delta, \bb{R}^n) \cong H^\bullet (\bb{R}^n; \bb{C}) = \begin{cases} \bb{C} & \bullet = 0 \\ 0 & \bullet \neq 0 \end{cases} \]
 from the fact that $K_\Delta$ is everywhere outward-pointing at infinity by its definition in Assumption \ref{canassump} (see Section \ref{hms} for more details). In fact, we know that $\mathcal{F}^s_\Delta (W_\Sigma)$ is strictly unital by construction. Hence, $H^0 \Hom_{\mathcal{F}^s_\Delta (W_\Sigma)} (\mathcal{L} , \mathcal{L})$ is generated by the strict unit. 

Note that there is a natural identification 
\[ \Hom_{\mathcal{F}^s_\Delta (W_\Sigma)} (\mathcal{L}(\ell D) , \mathcal{L}(k D)) = \Hom_{\mathcal{F}^s_\Delta (W_\Sigma)} (\mathcal{L} , \mathcal{L}( (k - \ell) D)) \]
up to choice of almost-complex structure by applying the flow of the twisting Hamiltonian $-\ell H_D$ to all data. Thus, we see that Proposition \ref{ampleexample} applies equally well to the morphism spaces $H^\bullet \Hom_{\mathcal{F}^s_\Delta (W_\Sigma)} (\mathcal{L}(\ell D) , \mathcal{L}(k D))$ for all $\ell, k \in \bb{Z}$. 

We can obtain restrictions on the product by a topological argument. For this, we first note that there is an additional $\bb{Z}^n$-grading on $CF^\bullet (L_0 , L_1)$ for two exact Lagrangian sections $L_0$ and $L_1$ in $(\bb{C}^*)^n$ with fixed lifts $\wt L_0$ and $\wt L_1$ to the universal cover $T^*\bb{R}^n$ (cf. Section 3.2 of \cite{Ab2}). The grading $|p|$ of an intersection point $p \in L_0 \cap L_1$ is the unique element of $\bb{Z}^n$ satisfying $\wt p_0 + 2\pi |p| = \wt p_1$ where $\wt p_0 \in \wt L_0$ and $\wt p_1 \in \wt L_1$ are lifts of $p$. By lifting holomorphic discs, we immediately see that the $A_\infty$ structure maps preserve the $\bb{Z}^n$ grading. After identifying generators with integral elements of a polytope as above, it follows that the multiplication map 
\[ m^2\colon CF^\bullet (\psi^{\eps'} \mathcal{L}(kD), \mathcal{L}(\ell D) ) \otimes CF^\bullet ( \psi^{\eps} \mathcal{L}, \psi^{\eps'} \mathcal{L}(kD)) \to CF^\bullet (\psi^{\eps}\mathcal{L}, \mathcal{L}(\ell D) ) \]
for $ \ell > k > 0$ and small $\eps > \eps'$ satisfies $m^2 (p,q) = B_{p,q} (q + p) \in \ell P$ where $q \in kP$, $p \in (\ell - k)P$, and $B_{p,q} \in \bb{Z}$ is the count $\# \mathcal{M}(p, q, q+p; J)$. In fact, we can compute the structure constants $B_{p,q}$ by forcing all these intersection points to lie at the same point in $(\bb{C}^*)^n$ as in the argument below.

\begin{prop} \label{structureconstants} For any $q \in k P$ and $p \in (\ell -k) P$ with $\ell > k > 0$, $B_{p,q}$ as defined above is equal to $\pm 1$.
\end{prop}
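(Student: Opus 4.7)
The plan is to exploit the freedom in representing admissible Hamiltonian isotopy classes to arrange the three labeled intersection points $q$, $p$, and $r = p+q$ to coincide at a single point of $(\bb{C}^*)^n$, and then to use an action/energy argument to identify $B_{p,q}$ with the count of a single constant holomorphic triangle.

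Fix a generic base point $\mu_0 \in \bb{R}^n$. The intersection point carrying the $\bb{Z}^n$-grading label $q$ in $\psi^\eps\mathcal{L} \cap \psi^{\eps'}\mathcal{L}(kD)$ lies over the solution of $k\nabla H_D(\mu) - (\eps - \eps')\nabla K_\Delta(\mu) = 2\pi q$, and analogously for $p$ (with $(\ell - k)\nabla H_D - \eps'\nabla K_\Delta = 2\pi p$) and for $r$ (with $\ell\nabla H_D - \eps\nabla K_\Delta = 2\pi r$). The third equation is the sum of the first two, so the requirement $\mu_p = \mu_q = \mu_r = \mu_0$ reduces to two vector equations that uniquely determine $(\nabla H_D(\mu_0), \nabla K_\Delta(\mu_0))$. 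Since $H_D$ and $K_\Delta$ are only constrained at infinity (as a twisting Hamiltonian for $D$ and by Assumption \ref{canassump}, respectively), adding compactly supported admissible Hamiltonians near $\mu_0$ realizes any prescribed values of these gradients without altering the admissibility class; by Proposition \ref{invarianthf} this preserves Floer cohomology and the lattice-point labeling of generators. A direct computation shows that once $\mu_p = \mu_q = \mu_r = \mu_0$, the three $\theta$-coordinates automatically agree modulo $2\pi\bb{Z}^n$, so the three labeled intersection points coincide at a single $x_0 \in (\bb{C}^*)^n$.

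For any $\omega$-tame monomially admissible $J$, a holomorphic triangle $u$ has $\int u^*\omega \geq 0$, with equality exactly when $u$ is constant. Each Lagrangian in our problem is a graph $L_i = \{\theta = df_i\}$, so the Liouville form $\lambda = \sum_j \mu_j d\theta_j$ restricts to $d(\mu\cdot\nabla f_i - f_i)$ on $L_i$, and Stokes' theorem expresses the topological action of a triangle with corners at $q,p,r$ as the alternating sum of these primitives at the corners, which vanishes identically when all three corners coincide at $x_0$. Hence only constant triangles contribute to $B_{p,q}$, and there is exactly one. After a further small generic perturbation to ensure pairwise transversality of the three Lagrangian tangent subspaces at $x_0$, the linearized Cauchy-Riemann operator at the constant triangle is surjective by a standard computation for constant discs with Lagrangian boundary conditions; the constant triangle is therefore a regular zero of the moduli space and contributes $\pm 1$ to $B_{p,q}$.

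The main technical obstacle is verifying that the localized perturbation of $H_D$ and $K_\Delta$ can be arranged to send the specific $q$-, $p$-, and $r$-labeled intersection points (rather than some other pair of intersection points with different gradings) to $\mu_0$, while simultaneously preserving monomial admissibility, the twisting Hamiltonian condition at infinity, and leaving all other labeled intersection points outside the support of the perturbation. These can all be ensured by taking the perturbation small and supported in a small ball about $\mu_0$ where the unperturbed $H_D$ and $K_\Delta$ are strictly convex or concave, so that the open-mapping property used in the proof of Lemma \ref{polytope} guarantees uniqueness of the intersection point at $\mu_0$ and no other labels migrate into the region.
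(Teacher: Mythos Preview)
Your overall strategy—arrange the three labeled intersection points to coincide, then use the action/energy argument to see that only the constant triangle contributes—is exactly the paper's strategy. The gap is in your mechanism for bringing the points together.

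You propose to perturb $H_D$ and $K_\Delta$ by compactly supported functions so as to prescribe $(\nabla H_D(\mu_0),\nabla K_\Delta(\mu_0))$. But solving your $2\times 2$ linear system for these gradients in terms of $p,q,\eps,\eps'$ generically yields values far from those of the unperturbed Hamiltonians (indeed, the coefficient matrix has determinant $(\ell-k)\eps-\ell\eps'$, which is small, so the required gradients can be arbitrarily large and in particular need not lie in $2\pi P$). Thus the perturbation cannot be ``small and supported in a small ball about $\mu_0$''. A large perturbation destroys the concavity used in Lemma~\ref{polytope}, so you lose the guarantee of a unique degree-$0$ intersection point per $\bb{Z}^n$-label; the perturbed chain complex may have several generators carrying the label $q$ (or $p$, or $r$), possibly in different Maslov degrees with nontrivial differentials. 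Then the chain-level triangle count is no longer concentrated at $\mu_0$, and your energy argument does not apply.

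The paper avoids this by a different move. After first conjugating all data by the flow of $-kH_D-\eps'K_\Delta$ so that the middle Lagrangian is the zero section, it replaces each of the other two Lagrangian lifts $df_i$ by a \emph{translate in the base}, $\tilde f_i(\mu)=f_i(\mu-\mu_*)$, using a \emph{different} shift $\mu_*$ for each. Translation preserves the gradient image exactly, so the Floer chain complexes and their $\bb{Z}^n$-gradings are literally unchanged, while the specific $q$-, $p$-, and $(q+p)$-labeled points are all brought to the origin. The translated sections remain monomially admissible for a slightly smaller combinatorial division $\Delta'$, and the straight-line isotopy $f_i(\mu-t\mu_*)$ stays $\Delta'$-admissible, so continuation maps (which respect the $\bb{Z}^n$-grading and have no differential to contend with) identify the two triangle counts. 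The key point your approach cannot replicate is that the paper moves the three Lagrangians \emph{independently}; your two parameters $H_D$ and $K_\Delta$ are shared among all three Lagrangians, so perturbing them moves all three simultaneously and you lack the degrees of freedom to bring three generically distinct points together without large, structure-destroying changes.
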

\begin{proof} For clarity, we first apply the flow of $-kH_D - \eps'K_\Delta$ to all data so that we are looking at
\[ CF^\bullet( \mathcal{L}, \psi^{-\eps'} \mathcal{L} ( (\ell -k) D)) \otimes CF^\bullet (\psi^{\eps - \eps'} \mathcal{L}(-kD), \mathcal{L} ) \to CF^\bullet( \psi^{\eps - \eps'} \mathcal{L}(-k D),  \psi^{- \eps'} \mathcal{L}( (\ell -k) D)). \]
Lifting to $T^*\bb{R}^n$, we have the Lagrangians $df_1$, the zero section, and $df_2$ where
\[ f_1(\mu) = -k H_D(\mu) + (\eps - \eps')K_\Delta(\mu) + q \cdot \mu \]
and 
\[ f_2(\mu) = (\ell -k)H_D(\mu) - \eps' K_\Delta(\mu) - p \cdot \mu .\]
From this viewpoint, the two intersection points $q$ and $p$ correspond to $(\mu_q, 0)$ and $(\mu_p, 0)$, respectively, such that $\nabla f_1(\mu_q) = \nabla f_2(\mu_p) = 0$, that is, 
\[ k \nabla H_D(\mu_q) + (\eps' - \eps) \nabla K_\Delta ( \mu_q) = q \]
and
\[ (\ell - k) \nabla H_D (\mu_p) - \eps' \nabla K_\Delta(\mu_p) = p. \]

We now consider the functions $\wt{f}_1(\mu) = f_1(\mu - \mu_q)$ and $\wt{f}_1(\mu) = f_1(\mu - \mu_p)$. The gradient images of $\wt{f}_1, \wt{f}_2$, and $\wt{f}_2 - \wt{f}_1$ coincide with those of $f_1$, $f_2$, and $f_2 - f_1$. As a result, the points $p$, $q$, and $q+p$ all now correspond to $(0, 0) \in T^*\bb{R}^n$. Let $L_1$ and $L_2$ be the Lagrangians obtained from projecting $d\wt{f}_1$ and $d\wt{f}_2$ to $(\bb{C}^*)^n$.

We can assume without loss of generality that our monomial division $\Delta$ was a combinatorial division. On any other combinatorial division $\Delta'$ whose cones are all slightly smaller than those of $\Delta$, we then have that $\text{arg}(c_\alpha z^\alpha|_{L_1}) = \eps - \eps'$ and $\text{arg}(c_\alpha z^\alpha|_{L_2}) = -\eps'$ in the complement of a compact subset of $C_\alpha \in \Delta'$ for all $\alpha \in A$. As a result, their (monomially admissible) Floer theory is well-defined. The multiplication
\[ CF^\bullet (\mathcal{L}, L_2) \otimes CF^\bullet (L_1, \mathcal{L}) \to CF^\bullet (L_1, L_2) \]
takes $(p,q)$ to $\pm(q + p)$ because $p$, $q$, and $q + p$ are all the same point forcing all disks to be constant by energy considerations. Now, the Lagrangians $L_1$ and $L_2$ are isotopic through the flow of an admissible Hamiltonian with respect to $\Delta'$ to our original Lagrangians that came from projecting $df_1$ and $df_2$, respectively. Consequently, the Floer cochain complexes are related by continuation maps that are isomorphisms on cohomology, preserve $m^2$ on cohomology, and respect the $\bb{Z}^n$ grading. Since all the complexes we consider have no differentials, it follows that the original count of disks $B_{p,q}$ is also $\pm 1$.
\end{proof}

As observed in Lemma 3.21 of \cite{Ab1}, the sign of $B_{p,q} = \pm 1$ is actually independent of $p$ and $q$, and we can take all the signs to be positive by simply negating the generators if necessary. Therefore, we have computed the homogeneous coordinate ring of $X$ as in \cite{Ab1} using the Lagrangians $\mathcal{L}(kD)$ with $k \in \bb{N}$. Further, we expect that $\mathcal{L}, \mathcal{L}(D), \hdots, \mathcal{L}(nD)$ split-generate $\mathcal{F}^s_\Delta (W_\Sigma)$ and $\mathcal{F}_\Delta (W_\Sigma)$ as the mirror objects $\mathcal{O} , \hdots, \mathcal{O}(nD)$ split-generate $D^b \Coh(X)$ by Theorem 4 of \cite{OrGen}. A proof of this generation result would give a proof of the expected HMS derived equivalence between $\mathcal{F}_\Delta (W_\Sigma)$ and $D^b \Coh(X)$. Instead of pursuing that direction further, we take a different approach in the following subsection by  identifying $\mathcal{F}^s_\Delta (W_\Sigma)$ with the category of line bundles on $X$ following \cite{Ab2}. This has the disadvantage of not explicitly computing the morphism spaces, but the advantage of geometrically identifying the mirrors of more objects.

\subsection{HMS for a monomial division adapted to the fan} \label{hms}

While the preceding subsection was the analogue of part of \cite{Ab1} in our setting, this subsection will be the analogue to \cite{Ab2}. Our goal will to be to prove that $\mathcal{F}^s_\Delta (W_\Sigma)$ is quasi-equivalent to a dg-enhancement of the category of line bundles on $X$ when $\Delta$ is adapted to the fan. The main result of \cite{Ab2} is the same statement with the different but similar category $\mathscr{T}\text{Fuk}((\bb{C}^*)^n, M_{t,1})$ in place of $\mathcal{F}^s_\Delta (W_\Sigma)$.

In fact, we can deduce HMS for $\mathcal{F}^s_\Delta (W_\Sigma)$ when $\Delta$ is adapted to the fan for the case $\mu = \Log$ directly from \cite{Ab2}. Recall that Abouzaid defines a deformation of $W_\Sigma$ to a symplectic fibration $W_{t,1}$ whose fiber $M_{t.1} = W_{t,1}^{-1}(1)$ has a particularly nice combinatorial structure from the data of an ample divsior $\sum \nu(\alpha) D_\alpha$ on $X$ given by a polytope $P$. In particular, the complement of the amoeba $\mathcal{A}_{t,1} = \Log (M_{t,1})$ has a distinguished component $\mathscr{P}_{t,1}$ whose rescaling $\mathscr{P} = \frac{1}{\log(t)} \mathscr{P}_{t,1}$ is contained in and $C^0$-close to $P$ (for large enough $t$). The symplectic fibration (over a neighborhood of $P$) is given by
\[ W_{t,1} = \sum_{\alpha \in A\cup\{0\}} t^{-\nu(\alpha)}(1- \psi_\alpha(z))z^\alpha \]
where the $\psi_\alpha$ are smooth functions depending only on $\Log(z)$ which satisfy (among other properties) $\psi_\alpha(z) = 1$ when $\Log(z)$ is outside of an arbitrarily small neighborhood of the facet dual to $\alpha$ and $\phi_\alpha(z) = 0$ when $\Log(z)$ is in a smaller neighborhood of the dual facet. Abouzaid then defines an object of $\mathscr{T}\text{Fuk}((\bb{C}^*)^n, M_{t,1})$, referred to as a tropical Lagrangian sections, as a Lagrangian section $L$ over $\mathscr{P}$ with boundary in $M_{t,1}$ which is admissible in the sense that there is a neighborhood of $\partial L$ where $L$ agrees with the parallel transport of $\partial L$ along a path from $1$ in $\bb{C}$. Abouzaid defines $\mathscr{T}\text{Fuk}((\bb{C}^*)^n, M_{t,1})$ as an $A_\infty$-pre-category where transverse sequences are given by a positive sequence of tropical Lagrangian sections where a sequence $(L_0, \hdots, L_d)$ of tropical Lagrangian sections is said to be positive if the curves near $1 \in \bb{C}$ given by $\gamma_j = W_{t,1}(L_j)$ have tangent vectors at $1$ that are oriented counterclockwise (see Figure 3 in \cite{Ab2}). Morphisms and higher multiplication maps for transverse sequences are given by counts of holomorphic discs with respect to almost complex structures satisfying the condition that $W_{t,1}$ is $J$-holomorphic in a neighborhood of $M_{t,1}$. 

\begin{prop} Suppose that $\Delta$ is a monomial division adapted to $\Sigma$ for $\mu = \Log$. Then, there is a quasi-equivalence between $\mathcal{F}^s_{\Delta}(W_\Sigma)$ and $\mathscr{T}\text{Fuk}((\bb{C}^*)^n, M_{t,1})$.
\end{prop}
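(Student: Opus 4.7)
The plan is to construct a quasi-equivalence $F \colon \mathcal{F}^s_\Delta(W_\Sigma) \to \mathscr{T}\text{Fuk}((\bb{C}^*)^n, M_{t,1})$ by exploiting the symplectic triviality of the deformation from $W_\Sigma$ to its tropical localization $W_{t,1}$ established in \cite{Ab1}. This triviality furnishes a symplectomorphism $\Psi$ of $(\bb{C}^*)^n$ supported in a neighborhood of $\Log^{-1}(P)$ that identifies the fibers of $W_\Sigma$ over a neighborhood of $1 \in \bb{C}$ with those of $W_{t,1}$, and in particular carries $W_\Sigma^{-1}(1)$ to $M_{t,1}$ up to a compactly supported isotopy. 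Since $\mu = \Log$ and $\Delta$ is adapted to $\Sigma$, the rescaled component $\mathscr{P} \subset P$ has each facet contained in the interior of the star of the corresponding ray, matching the large-scale structure of the monomial division.

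On objects, I would set $F(L)$ to be the truncation of $\Psi(L)$ to a section over $\mathscr{P}$. Near each facet $F_\alpha$ of $\mathscr{P}$, all $\psi_\beta$ with $\beta \neq \alpha$ equal $1$, so the only contribution to $W_{t,1}$ is $t^{-\nu(\alpha)}z^\alpha$; monomial admissibility of $L$ then forces $\partial F(L) \subset M_{t,1}$ and yields Abouzaid's parallel-transport admissibility automatically. Conversely, given a tropical Lagrangian section $L'$, extend its boundary outside $\mathscr{P}$ by parallel transport along arcs emanating from $1$ in the base of $W_{t,1}$ as indicated in the discussion of arrow $1$ of the diagram of Section \ref{relate}, then pull back by $\Psi^{-1}$ to produce a monomially admissible section. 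Combining this construction with Proposition \ref{bij}, which identifies admissible Hamiltonian isotopy classes of monomially admissible sections with $\text{Pic}(X)$, and the analogous identification on the tropical side in \cite{Ab2}, shows that $F$ is essentially surjective at the level of quasi-isomorphism classes.

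On morphisms, the positivity condition for a transverse sequence $(L_0, \ldots, L_d)$ in $\mathscr{T}\text{Fuk}$, that the tangent vectors of $W_{t,1}(L_j)$ at $1$ are counterclockwise-ordered, corresponds under $\Psi$ to the strict angle ordering $\theta_0 > \theta_1 > \cdots > \theta_d$ used in defining $\mathcal{F}_\Delta^\circ$ before localization at the quasi-units. Abouzaid's almost complex structures, which make $W_{t,1}$ holomorphic near $M_{t,1}$, pull back under $\Psi^{-1}$ to almost complex structures making each $c_\alpha z^\alpha$ holomorphic near the corresponding facet, hence to monomially admissible $J$ after extension by the standard structure outside $\Psi^{-1}(\Log^{-1}(\mathscr{P}))$. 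The compactness on Abouzaid's side (from $W_{t,1}$-holomorphicity confining disc boundaries to $M_{t,1}$) and on our side (Proposition \ref{comp1}) then agree through $\Psi$, giving a bijection of moduli spaces and hence an identification of the $A_\infty$ structures. To compare $\mathscr{T}\text{Fuk}$, which is an $A_\infty$-pre-category, with the localization-style construction of $\mathcal{F}^s_\Delta$, I would finally invoke Appendix \ref{precat}.

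The main obstacle will be the precise identification of moduli spaces under $\Psi$: one must check that generic monomially admissible $J$ push forward to almost complex structures satisfying Abouzaid's $W_{t,1}$-holomorphicity condition in a neighborhood of $M_{t,1}$, up to a contractible space of choices whose deformations do not affect the chain-level $A_\infty$ structure. A secondary but related technical point is verifying that the essentially surjective construction above is well-defined up to admissible Hamiltonian isotopy, namely that different choices of parallel transport arcs and truncations yield admissibly isotopic monomially admissible sections; this should follow from the convexity of twisting Hamiltonians noted after Definition \ref{twisthamdf} together with Proposition \ref{bij}.
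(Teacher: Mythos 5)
Your identification of objects is roughly parallel to the paper's: you match monomially admissible sections with Abouzaid's tropical sections by truncation over $\mathscr{P}$, and you argue essential surjectivity via Proposition \ref{bij} together with the corresponding count on the tropical side. You also correctly anticipate that the comparison between the $A_\infty$-pre-category $\mathscr{T}\text{Fuk}$ and the localization-style $\mathcal{F}^s_\Delta$ should go through Appendix \ref{precat}, and that the push-off angle ordering should correspond to Abouzaid's positivity condition. Those pieces are all fine.

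The gap is in how you propose to compare the moduli spaces of discs, and it is a real one. You want to transport the problem through a symplectomorphism $\Psi$ coming from the symplectic triviality of the tropical localization in \cite{Ab1}. But that result only produces a symplectomorphism of \emph{pairs} $((\bb{C}^*)^n, W_\Sigma^{-1}(1)) \simeq ((\bb{C}^*)^n, M_{t,1})$; it does not conjugate the fibration $W_\Sigma$ into $W_{t,1}$, and the paper explicitly cautions (in Section \ref{thimblesec}) that this symplectomorphism of pairs ``is not enough to construct a quasi-equivalence of the Fukaya-Seidel categories with boundaries in these hypersurfaces.'' Even setting that aside, the two admissibility conditions on almost complex structures are genuinely different: Abouzaid requires $W_{t,1}$ itself to be $J$-holomorphic in a neighborhood of $M_{t,1}$, whereas monomial admissibility requires each monomial $c_\alpha z^\alpha$ to be $J$-holomorphic near infinity over $C_\alpha$. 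These are constraints in different regions of $(\bb{C}^*)^n$, and no pushforward by a fixed symplectomorphism sends one class into the other. So the ``main obstacle'' you flag at the end is not something that can be checked within your framework — it requires a separate argument. The paper's resolution is to avoid conjugation altogether: it restricts all the relevant data to a large ball $B$ on which the two almost complex structures can be taken to agree, uses the monomial maximum principle (Proposition \ref{comp1}) to trap discs in a fixed compact $Z\subset B$, proves that the union of boundary Lagrangians is geometrically bounded on $B\setminus Z$ in the sense of \cite{Sik}, and then combines the resulting monotonicity estimate with the a priori energy bound from exactness to rule out any disc escaping $Z$. That chain of estimates is the substantive content missing from your proposal, and it is what actually lets one identify the two Floer theories without ever producing an intertwining symplectomorphism.
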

\begin{proof}[Sketch of proof.] In light of Propositions \ref{normalizecoeff} and \ref{adaptedinvariant}, we can assume the coefficients $c_\alpha$ are equal to $1$ and $\Delta$ is a combinatorial division. We can also assume that the $k_\alpha$ in the first condition of Definition \ref{div} are rational as noted before Proposition \ref{adaptedgivesample}. 

Let $Q$ be as in Proposition \ref{adaptedgivesample}. Applying Abouzaid's construction for the ample divisor corresponding to $Q$ allows us to relate monomially admissible Lagrangian sections with respect to $\Delta$ to tropical Lagrangian sections. We choose a collection of monomially admissible Lagrangian sections in each Hamiltonian isotopy class such that each monomially admissible Lagrangian $L$ satisfies $z^\alpha|_{L} \in \bb{R}_{>0}$ in $C_\alpha$ outside of a fixed compact subset inside of $Q$ and far from the boundary of $Q$. By adjusting the cones of the combinatorial division if necessary, we can also assume that each $C_\alpha$ contains the neighborhood of the facet dual to $\alpha$ where $\psi_\alpha(z) \neq 1$. Then, each of the chosen monomially admissible Lagrangian sections restricts to a section over $\mathscr{Q}$, and the values of $W_{t,1}$ on any of these monomially admissible Lagrangians are real and positive in a neighborhood of $\partial \mathscr{Q}$ since we have that $z^\alpha$ is real and positive on the Lagrangian whenever $\psi_\alpha(z) \neq 1$. This gives an embedding of monomially admissible Lagrangian sections into tropical Lagrangian sections that hits every Hamiltonian isotopy class of tropical Lagrangian section by Proposition 3.20 of \cite{Ab2} and our Corollary \ref{twistexist}. 

Upgrading this embedding to a quasi-equivalence between $\mathcal{F}^s_\Delta(W_\Sigma)$ and $\mathscr{T}\text{Fuk}((\bb{C}^*)^n, M_{t,1})$ requires several steps. The first step is to replace $\mathscr{T}\text{Fuk}((\bb{C}^*)^n, M_{t,1})$ by an honest $A_\infty$-category $\mathcal{A}$ defined exactly as $\mathcal{F}^s_\Delta(W_\Sigma)$ except restricting objects to sections over $\mathscr{Q}$ and using almost complex structures as in $\mathscr{T}\text{Fuk}((\bb{C}^*)^n, M_{t,1})$ and with push-off angles near $0$ rather than going all the way to $\pi$. The quasi-equivalence between $\mathcal{A}$ and $\mathscr{T}\text{Fuk}((\bb{C}^*)^n, M_{t,1})$ is then simply a combination of Proposition \ref{prelocal} and a procedure that replaces the radial arcs that objects of $\mathcal{A}$ project to in a neighborhood of $1 \in \bb{C}$ with paths going to $1$ while preserving positivity of the sequence and not creating intersections. 

The most important technical difficulty lies in the next step. We want a quasi-equivalence between $\mathcal{A}$ and an $A_\infty$-category $\mathcal{B}$ whose objects are the objects of $\mathcal{A}$ re-extended to monomially admissible Lagrangian sections and with morphisms computed using monomially admissible almost complex structures. The result follows from this quasi-equivalence as it is easy to see that $\mathcal{B}$ is quasi-equivalent to $\mathcal{F}_\Delta^s(W_\Sigma)$ as the only difference is the sequence of angles chosen. We can first replace $\mathcal{A}$ by the quasi-equivalent category $\mathcal{A}'$ that re-extends the objects to sections over $\bb{R}^n$ but using almost complex structures that make $W_{t,1}$ holomorphic outside of a compact set. We will assume without loss of generality that each almost complex structure for $\mathcal{B}$ is equal to the standard almost complex structure on $(\bb{C}^*)^n$ outside of a compact subset. For each sequence $(\psi^{\theta_0}(L_0), \hdots, \psi^{\theta_d}(L_d))$ of objects in $\mathcal{A}'$ and $\mathcal{B}$ with $\theta_0 > \theta_1 > \hdots > \theta_d$, we have two almost complex structures $J_a$ and $J_b$ used to compute $m^d$ in $\mathcal{A}'$ and $\mathcal{B}$, respectively. We can assume that $J_a$ agrees with $J_b$ in a ball $B$ of radius $R$ (possibly depending on the chosen sequence) in the standard K\"{a}hler metric that contains all of the disks contributing to $m_{\mathcal{B}}^d$. 

We claim that for large enough $R$, all $J_a$-holomorphic discs contributing to $m^d_{\mathcal{A}'}$ will be contained in $B$ and hence are the same discs that contribute to $m^d_{\mathcal{B}}$. To see this, first note that for large enough $R$ Proposition \ref{comp1} implies that there is a compact subset $Z \subset B$ that contains all the intersection points of the Lagrangians in the transverse sequence and all relevant $J_b$-holomorphic discs and is such that the argument of $c_\alpha z^\alpha$ restricted to $\psi^{\theta_j} (L_j)$ is equal to $\theta_j$ in $\Log^{-1}(C_\alpha)$ outside of $Z$. Also, the maximum principle argument of Proposition \ref{comp1} shows that any $J_a$-holomorphic disc not contained in $Z$ must intersect the boundary of $B$. Next, it is possible to check that the union $L = \psi^{\theta_0}(L_0) \cup \hdots \cup \psi^{\theta_d}(L_d)$ is geometrically bounded in $B \setminus Z$ in the sense of Definition 4.7.1 of \cite{Sik} because the distance between the Lagrangians in the sequence is bounded below and the smoothing regions in Proposition \ref{smooth} have fixed normal widths with respect to the Euclidean metric in $\Log$ coordinates so the Hamiltonian whose flow gives $\psi^{\theta_j}(L_j)$ can be taken to have bounded above gradient for each $j$. It then follows from \cite{Sik} that any holomorphic disc satisfies monotonicity estimates bounding its diameter by its energy. On the other hand, we have already noted in Remark \ref{energy1} that we have an a priori energy bound due to exactness of our Lagrangians. Taking $R$ large enough, we can force the diameter of $B \setminus Z$ to be arbitrarily large and hence preclude the existence of any $J_a$-holomorphic discs not contained in $Z$. Thus, we have the desired quasi-equivalence between $\mathcal{A}'$ and $\mathcal{B}$ from an identification of objects and morphisms.
\end{proof}

To prove HMS for adapted monomial divisions in general, we need to adapt the argument in \cite{Ab2} to our setting. The proof in \cite{Ab2} involves the following major steps.

\begin{enumerate}
\item Show that $\mathscr{T}\text{Fuk}((\bb{C}^*)^n, M_{t,1})$ is quasi-equivalent to an $A_\infty$-category of Morse functions.
\item Show that the $A_\infty$-category of Morse functions can be replaced with an $A_\infty$-category where morphisms are computed with simplicial cochains in place of Morse theory.
\item Show that the simplicial cochains in step 2 are equivalent to a \v{C}ech model for the category of line bundles on $X$.
\end{enumerate}

In addition to being imprecise, the outline above mentions $A_\infty$-categories while Abouzaid works with $A_\infty$-pre-categories in \cite{Ab2}. Appendix \ref{precat} shows that this distinction is simply a matter of language for Fukaya-Seidel categories and includes a reminder of the definition of an $A_\infty$-pre-category. We will address the first step in our setting and deduce the other two almost directly from \cite{Ab2}. 

The $A_\infty$ structure on a Morse category was first discovered by Fukaya \cite{FMorse}. As with defining the Fukaya-Seidel category $\mathcal{F}_\Delta (W_\Sigma)$, we will need to modify the standard construction to appropriately account for non-compactness. Let $\Delta$ be a monomial division for a Laurent polynomial $W$ on $(\bb{C}^*)^n$ and moment map $\mu$ for a toric K\"{a}hler form $\omega$ on $(\bb{C}^*)^n$. For simplicity, we assume that $\mu\colon (\bb{C}^*)^n \to \bb{R}^n$ is onto. By Remark \ref{Kpotential}, we can always write $\omega = \omega_\varphi$ and take $\mu = \Phi \circ \Log$ where $\Phi = \nabla \varphi$ is the Legendre transform of $\varphi$. In this context, we are simply assuming $\Phi$ is onto. We will also assume that Assumption \ref{canassump} holds.

\begin{df} A smooth function $f$ on $\bb{R}^n$ is admissible with respect to $\Delta$, or \textit{monomially admissible}, if $\nabla f \cdot \alpha = n_\alpha$ for some $n_\alpha \in 2\pi \bb{Z}^n$ in the complement of a compact subset of $C_\alpha$ for all $\alpha \in A$.
\end{df}

\begin{rem} When $W = W_\Sigma$, monomially admissible functions are simply twisting Hamiltonians.
\end{rem}

Recall that $(\bb{C}^*)^n$ is $T^*\bb{R}^n$ mod a fiberwise lattice and that $\omega = \omega_\varphi$ lifts to an exact symplectic form on $T^* \bb{R}^n$. Given a Riemannian metric $g$ on $\bb{R}^n$, we get an induced almost complex structure $J_g$ on $T^*\bb{R}^n$ (see \cite{FO}). For example, the metric given by
\[ (g_\varphi)_{j\ell} = ( \text{Hess}(\varphi)^{-1} )_{j \ell} \]
in coordinates $(\mu_1, \hdots, \mu_n) = \Phi(\log|z_1|, \hdots, \log|z_n|)$ induces the lift of the standard almost complex structure on $(\bb{C}^*)^n$. Note that 
\begin{equation} \label{gphi} g_\varphi (U, V) = U \cdot d\Phi^{-1}(V) \end{equation}
implying that 
\begin{equation} \label{gradientcomp} \nabla_\varphi f = d\Phi (\nabla f) 
\end{equation}
where $\nabla f$ is the Euclidean gradient in coordinates $(\mu_1, \hdots, \mu_n)$. We will more generally consider the following class of metrics.

\begin{df} A Riemannian metric $g$ on $\bb{R}^n$ is admissible with respect to $\Delta$, or \textit{monomially admissible}, if $J_g$ induces an almost complex structure on $(\bb{C}^*)^n$ that is admissible with respect to $\Delta$.
\end{df}

\begin{rem} The fact that $J_g$ induces an almost complex structure on $(\bb{C}^*)^n$ is automatic in our setting. The condition that the induced almost complex structure is monomially admissible is equivalent to the lift of $c_\alpha z^\alpha$ to $T^*\bb{R}^n$ is $J_g$-holomorphic over the complement of a compact subset of $C_\alpha$ for each $\alpha \in A$. 
\end{rem}

When the lift of $z^\alpha$ to $T^*\bb{R}^n$ is $J_{g}$-holomorphic, we have that
\begin{equation} \label{mong}
g (\alpha, V) = \alpha \cdot d\Phi^{-1}(V)
\end{equation}
and thus 
\begin{equation} \label{gradientcomp2}
d \Phi^{-1}(\nabla_g f) \cdot \alpha = \nabla f \cdot \alpha,
\end{equation}
for any smooth function $f$ generalizing \eqref{gphi} and \eqref{gradientcomp}. Thus, we have that \eqref{gradientcomp2} holds in the complement of a compact subset of $C_\alpha$ for all $\alpha \in A$ if the metric $g$ is monomially admissible.

Now, let $MO_\Delta$ be a countable set of functions admissible with respect to $\Delta$. We can then choose a generic sequence $0 = \theta^0 < \theta^1 < \hdots < \theta^j < \hdots < \pi$ with $\lim_{j \to \infty} \theta^j = \pi$ so that $f_1 - f_0 + (\theta^{j_1} - \theta^{j_0})K_\Delta + m$ is Morse for any $f_0, f_1 \in MO_\Delta$, $j_1 > j_2$, and $m \in 2\pi \bb{Z}^n$ which we identify with the function $m \cdot \mu$ on $\bb{R}^n$. We define an $A_\infty$-category $\Morse_\Delta^\circ$ whose objects are pairs $(f, \theta)$ where $f \in MO_\Delta$ and $\theta \in \{ \theta^j \}$. Morphisms are given by
\[ \Hom_{\Morse^\circ_\Delta} \big((f_0, \theta_0), (f_1, \theta_1) \big) = \begin{cases} \underset{m \in 2\pi\bb{Z}^n}{\bigoplus} CM^\bullet( f_1 - f_0 + (\theta_{1} - \theta_0)K_\Delta + m) & \theta_0 > \theta_1 \\ \bb{K} \cdot e_{(f_0, \theta_0)} & (f_0, \theta_0) = ( f_1, \theta_1) \\ 0 & \text{otherwise} \end{cases} \] 
where $e_{(f_0, \theta_0)}$ is a formal element, $\bb{K}$ is a field (again, $\bb{K} = \bb{C}$ in our application), and $CM^\bullet(\cdot)$ is the Morse co-chain complex of a Morse function. The $A_\infty$ structure is given by setting the $e_{(f,\theta)}$ to be strict units and defining all other nontrivial structure maps by counts of gradient trees as defined by Fukaya \cite{FMorse} (see also Section 4.1 of \cite{Ab2}) with a monomially admissible metric making the sequence of functions involved Morse-Smale in the sense of Definition 4.21 of \cite{Ab2}. Such metrics can be chosen and the moduli spaces of gradient trees can be compactified in the usual way to obtain the $A_\infty$ equations as a consequence of the following proposition.

\begin{prop} \label{Mcomp} Suppose that $f$ is a monomially admissible function, $g$ is a monomially admissible Riemannian metric, and $\eps \in (0, \pi)$. Then there is a compact subset of $\bb{R}^n$ containing the image of any gradient trajectory $\gamma$ of $\wt{f} = f - \eps K_\Delta$ with respect to $g$ that limits to critical points of $\wt{f}$ in any direction in which the domain of $\gamma$ is non-compact.
\end{prop}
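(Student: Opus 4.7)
The strategy mirrors the maximum-principle argument of Proposition \ref{comp1}, with the logarithmic analogues of the monomials $|c_\alpha z^\alpha|^{k_\alpha}$ playing the role of the monomials themselves on the moment-map image $\bb{R}^n$. For each $\alpha \in A$, set
\[ G_\alpha(\mu) = k_\alpha (\alpha \cdot \Phi^{-1}(\mu)) + k_\alpha \log|c_\alpha|, \]
and let $G(\mu) = \max_{\alpha \in A} G_\alpha(\mu)$, so that by the second condition of Definition \ref{div} the maximum at each $\mu$ is attained by some $G_\alpha$ with $\mu \in C_\alpha$. A preliminary observation is that $G$ is proper on $\bb{R}^n$: any sequence $\mu_m \to \infty$ has $\Phi^{-1}(\mu_m) \to \infty$ since $\Phi$ is a diffeomorphism, and because the primitive generators $\alpha \in A$ of the complete fan span $\bb{R}^n$, after passing to a subsequence some $\alpha$ satisfies $\alpha \cdot \Phi^{-1}(\mu_m) \to \infty$, whence $G(\mu_m) \geq G_\alpha(\mu_m) \to \infty$.

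The main step is a maximum principle for $G \circ \gamma$. Using \eqref{mong}, the monomial admissibility of $g$ gives $\nabla_g G_\alpha = k_\alpha \alpha$ on the complement of a compact subset of $C_\alpha$, and combining this with \eqref{gradientcomp2} and the defining relations $\nabla f \cdot \alpha = n_\alpha$, $\nabla K_\Delta \cdot \alpha = 1$ yields
\[ \frac{d}{dt} G_\alpha(\gamma(t)) = -g(k_\alpha \alpha, \nabla_g \wt f) = -k_\alpha(n_\alpha - \eps), \]
which is a nonzero constant since $n_\alpha \in 2\pi\bb{Z}$ and $\eps \in (0, \pi)$. I would fix a compact $K \subset \bb{R}^n$ containing the critical set of $\wt f$ (which is automatically bounded, as $\nabla \wt f \neq 0$ outside the finitely many compact sets controlling the admissibility failures for $f$, $g$, and $K_\Delta$) together with those failure sets. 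At any candidate interior local maximum $t_0$ of $G \circ \gamma$ with $\gamma(t_0) \in C_\alpha \setminus K$ and $G(\gamma(t_0)) = G_\alpha(\gamma(t_0))$, the inequality $G \circ \gamma \geq G_\alpha \circ \gamma$ with equality at $t_0$ would force $G_\alpha \circ \gamma$ to also attain a local maximum at $t_0$, contradicting its constant nonzero rate of change. Combined with the fact that $G \circ \gamma$ is bounded at the non-compact ends of the domain of $\gamma$ (where $\gamma$ limits to critical points lying inside $K$), this gives
\[ \sup_t G(\gamma(t)) \leq \max\bigl(\sup_K G,\; \sup_{\mathrm{Crit}(\wt f)} G\bigr) < \infty, \]
and properness of $G$ then confines the image of $\gamma$ to a compact subset of $\bb{R}^n$.

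The main technical obstacle is the analysis at transition times $t_0$ with $\gamma(t_0) \in C_\alpha \cap C_\beta$, since the one-sided derivatives of $G \circ \gamma$ are then governed by potentially distinct constants $-k_\alpha(n_\alpha - \eps)$ and $-k_\beta(n_\beta - \eps)$. Continuity of $G$ at such a transition forces $G_\alpha(\gamma(t_0)) = G_\beta(\gamma(t_0))$ (otherwise condition 2 of Definition \ref{div} on one side would contradict continuity), and the sign requirements for a local maximum translate to the conditions $n_\alpha \geq 2\pi$ and $n_\beta \leq 0$. Ruling this out will require using both constraints $\nabla \wt f \cdot \alpha = n_\alpha - \eps$ and $\nabla \wt f \cdot \beta = n_\beta - \eps$ on the shared gradient at $\gamma(t_0)$ together with the geometry of the cones $C_\alpha, C_\beta$ and the direction in which $\gamma$ crosses their common boundary; the degenerate cases (multi-cone overlaps, tangential crossings) will likely be the most delicate part of the argument, possibly requiring a small perturbation of the $C_\alpha$ within the same monomial division to arrange transverse crossings.
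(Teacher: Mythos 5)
Your first two paragraphs give a correct proof that is essentially identical to the paper's: same function $G$, same maximum-principle argument, same use of the second condition of Definition \ref{div} to pick out an $\alpha$ with $\gamma(t_0)\in C_\alpha$ at a local maximum, and same contradiction from the nonzero derivative of $G_\alpha\circ\gamma$.

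The ``main technical obstacle'' you describe in the last paragraph, however, is not actually there. The argument you already wrote handles every transition case without further work, because it never differentiates $G\circ\gamma$ and never inspects one-sided derivatives of $G$. You only use that if $t_0$ is a local maximum of $G\circ\gamma$ and $G(\gamma(t_0))=G_\alpha(\gamma(t_0))$ for some $\alpha$ with $\gamma(t_0)\in C_\alpha$ (such $\alpha$ exists by Definition \ref{div}, condition~2, regardless of how many other $\beta$ also attain the maximum or also contain $\gamma(t_0)$ in their cones), then the chain of inequalities $G_\alpha(\gamma(t))\le G(\gamma(t))\le G(\gamma(t_0))=G_\alpha(\gamma(t_0))$ for $t$ near $t_0$ forces the smooth function $G_\alpha\circ\gamma$ to have a local maximum at $t_0$, contradicting $\tfrac{d}{dt}G_\alpha(\gamma(t))\big|_{t_0}=\pm k_\alpha(n_\alpha-\eps)\neq 0$. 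No matching condition $G_\alpha(\gamma(t_0))=G_\beta(\gamma(t_0))$, no sign analysis $n_\alpha\geq 2\pi$, $n_\beta\leq 0$, no transversality of crossings, and no perturbation of the $C_\alpha$ is needed. The paper's proof is exactly your first maximum-principle paragraph (plus the properness observation), and nothing more.
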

\begin{proof} Consider the function 
\begin{equation} \label{mondivfunct} G(\mu) = \max_{\alpha \in A} \Big( k_\alpha (\Phi^{-1}(\mu) \cdot \alpha + \log| c_\alpha |) \Big) \end{equation}
which is the induced function on the base $\bb{R}^n$ from the function in Definition \ref{div}. Note that outside of a compact subset of $\bb{R}^n$ any local max of $G \circ \gamma$ is a local max of $ G_\alpha (t) = k_\alpha (\Phi^{-1}(\gamma(t)) \cdot \alpha + \log| c_\alpha |)$ for some $\alpha \in A$ at a point in the subset of $C_\alpha$ where $\nabla \wt{f}$ is a nonzero constant.  However, we then have that
\[ \frac{d}{dt} G_\alpha(t) = k_\alpha d\Phi^{-1}(\dot{\gamma}(t)) \cdot \alpha = k_\alpha d\Phi^{-1}(\nabla_g \wt f) \cdot \alpha = k_\alpha \nabla \wt f \cdot \alpha \neq 0 \]
using \eqref{gradientcomp2}. Therefore, $G \circ \gamma$ has no local maxima outside of a compact subset $K$ of $\bb{R}^n$ that does not depend on $\gamma$. 

After completing $\gamma$ to include its limit points if its domain is noncompact, we see that the maximum of $G \circ \gamma$ occurs in $K$. Therefore, the image of $\gamma$ is contained in the compact subset $G^{-1}([0, M])$ where $M$ is the maximum of $G$ on $K$. 
\end{proof}

Proposition \ref{Mcomp} is the manifestation of Proposition \ref{comp1} in the Morse theory of monomially admissible functions and metrics. We now have the language to relate the Floer theory of monomially admissible Lagrangian sections to the Morse theory of monomially admissible functions.

\begin{prop} \label{FOprop} Let $\Morse^\circ_\Delta$ be the Morse $A_\infty$-category defined above for some set $MO_\Delta$ of monomially admissible functions. Suppose that $O_\Delta$ is the set of Lagrangian branes obtained from projection to $(\bb{C}^*)^n$ of the graph of $df$ for $f \in MO_\Delta$ and that $\mathcal{F}_\Delta^\circ$ is the associated $A_\infty$-category defined in Section \ref{flocalize}. Then, the natural maps identifying objects and morphisms between $\mathcal{F}_\Delta^\circ$ and $\Morse^\circ_\Delta$ are quasi-isomorphisms. 
\end{prop}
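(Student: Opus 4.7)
The plan is to adapt the Fukaya--Oh correspondence between the Morse $A_\infty$-category and the Fukaya category of cotangent bundles (as developed in \cite{FMorse} and refined in Section 4 of \cite{Ab2}) to the monomially admissible setting. At the level of objects the identification is tautological: each $f \in MO_\Delta$ gives the Lagrangian brane $L_f$ obtained by projecting $\Gamma_{df} \subset T^*\bb{R}^n$ to $(\bb{C}^*)^n$, and the definition of monomial admissibility of $f$ translates directly to the monomial admissibility of $L_f$.

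For the generators of morphism spaces, after pushing off, the Lagrangian $\psi^{\theta_j}(L_{f_j})$ lifts to the graph $\Gamma_{d(f_j + \theta_j K_\Delta)}$, so intersection points in $(\bb{C}^*)^n$ correspond to pairs of lifts differing by an element of $2\pi\bb{Z}^n$, equivalently to critical points of $f_1 - f_0 + (\theta_1 - \theta_0)K_\Delta + m\cdot \mu$ as $m$ ranges over $2\pi\bb{Z}^n$. This gives a canonical bijection with generators of the Morse chain complex defining $\Hom_{\Morse^\circ_\Delta}$, and the Maslov grading from the section grading convention agrees with the shifted Morse index.

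For the $A_\infty$-structure, I would follow the Fukaya--Oh rescaling argument as implemented in Section 4 of \cite{Ab2}. Replace the defining functions by $\lambda f$ and the twisting Hamiltonian by $\lambda K_\Delta$ for small $\lambda > 0$; since $K_\Delta$ and the $f \in MO_\Delta$ are monomially admissible, the rescaled flows remain admissible and (up to an overall rescaling of the angle sequence) this is an admissible Hamiltonian isotopy which does not change the quasi-isomorphism class on the Fukaya-Seidel side by Proposition \ref{invarianthf} and Proposition \ref{choices}. As $\lambda \to 0$, moduli spaces of perturbed holomorphic polygons with boundary on the $\Gamma_{\lambda d(f_j + \theta_j K_\Delta)}$ and with respect to almost complex structures induced by monomially admissible metrics on $\bb{R}^n$ are shown to converge, with bijective correspondence on the rigid locus, to moduli spaces of gradient trees of the rescaled differences. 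Abouzaid's Propositions 4.27--4.30 then identify signs, orientations, and virtual dimensions.

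The main obstacle is verifying that this classical correspondence remains valid in our non-compact setting — that all relevant moduli spaces, both on the Floer and Morse sides, stay in a common compact subset of $(\bb{C}^*)^n$ uniformly in $\lambda$. This is precisely where monomial admissibility is critical on both sides: the maximum principle argument of Proposition \ref{comp1} applies uniformly in $\lambda$ to the holomorphic side since the function $g = \max_\alpha(|c_\alpha z^\alpha|^{k_\alpha})$ is independent of $\lambda$, while the parallel argument of Proposition \ref{Mcomp} provides a uniform compact subset containing all gradient trees with the chosen monomially admissible metric. Once this uniform compactness is secured, the energy bound from exactness (Remark \ref{energy1}) reduces the remainder of the comparison to local gluing and transversality statements that work exactly as in the Fukaya--Oh argument and the compact setup in \cite{Ab2}.
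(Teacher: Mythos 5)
Your identification of objects and of generators is correct and matches the paper's opening observations, and invoking the Fukaya--Oh correspondence is the right idea. The key issue is in how you implement the rescaling.

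You propose to rescale the \emph{functions}, replacing $f$ by $\lambda f$ and $K_\Delta$ by $\lambda K_\Delta$ for small $\lambda$, and you assert that ``the rescaled flows remain admissible.'' That assertion fails: monomial admissibility of a function requires $\nabla f \cdot \alpha \in 2\pi\bb{Z}$ near infinity in each $C_\alpha$, and this integrality is destroyed by scaling by $\lambda < 1$. Equivalently, the Lagrangian $\Gamma_{\lambda\, d(f+\theta K_\Delta)}$ projects to a section of $(\bb{C}^*)^n$ on which $\arg(c_\alpha z^\alpha)$ is $\lambda \cdot 2\pi n_\alpha \not\equiv 0 \pmod{2\pi}$ over $C_\alpha$, so it is not monomially admissible, and neither Proposition \ref{comp1} nor Proposition \ref{invarianthf} applies. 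The underlying problem is that in $T^*M$ with $M$ compact the fiber rescaling converts ``rescale the metric'' into ``rescale the functions,'' but in $(\bb{C}^*)^n = T^*\bb{R}^n / 2\pi\bb{Z}^n$ the fiber rescaling does not descend, and these two operations are genuinely different. The paper therefore keeps the Lagrangians fixed and rescales only the \emph{metric} to $\eps g$; since $J_{\eps g}$ also fails to be monomially admissible, the compactness argument of Proposition \ref{comp1} must be adapted by observing that the functions $w^\alpha$, with $w_j = e^{\log|z_j|/\eps + i\theta_j}$, are $J_{\eps g}$-holomorphic, so the maximum-modulus argument can be run with $w^\alpha$ in place of $z^\alpha$. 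Your remark that $g = \max_\alpha(|c_\alpha z^\alpha|^{k_\alpha})$ is ``independent of $\lambda$'' does not save the argument, because with your rescaling the Lagrangian boundary condition no longer lies over rays in each $C_\alpha$, so the Schwarz reflection step fails.

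There is also a second gap: the Fukaya--Oh theorem holds only for $\eps$ small depending on the sequence of Lagrangians involved, and no single $\eps$ works for all the moduli spaces that contribute to the $A_\infty$-structure. The paper addresses this by the filtration argument of Corollary 4.36 of \cite{Ab2} (adjusted because one works with honest $A_\infty$-categories rather than pre-categories, so one simply sets structure maps to $0$ on non-transverse sequences). Your proposal's final paragraph treats the comparison as reducible to ``local gluing and transversality'' after the energy bound, which passes over this non-uniformity entirely.
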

\begin{proof} Let $L_0$ and $L_1$ be Lagrangian sections that are monomially admissible with respect to $e^{-i\theta_0}W$ and $e^{-i\theta_1}W$, respectively, with $\theta_0 \neq \theta_1$. For our application, $L_0$ and $L_1$ will be the images of monomially admissible Lagrangian sections with respect to $W$ by $\psi^{\theta_0}$ and $\psi^{\theta_1}$ with $0 \leq \theta_1 < \theta_0 < \pi$. Given lifts $\wt{L_0}$ and $\wt{L_1}$ to $T^*\bb{R}^n$ of $L_0$ and $L_1$, we have
\[ CF^\bullet (L_0, L_1) = \bigoplus_{m \in 2\pi \bb{Z}^n} CF^\bullet (\wt{L}_0, \wt{L}_1 +m ) \]
when the left-hand side is well-defined and where the addition is fiberwise. Moreover, the $A_\infty$ structures can be computed on the right-hand side and respect the $\bb{Z}^n$-grading as holomorphic discs can be lifted to $T^* \bb{R}^n$. This is the same $\bb{Z}^n$-grading already discussed in Section \ref{floercomp} in a special case. In addition, we can always find $H_0$ and $H_1$ such that $\wt{L_0}$ and $\wt{L_1}$ are the graphs of $dH_0$ and $dH_1$, respectively.

After that observation, the statement is essentially Theorem 2.3 of \cite{FO} in our setting. The main ingredient is another theorem of Fukaya and Oh in \cite{FO}, which was elaborated on and expanded in \cite{ekmorse, ruanmorse}. The theorem states that in the cotangent bundle of a compact manifold for small enough $\eps > 0$, there is an orientation-preserving homeomorphism between the moduli space of $J_{\eps g}$-holomorphic discs with fixed input and output points and boundaries on a collection of Lagrangians that are the graphs of differentials of functions and the moduli space of $\eps g$ gradient trees with the same input and output points and edges labeled by differences of the same functions. 

To adapt the theorem to our setting, it is enough to have a compact subset of the base $\bb{R}^n$ containing all $g$-gradient trees and all projections to $\bb{R}^n$ of $J_{\eps g}$-holomorphic discs for a fixed set of inputs, ouputs, and boundary conditions when $g$ is monomially admissible. For $\eps =1$ and $g$, this is achieved by Proposition \ref{comp1} and Proposition \ref{Mcomp}, respectively. However, $J_{\eps g}$ will not induce a monomially admissible almost complex structure on $(\bb{C}^*)^n$ for $\eps \neq 1$ as 
\[ J_{\eps g} \left( \frac{\partial}{\partial \log |z_j|} \right) = \frac{1}{\eps} J_g  \left( \frac{\partial}{\partial \log |z_j|} \right) \]
for $j = 1, \hdots, n$. However, the function $w^\alpha$ will be $J_{\eps g}$-holomorphic with $w_j = e^{\log|z_j|/\eps + i \theta_j}$ for $j = 1, \hdots, n$ if $z^\alpha$ is $J_g$-holomorphic. As a result, we can apply the same arguments in Proposition \ref{comp1} for $J_{\eps g}$ by replacing each $z^\alpha$ with $w^\alpha$. 

The final step, which accounts for the inability to uniformly choose $\eps$ for all relevant moduli spaces, is to apply the filtration argument in the proof of Corollary 4.36 of \cite{Ab2}. The argument in our setting is exactly the same except we do not work with $A_\infty$-pre-categories and thus one simply sets the multiplication maps to $0$ for the sequences that would be declared not to be transverse in the construction of the filtration.
\end{proof}

\begin{rem} Proposition \ref{FOprop} is the analogue of Theorem 5.8 in \cite{Ab2} in our setting. However, Proposition \ref{FOprop} has a more straightforward proof than Theorem 5.8 in \cite{Ab2} because we can directly apply the Fukaya-Oh theorem while Abouzaid cannot. 
\end{rem}

We immediately have the following.

\begin{cor} \label{FOcor} Let $W= W_\Sigma$ and $\Delta$ be adapted to $\Sigma$. Suppose that $MO_\Delta$ contains one twisting Hamiltonian for each divisor on $X$. Then, $\mathcal{F}^s_\Delta (W_\Sigma)$ is quasi-equivalent to the localization $\Morse_\Delta$ of $\Morse^\circ_\Delta$ at the image of the quasi-units under the quasi-isomorphism of Proposition \ref{FOprop}.
\end{cor}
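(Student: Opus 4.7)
The plan is to deduce the corollary directly from Proposition \ref{FOprop} together with the invariance of the Lyubashenko-Ovsienko quotient under quasi-equivalences. First, I would align the objects on the two sides. Since $\Delta$ is adapted to $\Sigma$, Proposition \ref{bij} and Remark \ref{twistandpic} show that every Hamiltonian isotopy class of monomially admissible Lagrangian section is represented by a Lagrangian of the form obtained by flowing $\mathcal{L} = (\bb{R}_{>0})^n$ by some twisting Hamiltonian $f$, and that its lift to $T^*\bb{R}^n$ is the graph of $df$. By the hypothesis that $MO_\Delta$ contains a twisting Hamiltonian for each toric divisor and the fact that every line bundle on $X$ is isomorphic to $\mathcal{O}(D)$ for some toric $D$, the set $O_\Delta$ obtained by projecting the graphs of $df$ for $f \in MO_\Delta$ down to $(\bb{C}^*)^n$ represents every admissible Hamiltonian isotopy class of section. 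By the discussion around Proposition \ref{adaptedinvariant}, this is therefore a valid set of objects with which to compute $\mathcal{F}^s_\Delta(W_\Sigma)$.

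With the objects identified in this way, Proposition \ref{FOprop} gives quasi-isomorphisms of Floer and Morse morphism complexes that match $\mathcal{F}^\circ_\Delta$ with $\Morse^\circ_\Delta$ as $A_\infty$-categories. Under this identification, the set of quasi-units $Q$ used to localize $\mathcal{F}^\circ_\Delta$ into $\mathcal{F}^s_\Delta(W_\Sigma)$ is carried to a well-defined collection of morphisms $Q'$ in $\Morse^\circ_\Delta$, and $\Morse_\Delta$ is by construction the Lyubashenko-Ovsienko localization of $\Morse^\circ_\Delta$ at $\mathrm{Cones}(Q')$.

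It then suffices to invoke that this quotient is well-behaved under $A_\infty$-quasi-equivalence: given a quasi-equivalence $\Phi \colon \mathcal{A} \to \mathcal{B}$ and a set $S$ of morphisms in $\mathcal{A}$, the induced functor $\mathcal{A}/\mathrm{Cones}(S) \to \mathcal{B}/\mathrm{Cones}(\Phi(S))$ is again a quasi-equivalence. This is the principle already used in the proof of Proposition \ref{choices}, whose content is that the quasi-equivalence type of the quotient depends only on the cohomology classes of the morphisms being inverted (see also Section 3.1.3 of \cite{GPS1}). Applied to the quasi-equivalence of Proposition \ref{FOprop} with $S = Q$, this yields the asserted quasi-equivalence $\mathcal{F}^s_\Delta(W_\Sigma) \simeq \Morse_\Delta$.

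The real work has already been absorbed into Proposition \ref{FOprop}; the corollary itself is essentially formal. The only point requiring a small amount of care is that one must know the cohomology classes of the quasi-units are matched across the identification, and this is built into the statement by defining $\Morse_\Delta$ directly via the image of $Q$ under the quasi-isomorphism.
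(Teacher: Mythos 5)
Your proposal is correct and takes the same route the paper intends (the paper itself offers no explicit proof beyond "We immediately have the following"). You correctly identify that the hypothesis on $MO_\Delta$ guarantees, via Proposition \ref{bij}, that the projected graphs of differentials form a valid $O_\Delta$ for defining $\mathcal{F}^s_\Delta(W_\Sigma)$, and that the quasi-equivalence then drops from Proposition \ref{FOprop} to the localizations by the standard compatibility of Lyubashenko-Ovsienko quotients with quasi-equivalences, which is the same principle the paper invokes in the proof of Proposition \ref{choices}.
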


\begin{rem} As a consequence of the discussion that follows, we will see that the Morse cohomology $HM^\bullet( - K_\Delta)$ is concentrated in degree zero where it has rank one. As a consequence, the localization of $\Morse_\Delta^\circ$ to define $\Morse_\Delta$ in Corollary \ref{FOcor} can be done at chain-level representatives of any non-zero class in $H^\bullet \Hom_{\Morse^\circ_\Delta} ( (f, \theta'), (f,\theta) )$ for all $f \in MO_\Delta$ and $\theta' > \theta$ in $\{ \theta^j \}$. 
\end{rem}

Now that we can work entirely in the setting of Morse theory, we will change to a manifold with corners rather than the noncompact manifold $\bb{R}^n$ in order to more easily describe the Morse cohomology in terms of the behavior of $f$ at infinity and to be able to directly use results in \cite{Ab2}. For this, we assume without loss of generality that there is a compact set $K \subset \bb{R}^n$ such that for every $f \in MO_\Delta$ and for $f = K_\Delta$, $\nabla f \cdot \alpha$ is constant and in $2 \pi \bb{Z}^n$ in $C_\alpha \setminus K$ for all $\alpha \in A$ and that every monomially admissible metric computing morphisms in $\Morse_\Delta^\circ$ satisfies \eqref{gradientcomp2} in $C_\alpha \setminus K$ for all $\alpha \in A$. Note that these assumptions imply that the critical points of every Morse function whose Morse cochain complex appears as a morphism space in $\Morse_\Delta^\circ$ lie in $K$. It also follows by the proof of Proposition \ref{Mcomp} that every gradient tree needed to define $\Morse_\Delta^\circ$ is contained in the compact set $G^{-1}([0, M])$ where $G$ is defined in \eqref{mondivfunct} and $M$ is the maximum value of $G$ on $K$. Given that, we choose $y > M$ and set $Y = G^{-1}([0, y])$. We define $\Morse^\circ_\Delta(Y)$ in the same way as $\Morse^\circ_\Delta$ except restricting all data to $Y$. 

\begin{rem} It would be interesting to show that one could proceed without the restriction to $Y$ and compute the cohomology of line bundles as the cohomology on $\bb{R}^n$ relative to a conical set as it is more traditionally presented (see for instance Section 9.1 of \cite{CLS}). If possible, such a proof would still work if one bounded holomorphic disks in a way other than using the function from the second condition of Definition \ref{div}.
\end{rem}

By construction, it is clear that the restriction map $\Morse^\circ_\Delta \to \Morse^\circ_\Delta(Y)$ is a quasi-isomorphism.  Note that $Y = \Phi(Q)$ where $Q$ is the convex polytope in $\bb{R}^n$ given by 
\[ k_\alpha ( u \cdot \alpha + \log|c_\alpha|) \leq y \]
for all $\alpha \in A$. In particular, $Y$ has the structure of a smooth manifold with corners. Further, we have by construction that no critical points or Morse flow trees involved in the morphism space and structure maps on $\Morse^\circ_\Delta(Y)$ intersect the boundary of $Y$. In fact, the proof of Proposition \ref{Mcomp} shows that no gradient trajectories between interior points of $Y$ intersect $\partial Y$, i.e., the Morse functions that we consider are boundary convex in the sense of Definition 4.2 of \cite{Ab2}. 

\begin{rem} In what follows, we will use results in \cite{Ab2} that are stated for manifolds with boundary instead of corners. This discrepancy is never an issue due to the observation above that gradient trees and critical points involved in our Morse theory do not intersect $\partial Y$. In fact, this makes our setting somewhat simpler than in \cite{Ab2} where critical points on the boundary are allowed. If one still prefers to work with a manifold with boundary rather than corners, then it should be possible to carefully smooth $\partial Y$ by smoothing $\partial Q$ by using a positive symmetric mollifier as in the first half of the proof of Proposition \ref{rhoexists} and carry out all the steps below. 
\end{rem} 

Before proceeding to the categorical statements, it is perhaps useful to first understand the Morse cohomology of the functions we consider. 

\begin{df}[Definition 4.7 of \cite{Ab2}] Suppose that $f$ is a boundary convex Morse function on $Y$. We define $\partial_f^+ Y$ to be the set of points in $\partial Y$ that are limit points of gradient trajectories of $f$ with initial points in the interior of $Y$.
\end{df}

Note that in our intended application, one can replace ``limit point" in the above definition by ``endpoint" since there are no critical points in $\partial Y$. With that notation, we can express the Morse cohomology.

\begin{prop}[Lemma 4.24 of \cite{Ab2}] \label{morsecohom} If $(f,g)$ is a Morse-Smale pair on $Y$ and $f$ is boundary convex, 
\[ HM^\bullet(f) \cong H^\bullet (Y, \partial_f^+ Y). \]
\end{prop}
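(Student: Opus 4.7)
The plan is to build a relative CW decomposition of the pair $(Y, \partial_f^+ Y)$ from the Morse data of $(f, g)$, and then identify the resulting cellular cochain complex with $CM^\bullet(f)$. This mirrors the handle decomposition argument familiar from classical Morse theory on closed manifolds, suitably adapted to the manifold-with-corners setting via boundary convexity; the argument is standard enough that one could also simply cite Lemma 4.24 of \cite{Ab2}, but we sketch the ideas for completeness.

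First, I would consider the filtration of $Y$ by sublevel sets $Y_{\leq a} = f^{-1}((-\infty, a]) \cap Y$. Since, by hypothesis leading up to Proposition \ref{morsecohom}, all critical points of $f$ lie in the interior of $Y$, there are no ``boundary critical values.'' For $a$ strictly below the minimum critical value of $f$, boundary convexity implies that every $-\nabla_g f$ trajectory starting in $Y_{\leq a}$ descends until it reaches a point of $\partial_f^+ Y$, giving a deformation retraction of $Y_{\leq a}$ onto $Y_{\leq a} \cap \partial_f^+ Y$. Thus the base of the filtration is homotopy equivalent to a subset of $\partial_f^+ Y$.

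Next, as $a$ crosses a critical value $c$ with interior critical point $p$ of index $k$, the classical Morse lemma applies in the interior of $Y$: the sublevel set $Y_{\leq c+\epsilon}$ is obtained from $Y_{\leq c-\epsilon}$ by attaching a $k$-handle whose core is the descending disk of $p$, glued along its boundary sphere to a subset of the interior of $Y_{\leq c-\epsilon}$. Up to homotopy, this is the attachment of a single relative $k$-cell. Iterating across all critical values, $(Y, \partial_f^+ Y)$ is seen to be homotopy equivalent to a relative CW pair with one $k$-cell for each interior critical point of $f$ of index $k$. The Morse-Smale condition guarantees that the attaching maps are transverse to the boundaries of lower-dimensional descending disks, so the cellular coboundary is computed by counting, with signs, the $+\nabla_g f$ trajectories connecting an index-$k$ critical point to an index-$(k+1)$ critical point (equivalently, the signed intersection of the ascending sphere of the first with the descending sphere of the second). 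This is precisely the definition of the Morse cochain differential, so the cellular cochain complex of the CW pair coincides with $CM^\bullet(f)$, and the isomorphism $HM^\bullet(f) \cong H^\bullet(Y, \partial_f^+ Y)$ follows.

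The main obstacle is the base step: establishing a well-defined deformation retraction of $Y_{\leq a}$ (and more generally $Y_{\leq a}$ minus a neighborhood of the already-attached handles) onto its intersection with $\partial_f^+ Y$. This is exactly where boundary convexity is essential—it rules out trajectories becoming tangent to $\partial Y$ or escaping through $\partial_f^- Y$, and ensures that the forward $-\nabla_g f$ flow assigns each interior point either to an interior critical point or to a unique limit point in $\partial_f^+ Y$. Once this dynamical control is in place, the handle attachments all happen strictly in the interior and the remainder of the argument reduces to the closed-manifold case.
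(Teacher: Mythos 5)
The paper does not prove this statement; it is cited as Lemma 4.24 of \cite{Ab2}, so your sketch fills a gap the paper deliberately leaves. The filtration-plus-handle-attachment approach is the right one, but there is a concrete direction error in your base step that, left uncorrected, yields the wrong relative pair and the wrong grading.

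You claim $-\nabla_g f$ trajectories starting in $Y_{\leq a}$ terminate on $\partial_f^+ Y$. This is backwards. The paper notes, immediately after quoting the definition of $\partial_f^+ Y$, that a point of $\partial Y$ (on a top-dimensional stratum) belongs to $\partial_f^+ Y$ if and only if $\nabla_g f$ is \emph{outward}-pointing there; hence $-\nabla_g f$ is \emph{inward}-pointing along $\partial_f^+ Y$, and the downward flow is repelled from it. From $Y_{\leq a}$, the $-\nabla_g f$ flow exits through the complement $\partial_f^- Y = \partial Y \setminus \partial_f^+ Y$. Your sublevel-set filtration therefore builds a relative CW pair $(Y, \partial_f^- Y)$ with one $k$-cell per Morse index-$k$ critical point, computing $H^\bullet(Y, \partial_f^- Y)$ with generators graded by Morse index --- neither the relative pair nor the grading asserted.

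The resolution is to observe that $CM^\bullet(f)$ in this paper (following \cite{Ab2}) is graded by \emph{co-index} $n - \text{ind}(p)$, not index: one can read this off Section~\ref{floercomp}, where the critical points of the strictly concave function $kH_D - \eps K_\Delta$ ($k>0$), all of Morse index $n$, are placed in degree $0$. Given that grading, the correct filtration is the superlevel-set filtration $Y_{\geq a}$, i.e.\ your argument applied to $-f$: the base then retracts onto $\partial_f^+ Y$ along the $+\nabla_g f$ flow (which really is where the upward flow exits --- that is the definition of $\partial_f^+ Y$), and crossing a critical point $p$ attaches a cell of dimension $\text{ind}_{-f}(p) = n - \text{ind}_f(p)$, matching the co-index grading on the nose. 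Alternatively, keep sublevel sets with the corrected retraction onto $\partial_f^- Y$, and pass to the stated form via Poincar\'{e}--Lefschetz duality and universal coefficients. The remainder of your sketch --- interior Morse lemma, the Morse--Smale condition identifying the cellular coboundary with the Morse differential, boundary convexity controlling trajectory escape --- is sound.
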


Note that on the interiors of the maximum dimensional strata of $\partial Y$, a point belongs to $\partial_f^+ Y$ if and only if the gradient $\nabla_g f$ is outward-pointing at that point. In addition, $Y$ has (possibly empty) faces $Y_\alpha = \Phi(F_\alpha)$ for each $\alpha \in A$ where $F_\alpha$ is the face of $Q$ where $k_\alpha ( u \cdot \alpha + \log|c_\alpha|) \leq \log(y)$, and $\cup_{\alpha \in A} Y_\alpha = \partial Y$. As a result of \eqref{mong}, $\alpha$ is an outward-pointing normal to $Y_\alpha$ with respect to a monomially admissible $g$. If in addition, $\nabla f \cdot \alpha$ is constant on $Y_\alpha$ for all $\alpha \in A$, as is the case for the Morse functions defining morphisms in $\Morse_\Delta^\circ (Y)$, then $\text{int}(Y_\alpha) \cap \partial_f^+ Y$ is either empty or equal to $\text{int}(Y_\alpha)$ for each $\alpha \in A$. In particular, the closure of $\partial_f^+ Y$ is the union of all $Y_\alpha$ such that $\nabla f \cdot \alpha$ is positive outside of a compact subset of $C_\alpha$.

Now, we define a simplicial model in order to categorify Proposition \ref{morsecohom}. The analogous $A_\infty$-pre-category is defined in Appendix E of \cite{Ab2} in more generality. We will focus only on the case needed to prove HMS. Let $Y_b$ be the triangulation of $Y$ that is the image of the barycentric subdivision $Q_b$ of $Q$. Given a function $h\colon \partial Y \to \bb{R}$, we define $\partial_h^+ Y_b$ to be the closure of the maximal cells of $Y_b$ in $\partial Y$ where $h$ takes some positive value in the interior. Given a countable set $MO_\Delta$ of monomially admissible functions, we define a (possibly discontinuous) function $h(f)\colon \partial Y \to 2\pi \bb{Z}$ for each $f \in MO_\Delta$ by 
\begin{equation} \label{boundaryfunction}
h(f)(\mu) = \max_{ \{ \alpha \, | \, \mu \in Y_\alpha \} } \nabla f \cdot \alpha
\end{equation}
so that in particular
\begin{equation} \label{boundaryfunction2} h(f) = g(\nabla_g f, \alpha) = \nabla f \cdot \alpha \end{equation}
on the interior of $Y_\alpha$ for each $\alpha \in A$. In the definition of $h(f)$, we use \eqref{boundaryfunction} only to be explicit. The only necessary property is \eqref{boundaryfunction2}. We set $SO_\Delta$ to be the set of $h(f)$ for $f \in MO_\Delta$.  We also define functions $h _\Delta = h(K_\Delta)$ for $K_\Delta$ and $h_m = h(m \cdot \mu)$ for all $m \in 2 \pi \bb{Z}^n$ by \eqref{boundaryfunction}. As before, we fix a generic sequence $0 = \theta^0 < \theta^1 < \hdots < \theta^j < \hdots < \pi$ with $\lim_{j \to \infty} \theta^j = \pi$. 

\begin{rem} Since $\nabla f \cdot \alpha$ is constant on $Y_\alpha$ for all $\alpha \in A$ and $f \in MO_\Delta$, we have that the inclusion $\partial_f^+ Y \subset \partial_{h(f)}^+ Y_b$ is a deformation retract for all $f \in MO_\Delta$. In fact, $\partial_{h(f)}^+ Y_b$ is the closure of $\partial_f^+ Y$. The same statements also hold for the pairs $(K_\Delta, h_\Delta)$ and $(m, h_m)$ and for any linear combination of the aforementioned functions. 
\end{rem}

We now define a dg-category $\Simp^\circ_\Delta (Y)$ whose objects are pairs $(h, \theta)$ where $h \in SO_\Delta$ and $\theta \in \{ \theta^j \}$ with morphisms
\[ \Hom_{\Simp^\circ_\Delta (Y)} ( (h_0, \theta_0), (h_1, \theta_1) ) = \begin{cases} \underset{m \in 2\pi\bb{Z}^n}{\bigoplus} C^\bullet( Y_b, \partial_{h_1 - h_0 + (\theta_{1} - \theta_0)h_\Delta + m}^+ Y_b) & \theta_0 > \theta_1 \\ \bb{K} \cdot e_{(h_0, \theta_0)} & (h_0, \theta_0) = ( h_1, \theta_1) \\ 0 & \text{otherwise} \end{cases} \]
where $e_{(h_0, \theta_0)}$ is a formal element and $\bb{K}$ is a field. The differential is the simplicial cochain differential on each factor of the $\bb{Z}^n$ grading and multiplication is given by the following diagram induced by the inclusion $ \partial_{h_2 - h_0 + (\theta_{2} - \theta_0)h_\Delta + m + \ell}^+ Y_b \subset  \partial_{h_1 - h_0 + (\theta_{1} - \theta_0)h_\Delta + m}^+ Y_b \, \cup \, \partial_{h_2 - h_1 + (\theta_{2} - \theta_1)h_\Delta + \ell}^+ Y_b$.
\[ \begin{tikzcd}[column sep= 1.5 mm]
C^\bullet( Y_b, \partial_{h_2 - h_1 + (\theta_{2} - \theta_1)h_\Delta + \ell}^+ Y_b) \otimes C^\bullet( Y_b, \partial_{h_1 - h_0 + (\theta_{1} - \theta_0)h_\Delta + m}^+ Y_b) \arrow{d}  \\ C^\bullet( Y_b, \partial_{h_1 - h_0 + (\theta_{1} - \theta_0)h_\Delta + m}^+ Y_b \, \cup \, \partial_{h_2 - h_1 + (\theta_{2} - \theta_1)h_\Delta + \ell}^+ Y_b) \arrow{r} & C^\bullet( Y_b, \partial_{h_2 - h_0 + (\theta_{2} - \theta_0)h_\Delta + m + \ell}^+ Y_b)
\end{tikzcd} \]

The following proposition is essentially a consequence of Corollary 4.33 of \cite{Ab2} along with the discussion at the end of Section 6.2 in \cite{Ab2}. 
\begin{prop} \label{MorseSimp} The $A_\infty$-categories $\Morse^\circ_\Delta(Y)$ and $\Simp_\Delta^\circ (Y)$ are quasi-isomorphic when defined using the same set $MO_\Delta$ of monomially admissible functions.
\end{prop}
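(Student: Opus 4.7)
The plan is to reduce this statement to Corollary 4.33 of \cite{Ab2}, adapted to the present setup. First I would verify the claim on the level of cohomology for each morphism space. A typical morphism label in $\Morse^\circ_\Delta(Y)$ is of the form $f = f_1 - f_0 + (\theta_1 - \theta_0) K_\Delta + m$, and under the standing assumptions this is a boundary convex Morse function on $Y$ with $\nabla f \cdot \alpha$ constant on each face $Y_\alpha$. Proposition \ref{morsecohom} then gives $HM^\bullet(f) \cong H^\bullet(Y, \partial^+_f Y)$. The corresponding morphism space in $\Simp^\circ_\Delta(Y)$ computes $H^\bullet(Y_b, \partial^+_{h(f)} Y_b)$, and the inclusion $\partial^+_f Y \hookrightarrow \partial^+_{h(f)} Y_b$ is a deformation retract by the remark following \eqref{boundaryfunction2}, so these two relative cohomologies agree. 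Summing over the $\bb{Z}^n$-grading, the identification is term-by-term.

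To promote this to a chain-level $A_\infty$-quasi-isomorphism I would follow the approach in the discussion at the end of Section 6.2 of \cite{Ab2}. The key input is to choose an auxiliary Morse function $\phi$ on $Y$ adapted to the barycentric triangulation $Y_b$ (one critical point at each barycenter, descending manifolds realizing the open cells of $Y_b$), together with a monomially admissible metric, so that $CM^\bullet(\phi)$ is canonically identified with $C^\bullet(Y_b)$ and so that the subcomplexes picking out $\partial^+_{h(f)} Y_b$ correspond to the Morse subcomplexes generated by critical points with appropriate boundary behavior. Corollary 4.33 of \cite{Ab2} identifies the Morse $A_\infty$-multiplication defined via gradient trees on such an adapted $\phi$ with the simplicial cup product, and this identification extends to the relative complexes by restriction. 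One then builds an $A_\infty$-functor from $\Simp^\circ_\Delta(Y)$ to $\Morse^\circ_\Delta(Y)$ whose linear part is induced by continuation between the adapted Morse model and the functions $f_1 - f_0 + (\theta_1 - \theta_0) K_\Delta + m$, with higher-order components counting parameterized gradient trees interpolating between the two sets of Morse data. Equivariance under the $2\pi\bb{Z}^n$-action on the fibers is automatic because all auxiliary choices can be made translation-invariant.

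The main obstacle will be controlling the continuation data and parameterized moduli spaces of gradient trees so that they remain in the interior of $Y$, which is needed to ensure that the chain-level map respects the filtrations giving the relative subcomplexes $\partial^+_{h(f)} Y_b$. This amounts to extending boundary convexity through the one-parameter interpolations used by continuation: concretely, one needs every convex combination of the Morse functions appearing at an input, output, and auxiliary vertex of a tree to remain boundary convex with $\nabla \cdot \alpha$ of a fixed sign on each $Y_\alpha$. This follows from the linearity of the boundary directional derivatives in the function, together with our convention that $\nabla f \cdot \alpha$ is constant on $Y_\alpha$ for every $f$ appearing in $\Morse^\circ_\Delta(Y)$; the compactness then reduces to Proposition \ref{Mcomp}. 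Once this is in place, the construction of the $A_\infty$-quasi-isomorphism is formal and follows \cite{Ab2} verbatim.
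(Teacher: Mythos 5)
Your proposal is correct and takes essentially the same route as the paper, which (very tersely) reduces the statement to Corollary~4.33 and the discussion at the end of Section~6.2 of \cite{Ab2}, noting only that the quasi-isomorphism sends $(f,\theta)\mapsto(h(f),\theta)$ and proceeds through a cellular intermediary. You spell this out in more detail, and your auxiliary Morse function adapted to $Y_b$ plays exactly the role of the paper's ``cellular model''; your final paragraph correctly pinpoints that the only nonformal point in the present setting is preserving boundary convexity through the continuation interpolations (so that Proposition~\ref{Mcomp} applies and the chain-level maps respect the filtration by $\partial^+ Y$), which the paper leaves implicit but which is handled the same way.
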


The quasi-isomorphism takes a pair $(f, \theta)$ to $(h(f), \theta)$. The statement is proved in \cite{Ab2} by first passing through a cellular model. At this point, we are working entirely with topological data. This allows us to remove the formality of localization and work with $Q$ in place of $Y$. 

We define $\Simp_\Delta(Y)$ to be the localization of $\Simp_\Delta^\circ (Y)$ at chain-level representatives with a $\bb{Z}^n$ grading of $0$ (in order to have a $\bb{Z}^n$ grading on the localization) of any non-zero class in 
\[ H^\bullet \Hom_{\Simp_\Delta^\circ (Y)} ( (h, \theta'), (h, \theta)) = H^\bullet (Y ) = \begin{cases} \bb{K} & \bullet = 0 \\ 0 & \bullet \neq 0 \end{cases} \]
 for all $h \in SO_\Delta$ and $\theta' > \theta$ in $\{ \theta^j \}$. For instance, we can simply choose to localize at the strict unit in $C^\bullet(Y_b)$ for all $h \in SO_\Delta$ and $\theta' > \theta$ in $\{ \theta^j \}$.

In another direction, we define a dg-category $\Simp_\Delta(Q)$ whose objects are elements of $SO_\Delta$ with morphisms defined by 
\[ \Hom_{\Simp_\Delta(Q)} (h_0, h_1) = \bigoplus_{m \in 2\pi \bb{Z}^n} C^\bullet ( Q_b, \partial^+_{h_1 - h_0 + m} Q_b) \]
where $\partial_{h}^+ Q_b$ is the closure of the maximal cells of $Q_b$ where $h \circ \Phi$ takes some positive value. The differential and multiplication are defined similarly to $\Simp^\circ_\Delta(Y)$. 

\begin{prop} \label{noloc} The $\dg$-categories $\Simp_\Delta (Y)$ and $\Simp_\Delta(Q)$ are quasi-equivalent when defined using the same set $SO_\Delta$. 
\end{prop}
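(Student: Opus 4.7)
The approach is to use the Legendre transform $\Phi\colon Q \to Y$ to identify the combinatorial data underlying both categories, and then to observe that an integrality argument renders the localization in $\Simp_\Delta(Y)$ essentially trivial at the chain level. By Lemma \ref{polytope} and Definition \ref{radialdef}, $\Phi$ is a homeomorphism that sends each face $F_\alpha$ of $Q$ onto the face $Y_\alpha$ of $Y$. Consequently $\Phi$ identifies the barycentric subdivisions $Q_b \cong Y_b$ and, via pullback, identifies the functions in $SO_\Delta$ viewed on $\partial Y$ with their counterparts on $\partial Q$. Because each maximal cell of $Y_b$ lying in $\partial Y$ is contained in the interior of a single face $Y_\alpha$, the positive subcomplex $\partial^+_h Y_b$ is determined by the (constant) value of $h$ on each open face, and $\Phi$ induces a canonical isomorphism $C^\bullet(Y_b, \partial^+_h Y_b) \cong C^\bullet(Q_b, \partial^+_h Q_b)$.

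The next step is to analyze the angle parameter. For $(h_0, \theta_0), (h_1, \theta_1)$ in $\Simp^\circ_\Delta(Y)$ with $\theta_0 > \theta_1$, the function $h_1 - h_0 + (\theta_1 - \theta_0)h_\Delta + m$ takes the constant value $n^1_\alpha - n^0_\alpha + (\theta_1 - \theta_0) + m \cdot \alpha$ on the interior of $Y_\alpha$, where $n^j_\alpha := \nabla f_j \cdot \alpha \in 2\pi\bb{Z}$ for any monomially admissible $f_j$ with $h(f_j) = h_j$ and we have used $\nabla K_\Delta \cdot \alpha = 1$ on $Y_\alpha$. Since $n^1_\alpha - n^0_\alpha + m \cdot \alpha \in 2\pi\bb{Z}$ while $\theta_0 - \theta_1 \in (0, \pi)$, the sign of this expression agrees with the sign of $n^1_\alpha - n^0_\alpha + m \cdot \alpha$ regardless of the chosen angles. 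Hence $\partial^+_{h_1 - h_0 + (\theta_1 - \theta_0)h_\Delta + m} Y_b$ equals $\partial^+_{h_1 - h_0 + m} Y_b$ for every admissible pair of angles, and the morphism complex agrees canonically with $\Hom_{\Simp_\Delta(Q)}(h_0, h_1)$. A parallel check shows that the inclusion-based multiplications coincide, yielding a dg-functor $F\colon \Simp_\Delta^\circ(Y) \to \Simp_\Delta(Q)$ given on objects by $(h,\theta) \mapsto h$.

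The functor $F$ sends each quasi-unit to a chain-level representative of the unit class in $\Hom_{\Simp_\Delta(Q)}(h, h)$, because the Hom complex containing it is canonically isomorphic to $\bigoplus_m C^\bullet(Y_b, \partial^+_m Y_b)$, whose cohomology is concentrated in degree zero and is one-dimensional there, by Propositions \ref{morsecohom} and \ref{MorseSimp} together with the fact that $HM^\bullet(-\eps K_\Delta + m) = 0$ for small $\eps$ and $m \neq 0$ (the function has no critical points since $\nabla K_\Delta(Y) = 2 \pi P$ is bounded while $m/\eps$ is not). Hence $F$ factors through the localization to a dg-functor $\bar F\colon \Simp_\Delta(Y) \to \Simp_\Delta(Q)$ that is essentially surjective. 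On morphisms, the analogue of Proposition \ref{localhoms} (proved as in Lemma 7.18 of \cite{SeNotes}) identifies the localized Hom complex with the direct limit over $\theta_0 \to \pi$ of the morphism complexes in $\Simp_\Delta^\circ(Y)$; by the previous step this direct system is constant with value $\Hom_{\Simp_\Delta(Q)}(h_0, h_1)$, so $\bar F$ is quasi-fully-faithful. The main obstacle in the plan is verifying that the chosen quasi-unit indeed represents the unit class rather than some other cohomology class in the one-dimensional $H^0 \Hom_{\Simp_\Delta(Q)}(h, h) \cong \bb{K}$; this follows from the compatibility of quasi-units with the $A_\infty$ structure recorded in Proposition \ref{invariantqu}(a), which forces the class to be the idempotent unit of the cohomology algebra.
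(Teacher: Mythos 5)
Your proof takes essentially the same route as the paper's: identify the cochain complexes via $\Phi$, observe that adding $(\theta_1-\theta_0)h_\Delta$ leaves $\partial^+$ unchanged by the integrality of the face values, define the dg-functor $(h,\theta)\mapsto h$, and conclude via the analogue of Proposition~\ref{localhoms}; your explicit sign computation is a welcome elaboration of a step the paper states without justification. One small inaccuracy in your final paragraph: there is no need to invoke Proposition~\ref{invariantqu}(a) (which concerns the Floer-theoretic quasi-units $c_{(L,\theta\to\theta')}$ in $\mathcal{F}^\circ_\Delta$, not the simplicial localization classes) nor the Morse-cohomology vanishing for $m\neq 0$—since the localizing morphisms are chosen with $\bb{Z}^n$-degree $0$, and the degree-$0$ summand of $H^0\Hom_{\Simp_\Delta(Q)}(h,h)$ is $H^0(Q_b)\cong\bb{K}$ containing the strict unit, any nonzero image is automatically a nonzero scalar multiple of the unit and hence invertible.
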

\begin{proof} We first note that since $Y_b$ is the image of $Q_b$ under $\Phi$, we have $\partial_h^+ Y_b = \Phi(\partial_h^+ Q_b)$ for any function $h\colon \partial Y \to \bb{R}$. Thus, the cochain complexes $C^\bullet(Y_b, \partial_h^+ Y_b)$ and $C^\bullet(Q_b, \partial_h^+ Q_b)$ are identified for any function $h\colon \partial Y \to \bb{R}$. We further observe that if $h$ is constant on the interior of $Y_\alpha$ for all $\alpha \in A$ and $h\colon \partial Y \to 2\pi \bb{Z}$, then 
\[ \partial^+_{h} Y_b = \partial^+_{h + (\theta_1 - \theta_0) h_\Delta} Y_b \]
for any $\theta_0 > \theta_1$ in $\{ \theta^j \}$. As a result, we can define a dg-functor
\[ \Simp_\Delta^\circ (Y) \to \Simp_\Delta (Q) \]
which takes $(h, \theta)$ to $h$ and is defined on morphisms via the identification outlined above between $\partial_{h_1 - h_0 + (\theta_1 - \theta_0)h_\Delta +m}^+ Y_b$ and  $\partial_{h_1- h_0 + m}^+ Q_b$ for each $m \in \bb{Z}^n$ and by taking the $e_{(h,\theta)}$ to the strict unit in $C^\bullet (Y_b)$. 

Further, the set of morphisms which are inverted to construct $\Simp_\Delta(Y)$ are all sent to invertible morphisms in $\Simp_\Delta (Q)$. As a result, we have an induced functor 
\[ \Simp_\Delta(Y) \to \Simp_\Delta(Q) \]
which is a quasi-equivalence by the analogue of Proposition \ref{localhoms} for $\Simp_\Delta(Y)$.
\end{proof}

The final step is to show that $\Simp_\Delta(Q)$ is quasi-equivalent to a dg-enhancement of the category of line bundles on $X$ when $\Delta$ is adapted to the fan and $MO_\Delta$ contains a twisting Hamiltonian for each divisor on $X$. In particular, we will use the dg \v{C}ech category of line bundles on $X$, $\Cech(X)$, defined in Definition 6.8 of \cite{Ab2}. The proof of Proposition \ref{adaptedgivesample} shows that $Q$ has the combinatorial type of the polytope of an ample line bundle on $X$ when $\Delta$ is adapted to $\Sigma$. With that in mind, we note that $\Simp_\Delta(Q)$ could equivalently be defined in this case as having objects all functions  $h\colon \partial Q \to 2\pi\bb{Z}$ that are constant on each face and equal to the max of its values on the intersecting faces elsewhere, cf. \eqref{boundaryfunction}, and morphisms defined in the same way as in $\Simp_\Delta (Q)$ without composing with $\Phi$. 

In addition, we see that $\Simp_\Delta (Q)$ is almost the same as the dg-category $\Simp^{\bb{Z}^n}(Q_b)$ defined in \cite{Ab2} when $\Delta$ is adapted to $\Sigma$. The only differences are that $\Simp^{\bb{Z}^n}(Q_b)$ allows more objects, which are all quasi-isomorphic to corresponding objects of $\Simp_\Delta(Q)$, and that $\Simp_\Delta(Q)$ is defined directly with $Q$ rather than a tropical approximation. In fact, the latter point makes $\Simp_\Delta(Q)$ even closer to the category $\Cech^{\bb{Z}^n}(Q)$ defined in \cite{Ab2} than $\Simp^{\bb{Z}^n}(Q)$. As a result of these observations, the claim in the proof of Proposition 6.7 in \cite{Ab2} and Proposition 6.9 in \cite{Ab2} combine to give the following.

\begin{prop} \label{cechtime} If $\Delta$ is adapted to $\Sigma$ and $MO_\Delta$ contains a twisting Hamiltonian for each divisor on $X$, the dg-categories $\Simp_\Delta(Q)$ and $\Cech(X)$ are quasi-equivalent. 
\end{prop}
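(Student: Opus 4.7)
The plan is to leverage the essentially topological nature of $\Simp_\Delta(Q)$ together with the already-established results of \cite{Ab2} identifying a closely related simplicial category with $\Cech(X)$. The approach proceeds in three main steps.

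First, I would establish that $Q$ genuinely behaves like the polytope of an ample divisor on $X$. By Proposition \ref{adaptedgivesample}, when $\Delta$ is adapted to $\Sigma$ the polytope $Q$ (taken with an appropriate scaling so that all $k_\alpha$ are integer reciprocals of $N$) is the polytope of the ample line bundle $\mathcal{O}\left(\sum_\alpha (N/k_\alpha) D_\alpha\right)$, and in particular its normal fan coincides with $\Sigma$. Consequently the faces of $Q$ are in bijection with the cones of $\Sigma$, and a function $h \colon \partial Q \to 2\pi\bb{Z}$ of the form \eqref{boundaryfunction} records precisely the data of an integer-valued support function on $\Sigma$ up to a shift by a linear function; such data determines a toric divisor on $X$ and hence a line bundle.

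Second, I would compare $\Simp_\Delta(Q)$ with Abouzaid's $\Simp^{\bb{Z}^n}(Q_b)$. The two categories have the same morphism complexes by construction, since both are built from relative simplicial cochains on the barycentric subdivision $Q_b$ with respect to the subcomplexes $\partial^+_{h_1 - h_0 + m} Q_b$. The only discrepancy is that $\Simp^{\bb{Z}^n}(Q_b)$ allows a larger class of boundary functions (arbitrary integer-valued locally constant functions on the open faces, rather than those of the specific form \eqref{boundaryfunction} coming from twisting Hamiltonians in $MO_\Delta$). Under the assumption that $MO_\Delta$ contains a twisting Hamiltonian for each divisor, the inclusion $\Simp_\Delta(Q) \hookrightarrow \Simp^{\bb{Z}^n}(Q_b)$ hits every quasi-isomorphism class of objects: any boundary function in the larger category is quasi-isomorphic in $\Simp^{\bb{Z}^n}(Q_b)$ to one of the $h(f)$ attached to a twisting Hamiltonian $f$, because both record the same line bundle up to a linear shift, and two such boundary functions differing by a linear function $m \in 2\pi\bb{Z}^n$ are canonically identified via the $m$-component of the morphism complex. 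Hence the inclusion is a quasi-equivalence.

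Third, I would conclude by invoking the proof of Proposition 6.7 in \cite{Ab2}, which produces a quasi-equivalence between $\Simp^{\bb{Z}^n}(Q_b)$ and the \v{C}ech category $\Cech^{\bb{Z}^n}(Q)$ built from open covers of $Q$ by stars of faces, together with Proposition 6.9 in \cite{Ab2}, which identifies $\Cech^{\bb{Z}^n}(Q)$ with the $\dg$-enhancement $\Cech(X)$ of the category of line bundles on $X$ (using the combinatorial type of $Q$ coming from step one). Since $\Simp_\Delta(Q)$ is defined directly on $Q$ rather than on a tropical approximation, there is no tropical comparison step needed; in fact this makes the identification more direct than in \cite{Ab2}. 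Composing these quasi-equivalences with the one from step two yields the desired quasi-equivalence $\Simp_\Delta(Q) \simeq \Cech(X)$.

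The main obstacle I anticipate is the careful bookkeeping in step two: one must check that the restriction on objects to those of the form $h(f)$ really loses nothing, and that the $\bb{Z}^n$-grading (shifts by $m \in 2\pi\bb{Z}^n$) is compatible with the equivariant structures (that is, with the choice of toric divisor representing a given line bundle) on the \v{C}ech side. Verifying this amounts to matching, for each pair of twisting Hamiltonians, the boundary subcomplex $\partial^+_{h(f_1) - h(f_0) + m} Q_b$ with the support function data for $\mathcal{O}(D_1 - D_0)$ twisted by $m$, which should follow directly from the definition \eqref{boundaryfunction} and the interpretation of support functions on $\Sigma$.
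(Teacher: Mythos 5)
Your proposal is correct and takes essentially the same route as the paper: use Proposition \ref{adaptedgivesample} to identify the combinatorial type of $Q$ with an ample polytope, observe that $\Simp_\Delta(Q)$ differs from Abouzaid's $\Simp^{\bb{Z}^n}(Q_b)$ only by allowing fewer objects (each of which has a quasi-isomorphic representative in $\Simp_\Delta(Q)$ under the hypothesis on $MO_\Delta$) and by being defined directly on $Q$ rather than a tropical approximation, and then invoke Propositions 6.7 and 6.9 of \cite{Ab2}. The paper's discussion is a bit more informal and notes that $\Simp_\Delta(Q)$ is in fact even closer to $\Cech^{\bb{Z}^n}(Q)$ than $\Simp^{\bb{Z}^n}(Q_b)$ is, but this only shortens the comparison and does not change the substance of the argument.
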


Proposition \ref{cechtime} combined with all the previous results of this subsection gives the desired homological mirror symmetry.

\begin{cor} \label{hmscor} If $\Delta$ is adapted to $\Sigma$, $\mathcal{F}^s_\Delta(W_\Sigma)$ is quasi-equivalent to $\Cech(X)$. As a result, the homotopy category of $Tw^\pi \mathcal{F}^s_\Delta (W_\Sigma)$ is equivalent to $D^b \Coh(X)$. 
\end{cor}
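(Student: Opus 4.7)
The plan is to assemble the quasi-equivalence in the first sentence of the corollary by concatenating the string of intermediate quasi-equivalences that have been set up in the preceding subsection; the second sentence then follows from standard split-generation arguments. Concretely, I would start from the right-hand side of Corollary \ref{FOcor}, which identifies $\mathcal{F}_\Delta^s(W_\Sigma)$ with the localization $\Morse_\Delta$ of the Morse category $\Morse_\Delta^\circ$ at the images of quasi-units, provided $MO_\Delta$ contains a twisting Hamiltonian for each toric divisor on $X$ (which we are free to choose). I would then invoke the restriction map $\Morse_\Delta^\circ \to \Morse_\Delta^\circ(Y)$, which is a quasi-isomorphism because all critical points and gradient trees that contribute to the structure maps are supported in $Y = G^{-1}([0,y])$ by the boundary-convexity consequence of Proposition \ref{Mcomp}.

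From there, Proposition \ref{MorseSimp} supplies a quasi-isomorphism $\Morse_\Delta^\circ(Y) \simeq \Simp_\Delta^\circ(Y)$, which after localization at chain-level representatives of the (degree zero, $\bb{Z}^n$-grading zero) generator of $H^\bullet \Hom((h,\theta'),(h,\theta))\cong H^\bullet(Y) = \bb{K}$ yields $\Morse_\Delta \simeq \Simp_\Delta(Y)$. Proposition \ref{noloc} then gives $\Simp_\Delta(Y) \simeq \Simp_\Delta(Q)$ via $\Phi$, using that $Y = \Phi(Q)$ identifies the simplicial pairs $(Y_b,\partial_h^+ Y_b)$ and $(Q_b,\partial_h^+ Q_b)$. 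Finally, since $\Delta$ is adapted to $\Sigma$, Proposition \ref{adaptedgivesample} shows that $Q$ is combinatorially the polytope of an ample line bundle on $X$, so Proposition \ref{cechtime} completes the chain with $\Simp_\Delta(Q) \simeq \Cech(X)$. Concatenating these quasi-equivalences yields $\mathcal{F}_\Delta^s(W_\Sigma) \simeq \Cech(X)$.

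For the second assertion, I would pass to split-closures of twisted complexes on both sides. By construction, $\Cech(X)$ is a dg-enhancement of the full subcategory of $D^b\Coh(X)$ spanned by line bundles on $X$. Since $X$ is smooth and projective, Orlov's generation theorem (Theorem 4 of \cite{OrGen}), invoked already in Section \ref{floercomp} via the full exceptional collection $\mathcal{O},\mathcal{O}(D),\ldots,\mathcal{O}(nD)$ for any ample $D$, guarantees that line bundles split-generate $D^b\Coh(X)$. Thus $Tw^\pi \Cech(X)$ has homotopy category equivalent to $D^b\Coh(X)$, and by the quasi-equivalence above the same holds for $Tw^\pi \mathcal{F}_\Delta^s(W_\Sigma)$.

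The main work has already been carried out in Propositions \ref{FOprop}, \ref{MorseSimp}, \ref{noloc}, and \ref{cechtime}, so at the level of a proposal the only delicate point is a bookkeeping one: one must make sure the sets $MO_\Delta$, $SO_\Delta$, the generic angle sequences, and the specific representatives used to localize match across the different categories, so that the intermediate quasi-isomorphisms genuinely compose. I would address this by first fixing $MO_\Delta$ to contain one twisting Hamiltonian per toric divisor, then defining $SO_\Delta$ as its image under $f \mapsto h(f)$ and using the same generic sequence $\{\theta^j\}$ throughout; the localizations at each stage are then at classes that correspond to one another under the quasi-isomorphisms, so the universal property of Lyubashenko--Ovsienko localization produces a coherent chain of quasi-equivalences.
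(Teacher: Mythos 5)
Your proposal is correct and follows exactly the chain the paper has in mind: the paper's one-line justification ("Proposition \ref{cechtime} combined with all the previous results of this subsection gives the desired homological mirror symmetry") is precisely the concatenation of Corollary \ref{FOcor}, the restriction to $Y$, Proposition \ref{MorseSimp}, Proposition \ref{noloc}, and Proposition \ref{cechtime} that you spell out, followed by the generation-by-line-bundles argument for the derived statement. Your final bookkeeping remark about keeping $MO_\Delta$, $SO_\Delta$, the angle sequences, and the localizing classes consistent across the chain is a genuine point of care that the paper leaves implicit, and you resolve it correctly.
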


Here, $\text{Tw}^{\pi} (\mathcal{A})$ is the idempotent closure of the category of twisted complexes on an $A_\infty$-category $\mathcal{A}$. On objects, the quasi-equivalence of Corollary \ref{hmscor} takes $\phi_D(\mathcal{L})$ to the line bundle $\mathcal{O}(D)$. 

\subsection{Tropical divisions versus combinatorial divisions} \label{tropvcomb}

In the previous subsection, we saw that when $\Delta$ is a combinatorial division or any division adapted to $\Sigma$, $\mathcal{F}^s_\Delta(W_\Sigma)$ is quasi-equivalent to a dg-enhancement of line bundles on $X$. As a result, we expect that whenever $\mathcal{F}_\Delta(W_\Sigma)$ has enough objects, it is derived equivalent to $D^b \Coh(X)$ when $\Delta$ is a combinatorial division. In this subsection, we wish to investigate tropical divisions that are not adapted to the fan such as when $X$ is not Fano.

We will start by looking at the Hirzebruch surfaces $\bb{F}_m$ with $m \geq 2$ and speculate about a more general picture. We work with the fan $\Sigma_m$ of $\bb{F}_m$ whose primitive generators are $(1, m), (0,1), (-1,0)$ and $(0,-1)$. For simplicity, we assume that $\mu = \Log$ and $c_\alpha = 1$ for all $\alpha$. Let $\Delta^t_m$ be the tropical division for $W_{\Sigma_m}$ with $\delta = 0$ or $\delta$ small. The division $\Delta^t_m$ is shown in Figure \ref{hirztrop} for $m = 3$. Temporarily, we will focus on $m \geq 3$ and come back to the somewhat degenerate case of $m = 2$. For $m \geq 3$, $C_{(0,1)}$ is empty. Thus, $\Delta^t_m$ is certainly not adapted to the fan $\Sigma_m$, and we should not expect monomially admissible Lagrangian sections with respect to $\Delta^t_m$ to correspond to line bundles on $\bb{F}_m$. 

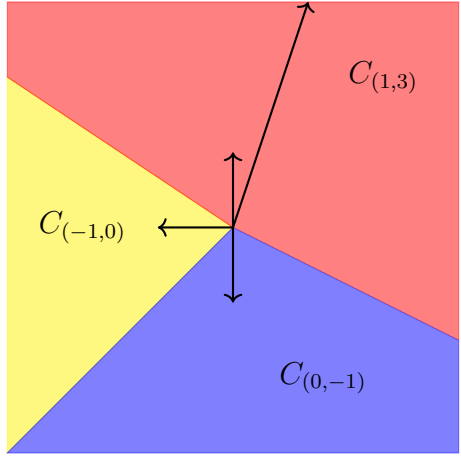
\begin{figure} 
\centering
\begin{tikzpicture}

\draw[yellow, fill=yellow, opacity = 0.5] (0,0) -- (-3, -3) -- (-3, 2) -- (0, 0);
\draw [blue, fill=blue, opacity=0.5] (0,0) -- (3, -1.5) -- (3, -3) -- (-3,-3) -- (0,0);
\draw[red, fill=red, opacity = 0.5] (0,0) -- (-3, 2) -- (-3, 3) -- (3,3) -- (3,-1.5);
\draw (-2, 0) node {$C_{(-1,0)}$};
\draw (1.2, -2) node {$C_{(0,-1)}$};
\draw (2, 2) node {$C_{(1,3)}$};
\draw[->, thick] (0,0) --(1,3);
\draw[->, thick] (0,0) -- (0,1);
\draw[->, thick] (0,0) -- (-1,0);
\draw[->, thick] (0,0) -- (0,-1);

\end{tikzpicture}
\caption{The tropical division for $W_{\Sigma_3} = z_1z_2^3 + z_2 + \frac{1}{z_1} + \frac{1}{z_2}$ overlaid on the fan $\Sigma_3$ with $\mu = \Log$ and $\delta = 0$.} \label{hirztrop}
\end{figure}

However, $\Delta^t_m$ is adapted to the fan $\Sigma_m'$ whose primitive generators are $(1,m), (-1,0)$ and $(0,-1)$, which is a fan for the singular toric variety $\bb{P}^2(1,1,m)$. By the proof of Proposition \ref{normalizecoeff}, the preceding fact is even true for arbitrary $c_\alpha$. Although $\bb{P}^2(1,1,m)$ is not smooth, the fan $\Sigma_m$ is still simplicial. Line bundles on the variety $\bb{P}^2(1,1,m)$ correspond to integral piecewise linear functions on the fan up to an integral linear function. On the other hand, the Hamiltonian isotopy class of a monomially admissible Lagrangian section $L$ is determined by the function $\nu_L$ in \eqref{isotopyfunction} on the generators of $\Sigma_m'$, which can take arbitrary integral values, up to an integral linear function by the arguments in Section \ref{sectionclasses}. Although $\nu_L$ is a piecewise linear function, its linear parts are not necessarily integral. For example, the piecewise linear function corresponding to $\nu_L(-1,0) = 1$ and $\nu_L(0,-1) = \nu_L(1,m) = 0$ is equal to $- \mu_1 + \mu_2/m$ on the cone $\langle (-1,0), (1,m) \rangle$. Therefore, we conclude that there is no natural bijection between line bundles on $\bb{P}^2(1,1,m)$ and monomially admissible Lagrangian sections with respect to $\Delta^t_m$.

The situation can be remedied by replacing $\bb{P}^2(1,1,m)$ with its associated toric stack $\bold{P}^2(1,1,m)$. See Section 2 of \cite{AKO1} for a discussion of this particular stack and \cite{BCS} for a general discussion and definition of toric Deligne-Mumford stacks. Line bundles on $\bold{P}^2(1,1,m)$ are in one-to-one correspondence with integral functions on the primitive generators of $\Sigma_m'$ up to integral linear functions as in the case of a smooth variety. As a result, we have the following.

\begin{prop} \label{hirzbij} (cf. Proposition \ref{bij}) For $m \geq 3$, the Hamiltonian isotopy classes of admissible Lagrangian sections with respect to $\Delta^t_m$ are in bijection with line bundles on the stack $\mathbf{P}^2(1,1,m)$. 
\end{prop}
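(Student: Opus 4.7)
The plan is to follow the proof of Proposition \ref{bij} almost verbatim, adapting it to account for the simplicial (but not smooth) fan $\Sigma_m'$ and the stacky structure of $\mathbf{P}^2(1,1,m)$. Write $A' = \{(1,m),(-1,0),(0,-1)\}$ for the set of primitive generators. Since $\Delta^t_m$ is adapted to $\Sigma_m'$, each $C_\alpha$ is unbounded and the invariant $\nu_L \colon A' \to \bb{Z}$ of formula \eqref{isotopyfunction} is well-defined on any monomially admissible Lagrangian section $L$ (up to an integral linear function coming from the choice of lift to $T^*\bb{R}^2$). Because $\nu_L$ takes values in a discrete set and varies continuously under Hamiltonian isotopies through monomially admissible Lagrangian sections, it descends to an invariant of Hamiltonian isotopy classes, giving a well-defined map from such isotopy classes to $\bb{Z}^{A'}$ modulo integer linear functions.

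For surjectivity, given any $\nu \colon A' \to \bb{Z}$, I would first form the continuous piecewise-linear function $F$ on $\bb{R}^2$ that is linear on each maximal cone of $\Sigma_m'$ and satisfies $F(\alpha) = -2\pi\nu(\alpha)$ for all $\alpha \in A'$. Since $\Sigma_m'$ is simplicial (being $2$-dimensional with rays), $F$ is well-defined even without smoothness of the fan, and Proposition \ref{smooth} produces a smooth function $H$ with $\nabla H \cdot \alpha = -2\pi \nu(\alpha)$ outside a small neighborhood of each ray; this is a twisting Hamiltonian in the sense of Definition \ref{twisthamdf} (cf.\ the remark that the construction needs only the simplicial hypothesis). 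Applying the time-$1$ flow of $H$ to $\mathcal{L} = (\bb{R}_{>0})^2$ and following the argument of Proposition \ref{bij} produces a monomially admissible Lagrangian section realizing $\nu$.

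For injectivity, if $\nu_L$ and $\nu_{L'}$ agree modulo an integral linear function, I would choose lifts of $L$ and $L'$ to $T^*\bb{R}^2$ as $df$ and $df'$ with $\alpha \cdot \theta|_L = \alpha \cdot \theta|_{L'}$ on each $C_\alpha$ outside of a compact subset. The convex combination $f_t = (1-t)f + tf'$ preserves monomial admissibility because the admissibility condition on each $C_\alpha$ is a linear condition on the lifted section, yielding the desired Hamiltonian isotopy between $L$ and $L'$. Combining all three steps gives the desired bijection, using the identification $\mathrm{Pic}(\mathbf{P}^2(1,1,m)) \cong \bb{Z}^{A'}/M$ described in \cite{BCS} (see Section 2 of \cite{AKO1} for this specific case), where $M = \bb{Z}^2$ is embedded via $m \mapsto (m \cdot \alpha)_{\alpha \in A'}$.

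The main conceptual point — and the only real deviation from Proposition \ref{bij} — is the observation that stacky Picard groups are precisely designed so that the combinatorial parametrization of line bundles matches the set $\bb{Z}^{A'}/M$ regardless of whether $\Sigma_m'$ is smooth. All analytic content (existence of twisting Hamiltonians, construction of the Lagrangian sections, the convex-interpolation argument) carries over unchanged once one observes that simpliciality suffices for Proposition \ref{smooth}. I do not anticipate a substantive obstacle; the work is in verifying that each ingredient of Proposition \ref{bij} uses only the simplicial hypothesis on the fan, which is the case here.
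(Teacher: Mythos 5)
Your proposal is correct and follows essentially the same route as the paper, whose own justification consists of the two paragraphs preceding the proposition: observe that $\Delta^t_m$ is adapted to $\Sigma_m'$, invoke the arguments of Section \ref{sectionclasses} to identify Hamiltonian isotopy classes of monomially admissible Lagrangian sections with $\bb{Z}$-valued functions on the generators of $\Sigma_m'$ modulo integral linear functions, and cite the description of $\Pic(\mathbf{P}^2(1,1,m))$ from \cite{BCS}. You carry out the same steps, merely making explicit the adaptation of Proposition \ref{bij} and the observation (already flagged in a remark after Proposition \ref{smooth}) that simpliciality of the fan suffices for the twisting-Hamiltonian construction.
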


In the case $m = 2$ with all $c_\alpha = 1$, $C_{(0,1)}$ is no longer empty and is contained in the intersection of $C_{(-1,0)}$ and $C_{(1,2)}$. Thus, we get the condition that $\nu_L(0,1) = \frac{1}{2} (\nu_L(1,2) + \nu_L(-1,0) )$ must be an integer. This is the same condition for $L$ to come from a piecewise integral linear function, i.e., correspond to a line bundle on $\bb{P}^2(1,1,2)$.  Since this does not agree with the general picture above and the cone $C_{(0,1)}$ is degenerate, it seems logical to remove this issue by changing coefficients. For instance, we can set $c_{(0,1)} = e^{-1}$ so that $C_{(0,1)}$ is empty as in the case $m \geq 3$. We will keep this convention from here on out and hence have that Proposition \ref{hirzbij} also holds for $m = 2$. 

We would like to upgrade Proposition \ref{hirzbij} to an equivalence of categories. In fact, we already have the tools to do so. In this particularly simple case, the desired quasi-equivalence can be deduced from the computation in Section \ref{floercomp} as all line bundles on $\mathbf{P}^2(1,1,m)$ are isomorphic to $\mathcal{O}(kD_{(-1,0)})$ for some $k \in \bb{Z}$ and have sections matching with integer points of the corresponding polytope. Alternatively, the methods of Section \ref{hms} also apply in this setting as the moment polytope of $\mathbf{P}^2(1,1,m)$ is simplicial and the cohomology of line bundles on a smooth toric Deligne-Mumford stack can be described analogously to the case of smooth toric varieties as shown in Proposition 4.1 of \cite{BH}. 

\begin{prop} \label{hirztrophms} The monomially admissible Fukaya-Seidel category of Lagrangian sections $\mathcal{F}^s_{\Delta^t_m} (W_{\Sigma_m})$ is quasi-equivalent to a $\dg$-enhancement of the category of line bundles on $\mathbf{P}^2(1,1,m)$. 
\end{prop}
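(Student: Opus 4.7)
The plan is to follow the strategy of Section \ref{hms}, noting that although $\mathbf{P}^2(1,1,m)$ is a Deligne--Mumford stack rather than a smooth variety, its fan $\Sigma_m'$ is simplicial, which is what is actually used in the construction of twisting Hamiltonians and support functions. By the proof of Proposition \ref{normalizecoeff} (applied in the reverse direction to absorb the coefficient $c_{(0,1)} = e^{-1}$ for $m=2$), $\Delta^t_m$ is a monomial division adapted to $\Sigma_m'$. Then Corollary \ref{twistexist} and Proposition \ref{smooth}, whose proofs use only simpliciality of the fan, produce a twisting Hamiltonian $H_D$ for every toric $\bb{Q}$-divisor $D$ on $\mathbf{P}^2(1,1,m)$, in particular for every divisor whose support function is integral on the primitive generators of $\Sigma_m'$, i.e.\ every line bundle on the stack.

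First, I would upgrade Proposition \ref{hirzbij} by showing, exactly as in Section \ref{sectionclasses}, that the time-$1$ flow of a twisting Hamiltonian gives a monomially admissible Lagrangian section representing the class $\nu_L$ prescribed by the support function, and that convex linear interpolation between lifts realizes the bijection on Hamiltonian isotopy classes through monomially admissible Hamiltonian isotopies. This populates $O_{\Delta^t_m}$ with one representative $\mathcal{L}(D)$ per line bundle on $\mathbf{P}^2(1,1,m)$.

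Next, I would run the chain of quasi-equivalences of Section \ref{hms} verbatim:
\[ \mathcal{F}^s_{\Delta^t_m}(W_{\Sigma_m}) \simeq \Morse_{\Delta^t_m} \simeq \Simp_{\Delta^t_m}(Y) \simeq \Simp_{\Delta^t_m}(Q) \simeq \Cech(\mathbf{P}^2(1,1,m)). \]
Propositions \ref{FOprop}, \ref{MorseSimp}, and \ref{noloc} are purely local statements about Morse theory, simplicial cochains, and monomially admissible metrics on $\bb{R}^n$, so they transfer without change; the relevant polytope $Q$ produced by Proposition \ref{adaptedgivesample} is simplicial with the combinatorial type of a polytope of an ample line bundle on $\mathbf{P}^2(1,1,m)$. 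The only nontrivial substitution is in the last step, Proposition \ref{cechtime}, where Proposition 6.7 and Proposition 6.9 of \cite{Ab2} must be replaced by their stacky analogues. This is where Proposition 4.1 of \cite{BH} enters: it identifies the cohomology of a line bundle on a smooth simplicial toric DM stack with a sum of pieces indexed by integral points in translates of the polytope, mirroring the combinatorics that the \v{C}ech model $\Simp_{\Delta^t_m}(Q)$ already records on simplicial $Q_b$.

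The main technical obstacle is precisely this last identification: one must check that the dg \v{C}ech category $\Cech(X)$ from Definition 6.8 of \cite{Ab2} admits a stacky variant $\Cech(\mathbf{P}^2(1,1,m))$ that is quasi-equivalent to a dg-enhancement of the category of line bundles on the stack, and that the equivalence in Proposition \ref{cechtime} is obtained by matching the localization data in $\Simp_{\Delta^t_m}(Q)$ with the open cover by invariant affine pieces of $\mathbf{P}^2(1,1,m)$. Since the fan is simplicial and the combinatorics on $Q$ are already identical on the nose to those used in \cite{Ab2}, this reduces to a bookkeeping exercise using \cite{BH} in place of the smooth toric cohomology computation. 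As a sanity check, the direct Floer calculation of Section \ref{floercomp} can be repeated for $\mathcal{L}(kD_{(-1,0)})$ (noting $\operatorname{Pic}(\mathbf{P}^2(1,1,m)) \cong \bb{Z}$), and the resulting graded vector spaces match $\Ext^\bullet$ on the stack via Proposition 4.1 of \cite{BH}, providing independent verification that the full dg-enhancement reconstructed by the Morse--simplicial chain is the correct one.
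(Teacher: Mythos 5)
Your proof is correct. The paper gives this result via two routes: (i) the direct Floer computation of Section \ref{floercomp}, which suffices here because $\Pic(\mathbf{P}^2(1,1,m)) \cong \bb{Z}$ is generated by $\mathcal{O}(D_{(-1,0)})$, so Propositions \ref{ampleexample} and \ref{structureconstants} already identify the morphism spaces and the ring structure on the generating collection $\{\mathcal{L}(kD_{(-1,0)})\}_k$; and (ii), offered in the text as an alternative, the Section \ref{hms} chain through the Morse and simplicial models, with Proposition 4.1 of \cite{BH} supplying the stacky analogue of the toric cohomology computation. You develop (ii) as your main argument and cite (i) as a sanity check, so the substance is essentially the paper's with the emphasis reversed: the paper leads with (i) because the cyclic Picard group lets the homogeneous coordinate ring computation determine the full dg-category with less overhead than setting up a stacky \v{C}ech model, whereas your elaboration of (ii) fills in more of the intermediate quasi-equivalences than the paper spells out. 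The one point you flag as the ``main technical obstacle''---replacing Propositions 6.7 and 6.9 of \cite{Ab2} by stacky versions---is indeed where the work lies in route (ii); the paper glosses it precisely because route (i) is available as an independent verification.
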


As before, we expect that we should also have a quasi-equivalence between $Tw^\pi\mathcal{F}_{\Delta^t_m}(W_{\Sigma_m})$ and a dg-enhancement of $D^b\Coh(\mathbf{P}^2(1,1,m))$ when $\mathcal{F}_{\Delta^t_m} (W_{\Sigma_m})$ has enough objects. 

This example has been explored before for the usual construction of Fukaya-Seidel categories. In \cite{AKO1}, it is shown using a basis of thimbles that the Fukaya-Seidel category $\mathcal{FS}(W_{\Sigma_m})$ is derived equivalent to the category of coherent sheaves on $\mathbf{P}^2(1,1,m)$. Further, they show that there is a subset of thimbles that generate a subcategory derived equivalent to the category of coherent sheaves on $\bb{F}_m$, that is, mirror to an inclusion $D^b\Coh(\bb{F}_m) \to D^b\Coh(\mathbf{P}^2(1,1,m))$ (which is an equivalence when $m =2$). Thus, Proposition \ref{hirztrophms} suggests that $\mathcal{F}_{\Delta^t_m} (W_{\Sigma_m})$ recovers the Fukaya-Seidel category of $W_{\Sigma_m}$ while $\mathcal{F}_{\Delta} (W_{\Sigma_m})$ for $\Delta$ adapted to $\Sigma_m$ corresponds to a subcategory of the Fukaya-Seidel category mirror to $\bb{F}_m$. It should be noted, however, that the inclusion $D^b\Coh(\bb{F}_m) \to D^b\Coh(\mathbf{P}^2(1,1,m))$ is not induced by a geometric map between $\bb{F}_m$ and $\mathbf{P}^2(1,1,m)$. Moreover, we cannot hope to see the mirror to this inclusion by analyzing only monomially admissible Lagrangian sections with respect to the tropical division and a division adapted to $\Sigma_m$ because the rank of the Picard group of $\bb{F}_m$ is larger than that of $\mathbf{P}^2(1,1,m)$. On the other hand, it is possible to construct an inclusion of $\mathcal{F}^s_{\Delta}(W_{\Sigma_m})$ into the Fukaya-Seidel category of $W_{\Sigma_m}$ by viewing the objects of $\mathcal{F}^s_{\Delta}(W_{\Sigma_m})$ as objects of Abouzaid's category of tropical Lagrangian sections from \cite{Ab2} when $\Delta$ is adapted to the fan. Further explanation of this example in the thimble framework can be found in \cite{BDFKK}. 

The example of monomial admissibility in the mirror to $\bb{F}_m$ suggests a more general picture for the meaning of the tropical division for any $W_\Sigma$ and its relation to the adapted divisions. Our expectation for this picture is summarized in the following two conjectures.

\begin{conj} \label{fsconj} For an appropriate choice of coefficients $c_\alpha$ and/or toric K\"{a}hler form, $\mathcal{F}_{\Delta^t}(W_\Sigma)$ is derived equivalent to the Fukaya-Seidel category of $W_\Sigma$ where $\Delta^t$ is the tropical division for small $\delta$. 
\end{conj}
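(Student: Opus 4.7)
The plan is to identify $\mathcal{F}_{\Delta^t}(W_\Sigma)$ with a Fukaya-Seidel-type category mirror to a toric variety (or stack) $X^a$ associated with an anticanonical modification of $X$, and then invoke the expected derived equivalence between $\mathcal{FS}(W_\Sigma)$ and $D^b\Coh(X^a)$ established or predicted in \cite{AKO1, BDFKK}. Concretely, my strategy proceeds in four stages.

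\textbf{Stage 1 (producing the right combinatorics).} Given $\Sigma$ and small $\delta$, analyze which $C_\alpha$ in $\Delta^t$ are unbounded and which are empty or bounded for a generic choice of coefficients. Following the Hirzebruch example in Section \ref{tropvcomb}, I expect that the set $A^a \subset A$ of rays with unbounded $C_\alpha$ generates a simplicial fan $\Sigma^a$ defining a (possibly stacky) toric variety $X^a$, namely an anticanonical model of $X$ in the sense of the BDFKK program. The first task is to tune the coefficients $c_\alpha$ and the toric K\"ahler form (hence $\mu$) so that $\Delta^t$ is \emph{adapted to} $\Sigma^a$ in the sense of Definition \ref{adapt}, applied to the fan of $X^a$. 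For Fano $X$, Corollary \ref{tropicalfanoadapts} already does this with $\Sigma^a=\Sigma$. In the general case, I would generalize the construction of Proposition \ref{rhoexists} by starting from a polytope $P$ combinatorially modeled on an ample divisor on $X^a$ and then verifying via Propositions \ref{adaptedgivesample}, \ref{amplegivesadapted}, and \ref{normalizecoeff} that one can realize $\Delta^t$ (with small $\delta$) as the division cut out by $\max | c_\alpha z^\alpha|$ for $\alpha \in A^a$ on such a polytope.

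\textbf{Stage 2 (HMS for sections of $\mathcal{F}^s_{\Delta^t}(W_\Sigma)$).} Once $\Delta^t$ is adapted to $\Sigma^a$, the results of Section \ref{hms} should go through verbatim to identify $\mathcal{F}^s_{\Delta^t}(W_\Sigma)$ with a $\dg$-enhancement of $\Coh(X^a)$ via the chain of quasi-equivalences through $\Morse^\circ_\Delta(Y)$, $\Simp_\Delta(Y)$, $\Simp_\Delta(Q)$, and $\Cech(X^a)$. The Hirzebruch computation (Proposition \ref{hirztrophms}) gives the model case; the only new ingredient is verifying that the combinatorial arguments of Proposition \ref{MorseSimp} and Proposition \ref{cechtime} survive when $\Sigma^a$ is only simplicial rather than smooth, which amounts to invoking the stacky extension of the line-bundle cohomology formulas (e.g., Proposition 4.1 of \cite{BH}) that was already used for $\mathbf{P}^2(1,1,m)$.

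\textbf{Stage 3 (generation and passage to the full Fukaya-Seidel category).} Assuming the natural generation statement that $\mathcal{F}^s_{\Delta^t}(W_\Sigma)$ split-generates $\mathcal{F}_{\Delta^t}(W_\Sigma)$, we obtain a derived equivalence $\mathrm{Tw}^\pi \mathcal{F}_{\Delta^t}(W_\Sigma) \simeq D^b\Coh(X^a)$. To complete the argument, I would combine this with the derived equivalence $D^b\Coh(X^a) \simeq \mathcal{FS}(W_\Sigma)$ predicted by \cite{AKO1, BDFKK} and proved there in families of examples (and, in a slightly different guise, in dimension two). One can either accept this equivalence as input or construct it in our framework by comparing generating exceptional collections: thimbles for $\mathcal{FS}(W_\Sigma)$ (as in \cite{Lef1}) versus the monomially admissible sections $\phi_D(\mathcal{L})$ indexed by toric divisors on $X^a$, matching morphism spaces via Section \ref{floercomp}-style Floer computations on both sides.

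\textbf{Main obstacles.} The hardest step is almost certainly \textbf{Stage 3}: both the generation of $\mathcal{F}_{\Delta^t}(W_\Sigma)$ by sections (which, as noted after Proposition \ref{adaptedinvariant}, requires closed-string input not yet developed) and the derived equivalence $\mathcal{FS}(W_\Sigma)\simeq D^b\Coh(X^a)$ beyond the cases treated in \cite{AKO1, BDFKK} are themselves open problems. A secondary difficulty is Stage 1: the non-Fano generalization of Corollary \ref{tropicalfanoadapts}, ensuring that one can simultaneously achieve the tropical shape of the cones and the condition that the map $\Phi$ sends each facet of the polytope for $X^a$ into the open star of the appropriate ray in $\Sigma^a$. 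An attractive alternative worth pursuing in parallel is a direct Lagrangian-isotopy construction of an $A_\infty$-functor $\mathcal{F}_{\Delta^t}(W_\Sigma)\to\mathcal{FS}(W_\Sigma)$ using that large enough $\delta$ forces monomially admissible Lagrangians into the right half-plane (Remark \ref{righthalf}), which can then be bent toward $\bb{R}_{>0}$ through a family of admissibility conditions while controlling holomorphic discs via Proposition \ref{comp2}.
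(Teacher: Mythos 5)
The statement you are addressing is labeled as a \emph{conjecture} in the paper (Conjecture~\ref{fsconj}), and the paper offers no proof of it; the surrounding text (Section~\ref{tropvcomb} and the discussion of arrow~8 in Section~\ref{relate}) only gives plausibility evidence and sketches a \emph{possible} route, so there is no ``paper's own proof'' to compare against. Your proposal is a strategy outline, not a proof, and you say so yourself --- which is honest, but means there is a genuine gap.

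The more substantive issue is the logical direction of your main route. The paper's position is that Conjecture~\ref{fsconj} together with Conjecture~\ref{tropconj} should \emph{imply} the derived equivalence $\mathcal{FS}(W_\Sigma) \simeq D^b\Coh(\mathbf{Y})$, which is itself only conjectured in general in~\cite{BDFKK} and verified in low-dimensional families in~\cite{AKO1}. Your Stage~3, however, takes that derived equivalence as an \emph{input}, so your argument either silently assumes a statement the paper places downstream of the conjecture it is trying to prove, or reduces to a reproof of the Hirzebruch-type cases already worked out in Section~\ref{tropvcomb}. If you want to stay on the algebraic-geometry side, you need an independent proof of $\mathcal{FS}(W_\Sigma) \simeq D^b\Coh(\mathbf{Y})$ that does not pass through $\mathcal{F}_{\Delta^t}(W_\Sigma)$; your exceptional-collection comparison is a reasonable candidate, but it carries the full weight of comparing thimble Floer theory with monomially admissible Floer theory, which is precisely the analytic content of arrow~4 in the diagram of Section~\ref{relate} and is flagged there as an unaddressed compactness problem.

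By contrast, the paper's own suggestion for arrow~8 is a direct symplectic construction: either an embedding to $\mathcal{W}((\bb{C}^*)^n, \bb{L}_{\Sigma_{ac}})$ for the FLTZ skeleton of the anticanonical model (arrow~7 style, using conical twisting Hamiltonians as in Remark~\ref{conical}), or an embedding to a tropical-Lagrangian-sections category for a tropical localization adapted to $\Sigma_{ac}$ rather than $\Sigma$ (arrow~6 style). Your closing ``alternative'' --- bending right-half-plane admissible Lagrangians (cf.~Remark~\ref{righthalf}) toward the positive real axis while controlling discs via Proposition~\ref{comp2} --- is in the same spirit and is, in my view, the more promising part of your proposal. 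I would flip the emphasis: make this direct functor the main plan, and identify precisely which compactness estimate fails when you interpolate between the monomial-admissibility and classical-admissibility conditions. Finally, note that even your Stages~1--2 (essentially a proof of Conjecture~\ref{tropconj}) are not automatic: the combinatorial arguments in Section~\ref{hms} use smoothness of $\Sigma$ in several places (e.g.\ constructing $F_D$ and lifting Lagrangian sections), so the simplicial/stacky extension needs to be written out rather than claimed to ``go through verbatim.''
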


\begin{conj} \label{tropconj} Suppose the normal fan $\Sigma_{ac}$ to the polytope associated to the anticanonical bundle on $X$ is simplicial. Let $\mathbf{Y}$ be the smooth Deligne-Mumford stack corresponding to $\Sigma_{ac}$. For an appropriate choice of toric K\"{a}hler form and any small $\delta$, the tropical division $\Delta^t$ for $W_\Sigma$ is adapted to $\Sigma_{ac}$ and $\mathcal{F}_{\Delta^t}^s (W_\Sigma)$ is quasi-equivalent to a dg-enhancement of the category of line bundles on $\mathbf{Y}$. 
\end{conj}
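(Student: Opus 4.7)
The plan is to adapt the construction and HMS argument of Section \ref{hms} to the anticanonical setting. First, I would construct the toric K\"{a}hler form by applying Proposition \ref{rhoexists} to the anticanonical polytope $P_{ac} = \{u \in \bb{R}^n : u \cdot \alpha \geq -1 \text{ for all } \alpha \in A\}$, whose normal fan is $\Sigma_{ac}$ by assumption and whose facets are indexed by the subset $A_{ac} \subseteq A$ of ``active'' generators (i.e., the primitive generators of $\Sigma_{ac}$). The resulting $\Phi$ sends each facet $F_\alpha$ (for $\alpha \in A_{ac}$) into the interior of the star of $\alpha$ in $\Sigma_{ac}$. I would then choose the coefficients $c_\alpha$ so that $\Delta^t$ is compatible with $\Sigma_{ac}$: for $\alpha \in A_{ac}$, set $|c_\alpha|$ to values matching the defining constants of $P_{ac}$; for $\alpha \in A \setminus A_{ac}$, take $|c_\alpha|$ sufficiently small that $|c_\alpha z^\alpha|$ is dominated by $|c_\beta z^\beta|$ for some $\beta \in A_{ac}$ everywhere on $(\bb{C}^*)^n$. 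Redundancy of the constraint $u \cdot \alpha \geq -1$ in $P_{ac}$ for $\alpha \notin A_{ac}$ ensures the existence of such a $\beta$, so that for small enough $\delta$ the cells $C_\alpha$ for $\alpha \notin A_{ac}$ are empty.

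With that setup, $\Delta^t$ is adapted to $\Sigma_{ac}$ in the sense analogous to Definition \ref{adapt}: for each $\alpha \in A_{ac}$, the cell $C_\alpha$ is (up to a compact subset) the cone on $\Phi(F_\alpha) \subset \text{int(star}(\alpha))$, while $C_\alpha$ is empty for $\alpha \notin A_{ac}$. Next, I would establish the analogue of Proposition \ref{bij}: Hamiltonian isotopy classes of monomially admissible Lagrangian sections are in bijection with $\Pic(\mathbf{Y})$. Since only the $C_\alpha$ for $\alpha \in A_{ac}$ impose constraints at infinity, the function $\nu_L$ from \eqref{isotopyfunction} is defined only on $A_{ac}$ and extends uniquely to a piecewise linear integral function on $\bb{R}^n$ linear on the maximal cones of $\Sigma_{ac}$. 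Since $\Sigma_{ac}$ is simplicial and $\mathbf{Y}$ is the associated smooth DM stack, such functions (modulo integral linear functions) are in bijection with $\Pic(\mathbf{Y})$ by the description of toric DM stacks in \cite{BCS}, and the three-step argument of Proposition \ref{bij} carries over.

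The HMS step then proceeds by running Section \ref{hms} verbatim with $\Sigma$ replaced by $\Sigma_{ac}$ and $X$ by $\mathbf{Y}$. Twisting Hamiltonians for any toric divisor on $\mathbf{Y}$ exist by the construction of Corollary \ref{twistexist}, using that $\Sigma_{ac}$ is simplicial. The Floer-to-Morse equivalence of Proposition \ref{FOprop} and the Morse-to-simplicial equivalence of Proposition \ref{MorseSimp} go through without modification, with the polytope $Q$ taken to be a scaling of $P_{ac}$. The final step adapts Proposition \ref{cechtime}: line bundles on the smooth toric DM stack $\mathbf{Y}$ admit a standard \v{C}ech description via the toric affine cover indexed by maximal cones of $\Sigma_{ac}$, and their cohomology is computed by piecewise linear integral functions on $\Sigma_{ac}$ as in Proposition 4.1 of \cite{BH}; this matches the combinatorics of $\Simp_\Delta(Q)$ for $Q = P_{ac}$.

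The main obstacle is the last step: extending Proposition \ref{cechtime} from smooth toric varieties to smooth toric Deligne-Mumford stacks. While the combinatorics are parallel, one must verify carefully that the identification $\Simp_\Delta(Q) \simeq \Cech(\mathbf{Y})$ respects the stacky structure and the $\bb{Z}^n$-grading in the same way as in the smooth variety case. A secondary technical difficulty is a quantitative version of the disc-bounding argument: one must confirm that for sufficiently small $|c_\alpha|$ ($\alpha \notin A_{ac}$) and sufficiently small $\delta$, the monomials outside $A_{ac}$ do not contribute essentially to the Floer theory, so that the maximum principle argument of Proposition \ref{comp1} and the energy estimates used throughout Section \ref{hms} yield a Floer theory genuinely determined only by the monomials indexed by $A_{ac}$.
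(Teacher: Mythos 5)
This statement is labeled a conjecture in the paper, and the paper does not prove it. The only ingredients the paper offers are the $\bb{F}_m$ special case worked out in Propositions~\ref{hirzbij} and \ref{hirztrophms}, and the observation in the Remark following the conjecture that Proposition~\ref{rhoexists} already handles the first half (existence of a radial form making $\Delta^t$ adapted to $\Sigma_{ac}$). So there is no paper proof to compare against; I can only assess your outline on its own merits.

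Your strategy is the natural one and matches the direction the paper points: produce the K\"ahler form via Proposition~\ref{rhoexists} applied to $P_{ac}$, tune coefficients so that the cells $C_\alpha$ for $\alpha \in A \setminus A_{ac}$ disappear at infinity, establish the analogue of Proposition~\ref{bij} with $\Pic(\mathbf{Y})$ in place of $\Pic(X)$, and rerun Section~\ref{hms} with $\Sigma_{ac}$, $\mathbf{Y}$ in place of $\Sigma$, $X$. You also correctly flag the two largest holes: the need to extend Proposition~\ref{cechtime} (hence Propositions 6.7 and 6.9 of \cite{Ab2}) to smooth toric Deligne-Mumford stacks, and a quantitative check that the inactive monomials do not interfere with the Floer-theoretic arguments.

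There is, however, a third spot you pass over too quickly. You assert that redundancy of the constraint $u \cdot \alpha \geq -1$ in $P_{ac}$ ``ensures the existence of such a $\beta$,'' so that $C_\alpha$ can be emptied by shrinking $c_\alpha$. What redundancy gives is that $\alpha$ lies in $\mathrm{conv}(A_{ac} \cup \{0\})$, hence $v \cdot \alpha \leq \max_{\beta \in A_{ac}} (v \cdot \beta)$ for every $v$ --- but only weakly. The paper's $\bb{F}_2$ example illustrates the issue: $u_2 \geq -1$ is redundant in $P_{ac}(\bb{F}_2)$, yet the recession cone $V_{(0,1)} = \{u \colon u_1 + u_2 = 0, u_2 \geq 0\}$ is a nontrivial ray, because $(0,1) = \tfrac12(1,2) + \tfrac12(-1,0)$ sits on a proper face of $\mathrm{conv}(A_{ac})$. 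So $C_\alpha$ need not vanish at infinity for cone-geometric reasons; whether shrinking $c_\alpha$ empties it is a separate affine statement. The correct claim, which still needs a general proof, is that writing $\alpha = \sum_\beta \lambda_\beta \beta$ with $\lambda_\beta \geq 0$, $\sum \lambda_\beta \leq 1$, and $\beta \in A_{ac}$, taking $|c_\alpha| < \prod_\beta |c_\beta|^{\lambda_\beta}$ makes the defining inequalities of $C_\alpha$ jointly inconsistent outside a compact set; moreover, for $\delta > 0$ the inequalities loosen and the threshold moves, so a uniform choice valid for all small $\delta$ should be exhibited. Also note the conjecture as stated allows choosing only the toric K\"ahler form, not the $c_\alpha$; in light of the Remark after Conjecture~\ref{fsconj}, you should either invoke Proposition~\ref{normalizecoeff}-type reasoning to absorb coefficients into the form or explicitly widen the hypotheses. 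As it stands, your proposal is a sound roadmap consistent with the author's stated expectations, but it does not close any of the three gaps, and so does not promote the conjecture to a theorem.
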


\begin{rem} It is not reasonable to hope for Conjecture \ref{fsconj} to hold without some choice of coefficients and toric K\"{a}hler form. The case of $\bb{F}_2$ investigated above shows that the choice of coefficients can affect the quasi-equivalence type of $\mathcal{F}^s_{\Delta^t} (W_\Sigma)$. Moreover, we saw in Section \ref{existencesect} that the toric K\"{a}hler form plays an essential role in the behavior of a monomial division through the moment map. In fact, Proposition \ref{rhoexists} shows that the first part of Conjecture \ref{tropconj} is true in the sense that there is always a radial toric K\"{a}hler form $\omega_\varphi$ on $(\bb{C}^*)^n$ for which $\Delta^t$ is adapted to $\Sigma_{ac}$ for $\mu = \Phi \circ \Log$ and small $\delta$.
\end{rem}

From Conjectures \ref{fsconj} and \ref{tropconj}, we expect that when the anticanonical normal fan $\Sigma_{ac}$ is simplicial then the Fukaya-Seidel category $\mathcal{FS}(W_\Sigma)$ is derived equivalent to $D^b \Coh(\textbf{Y})$ where $\textbf{Y}$ is the smooth Deligne-Mumford stack associated to $\Sigma_{ac}$ as also suggested by the conjectures in \cite{BDFKK}. 

\section{Monodromy} \label{4}

In this section, our main goal is to upgrade the action of Corollary \ref{action} to an action on $\mathcal{F}_\Delta (W_\Sigma)$ for a monomial division $\Delta$ adapted to $\Sigma$ that is mirror to the action of $\text{Pic}(X)$ on $D^b\Coh(X)$. Since this action comes from a group of Hamiltonian diffeomorphisms, each functor comes with a natural transformation from the identity functor when the Hamiltonian is sufficiently negative, i.e., is a twisting Hamiltonian for an effective divisor. We compare these natural transformations with the natural transformation from the identity to $(\cdot) \otimes \mathcal{O}(D)$ given by multiplication by a defining section when $D$ is effective. Finally, we explore some applications of the monodromy and natural transformations. 

\subsection{Monodromy on the monomially admissible Fukaya-Seidel category} \label{catmono}

It is well-known that symplectomorphisms act on the Fukaya category. Thus, the action of twisting Hamiltonians on $\mathcal{F}_\Delta(W_\Sigma)$ should be entirely expected, but care must be taken in the precise setup. While the compact Fukaya category is invariant under Hamiltonian isotopy, the twisting Hamiltonians are not compactly supported and act nontrivially on the quasi-isomorphism classes of objects by Corollaries \ref{action} and \ref{hmscor}.

For Lagrangian branes $L_0$ and $L_1$ and a symplectomorphism $\phi$, the complexes $CF^\bullet(L_0, L_1)$ and $CF^\bullet(\phi(L_0), \phi(L_1))$ are naturally identified when the latter is computed with the almost complex structure $\phi_*(J)$. To take advantage of this fact, we would first like to verify that twisting Hamiltonians preserve the class of monomially admissible almost complex structures.

\begin{lem} \label{preservesJ} If $\phi$ is the time-$1$ flow of a twisting Hamiltonian $H$ for some toric divisor with respect to a monomial division $\Delta$ for $W_\Sigma$ then $\phi_*$ preserves the class of monomially admissible almost complex structures with respect to $\Delta$. 
\end{lem}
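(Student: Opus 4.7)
The plan is to verify the defining property of monomial admissibility of $\phi_*(J)$ by reducing it to the property already assumed for $J$, exploiting the explicit formula for the flow of a twisting Hamiltonian given at the start of Section~\ref{twistham}.

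First I would record the tautological observation that $z^\alpha$ is $\phi_*(J)$-holomorphic precisely where $z^\alpha \circ \phi$ is $J$-holomorphic, and note that because $H = H(\mu_1,\ldots,\mu_n)$ depends only on the moment map coordinates, its time-$1$ flow preserves each level set of $\mu$; in particular it preserves $\mu^{-1}(C_\alpha)$ setwise for every $\alpha \in A$. So it suffices to show that for each $\alpha$, $z^\alpha \circ \phi$ is $J$-holomorphic on the complement of a compact subset of $\mu^{-1}(C_\alpha)$.

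Next I would use the explicit formula
\[ \phi(\mu_1,\theta_1,\ldots,\mu_n,\theta_n) = (\mu_1, \theta_1 + H_{\mu_1}, \ldots, \mu_n, \theta_n + H_{\mu_n}) \]
together with the fact that $|z^\alpha|$ is a function of $\mu$ only and $\arg(z^\alpha) = \alpha \cdot \theta$ (up to a function of $\mu$) to compute
\[ z^\alpha \circ \phi \;=\; e^{\,i\,(\nabla H \cdot \alpha)}\, z^\alpha. \]
The key input is now Definition~\ref{twisthamdf}: since $H$ is a twisting Hamiltonian for some divisor $D = \sum n_\beta D_\beta$, the directional derivative $\nabla H \cdot \alpha \equiv -2\pi n_\alpha$ is a constant outside a compact subset of $C_\alpha$. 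Pulling this back to $(\bb{C}^*)^n$ via $\mu$, there is a compact set $K \subset (\bb{C}^*)^n$ such that on $\mu^{-1}(C_\alpha) \setminus K$ we have $z^\alpha \circ \phi = e^{-2\pi i n_\alpha}\, z^\alpha$, which is a fixed scalar multiple of $z^\alpha$.

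Finally, enlarging $K$ to absorb the compact set on which $J$-holomorphicity of $z^\alpha$ may fail, we obtain that $z^\alpha \circ \phi$ is $J$-holomorphic on $\mu^{-1}(C_\alpha)$ outside this enlarged compact set, which by the equivalence in the first step gives the $\phi_*(J)$-holomorphicity of $z^\alpha$ on the same region. Doing this for each $\alpha \in A$ yields monomial admissibility of $\phi_*(J)$; $\omega$-compatibility is automatic since $\phi$ is a symplectomorphism. There is no serious obstacle here — the content of the lemma is really just that a twisting Hamiltonian rotates the argument of each $z^\alpha$ by a locally constant amount in its respective region, which is exactly the freedom allowed by the notion of monomial admissibility for $J$.
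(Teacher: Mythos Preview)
Your proof is correct and follows essentially the same approach as the paper's: reduce to checking that $z^\alpha \circ \phi$ is $J$-holomorphic in the relevant region, then compute $z^\alpha \circ \phi = e^{i(\nabla H \cdot \alpha)} z^\alpha$ and use Definition~\ref{twisthamdf}. The paper's version notes that in fact $e^{-2\pi i n_\alpha} = 1$ so $z^\alpha \circ \phi = z^\alpha$ on the nose, but your ``fixed scalar multiple'' observation is already enough; you also add the remark that $\phi$ preserves $\mu^{-1}(C_\alpha)$ and that $\omega$-compatibility is automatic, which the paper leaves implicit.
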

\begin{proof} Suppose that $J$ is a monomially admissible almost complex structure so that $z^\alpha$ is $J$-holomorphic in $\mu^{-1}(C_\alpha)$ for each $\alpha \in A$ in the complement of a compact sbuset. We will show that each $z^\alpha$ is $\phi_*J$-holomorphic in $\mu^{-1}(C_\alpha)$ outside a compact subset.

A function is $\phi_*J$-holomorphic if and only if its composition with $\phi$ is $J$-holomorphic. However, we can write $z^\alpha = f(\mu) e^{i \theta \cdot \alpha}$ for some smooth function $f$. Thus, we have
\[ z^\alpha \circ \phi = f(\mu) e^{i (\theta + \nabla H) \cdot \alpha} =  z^\alpha \]
in the complement of a compact subset of $\mu^{-1}(C_\alpha)$ where $\nabla H \cdot \alpha$ is an integer.  
\end{proof}

In general, one needs to use continuation morphisms or an algebraic setup as in Section 10b of \cite{SeBook} to construct an action of a group of symplectomorphisms on a Fukaya category. In our setting, we can simplify the setup of the group action if we are careful. First, let $O_\Delta^o$ be a countable set of monomially admissible Lagrangian branes. Then, choose a basis and representatives $\{ H_1, \hdots, H_m \}$ for the free abelian group that is the quotient of the twisting Hamiltonians that exist for $\Delta$ by the subgroup of admissible Hamiltonians. For instance, the basis can be chosen as the Hamiltonians corresponding to the divisors $D_\alpha$ for all $\alpha \in A$ when $\Delta$ is adapted to $\Sigma$. Let $G$ be the free abelian group generated by $\phi_{H_1}, \hdots, \phi_{H_m}$. Note that $G$ is isomorphic to the group of twisting Hamiltonians mod admissible Hamiltonians that we used to begin this construction. For a generic choice of basis, we can assume that $\phi(L) \neq L$ for all $L \in O_\Delta^o$ and all nontrivial $\phi \in G$. Then, we set 
\[ O_\Delta = \{ \phi(L) | L \in O_\Delta^o, \phi \in G \} \]
and construct the category $\mathcal{F}_\Delta(W_\Sigma)$ as in Section \ref{flocalize} except that we choose the almost complex structures to be equivariant with respect to the $G$ action, which is free on objects of $\mathcal{F}_\Delta^\circ$. Note that every $\phi \in G$ commutes with $\psi^\theta$ for any $\theta$ so the action on objects of $\mathcal{F}^\circ_\Delta$ is defined to only change the Lagrangian brane and not the angle. 

As a result of the observation above on the identification of Floer cochain complexes, we obtain a strict action of $G$ on $\mathcal{F}_\Delta^\circ$ with no higher terms. More formally, for every $\phi \in G$, we have an $A_\infty$-functor $\mathscr{F}_\phi\colon~\mathcal{F}_\Delta^\circ \to \mathcal{F}_\Delta^\circ$ defined by $\mathscr{F}_\phi(L, \theta) = (\phi(L), \theta)$, $\mathscr{F}_\phi^1$ is the canonical identification of $CF(\psi^{\theta_0}(L_0), \psi^{\theta_1}(L_1))$ with $CF(\phi \circ \psi^{\theta_0} (L_0),  \phi \circ \psi^{\theta_1}(L_1))$ for $\theta_0 > \theta_1$, $\mathscr{F}_\phi^1(e_{(L, \theta)}) = e_{(\phi(L), \theta)}$,  and $\mathscr{F}_\phi^d = 0$ for $d \geq 2$. These functors satisfy $\mathscr{F}_{\id} = \id_{\mathcal{F}_\Delta^\circ}$ and $\mathscr{F}_{\phi_2 \phi_1} = \mathscr{F}_{\phi_2} \circ \mathscr{F}_{\phi_1}$. 

If we also equivariantly choose the almost complex structures defining quasi-units, the $\mathscr{F}_\phi$ will take quasi-units to quasi-units on the chain level (instead of on the cohomology level for a more general choice). As a consequence, we can define (cf. Proposition 3.1 of \cite{LO}) $A_\infty$-functors 
\[ F_\phi\colon \mathcal{F}_\Delta (W_\Sigma) \to \mathcal{F}_\Delta (W_\Sigma) \]
 for each $\phi \in G$ which agree with $\mathscr{F}_\phi$ on objects and are defined on morphism spaces
\[ \Hom_{\mathcal{F}_\Delta (W_\Sigma)} ( Z_0, Z_1) =  \bigoplus_{\substack{ p \geq 0 \\ Q_1, \hdots, Q_p \in \text{Cones}(Q)}} \Hom_{\text{Tw}\mathcal{F}_\Delta^\circ} (Q_p, Z_1)[1] \otimes \hdots \otimes \Hom_{\text{Tw}\mathcal{F}_\Delta^\circ} (Z_0, Q_1) \]
for $Z_0, Z_1 \in \text{Ob}(\mathcal{F}_\Delta (W_\Sigma))$ by linearly extending
\[ F^1_\phi (x_p \otimes \hdots \otimes x_0) = \mathscr{F}^1_\phi (x_p) \otimes \hdots \otimes \mathscr{F}^1_\phi(x_0) \]
when $x_j \in \Hom_{\text{Tw}\mathcal{F}_\Delta^\circ} (Q_j, Q_{j+1})[1]$ for $j = 1, \hdots, p$ with $Q_{p+1} = Z_1$ and  $x_0 \in \Hom_{\text{Tw}\mathcal{F}_\Delta^\circ} (Z_0, Q_1)$. We are using crucially here that the functors induced by the $\mathscr{F}_\phi$ on $\text{Tw}\mathcal{F}_\Delta^\circ$ preserve $\text{Cones}(Q)$. Without making equivariant choices to ensure the preservation of $\text{Cones}(Q)$, one could construct functors using the universal property of localization up to quasi-isomorphism. However, the result would not be a strict action in the sense that $F_{\phi_2 \phi_1}$ would only be guaranteed to be quasi-isomorphic to $F_{\phi_2} \circ F_{\phi_1}$ and $F_{\id}$ would not have to be chosen to be the identity. With our construction, we have a strict $G$-action on $\mathcal{F}_\Delta (W_\Sigma)$ and the diagram
\[ \begin{tikzcd} \mathcal{F}_\Delta^\circ \arrow{r}{\mathscr{F}_\phi} \arrow{d} & \mathcal{F}_\Delta^\circ \arrow{d} \\
 \mathcal{F}_\Delta(W_\Sigma) \arrow{r}{F_\phi} & \mathcal{F}_\Delta(W_\Sigma) \end{tikzcd} \]
commutes or all $\phi \in G$.
In fact, we could have constructed an action by any countable group of twisting Hamiltonians. In particular, we have shown the following.

\begin{prop} The monodromy in \eqref{monodromyeq} induces a functor on $\mathcal{F}_\Delta (W_\Sigma)$ whenever a twisting Hamiltonian for $D$ exists and $O_\Delta$ is closed under its flow.
\end{prop}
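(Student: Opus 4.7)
The plan is to realize the monodromy explicitly as the time-$1$ flow of a twisting Hamiltonian for $D$ and then invoke the construction carried out in the preceding paragraphs. Let $H_D$ be a twisting Hamiltonian for $D = \sum n_\alpha D_\alpha$, which exists by hypothesis, and denote its flow by $\phi^t_{H_D}$. Writing $z^\alpha = f(\mu) e^{i\theta\cdot\alpha}$ as in the proof of Lemma \ref{preservesJ}, one checks that if $L$ is monomially admissible with respect to $W_\Sigma$ then $\phi^t_{H_D}(L)$ is monomially admissible with respect to $W_\Sigma^{2\pi t, D}$: on the complement of a compact subset of $\mu^{-1}(C_\alpha)$ the flow translates $\theta\cdot\alpha$ by $t\,\nabla H_D\cdot\alpha = -2\pi t n_\alpha$, exactly cancelling the factor $e^{i 2\pi t n_\alpha}$ in the coefficient of $z^\alpha$ appearing in $W_\Sigma^{2\pi t, D}$. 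In particular, at $t = 1$ the loop closes and $\phi := \phi^1_{H_D}$ is a symplectomorphism of $((\bb{C}^*)^n,\omega)$ preserving $\Delta$-monomial admissibility.

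By Lemma \ref{preservesJ}, $\phi_\ast$ preserves the class $\mathcal{J}(\Delta)$ of monomially admissible almost complex structures, and by assumption $O_\Delta$ is closed under $\phi$. We are thus in precisely the situation of the construction in this subsection: adjoining $\phi$ (and a generic choice of further generators if needed) to the finite list $\phi_{H_1},\dots,\phi_{H_m}$, we obtain a countable free abelian group $G$ of symplectomorphisms acting freely on the objects of $\mathcal{F}_\Delta^\circ$, and we choose Floer data and the one-forms and almost complex structures used to define the quasi-units $G$-equivariantly. Because $\phi$ identifies $CF^\bullet(\psi^{\theta_0}(L_0),\psi^{\theta_1}(L_1))$ tautologically with $CF^\bullet(\psi^{\theta_0}(\phi(L_0)),\psi^{\theta_1}(\phi(L_1)))$ (the two complexes being computed with pushforward data), the rule $(L,\theta) \mapsto (\phi(L),\theta)$ extends to a strict $A_\infty$-endofunctor $\mathscr{F}_\phi$ of $\mathcal{F}_\Delta^\circ$ with vanishing higher components. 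Equivariance of the quasi-unit data ensures that $\mathscr{F}_\phi$ carries $\mathrm{Cones}(Q)$ to itself, so by the universal property of the Lyubashenko--Ovsienko quotient it descends to an $A_\infty$-functor $F_\phi$ on $\mathcal{F}_\Delta(W_\Sigma)$, exactly as in the explicit formula for $F^1_\phi$ given above.

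The only nuisance is to verify that $F_\phi$ is well-defined up to quasi-isomorphism, independently of the auxiliary choices: the twisting Hamiltonian $H_D$ within its affine space of representatives, the generic basis used to set up $G$, and the equivariant Floer data. Invariance under the first is immediate since two twisting Hamiltonians for $D$ differ by an admissible Hamiltonian and the corresponding time-$1$ flows act by quasi-isomorphic functors (by the same continuation-map argument as Proposition \ref{invarianthf}, now applied equivariantly). Invariance under the remaining choices is Proposition \ref{choices} combined with the standard equivariant refinement of the independence arguments for $\mathcal{F}_\Delta^\circ$; the main point is that any two equivariant systems of choices can be interpolated through a third on an enlarged group acting freely, and the resulting zig-zag of strict functors consists of quasi-equivalences. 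The principal subtlety, and the only place where some care is needed, lies in this equivariant interpolation, but no new compactness or transversality input beyond what is already established in Sections \ref{control} and \ref{Floer} is required. This yields the desired monodromy functor $F_\phi$ on $\mathcal{F}_\Delta(W_\Sigma)$.
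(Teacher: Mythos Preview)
Your proposal is correct and follows essentially the same approach as the paper: the proposition there is stated as an immediate consequence of the preceding construction (``In fact, we could have constructed an action by any countable group of twisting Hamiltonians. In particular, we have shown the following.''), and your argument is a faithful expansion of that construction. The only minor remark is that your final paragraph on independence of choices goes beyond what the proposition asserts; the paper treats that separately in Proposition~\ref{actionclasses}.
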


We can also see that our choice of representative twisting Hamiltonian up to admissible Hamiltonian isotopy did not matter. In fact, it is enough for the piecewise linear approximations to differ by a linear map.

\begin{prop} \label{actionclasses} If the difference of two twisting Hamiltonians $H_1, H_2$ is linear up to an admissible Hamiltonian, then the induced functors of $H_1$ and $H_2$ on $\mathcal{F}_\Delta (W_\Sigma)$ are quasi-isomorphic.
\end{prop}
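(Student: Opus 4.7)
The plan is to split the proposition into two sub-claims by writing $H_1 - H_2 = L + A$ where $L = 2\pi m\cdot\mu$ is an integer linear function on $\bb{R}^n$ (with $m\in\bb{Z}^n$, since only such linear functions arise as differences of support functions of linearly equivalent toric divisors, as in Remark \ref{twistandpic}) and $A$ is admissible, then treating separately the cases where $H_1 - H_2$ is purely linear or purely admissible. The linear case is immediate: the Hamiltonian vector field of $L = 2\pi m\cdot\mu$ is $2\pi\sum m_j\partial_{\theta_j}$, whose time-$1$ flow $(\mu,\theta)\mapsto(\mu,\theta + 2\pi m)$ descends to the identity on $(\bb{C}^*)^n = T^*\bb{R}^n/(2\pi\bb{Z}^n)$ because $m\in\bb{Z}^n$; since both $H_1$ and $H_2$ depend only on the moment-map coordinates, their flows commute, so the time-$1$ flows of $H_1$ and $H_2$ coincide as symplectomorphisms of $(\bb{C}^*)^n$ and the induced functors are literally equal.

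For the admissible case, the geometric content is that $\nabla A\cdot\alpha = 0$ at infinity in $C_\alpha$ for each $\alpha\in A$, so the path $t\mapsto \phi_{tA}(L)$ is an admissible Hamiltonian isotopy for every $L\in O_\Delta$, and $\phi_{H_1}(L) = \phi_A(\phi_{H_2}(L))$ is admissibly isotopic to $\phi_{H_2}(L)$. To upgrade this to a quasi-isomorphism of functors I would adapt the auxiliary-category strategy used to prove Proposition \ref{choices}: form an enlarged $A_\infty$-category $\mathcal{F}_\Delta^\circ(\phi_{H_1}\sqcup\phi_{H_2})$ whose objects carry an additional label indicating whether $\phi_{H_1}$ or $\phi_{H_2}$ has been applied, equip it with the usual quasi-units within each label coming from a single chosen global twisting Hamiltonian $K_\Delta$, and also include chain-level cross-label quasi-units in $CF^\bullet(\psi^\theta \phi_{H_1}(L),\psi^{\theta'}\phi_{H_2}(L))$ for $\theta > \theta'$ obtained by counting solutions of \eqref{perturb} whose moving boundary uses $K_\Delta$ together with the admissible interpolation $\phi_{tA}$ to pass from one label to the other. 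The maximum principle of Proposition \ref{monmax} applies because the augmented Hamiltonian $K_\Delta + sA$ still satisfies $\nabla(K_\Delta + sA)\cdot\alpha = 1$ at infinity in $C_\alpha$, so Proposition \ref{comp2} provides both compactness and an a priori energy bound. The three parts of Proposition \ref{invariantqu} then adapt directly: gluing shows compatibility of these cross-label quasi-units with the standard ones, Proposition \ref{invarianthf} identifies them with quasi-isomorphisms at the cohomology level, and the analogue of part (c) gives the acyclicity of their cones needed for the localization. Then, exactly as in Proposition \ref{choices}, both $\mathcal{F}_\Delta(W_\Sigma;H_1)$ and $\mathcal{F}_\Delta(W_\Sigma;H_2)$ embed quasi-equivalently into the localization of this enlarged category at all quasi-units, and these embeddings intertwine $F_{\phi_{H_1}}$ and $F_{\phi_{H_2}}$ with a common endofunctor of the enlarged localization, producing the desired quasi-isomorphism.

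The main obstacle will be the coherent construction of the cross-label quasi-units: simultaneously choosing, for each relevant pair, a family of moving boundary conditions factoring through $\phi_{tA}$, together with one-forms $\beta$ satisfying $d\beta \le 0$ and $\beta|_{\partial S} = df$ for the appropriate monotone $f$, in a way that the resulting chain-level elements are compatible with the $A_\infty$-structure and with the ordinary quasi-units under composition. This is essentially the same bookkeeping as for the quasi-units in Section \ref{flocalize}, the new ingredient being that the admissible term $A$ contributes an extra piece of the boundary condition which is harmless for the geometric estimates precisely because $\nabla A\cdot\alpha = 0$; it should reduce to the standard analysis of boundaries of one-dimensional moduli spaces combined with the invariance results already established in Proposition \ref{invarianthf}.
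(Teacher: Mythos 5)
Your decomposition $H_1 - H_2 = L + A$ into an integral linear part and an admissible part, and your treatment of the linear part via the observation that the time-$1$ flow of $2\pi m\cdot\mu$ with $m\in\bb{Z}^n$ descends to the identity on $(\bb{C}^*)^n$, match the paper exactly. You also correctly isolate the crucial geometric fact for the admissible piece, namely that $\nabla A\cdot\alpha = 0$ at infinity in each $C_\alpha$, which is precisely what lets Proposition~\ref{comp2} and Remark~\ref{moving} apply symmetrically.

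For the admissible part, however, your route differs from the paper's, and it is worth spelling out the trade-off. The paper forward-references the natural-transformation machinery of Section~\ref{defsect}: since an admissible Hamiltonian is a twisting Hamiltonian for the empty divisor, which is effective, the construction there produces a natural transformation from the identity to $F_{\phi_A}$ whose zeroth-order components are continuation maps, and these are invertible precisely because the same construction can be run for $-A$ (also admissible). That gives the quasi-isomorphism of functors in one stroke via Lemma~1.6 of \cite{SeBook}. Your alternative adapts the auxiliary-category device of Proposition~\ref{choices}: an enlarged category with labels for $\phi_{H_1}$ versus $\phi_{H_2}$ and cross-label quasi-units obtained from moving boundary conditions interpolating through $\phi_{tA}$. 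This is geometrically the same data (the cross-label quasi-unit is exactly the zeroth-order term of the natural transformation the paper invokes), so the hard analytic content agrees. Two cautions, though. First, your notation $\mathcal{F}_\Delta(W_\Sigma;H_1)$ versus $\mathcal{F}_\Delta(W_\Sigma;H_2)$ suggests two different categories, but the proposition compares two endofunctors of one and the same $\mathcal{F}_\Delta(W_\Sigma)$ (the $H_i$ are twisting Hamiltonians for a divisor $D$, not the $K_\Delta$ that defines the category). Second, the final sentence ("these embeddings intertwine $F_{\phi_{H_1}}$ and $F_{\phi_{H_2}}$ with a common endofunctor, producing the desired quasi-isomorphism") does not follow merely from two quasi-equivalences intertwining a common endofunctor $G$: you also need that the two embeddings are themselves naturally isomorphic, and that is exactly what the cross-label quasi-units must be shown to realize. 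Once you make that last identification explicit, your argument closes, but that step is in effect re-deriving the natural transformation that the paper cites directly.
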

\begin{proof} The Hamiltonian $H_1 - H_2$ is equal to the sum of an admissible Hamiltonian $H$ and an integral linear function on $\bb{R}^n$ (the fact that it is integral follows from $X$ being smooth). Since the time-$1$ flow of an integral linear function is the identity on $(\bb{C}^*)^n$, it is enough to show that the functor induced by $H$ is quasi-isomorphic to the identity. That essentially follows from the fact that continuation maps can be defined for both $H$ and $-H$ using Proposition \ref{comp2} and applying the same arguments that show that functors induced by Hamiltonian isotopies on a compact Fukaya category are quasi-isomorphic to the identity as in Proposition 10.3 of \cite{SeBook}. More accurately, one needs to define a natural transformation from the identity to the functor induced by $H$ for which multiplication by the zeroth order term is the invertible continuation map for $H$. Such natural transformations are defined in Section \ref{defsect}.
\end{proof}

The main subject of the rest of this subsection will be to understand the monodromy action of twisting Hamiltonians on Lagrangian sections described in the example below under the homological mirror symmetry established in Corollary \ref{hmscor}. Of course, we expect our description of the action to generalize to objects other than sections due to the expected generation by sections. 

\begin{ex} \label{tensoronlines} Suppose that $\Delta$ is adapted to the fan. If we take $O_\Delta^o = \{ \mathcal{L} \}$ to only include $\mathcal{L} = (\bb{R}_{>0})^n$ with its canonical grading as a section, then $O_\Delta$ will consist of Lagrangians $\phi_D(\mathcal{L})$ for each toric divisor $D$ on $X$. In this setting, the construction above produces an action of $G$ on $\mathcal{F}^s_\Delta (W_\Sigma)$ given by a functor $F_D = F_{\phi_D}$ for each toric divisor $D$ on $X$. Note that in this case, $O_\Delta$ is not in bijection with the quasi-isomorphism types of objects of $\mathcal{F}^s_\Delta(W_\Sigma)$. If we instead perform the construction with a basis of twisting Hamiltonians up to admissible Hamiltonians and integral linear functions, we obtain $\mathcal{F}^s_\Delta(W_\Sigma)$ from an $O_\Delta$ in bijection with $\Pic(X)$ and with an action of $\Pic(X)$ by Proposition \ref{bij} and Remark \ref{twistandpic}.
\end{ex}

By Proposition \ref{actionclasses}, there is really no difference in the information encoded by either setup given in Example \ref{tensoronlines}. Both have the expected effect under homological mirror symmetry. As a first example, $F_D$ takes $\mathcal{L}(kD)$ to $\mathcal{L}( (k + 1) D)$ in the setup of Section \ref{floercomp} for any $k \in \bb{Z}$. In that setting, $F_D$ acts on morphisms as the canonical identification between $\Hom_{\mathcal{F}^s_\Delta(W_\Sigma)} (\mathcal{L}(kD), \mathcal{L}(\ell D))$ and $\Hom_{\mathcal{F}^s_\Delta(W_\Sigma)} (\mathcal{L}( (k+1) D), \mathcal{L}( (\ell+1) D))$. In other words, the polytopes of $\mathcal{O}(\ell D) \otimes \mathcal{O}(-k D)$ and $\mathcal{O}( (\ell +1) D) \otimes \mathcal{O} ( -(k+1)D)$ are the same. More generally, we have the following.

\begin{thm} \label{mirrorfunctors} The functor $F_D$ from Example \ref{tensoronlines} is mapped by the quasi-equivalence of Corollary \ref{hmscor} to the functor $( \cdot) \otimes \mathcal{O}(D)$ on $\Cech(X)$. 
\end{thm}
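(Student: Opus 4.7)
The plan is to verify the statement by tracking $F_D$ through each stage of the chain of quasi-equivalences
\[ \mathcal{F}^s_\Delta(W_\Sigma) \;\simeq\; \Morse_\Delta \;\simeq\; \Simp_\Delta(Y) \;\simeq\; \Simp_\Delta(Q) \;\simeq\; \Cech(X) \]
constructed in Section \ref{hms}, showing that at each stage the functor induced by $F_D$ agrees on the nose with the analogue of $(\cdot) \otimes \mathcal{O}(D)$. On objects, the key observation is that any two twisting Hamiltonians commute (they depend only on moment coordinates), so $\phi_D \circ \phi_{D'} = \phi_{D+D'}$, and by the identification on objects from Corollary \ref{hmscor} this matches the action $\mathcal{O}(D') \mapsto \mathcal{O}(D' + D)$. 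For the identification of objects with line bundles rather than with divisors, the ambiguity in the choice of representative twisting Hamiltonian is controlled by Proposition \ref{actionclasses}.

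For morphisms, the main point is that $F_D$ acts trivially on the morphism complexes once we translate everything into the Morse/simplicial language. Concretely, by construction $\mathscr{F}^1_{\phi_D}$ is the tautological identification
\[ CF^\bullet(\psi^{\theta_0}\phi_{D_0}(\mathcal{L}),\psi^{\theta_1}\phi_{D_1}(\mathcal{L})) \;\longrightarrow\; CF^\bullet(\psi^{\theta_0}\phi_{D_0+D}(\mathcal{L}),\psi^{\theta_1}\phi_{D_1+D}(\mathcal{L})) \]
induced by pushing forward the almost complex structure by $\phi_D$, which is monomially admissible by Lemma \ref{preservesJ}. Under the Fukaya--Oh identification of Proposition \ref{FOprop}, the source is identified with a direct sum of Morse complexes of $H_{D_1} - H_{D_0} + (\theta_1 - \theta_0)K_\Delta + m$, and the target with Morse complexes of $(H_{D_1}+H_D)-(H_{D_0}+H_D)+(\theta_1-\theta_0)K_\Delta + m$; these Morse functions are literally equal, so $\mathscr{F}_{\phi_D}$ becomes the identity map on morphisms at the Morse stage. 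The same phenomenon persists in $\Simp_\Delta^\circ(Y)$ and $\Simp_\Delta(Q)$: the complex $C^\bullet(Q_b,\partial^+_{h_1 - h_0 + m}Q_b)$ is unchanged when $H_D$ is added to both arguments, since $h(\cdot)$ is linear on each face and the relevant boundary pieces only depend on the difference.

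On the Čech side, one then needs to check that this identity identification of polytope cochain complexes matches the Čech description of $(\cdot)\otimes\mathcal{O}(D)$ under the quasi-equivalence of Proposition \ref{cechtime}. This is essentially formal: tensoring with $\mathcal{O}(D)$ sends the line bundle $\mathcal{O}(D_0)$ to $\mathcal{O}(D_0 + D)$, and the induced map on morphism complexes is the canonical identification $\Hom(\mathcal{O}(D_0),\mathcal{O}(D_1)) \cong \Hom(\mathcal{O}(D_0+D),\mathcal{O}(D_1+D))$, which in the combinatorial polytope model of Section 6 of \cite{Ab2} corresponds to the equality $P_{D_1-D_0} = P_{(D_1+D)-(D_0+D)}$. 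Putting these compatibilities together produces a strict equality (not merely a quasi-isomorphism) between $F_D$ and $(\cdot)\otimes\mathcal{O}(D)$ under the composed quasi-equivalence on both objects and morphism complexes.

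The main obstacle I expect is bookkeeping rather than geometry: one must be careful to make the Floer-theoretic choices (strip-like ends, monomially admissible almost complex structures, Morse-Smale metrics, localization data) equivariantly with respect to the strict $G$-action, so that the induced action on the localization $\mathcal{F}_\Delta(W_\Sigma)$ is again a strict action and commutes on the nose with each of the intermediate functors rather than up to higher homotopy. In the construction of $\Morse_\Delta^\circ$, $\Simp_\Delta^\circ(Y)$ and $\Simp_\Delta(Q)$ this is straightforward because $G$ acts by translation on the labels; the subtlety lies in the Fukaya--Oh identification, where one must choose the perturbation parameter $\varepsilon$ and the monomially admissible metrics $g$ so that pushforward by $\phi_D$ sends the chosen data for a sequence $(L_{D_0},\ldots,L_{D_k})$ to the chosen data for $(L_{D_0+D},\ldots,L_{D_k+D})$. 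This can always be arranged inductively thanks to Proposition \ref{choices}.
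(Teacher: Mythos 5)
Your proposal is correct and follows essentially the same route as the paper's proof: on objects it reduces to Corollary \ref{action}, and on morphisms the canonical Floer identification under $\phi_D$ becomes the literal identity after passing to the Morse/simplicial models, which matches the tautological identification of Čech complexes $\check{C}^\bullet_m(L_1 \otimes L_0^{-1}) = \check{C}^\bullet_m((L_1 \otimes \mathcal{O}(D)) \otimes (L_0 \otimes \mathcal{O}(D))^{-1})$. The equivariance concerns in your final paragraph are legitimate but have already been addressed by the equivariant choices made in the construction of the strict $G$-action in Section \ref{catmono}.
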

\begin{proof} The quasi-equivalence of Corollary \ref{hmscor} extends the correspondence between Hamiltonian isotopy classes of monomially admissible Lagrangian sections with line bundles on $X$ from Proposition \ref{bij} as it takes $\phi_D(\mathcal{L})$ to $\mathcal{O}(D)$. Thus, the functor $F_D$ acts as $(\cdot) \otimes \mathcal{O}(D)$ on objects by Corollary \ref{action}.

On morphisms, $F_D$ is the canonical identification 
\[ \Hom_{\mathcal{F}^s_\Delta(W_\Sigma)} ( L_0, L_1) \cong \Hom_{\mathcal{F}^s_\Delta(W_\Sigma)} ( \phi_D(L_0) , \phi_D(L_1) )\]
which after passing to $\Morse_\Delta$ using Corollary \ref{FOcor} simply becomes the identity as 
\[ CM^\bullet(f_1 - f_0 + (\theta_1 - \theta_0)K_\Delta + m) = CM^\bullet( (f_1 + H_D) - (f_0 + H_D) + (\theta_1 - \theta_0)K_\Delta + m) \]
for any monomially admissible functions $f_0, f_1$, any angles $\theta_1, \theta_0$, and any $m \in \bb{Z}^n$. In \cite{Ab2}, morphisms are defined in $\Cech(X)$ by
\[ \Hom_{\Cech(X)} (L_0, L_1) = \bigoplus_{m \in \bb{Z}^n} \check{C}^\bullet_m (L_1 \otimes L_0^{-1}) \]
where $\check{C}^\bullet_m(L_1 \otimes L_0^{-1})$ is a \v{C}ech complex that satisfies 
\[ \check{C}^\bullet_m(L_1 \otimes L_0^{-1}) = \check{C}^\bullet_m( (L_1 \otimes \mathcal{O}(D)) \otimes (L_0 \otimes \mathcal{O}(D))^{-1}) \]
for every toric divisor $D$ on $X$. Moreover, the sequence of quasi-equivalences from $\Morse_\Delta$ to $\Cech(X)$ in Section \ref{hms} preserves these equalities.
\end{proof}

Note that in Example \ref{tensoronlines}, we can associate a functor to each object of $\mathcal{F}^s_{\Delta}(W_\Sigma)$ by assigning $F_D$ to $( \phi_D(\mathcal{L}), \theta)$ for any $\theta \in \{ \theta^j \}$. Because the $F_D$ come from a strict action of a free abelian group, this induces a symmetric monoidal structure on the homotopy category $H^0 (\mathcal{F}^s_\Delta(W_\Sigma))$ defined by
\[ \otimes \colon \Big( ( \phi_{D_0}( \mathcal{L}) , \theta_0) , (\phi_{D_1} (\mathcal{L}), \theta_1) \Big) \to F_{D_0}  (\phi_{D_1}( \mathcal{L}), \theta_1) =  ( \phi_{D_0 + D_1} (\mathcal{L}),  \theta_1 ) \]
with an analogous definition on morphisms and with natural isomorphisms
\[ ( \phi_{D_0}( \mathcal{L}) , \theta_0) \otimes (\phi_{D_1} (\mathcal{L}), \theta_1) \to ( \phi_{D_1}( \mathcal{L}) , \theta_1) \otimes (\phi_{D_0} (\mathcal{L}), \theta_0) \]
given by multiplication by quasi-units or their inverses. 

As a result of Theorem \ref{mirrorfunctors}, the homological mirror symmetry of Corollary \ref{hmscor} induces an equivalence of monoidal categories.

\begin{cor} \label{monoidalstr} The equivalence of homotopy categories induced by the quasi-equivalence of Corollary \ref{hmscor} is an equivalence of monoidal categories when $H^0 (\mathcal{F}^s_\Delta(W_\Sigma))$ is equipped with the monoidal structure defined above and the category of line bundles on $X$ with its usual monoidal structure.
\end{cor}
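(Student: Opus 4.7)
The plan is to verify the compatibility of the quasi-equivalence $\Psi \colon \mathcal{F}^s_\Delta(W_\Sigma) \to \Cech(X)$ from Corollary \ref{hmscor} with the monoidal structures level by level: on objects, on the tensor bifunctor applied to morphisms, on the associator and unitor, and finally on the braiding. On objects the identification is immediate from Corollary \ref{action}: since $\Psi$ sends $\phi_D(\mathcal{L})$ to $\mathcal{O}(D)$, we have
\[ \Psi\Big( (\phi_{D_0}(\mathcal{L}),\theta_0) \otimes (\phi_{D_1}(\mathcal{L}),\theta_1) \Big) = \mathcal{O}(D_0+D_1) = \mathcal{O}(D_0) \otimes \mathcal{O}(D_1), \]
and the unit $\mathcal{L}$ maps to the unit $\mathcal{O}$. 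On morphisms, the tensor product on the A-side is defined by applying $F_{D_0}$ to a morphism out of $(\phi_{D_1}(\mathcal{L}),\theta_1)$ (with the symmetric convention in the other slot), and Theorem \ref{mirrorfunctors} shows $\Psi \circ F_D \simeq ((-) \otimes \mathcal{O}(D)) \circ \Psi$; composing gives $\Psi(f \otimes g) = \Psi(f) \otimes \Psi(g)$.

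For the associator and unitor, the key point is that our construction of $F_D$ produces a strict action of a free abelian group: $F_{D_0} \circ F_{D_1} = F_{D_0+D_1}$ and $F_0 = \id$ hold on the nose, by equivariance of the auxiliary data and the discussion preceding Proposition \ref{actionclasses}. Consequently the A-side associator and unit constraints are identity morphisms, which under $\Psi$ map (via Theorem \ref{mirrorfunctors} again) to the identity constraints on the strict monoidal structure on line bundles generated by the $\mathcal{O}(D)$'s. The only genuine subtlety is the braiding, which on the A-side is given by multiplication by quasi-units (or their inverses) and must be shown to correspond to the canonical symmetry $\mathcal{O}(D_0) \otimes \mathcal{O}(D_1) \cong \mathcal{O}(D_1) \otimes \mathcal{O}(D_0)$ on the B-side.

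To verify this last point, one traces a quasi-unit $c_{(\phi_{D_0+D_1}(\mathcal{L}),\theta_1 \to \theta_0)}$ through the chain of quasi-equivalences in Section \ref{hms}, namely $\mathcal{F}^s_\Delta(W_\Sigma) \simeq \Morse_\Delta \simeq \Simp_\Delta(Y) \simeq \Simp_\Delta(Q) \simeq \Cech(X)$. By Proposition \ref{invariantqu}(c) and the construction, these quasi-units are isomorphisms on cohomology that, after localization, become honest isomorphisms between $(L,\theta_1)$ and $(L,\theta_0)$; moreover, each functor in the chain sends a quasi-unit to the chain-level representative of the identity class in the corresponding $\Hom$-complex of the target. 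Hence $\Psi$ takes the A-side braiding isomorphism to the identity on $\mathcal{O}(D_0+D_1)$, which under the canonical identification $\mathcal{O}(D_0+D_1) = \mathcal{O}(D_0) \otimes \mathcal{O}(D_1) = \mathcal{O}(D_1) \otimes \mathcal{O}(D_0)$ is exactly the B-side symmetry. The hardest step in practice will be this last one: confirming that at each stage of the chain in Section \ref{hms} the quasi-unit is intertwined with the identity (which ultimately rests on the fact that each intermediate equivalence respects the $\bb{Z}^n$-grading and the natural translation-by-divisor identifications of morphism spaces used in the proof of Theorem \ref{mirrorfunctors}). Combined with the earlier steps, this gives that $\Psi$ intertwines all of the structure isomorphisms and hence is an equivalence of symmetric monoidal categories.
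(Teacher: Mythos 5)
Your proposal is correct and fills in details that the paper treats as immediate. The paper offers no formal proof of this corollary—it simply asserts that the result follows from Theorem \ref{mirrorfunctors}, since the monoidal structure on $H^0(\mathcal{F}^s_\Delta(W_\Sigma))$ is built directly out of the functors $F_D$ and the quasi-units, and Theorem \ref{mirrorfunctors} identifies each $F_D$ with $(\cdot)\otimes\mathcal{O}(D)$. Your level-by-level verification (objects, tensor on morphisms, associator/unitor via strictness of the free abelian group action, braiding via quasi-units) is a faithful elaboration of this argument rather than an alternative route.

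Two small remarks. First, your notation $c_{(\phi_{D_0+D_1}(\mathcal{L}),\theta_1\to\theta_0)}$ should be read with care: by the paper's convention $c_{(L,\theta\to\theta')}\in\Hom((L,\theta'),(L,\theta))$ requires $\theta'>\theta$, so depending on the order of $\theta_0,\theta_1$ the braiding is either a quasi-unit or its formal inverse after localization, exactly as the paper's phrase ``quasi-units or their inverses'' indicates. Second, you are right that the only substantive point is that quasi-units are intertwined with identity constraints through the chain $\mathcal{F}^s_\Delta(W_\Sigma)\simeq\Morse_\Delta\simeq\Simp_\Delta(Y)\simeq\Simp_\Delta(Q)\simeq\Cech(X)$; this is supported by Proposition \ref{invariantqu} and by the $\bb{Z}^n$-grading preservation used in Theorem \ref{mirrorfunctors}, and is precisely the ingredient the paper leaves implicit in asserting the corollary.
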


It should be noted that in the microlocal homological mirror symmetry for toric varieties introduced by Fang-Liu-Treumann-Zaslow in \cite{FLTZ1, FLTZ2, FLTZ3, FLTZS} and proved in full generality by Kuwagaki in \cite{Ku2}, there is also an induced monoidal equivalence with respect to a monoidal structure on the category of microlocal sheaves. Also, the monoidal structure that we defined above fits well with the work of Subotic in \cite{Subotic} where a monoidal structure for Fukaya categories is described as fiberwise addition in the presence of a Lagrangian torus fibration with a reference section (in our case, these are the moment map $\mu$ and the section $\mathcal{L}$, respectively).

\subsection{Defining sections of line bundles from natural transformations} \label{defsect}

In the compact setting, all functors induced by Hamiltonian isotopies on the Fukaya category come with natural transformations from the identity as in Section 10c of \cite{SeBook}. Moreover, the natural transformations are quasi-isomorphisms with inverse given by the natural transformation for the inverse Hamiltonian isotopy. In the noncompact setting, such natural transformations are only defined when $H$ has certain asymptotic behavior, and the natural transformations that exist have more interesting geometric meaning as we will illustrate in our setting.

Given a twisting Hamiltonian $H_D$ for an effective divisor $D$ on $X$, we wish to define a natural transformation from the identity to the functor $F_D = F_{\phi_D}$ defined in Section \ref{catmono}. In particular, we will be assuming that $\mathcal{F}_\Delta(W_\Sigma)$ is set up as in Section \ref{catmono}. Roughly, we wish to define maps
\[ CF^\bullet(L_{d-1}, L_d) \otimes \hdots \otimes CF^\bullet(L_0, L_1) \to CF^\bullet(L_0, \phi_D(L_d))[-d] \]
for $d \geq 0$ by counting punctured discs with moving Lagrangian boundary condition determined by $H$ using the approach in Section 10c of \cite{SeBook}. The fact that $D$ is effective is a key condition that implies the inequality $\nabla H \cdot \alpha \leq 0$ in the complement of a compact subset of $C_\alpha$ for all $\alpha \in A$ needed to use the compactness results established in Section \ref{control}. We will need to be careful to define the natural transformations compatibly with localization. For example, a natural transformation from the identity to $F_D$ cannot be constructed on the level of $\mathcal{F}^\circ_\Delta$ as the $0$th order term would necessarily vanish.

Now, we begin to make things more precise. We first slightly modify our definition of $\mathcal{F}_\Delta(W_\Sigma)$. For any angle $\zeta \in (0, \pi]$, we define $\mathcal{F}_\Delta^\circ (\zeta)$ and $\mathcal{F}_\Delta (W_\Sigma, \zeta)$ in the same way as $\mathcal{F}_\Delta^\circ$ and $\mathcal{F}_\Delta(W_\Sigma)$ except replacing the sequence $\{ \theta^j \}$ of angles with all angles in $\mathbb{Q} \cap [0, \zeta]$. This modification poses no problem for our transversality assumptions as the set of angles is still countable.  As a consequence of Proposition \ref{localhoms} and the arguments in the proof of Proposition \ref{choices}, the quasi-equivalence type of $\mathcal{F}_\Delta (W_\Sigma, \zeta)$ is independent of $\zeta$, and all the $\mathcal{F}_\Delta (W_\Sigma, \zeta)$ are quasi-equivalent to $\mathcal{F}_\Delta (W_\Sigma)$ when constructed from the same set $O_\Delta$ of Lagrangian branes.

Next, we will define $A_\infty$-functors 
\[ P_\eps\colon \mathcal{F}_\Delta (W_\Sigma, \zeta) \to \mathcal{F}_\Delta (W_\Sigma, \zeta + \eps) \]
for every $\eps \in \mathbb{Q} \cap (0, \pi - \zeta]$ that take an object $(L, \theta)$ to $(L, \theta + \eps)$. To that end, we assume that our monomially admissible almost complex structures are chosen so that
\[ J^{\zeta + \eps}_{(L_0, \theta_0 + \eps), \hdots, (L_d, \theta_d + \eps)} = (\psi^\eps)_* J^{\zeta}_{(L_0, \theta_0), \hdots, (L_d, \theta_d)} \]
where $J^{\zeta}_{(L_0, \theta_0), \hdots, (L_d, \theta_d)}$ is the monomially admissible almost complex structure used in computing $m^d_{\mathcal{F}^\circ_\Delta}$ on the sequence of objects $(L_0, \theta_0), \hdots, (L_d, \theta_d)$ with $\theta_0 > \hdots > \theta_d$. This allows us to define functors
\[ \mathscr{P}_\eps \colon \mathcal{F}_\Delta^\circ(\zeta) \to \mathcal{F}_\Delta^\circ(\zeta + \eps) \]
analogously to how the functors $\mathscr{F}_\phi$ were defined in Section \ref{catmono}. That is, $\mathscr{P}_\eps$ takes $(L, \theta)$ to $(L, \theta + \eps)$, $\mathscr{P}^1_\eps$ is the identification
\[ CF^\bullet(\psi^{\theta_0}(L_0), \psi^{\theta_1}(L_1)) \cong CF^\bullet(\psi^{\theta_0+ \eps}(L_0), \psi^{\theta_1+\eps}(L_1))\]
 through $\psi^\eps$, and there are no higher terms. If the quasi-units in $\mathcal{F}_\Delta^\circ(\zeta+\eps)$ are also defined using pushforward almost complex structures, then the $\mathscr{P}_\eps$ will preserve the quasi-units on the chain level and we can define $P_\eps$ from $\mathscr{P}_\eps$ explicitly in the same way the $F_\phi$ are defined from $\mathscr{F}_\phi$ in Section \ref{catmono}. Without that assumption, we could still define $P_\eps$ from the universal property of localization. 

We now wish to define natural transformations 
\[ \mathcal{T}_{\eps, \phi} \colon \mathscr{P}_\eps \to \mathscr{F}_\phi \]
 where $\phi = \phi_D$. Here and for the rest of this section, we abuse notation to view $\mathscr{F}_\phi$ as a functor from $\mathcal{F}_\Delta^\circ(\zeta)$ to $\mathcal{F}_\Delta^\circ (\zeta + \eps)$ by composing $\mathscr{F}_\phi$ on $\mathcal{F}^\circ_\Delta (\zeta)$ with the inclusion of $\mathcal{F}_\Delta^\circ(\zeta)$ into $\mathcal{F}_\Delta^\circ(\zeta + \eps)$. Following Section 10c of \cite{SeBook}, we let $\mathcal{R}_{d+1, 1}$ be the moduli space of discs with $d+1$ boundary punctures $z_0, \hdots, z_d$ ordered counterclockwise and an interior marked point and let $\mathcal{S}_{d+1, 1}$ be the universal family over $\mathcal{R}_{d+1, 1}$. These spaces have compactifications $\overline{\mathcal{R}}_{d+1,1}$ and $\overline{\mathcal{S}}_{d+1,1}$ that consist of broken discs and are constructed similarly to $\overline{\mathcal{R}}_{d+1}$ and $\overline{\mathcal{S}}_{d+1}$ except with extra degenerations from the interior marked point. We then need to choose a family of data on $\overline{\mathcal{S}}_{d+1,1}$ which amounts to choosing on each fiber $S$ in $\overline{\mathcal{S}}_{d+1, 1}$  strip-like ends at each puncture (outgoing at $z_0$ and incoming at all other boundary punctures), numbers $0 = s_0 \leq s_1 \leq \hdots \leq s_{d+1} = 1$, a function $f: \partial S \to [0,1]$ such that $f$ is constant in the strip-like ends and monotonically increases from $s_j$ to $s_{j+1}$ on the boundary component between $z_j$ and $z_{j+1}$ for $j =0, \hdots , d$ with $z_{d+1} = z_0$, and a one-form $\beta$ that vanishes in the strip-like ends and satisfies $d\beta \geq 0$ and $\beta|_{\partial S} = df$. This family of data needs to be chosen compatibly with degenerations at the boundary. In addition to the more standard requirements, the compatibility includes that the component containing the interior marked point must contain the support of $\beta$ and all boundary components where $f$ is not constant. We also choose for each sequence $(L_0, \theta_0), \hdots, (L_d, \theta_d)$ of objects of $\mathcal{F}_\Delta^\circ(\zeta)$ with $\theta_0 > \hdots > \theta_d$ a generic family of monomially admissible almost complex structures over $\overline{\mathcal{R}}_{d+1,1}$ such that on each fiber $S$ the almost complex structure coincides with
\[ (\phi^{s_j} \psi^{(1-s_j)\eps})_* J^\zeta_{(L_{j-1}, \theta_{j-1}), (L_{j}, \theta_{j})} \]
on the incoming strip-like end near $z_j$ for $j =1 , \hdots, d$ and coincides with
\[ J^{\zeta + \eps}_{(L_0, \theta_0 + \eps), (\phi(L_d), \theta_d)}\]
on the outgoing strip-like end. The family of almost complex structures must also be chosen compatibly with boundary degenerations. 

With those choices in hand, we consider the moduli spaces of pairs of a point $r \in \overline{\mathcal{R}}_{d+1,1}$ and a map $u: S_r \to (\bb{C}^*)^n$ such that $S_r$ is the fiber of $\overline{\mathcal{S}}_{d+1, 1}$ over $r$ and $u$ is a solution to \eqref{perturb} with $H = H_D - \eps K_\Delta$, all other data from the chosen families, and the boundary condition that the boundary component between $z_j$ and $z_{j+1}$ is mapped to $\phi^{f(s)} \circ \psi^{\theta_j + (1 - f(s))\eps} (L_j)$ for $j = 1, \hdots, d$ again with $z_{d+1} = z_0$. Because $D$ is effective and $d\beta \geq 0$, we can use Remark \ref{moving} and Gromov compactness to see that these moduli spaces are compact. Counting the zero-dimensional part of these moduli spaces appropriately, we obtain degree $-d$ maps
\[ \mathcal{T}_{\eps, \phi}^d \colon CF^\bullet( \psi^{\theta_{d-1}} (L_{d-1}) , \psi^{\theta_d}(L_d)) \otimes \hdots \otimes  CF^\bullet (\psi^{\theta_0} (L_0), \psi^{\theta_{1}} (L_1)) \to CF^\bullet (\psi^{\theta_0 + \eps}(L_0), \phi \circ \psi^{\theta_d}(L_d) ) \]
using the identification 
\[ CF^\bullet (\psi^{\theta_{j}} (L_{j}), \psi^{\theta_{j+1}} (L_{j+1})) \cong CF^\bullet (\phi^{s_{j+1}} \circ \psi^{\theta_{j} + (1 - s_{j+1})\eps} (L_{j}), \phi^{s_{j+1}}\circ\psi^{\theta_{j+1} + (1-s_{j+1})\eps} (L_{j+1})) \]
for $j = 0, \hdots, d-1$. These maps satisfy the $A_\infty$ equation for a natural transformation as discussed in Section 10c of \cite{SeBook} and are the only possible non-zero terms in the natural transformation $\mathcal{T}_{\eps, \phi}$. Figure \ref{ntfig} shows the boundary degenerations contributing to the $A_\infty$ equation for $d = 1$. 

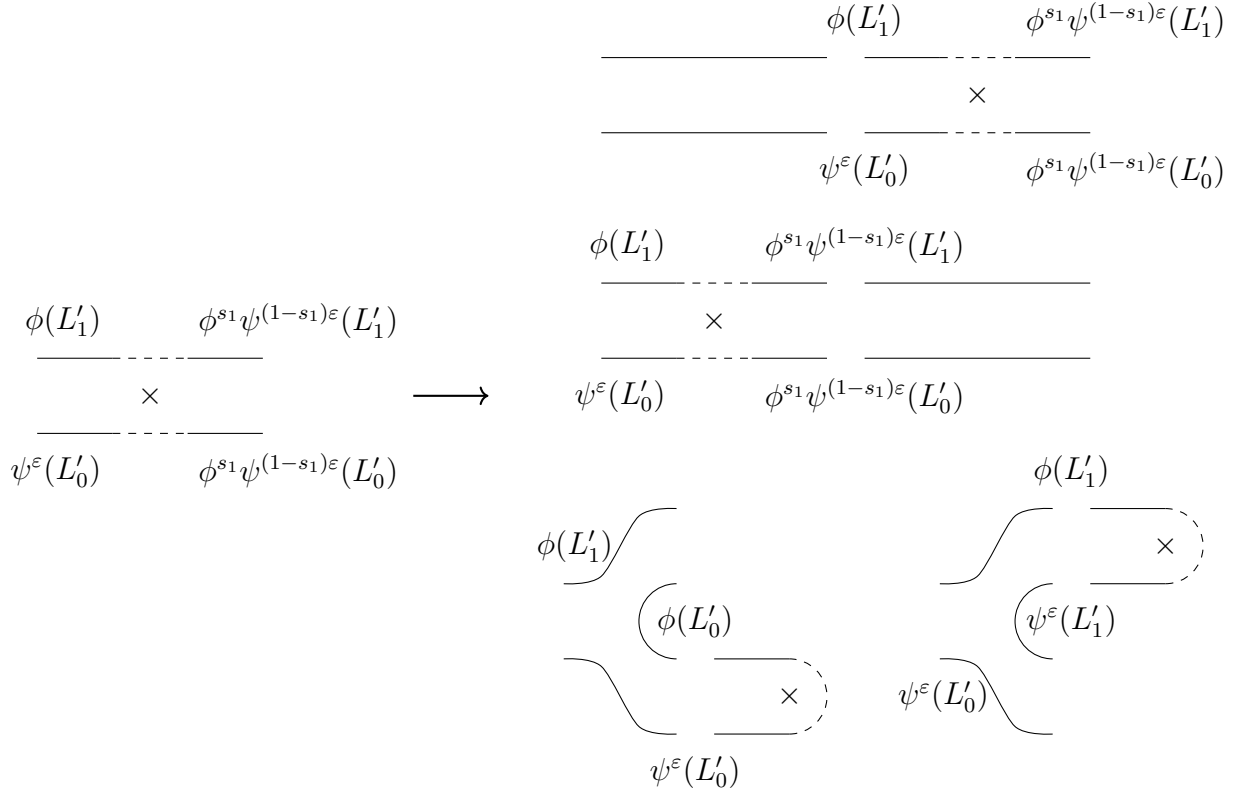
\begin{figure}
\centering 
\begin{tikzpicture}
	\draw (0,2) -- (1,2);
	\draw (0,3) -- (1,3);
	\node[left] at (1, 3.5) {$\phi (L_1') $};
	\node[left] at (1,1.5) {$\psi^{ \eps}(L_0')$};
	\draw[dashed] (1, 2) -- (2, 2);
	\draw[dashed] (1, 3) -- (2, 3) ;
	\draw (2,2) -- (3,2);
	\draw (2,3) -- (3,3);
	\node[right] at (2, 3.5){$\phi^{s_1}\psi^{ (1- s_1) \eps}(L_1') $};
	\node[right] at (2,1.5){$\phi^{s_1}\psi^{ (1- s_1) \eps}(L_0')$};
	\node at (1.5, 2.5) {$\times$}; 
	
	\draw[->, thick] (5 , 2.5) -- (6, 2.5);
	
	\draw (9,-2) -- (10,-2);
	\draw (9,-1) -- (10,-1);
	\node at (8.75, -0.5) {$\phi(L_0') $};
	\node at (8.75, -2.5) {$\psi^{\eps}(L_0')$};
	\node[left] at (7.8, 0.5) {$\phi(L_1')$};
	\draw[dashed] (10, -2) arc (-90:90: 0.5);
	\draw plot [smooth, tension = 0.5] coordinates {(7, 0) (7.5 , 0.1) (8, 0.9) (8.5 ,1)};
	\draw plot [smooth, tension = 0.5] coordinates {(7, -1) (7.5 , -1.1) (8, -1.9) (8.5 ,-2)};
	\draw (8.5 ,0) arc (90: 270 : 0.5);
	\node at (10, -1.5) {$\times$};
	
	\draw (14,0) -- (15,0);
	\draw (14,1) -- (15,1);
	\node at (13.75, -0.5) {$\psi^{\eps}(L_1') $};
	\node[left] at (12.8, -1.5) {$\psi^{\eps}(L_0')$};
	\node at (13.75, 1.5) {$\phi(L_1')$};
	\draw[dashed] (15, 0) arc (-90:90: 0.5);
	\draw plot [smooth, tension = 0.5] coordinates {(12, 0) (12.5 , 0.1) (13, 0.9) (13.5 ,1)};
	\draw plot [smooth, tension = 0.5] coordinates {(12, -1) (12.5 , -1.1) (13, -1.9) (13.5 ,-2)};
	\draw (13.5 ,0) arc (90: 270 : 0.5);
	\node at (15, 0.5) {$\times$};
	
	\draw (7.5,3) -- (8.5,3);
	\draw (7.5,4) -- (8.5,4);
	\node[left] at (8.5, 4.5) {$\phi (L_1') $};
	\node[left] at (8.5 ,2.5) {$\psi^{ \eps}(L_0')$};
	\draw[dashed] (8.5, 3) -- (9.5, 3);
	\draw[dashed] (8.5, 4) -- (9.5, 4) ;
	\draw (9.5,3) -- (10.5,3);
	\draw (9.5,4) -- (10.5,4);
	\node at (11, 4.5){$\phi^{s_1}\psi^{ (1- s_1) \eps}(L_1') $};
	\node at (11,2.5){$\phi^{s_1}\psi^{ (1- s_1) \eps}(L_0')$};
	\node at (9, 3.5) {$\times$}; 
	\draw (11, 3) -- (14, 3); 
	\draw (11, 4) -- (14, 4);
	
	\draw (11,6) -- (12,6);
	\draw (11,7) -- (12,7);
	\node[right] at (13, 7.5) {$\phi^{s_1}\psi^{ (1- s_1) \eps}(L_1') $};
	\node[right] at (13 ,5.5) {$\phi^{s_1}\psi^{ (1- s_1) \eps}(L_0')$};
	\draw[dashed] (12, 6) -- (13, 6);
	\draw[dashed] (12, 7) -- (13, 7) ;
	\draw (13,6) -- (14,6);
	\draw (13,7) -- (14,7);
	\node at (11, 7.5){$\phi(L_1')$};
	\node at (11,5.5){$\psi^\eps(L_0')$};
	\node at (12.5, 6.5) {$\times$}; 
	\draw (7.5, 6) -- (10.5, 6); 
	\draw (7.5, 7) -- (10.5, 7);
	
\end{tikzpicture}
\caption{Boundary degenerations in the $\overline{\mathcal{R}}_{2,1}$ contributing to the $A_\infty$ natural transformation equation. Dashed lines indicate where $df \neq 0$ and $\times$ is the interior marked point. For brevity of notation, we have also set $L_0' = \psi^{\theta_0} (L_0)$ and $L_1' = \psi^{\theta_1}(L_1)$. }
\label{ntfig}
\end{figure} 

As part of the universal property of localization, any natural transformation between functors from $\mathcal{F}_\Delta^\circ(\zeta)$ to $\mathcal{F}_\Delta^\circ(\zeta + \eps)$ taking quasi-units to quasi-units induces a natural transformation between the induced functors from $\mathcal{F}_\Delta(W_\Sigma, \zeta)$ to $\mathcal{F}_\Delta(W_\Sigma, \zeta + \eps)$. An explicit formula, which simplifies somewhat in our case as our functors have no higher terms, is given in Proposition 4.1 of \cite{LO}. As a result of that fact, we obtain the following proposition from the $\mathcal{T}_{\eps, \phi}$.

\begin{prop} \label{nattrans} Given a twisting Hamiltonian $H_D$ with flow $\phi_D$ for an effective toric divisor $D$ on $X$, there is a natural transformation
\[ T_{\eps, \phi_D} \colon P_\eps \to F_{\phi_D} = F_D \]
of functors from $\mathcal{F}_\Delta (W_\Sigma, \zeta)$ to $\mathcal{F}_\Delta(W_\Sigma, \zeta + \eps)$ for any $\zeta \in (0, \pi]$ and $\eps \in \bb{Q} \cap (0, \pi - \zeta]$.
\end{prop}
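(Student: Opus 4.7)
The plan is to verify that the maps $\mathcal{T}^d_{\eps,\phi}$ already sketched above assemble into a natural transformation of functors $\mathscr{P}_\eps\to\mathscr{F}_\phi$ between the pre-quotient categories $\mathcal{F}_\Delta^\circ(\zeta)\to\mathcal{F}_\Delta^\circ(\zeta+\eps)$, and then descend this to the localized categories via the universal property of the Lyubashenko--Ovsienko quotient.

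The first step is to verify that the moduli spaces used to define the $\mathcal{T}^d_{\eps,\phi}$ are well-behaved. The Hamiltonian $H=H_D-\eps K_\Delta$ satisfies $\nabla H\cdot\alpha=-2\pi n_\alpha-\eps$ outside a compact subset of $C_\alpha$, and effectiveness of $D$ gives $n_\alpha\ge 0$, hence $\nabla H\cdot\alpha\le -\eps<0$ in each $C_\alpha$ at infinity. Combined with the chosen $\beta$ satisfying $d\beta\ge 0$, this gives $(\nabla H\cdot\alpha)d\beta\le 0$, so the maximum principle of Proposition \ref{monmax} together with Remark \ref{moving} applies and the images are contained in a fixed compact subset of $(\bb{C}^*)^n$. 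The a priori energy bound from Remark \ref{perturbenergy}, adapted as in Remark \ref{moving} to moving boundary conditions, then gives Gromov compactness of the compactified moduli spaces after the standard transversality arguments on the domain-dependent almost complex structures.

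Second, one verifies the $A_\infty$-natural transformation equations. These come from analyzing the codimension-one boundary strata of $\overline{\mathcal{R}}_{d+1,1}$, which consist of either (a) a strip breaking at one of the boundary punctures, or (b) a bubbling off of a disc component not containing the interior marked point. By the compatibility we imposed on strip-like ends, almost complex structures, the functions $f$, and the forms $\beta$ (in particular, the requirement that the component carrying the interior marked point contains all of the support of $\beta$ and all non-constant boundary values of $f$), these strata correspond precisely to the terms of the $A_\infty$ natural transformation equation relating $\mathscr{P}_\eps$, $\mathscr{F}_\phi$, and the structure maps of $\mathcal{F}_\Delta^\circ(\zeta)$ and $\mathcal{F}_\Delta^\circ(\zeta+\eps)$, as in Section 10c of \cite{SeBook}.

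Finally, to pass from the pre-quotient natural transformation $\mathcal{T}_{\eps,\phi}$ to one defined on the localizations, we invoke Proposition 4.1 of \cite{LO}: a natural transformation of $A_\infty$-functors between $A_\infty$-categories induces a natural transformation of the induced functors on any quotient, provided both underlying functors send the collection of morphisms being inverted to morphisms that remain invertible. In our setting this amounts to observing that $\mathscr{P}_\eps$ and $\mathscr{F}_\phi$ are defined strictly on $\mathcal{F}_\Delta^\circ$ and (with the equivariant and push-forward choices of almost complex structures made in the construction of the quasi-units and in Section \ref{catmono}) carry quasi-units to quasi-units on the chain level, hence preserve $\mathrm{Cones}(Q)$; the formula of \cite{LO} then produces the desired natural transformation $T_{\eps,\phi_D}\colon P_\eps\to F_D$ of functors $\mathcal{F}_\Delta(W_\Sigma,\zeta)\to\mathcal{F}_\Delta(W_\Sigma,\zeta+\eps)$. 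The main technical obstacle is the first step: ensuring that effectiveness of $D$, together with the sign conditions imposed on $\beta$, really produces a uniform compactness result for all the moduli spaces involved; once that is in place, the rest is a matter of putting together standard gluing, boundary analysis, and the formal universal property of the quotient.
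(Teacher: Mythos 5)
Your proposal is correct and follows essentially the same route as the paper: compactness of the perturbed moduli spaces via effectiveness of $D$ and the sign conditions $d\beta\geq 0$ together with Remark \ref{moving}, the $A_\infty$ natural transformation equations from boundary degenerations as in Section 10c of \cite{SeBook}, and descent through the Lyubashenko--Ovsienko quotient using Proposition 4.1 of \cite{LO} together with the fact that $\mathscr{P}_\eps$ and $\mathscr{F}_\phi$ carry quasi-units to quasi-units on the chain level. The paper's argument is exactly this, with the construction of the $\mathcal{T}_{\eps,\phi}^d$ set up in the preceding paragraphs and the proposition itself proved by invoking the universal property of localization plus \cite{LO}.
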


When $\phi_D = \id$, each zeroth order term $\mathcal{T}^0_{\eps,\id} (L, \theta) \in CF^0 (\psi^{\theta + \eps}(L), \psi^\theta(L))$ for an object $(L, \theta)$ of $\mathcal{F}^\circ_\Delta (\zeta)$ is exactly a quasi-unit $c_{(L, \theta \to \theta + \eps)}$. This, in addition, implies that all the $T^0_{\eps, \id}$ are quasi-units as Proposition 4.1 of \cite{LO} shows that each $T^0_{\eps, \id}$ is the image of the corresponding $\mathcal{T}^0_{\eps, \id}$ under the localization functor. As a result of that observation and Lemma 1.6 in \cite{SeBook}, we have the following.

\begin{prop} \label{transexist} For any $\zeta \in (0, \pi]$ and $\eps \in (0, \pi-\zeta]$, the functor $P_\eps$ is quasi-isomorphic to the inclusion $\mathcal{F}_\Delta(W_\Sigma, \zeta) \to \mathcal{F}_\Delta(W_\Sigma, \zeta + \eps)$. 
\end{prop}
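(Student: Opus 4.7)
The plan is to deduce the proposition directly from the immediately preceding constructions. First, I would observe that the zero Hamiltonian $H \equiv 0$ qualifies as a twisting Hamiltonian for the (effective) empty divisor: the condition $\nabla H \cdot \alpha = 0$ holds everywhere, so all the compactness and energy estimates of Section \ref{control} are satisfied trivially. Consequently, Proposition \ref{nattrans} applies with $\phi_D = \phi_0 = \id$ and produces a natural transformation
\[ T_{\eps, \id} \colon P_\eps \longrightarrow F_{\id} \]
of functors $\mathcal{F}_\Delta(W_\Sigma, \zeta) \to \mathcal{F}_\Delta(W_\Sigma, \zeta + \eps)$.

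Next, I would identify $F_{\id}$ with the inclusion functor. On objects, $\mathscr{F}_{\id}(L, \theta) = (\id(L), \theta) = (L,\theta)$; on morphisms, $\mathscr{F}_{\id}^1$ is by construction the identity map on each Floer complex (since pushing the pushforward almost complex structure forward by the identity does nothing) with all higher terms vanishing, so $\mathscr{F}_{\id}$ literally equals the inclusion $\mathcal{F}_\Delta^\circ(\zeta) \hookrightarrow \mathcal{F}_\Delta^\circ(\zeta + \eps)$. Because the localization construction is functorial and respects inclusions of subcategories that are stable under the relevant morphisms, the induced $F_{\id}$ coincides (strictly, under our equivariant choices) with the inclusion of $\mathcal{F}_\Delta(W_\Sigma, \zeta)$ into $\mathcal{F}_\Delta(W_\Sigma, \zeta + \eps)$.

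Finally, I would check that $T_{\eps, \id}$ is in fact a natural quasi-isomorphism, which is where the main work is concentrated. By the discussion immediately preceding Proposition \ref{transexist}, the zeroth-order term $\mathcal{T}^0_{\eps, \id}(L,\theta)$ is precisely the quasi-unit $c_{(L, \theta \to \theta + \eps)}$: with $\phi_D = \id$ and $H_D = 0$, the perturbed holomorphic half-plane equation defining $\mathcal{T}^0_{\eps, \id}$ reduces exactly to the equation defining the quasi-unit, with one-form $\beta$ satisfying $\beta|_{\partial S} = d f$ and $d\beta \geq 0$, matching (after the sign flip from going outgoing rather than incoming, resolved by our boundary orientation conventions) the moving boundary data in Section \ref{flocalize}. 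By the explicit formula of Proposition 4.1 in \cite{LO}, the induced zeroth-order term $T^0_{\eps, \id}(L,\theta)$ is the image of $\mathcal{T}^0_{\eps, \id}(L,\theta)$ under the quotient functor, hence a quasi-unit in $\mathcal{F}_\Delta(W_\Sigma, \zeta + \eps)$. Since the quasi-units are precisely the morphisms inverted to form $\mathcal{F}_\Delta(W_\Sigma, \zeta + \eps)$, each $T^0_{\eps, \id}(L, \theta)$ is a quasi-isomorphism in the target category. Applying Lemma 1.6 of \cite{SeBook} (a natural transformation whose zeroth-order term is a quasi-isomorphism at every object is itself a natural quasi-isomorphism) then concludes the proof.

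The main subtlety will lie in the last step: carefully matching the perturbation data used to define $\mathcal{T}^0_{\eps, \id}$ with the data used to define the quasi-units in Section \ref{flocalize}, and confirming that despite the a priori different ordering and domain conventions the counts agree (up to a sign that can be absorbed), so that the identification with a quasi-unit truly holds on the chain level rather than only up to a boundary term. Once that identification is established, the rest of the argument is purely formal.
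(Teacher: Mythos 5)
Your proposal is correct and follows essentially the same approach as the paper: the paper's proof is precisely the discussion immediately preceding the proposition, observing that when $\phi_D = \id$ (i.e., $D$ is the empty divisor with $H_D \equiv 0$) the term $\mathcal{T}^0_{\eps,\id}(L,\theta)$ equals the quasi-unit $c_{(L,\theta \to \theta+\eps)}$, invoking Proposition 4.1 of \cite{LO} to see that $T^0_{\eps,\id}$ is then also a quasi-unit and hence invertible in the localization, and applying Lemma 1.6 of \cite{SeBook}. You add helpful detail in explicitly identifying $F_\id$ with the inclusion (which the paper uses silently, via the convention stated at the start of Section \ref{defsect} that $\mathscr{F}_\phi$ is composed with the inclusion of $\mathcal{F}_\Delta^\circ(\zeta)$ into $\mathcal{F}_\Delta^\circ(\zeta+\eps)$) and in flagging the sign bookkeeping between the quasi-unit convention ($H = K_\Delta$, $d\beta \le 0$) and the natural-transformation convention ($H = -\eps K_\Delta$, $d\beta \ge 0$), which indeed reconcile under $\beta_{\mathrm{qu}} = -\eps\,\beta_{\mathrm{nt}}$.
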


Thus, we see that the natural transformations $T_{\eps, \phi_D}$ define natural transformations from the identity to $F_D$ as functors on $\mathcal{F}_\Delta(W_\Sigma)$, as the inclusion $\mathcal{F}_\Delta(W_\Sigma, \zeta) \to \mathcal{F}_\Delta (W_\Sigma, \zeta + \eps)$ is a quasi-isomorphism for all $\zeta \in (0, \pi]$ and $\eps \in (0, \pi - \zeta]$. To conclude this section, we wish to interpret the natural transformations $T_{\eps, \phi_D}$ under homological mirror symmetry.

\begin{thm} \label{sectionthm} Suppose that $\Delta$ is adapted to $\Sigma$, $D$ is an effective toric divisor, and $F_D$ is constructed as a functor on $\mathcal{F}^s_\Delta(W_\Sigma, \zeta)$ as in Example \ref{tensoronlines}.  The image of the natural transformation $T_{\eps, \phi_D}$ under the quasi-equivalence of Corollary \ref{hmscor} 
is quasi-isomorphic to $(\cdot) \otimes s_D$ where $s_D$ is a toric defining section of $D$. 
\end{thm}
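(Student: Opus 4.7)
The plan is to reduce the identification of the full natural transformation to an identification of a single cohomology class, and then to pin this class down using the $\bb{Z}^n$-grading on Floer cohomology. Under the quasi-equivalence of Corollary~\ref{hmscor} combined with Theorem~\ref{mirrorfunctors}, both $T_{\eps, \phi_D}$ and the natural transformation ``multiplication by $s_D$'' are degree-$0$ natural transformations from $\id$ to $(\cdot)\otimes \mathcal{O}(D)$ on the $\dg$-category of line bundles on $X$. Since tensoring with $\mathcal{O}(D)$ is an autoequivalence, the homotopy classes of such natural transformations are canonically identified with $H^0(X, \mathcal{O}(D))$ via evaluation on $\mathcal{O}$. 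It therefore suffices to show that the cohomology class of $T^0_{\eps, \phi_D}(\mathcal{L}) \in HF^0(\psi^\eps \mathcal{L}, \mathcal{L}(D))$ corresponds to $s_D$ under the quasi-equivalence.

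Next, I would use the $\bb{Z}^n$-grading on $CF^\bullet$ coming from lifts to $T^*\bb{R}^n$ (as exploited in Section~\ref{floercomp}) to locate this class. Fix the zero-section lift of $\psi^\eps\mathcal{L}$ and the lift of $\mathcal{L}(D)$ obtained by flowing the zero section under $\phi_D$, i.e., the graph of $dH_D$. The moving boundary condition defining $T^0_{\eps, \phi_D}(\mathcal{L})$ is induced by a Hamiltonian isotopy on $(\bb{C}^*)^n$ that lifts canonically to $T^*\bb{R}^n$, and the interior marked point introduces no covering monodromy, so any contributing perturbed disc admits a consistent lift whose outgoing asymptote lies in the $m=0$ summand of $\bigoplus_m CF^\bullet(\wt{\psi^\eps\mathcal{L}}, \wt{\mathcal{L}(D)} + m)$. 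Because $D$ is effective, $0$ lies in the polytope $P_D$ (Lemma~\ref{polytope}), and by the computation of Section~\ref{floercomp} the $m=0$ summand has one-dimensional cohomology concentrated in degree $0$, generated by the transverse intersection corresponding to the critical point of $H_D - \eps K_\Delta$ at which the gradient vanishes. Under the tower of quasi-equivalences of Section~\ref{hms}, which respects the $\bb{Z}^n$-grading at every stage, this generator is identified with the monomial $1 \in H^0(X, \mathcal{O}(D))$, i.e., with the toric defining section $s_D$.

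The remaining task is to verify that the coefficient of this class in $T^0_{\eps, \phi_D}(\mathcal{L})$ is non-zero. The cleanest route is via the product structure: gluing in the moduli space $\overline{\mathcal{R}}_{2,1}$ (as in Figure~\ref{ntfig}) shows that for effective $D_1, D_2$ the product $m^2\bigl(T^0_{\eps_2, \phi_{D_2}}(\mathcal{L}(D_1)),\, T^0_{\eps_1, \phi_{D_1}}(\mathcal{L})\bigr)$ is cohomologous to $T^0_{\eps_1+\eps_2, \phi_{D_1+D_2}}(\mathcal{L})$, and Proposition~\ref{structureconstants} computes products of $\bb{Z}^n$-graded generators as signed generators with matching grading. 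Together these reduce non-triviality to the case $D = D_\alpha$ for a single prime toric divisor. In that minimal case one can pass to $\Morse_\Delta$ via Corollary~\ref{FOcor} and count gradient flow lines of $H_{D_\alpha} - \eps K_\Delta$ directly; alternatively one can deform continuously to the trivial case $D = 0$, where $T^0_{\eps, \id}(\mathcal{L})$ is precisely the chain-level quasi-unit $c_{(\mathcal{L}, 0\to \eps)}$ whose cohomology class is the cohomological identity by Proposition~\ref{invariantqu}(c), and carry the non-triviality forward along the deformation using the a priori energy bounds of Remark~\ref{perturbenergy}.

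The main obstacle is the chain-level gluing/multiplicativity identity used in the third step, together with the accompanying choice of representatives ensuring that the induced map on cohomology classes is compatible with the ring structure of $\bigoplus_{D \geq 0} H^0(X, \mathcal{O}(D))$ on the algebro-geometric side so that the section $s_{D_1} \cdot s_{D_2} = s_{D_1+D_2}$ is indeed the image of the corresponding product of Floer representatives. This is technical but routine, combining standard gluing for perturbed moduli spaces with the compactness results of Section~\ref{control} that already underlie the construction of $T_{\eps, \phi_D}$.
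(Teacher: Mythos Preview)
Your first two steps coincide with the paper's proof: the paper also reduces to the zeroth order term via the Hochschild classification of natural transformations from $\id$ to $(\cdot)\otimes\mathcal{O}(D)$, and then uses the $\bb{Z}^n$-grading to see that $T^0_{\eps,\phi_D}(\mathcal{L})$ lands in the one-dimensional degree-$0$ piece, hence is a scalar multiple of the class corresponding to $s_D$. One caveat: Lemma~\ref{polytope}, Section~\ref{floercomp}, and Proposition~\ref{structureconstants} are all stated and proved only for \emph{ample} $D$, so you cannot cite them for a general effective divisor; the rank-one statement you need comes instead from Corollary~\ref{hmscor} together with the identification of the $\bb{Z}^n$-graded pieces with weight spaces of $\Gamma(\mathcal{O}(D))$, which is how the paper argues it.

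The non-vanishing step is where your proposal genuinely diverges, and there is a gap. Your multiplicativity reduction already has a bookkeeping problem (the target of $T^0_{\eps_1,\phi_{D_1}}(\mathcal{L})$ is $\phi_{D_1}(\mathcal{L})$, not $\psi^{\eps_2}\phi_{D_1}(\mathcal{L})$, so the $m^2$ as written is not composable without inserting quasi-units and proving a more elaborate gluing statement). More seriously, neither of your base-case arguments closes the circle. The ``deformation to $D=0$'' does not work: the classes $T^0_{\eps,\phi_{D_\alpha}^t}(\mathcal{L})$ live in genuinely different Floer groups for different $t$, and the only natural map relating them is the continuation map induced by $H_{D_\alpha}$ itself---which, after gluing, is precisely $m^2(\cdot, T^0_{\eps,\phi_{D_\alpha}})$. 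So proving this continuation preserves non-triviality is equivalent to what you are trying to show, and energy bounds alone do not give injectivity. The ``direct Morse count'' option is not specified. The paper instead proves non-vanishing directly for arbitrary effective $D$, without any reduction: it interprets $m^2(\cdot, T^0_{\eps,\phi_D}(\mathcal{L},\theta))$ as a continuation map for $\eps K_\Delta - H_D$, factors it as (continuation for a nonnegative twisting Hamiltonian $\wt H$ for $-D$)$\circ$(quasi-unit multiplication)$\circ$(admissible-Hamiltonian continuation), observes the last two factors are quasi-isomorphisms, and then shows the first factor is nonzero by an action-filtration argument (solutions preserving action are constant) applied to a well-chosen target $L=\phi_D(\mathcal{L})$ for which the relevant $\bb{Z}^n$-degree-$0$ generators all lie in the region where $\wt H$ vanishes. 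This is both more direct and avoids the circularity in your deformation.
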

\begin{proof} Applying the quasi-equivalence of Corollary \ref{hmscor} gives us a natural transformation from the identity on $\Cech(X)$ to $(\cdot) \otimes \mathcal{O}(D)$ by Theorem \ref{mirrorfunctors}. Such natural transformations are classified by 
\[ HH^0(\Cech(X), \mathcal{B}) \cong \Hom_{X \times X}(i_* \mathcal{O}, i_* \mathcal{O}(D) ) \cong \Gamma(\mathcal{O}(D)) \]
where $\mathcal{B}$ is the bimodule over $\Cech(X)$ defined using the identity functor on the left and $(\cdot) \otimes \mathcal{O}(D)$ on the right and $i$ is the diagonal map. Therefore, we only need to look at the zeroth order terms $T^0_{\eps, \phi_D}$. 

In fact, it is enough to only look at the cohomology class of
\[ T^0_{\eps, \phi_D}(\mathcal{L}) \equiv T^0_{\eps, \phi_D} (\mathcal{L}, 0) \]
in $HF^0( \psi^{\eps}(\mathcal{L}), \phi_D(\mathcal{L}) ) \cong \Gamma(\mathcal{O}(D))$. With respect to the $\bb{Z}^n$ grading on $CF^0( \psi^{\eps}(\mathcal{L}), \phi_D(\mathcal{L}) )$ induced by the lifts $\eps dK_\Delta$ and $dH_D$ of $\psi^{\eps}(\mathcal{L})$ and $\phi_D(\mathcal{L})$, respectively, $T^0_{\eps, \phi_D}(\mathcal{L})$ has degree zero. This grading is sent to the natural $\bb{Z}^n$ grading on $\Gamma(\mathcal{O}(D))$ under the isomorphism $HF^0( \psi^{\eps}(\mathcal{L}), \phi_D(\mathcal{L}) ) \cong \Gamma(\mathcal{O}(D))$ induced by Corollary \ref{hmscor} (see the proof of Proposition 6.7 in \cite{Ab2}), and the cohomology in degree $0 \in \bb{Z}^n$ has rank one. As a result, the cohomology class of $T^0_{\eps, \phi_D}(\mathcal{L})$ is sent to a multiple of a toric defining section. 

It remains only to show that this multiple is not zero. For that, we can apply an argument similar to the proof of Proposition \ref{invariantqu}(c). Namely, we consider instead $T^0_{\eps, \phi_D}(\mathcal{L}, \theta)$ for some $\theta > 0$, which has cohomologous image to that of $T^0_{\eps,\phi_D}(\mathcal{L})$, and view
\[ m^2(\cdot, T^0_{\eps,\phi_D}(\mathcal{L}, \theta) ) : CF^\bullet(\phi_D\circ \psi^\theta(\mathcal{L}), L) \to CF^\bullet(\psi^{\theta+ \eps}(\mathcal{L}), L) \]
as a continuation map for $H = \eps K_\Delta - H_D$ by gluing and reparameterization where $L$ is any monomially admissible Lagrangian section. We can factor this continuation map as a continuation map for a twisting Hamiltonian $\wt{H}$ for $-D$ that is everywhere nonnegative and zero in an arbitrarily large compact set containing all intersection points of $\phi_D \circ \psi^\theta(\mathcal{L})$ and $L$, a continuation map coming from multiplication by a quasi-unit, and a continuation map of an admissible Hamiltonian. 

The latter two continuation maps are quasi-isomomorphisms by the proof of Proposition \ref{invarianthf} and by Proposition \ref{invariantqu}(c), respectively. Therefore, we only need to show that the first continuation map is nonzero on cohomology. This continuation map satisfies $0 \leq E^{\text{geom}}(u) \leq E^{\text{top}}(u)$ for any solution $u$ of \eqref{perturb} counted to define the continuation map. As a result, the only solutions that preserve action are constant. The result follows if we can show that there is a monomially admissible Lagrangian section $L$ such that there is a nonzero class in $HF^\bullet (\wt{\phi}\circ \phi_D \circ \psi^\theta( \mathcal{L}), L)$, where $\wt{\phi}$ is the flow of $\wt{H}$, represented by elements of $CF^\bullet( \phi_D \circ \psi^\theta(\mathcal{L}), L)$. Consider $L = \phi_D(\mathcal{L})$. Then, the generators of $CF^\bullet(\wt{\phi}\circ \phi_D \circ \psi^\theta( \mathcal{L}), L)$ which have degree $0$ with respect to the $\bb{Z}^n$ grading correspond to points where 
\begin{equation} \label{intersectionproof} d\wt{H} + \theta dK_\Delta = 0. \end{equation}
We can assume that $\nabla K_\Delta \cdot \alpha = 1$ in $C_\alpha$ outside of the compact set where $\wt{H}$ vanishes for all $\alpha \in A$ and that $\nabla \wt{H} \cdot \alpha \geq 0$ in $C_\alpha$ for all $\alpha \in A$. Then, the only solutions to \eqref{intersectionproof} are where $\wt{H}$ vanishes and $d K_\Delta = 0$. That is, they all correspond to generators of $CF^\bullet( \phi_D \circ \psi^\theta(\mathcal{L}), L)$. Since $HF^\bullet (\wt{\phi}\circ \phi_D \circ \psi^\theta( \mathcal{L}), L)$ has rank one in the degree $0$ part of the $\bb{Z}^n$ grading, we have found the desired nonzero cohomology class. 
\end{proof}

In the case that $D$ is ample and we choose $H_D$ as in Section \ref{floercomp}, we have seen that $T^0_{\eps, \phi_D} \in CF^\bullet(\psi^\eps(\mathcal{L}), \phi_D(\mathcal{L}))$ is a non-zero multiple of the generator corresponding to $0 \in P$. In fact, the proof can be somewhat simplified in that case using the concavity of $H_D$.

\subsection{Partially wrapped mirrors to some non-complete toric varieties via localization} \label{openhms}

In \cite{SeNT}, Seidel observed that for a (possibly singular) hypersurface $D \subset X$ one can obtain $D^b \Coh (X \setminus D)$ by localizing $D^b \Coh(X)$ at the natural transformation $(\cdot) \otimes s_D$. Futher, he postulated that there should be a mirror natural transformation so that homological mirror symmetry for $X \setminus D$ can be deduced from homological mirror symmetry for $X$ with the understanding of the mirror natural transformation. Although the picture should hold in great generality, we will show that it holds in the toric setting using the natural transformations that we have already constructed in Section \ref{defsect}. The analogous statement using microlocal sheaf theory was proved in \cite{IK}. 

Before stating the result, we need a bit more notation. For an effective toric divisor $D$, we set $S_D$ to be a set consisting of a chain-level representative for the morphism $1 \otimes s_D \in H^0 (V, V \otimes \mathcal{O}(D))$ for every object $V$ of $\Cech(X)$ where $s_D$ is a defining section of $D$. For instance, we can take $S_D$ to simply be the image under the quasi-equivalence of Corollary \ref{hmscor} of all the $T^0_{\eps, \phi_D}$ by Theorem \ref{sectionthm}. We define $\Cech(X \setminus D)$ to be the localization $S_D^{-1} \Cech(X)$. As a consequence of (1.10) and (1.11) in \cite{SeNT} and Theorem \ref{sectionthm}, $\Cech(X \setminus D)$ is a dg-enhancement of the category of line bundles on $X \setminus D$. Setting $\mathcal{F}^s_\Delta(W_\Sigma, D)$ to be the localization of $\mathcal{F}^s_\Delta(W_\Sigma)$ at all the $T^0_{\eps, \phi_D}$, we have deduced the following.

\begin{thm} \label{removedhms} Suppose that $\Delta$ is adapted to $\Sigma$ and $D$ is an effective divisor. The commutative diagram 
\[ \begin{tikzcd} \mathcal{F}^s_\Delta(W_\Sigma) \arrow{r}{\sim} \arrow{d} & \Cech(X) \arrow{d}  \\
\mathcal{F}^s_\Delta(W_\Sigma, D) \arrow{r}{\sim} & \Cech(X \setminus D) 
\end{tikzcd} \]
of $A_\infty$-categories with the top arrow being the quasi-equivalence from Corollary \ref{hmscor} induces a commutative diagram
\[ \begin{tikzcd} H^0(\text{Tw}^\pi \mathcal{F}^s_\Delta(W_\Sigma)) \arrow{r}{\sim} \arrow{d} & D^b\Coh(X) \arrow{d}  \\
H^0(\text{Tw}^\pi \mathcal{F}^s_\Delta(W_\Sigma, D)) \arrow{r}{\sim} & D^b \Coh(X \setminus D) 
\end{tikzcd} \]
of triangulated categories. 
\end{thm}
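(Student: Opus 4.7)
The strategy is to assemble the theorem from three ingredients already in hand: Theorem \ref{sectionthm}, the functoriality of the Lyubashenko-Ovsienko $A_\infty$-quotient, and Seidel's identification in \cite{SeNT} of the Verdier localization at $(\cdot) \otimes s_D$ with restriction to the complement of $D$. Essentially no new geometric input is required.

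The first step is to establish the commutative square of $A_\infty$-categories. Let $\Psi \colon \mathcal{F}^s_\Delta(W_\Sigma) \to \Cech(X)$ denote the quasi-equivalence of Corollary \ref{hmscor}. By Theorem \ref{sectionthm}, for each object $(L,\theta)$ the chain-level morphism $T^0_{\eps,\phi_D}(L,\theta)$ is carried by $\Psi$ to an element quasi-isomorphic to the chain-level representative of $1 \otimes s_D$ acting on $\Psi(L,\theta)$, i.e.\ to an element quasi-isomorphic to one in $S_D$ (and by choice of $S_D$ flagged in the paragraph above the theorem, we may arrange it to lie in $S_D$). Because the Lyubashenko-Ovsienko quotient depends only on the cohomology classes of the inverted morphisms --- a fact already used in the proof of Proposition \ref{choices} --- $\Psi$ descends to a quasi-equivalence $\bar\Psi \colon \mathcal{F}^s_\Delta(W_\Sigma,D) \to \Cech(X\setminus D)$ making the top square commute. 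This is the $A_\infty$ content of the theorem.

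The second step passes to triangulated categories. The operations $\text{Tw}^\pi$ and $H^0$ are functorial and preserve quasi-equivalences, so applying them to the top square yields the commutative square in the statement with horizontal arrows that are equivalences. The top-right identification $H^0(\text{Tw}^\pi \Cech(X)) \simeq D^b\Coh(X)$ is the content of Corollary \ref{hmscor}. For the bottom-right, we invoke the remark made just before the theorem statement: the combination of (1.10) and (1.11) in \cite{SeNT} with Theorem \ref{sectionthm} shows that $\Cech(X\setminus D)$ is a $\dg$-enhancement of the category of line bundles on $X\setminus D$, and hence $H^0(\text{Tw}^\pi \Cech(X\setminus D)) \simeq D^b\Coh(X\setminus D)$, using that $X\setminus D$ is smooth so that $D^b\Coh$ coincides with perfect complexes.

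The one point that requires a moment of care, and which I expect to be the main technical obstacle, is the compatibility between the chain-level Lyubashenko-Ovsienko quotient and the Verdier localization used by Seidel. Concretely, one needs to verify that inverting $S_D$ at the $A_\infty$ level and then passing to $H^0(\text{Tw}^\pi)$ produces the same triangulated category as taking $D^b\Coh(X)$ and quotienting by the thick subcategory generated by the cones on $(\cdot)\otimes s_D$. This is a general compatibility between the two types of localization (inverting morphisms versus killing cones), and it follows by comparing the universal properties on both sides; in our setting it can be imported directly from Seidel's discussion, since the cones on morphisms in $S_D$ are precisely the objects whose pushforward to $X\setminus D$ vanishes.
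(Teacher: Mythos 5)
Your proposal is correct and follows essentially the same route as the paper: the paper simply defines $S_D$ to be the image of the $T^0_{\eps,\phi_D}$ under the quasi-equivalence of Corollary \ref{hmscor}, so that the descent to the localizations is immediate, and then cites (1.10)--(1.11) of \cite{SeNT} with Theorem \ref{sectionthm} to identify $\Cech(X\setminus D)$ as a dg-enhancement of line bundles on $X\setminus D$. Your extra step invoking cohomological invariance of the Lyubashenko--Ovsienko quotient is harmless but unnecessary given the paper's choice of $S_D$, and your final compatibility concern is exactly what the citation of Seidel's localization result absorbs.
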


\begin{rem} Following the interpretation of wrapping and partial wrapping in Floer theory from \cite{AbSloc}, the localization $\mathcal{F}^s_\Delta(W_\Sigma) \to \mathcal{F}^s_\Delta(W_\Sigma, D)$ should correspond geometrically to wrapping by a twisting Hamiltonian for $D$. Indeed, the case of $D = \sum_{\alpha \in A} D_\alpha$ has $X \setminus D = (\bb{C}^*)^n$ and $H_D$ will give a cofinal wrapping sequence in all directions. In general, the wrapping only occurs in the directions where the twisting Hamiltonian is negative. See Figure \ref{wrappingfig} for a simple example. However, a geometric description of our localization cannot be deduced directly from existing results due to technical differences in the setup of the monomially admissible Fukaya-Seidel category.
\end{rem}

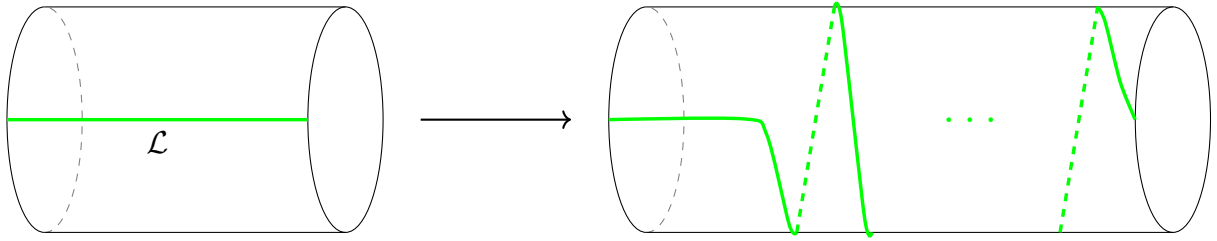
\begin{figure} 
\centering 
\begin{tikzpicture}
	\draw[dashed,color=gray] (-6,0) arc (-90:90:0.5 and 1.5);
	\draw (-6,0) -- (-2,0);
	\draw (-6,3) -- (-2,3);
	\draw (-6,0) arc (270:90:0.5 and 1.5);
	\draw (-2,1.5) ellipse (0.5 and 1.5);
	\draw (-6.5,1.5) -- (-2.5, 1.5) [color = green, line width = 1.3] node[font=\normalsize,midway,below, color= black] {$\mathcal{L}$};

	\draw[->, thick] (-1, 1.5) -- (1,1.5);
	 
	\draw[dashed,color=gray] (2,0) arc (-90:90:0.5 and 1.5);
	\draw (2,0) -- (9,0);
	\draw (2,3) -- (9,3);
	\draw (2,0) arc (270:90:0.5 and 1.5);
	\draw (9,1.5) ellipse (0.5 and 1.5);
	\draw [green, line width = 1.3] plot [smooth, tension = 0.5] coordinates {(1.5, 1.5) (3.3, 1.51) (3.6,1.3)  (3.9, 0.1) (4,0)  };
	\draw[dashed, green, line width = 1.3] plot [smooth, tension = 0.5] coordinates{(4, 0) (4.5,3)};
	\draw[green, line width = 1.3] plot [smooth, tension = 0.5] coordinates{(4.5, 3) (4.6, 2.8) (4.9, 0.2) (5, 0) } ;
	\draw[green] (6.3, 1.5) node[scale= 1.5] {$\hdots$};
	\draw[dashed, green, line width = 1.3] plot [smooth, tension = 0.5] coordinates{(7.5, 0) (8,3)};
	\draw[green, line width = 1.3] plot [smooth, tension = 0.5] coordinates{(8, 3) (8.1, 2.8) (8.3, 2) (8.5, 1.5) }; 	
\end{tikzpicture}
\caption{The image of the zero section in the mirror to $\mathbb{P}^1$ as it is wrapped by the twisting Hamiltonian for $D_{1}$ giving a Lagrangian mirror to the structure sheaf of $\bb{C}$ in the limit. All wrapping occurs in the region $\log |z| \geq 0$.} 
\label{wrappingfig}
\end{figure}

Theorem \ref{removedhms} should be viewed as an instance of Sylvan's stop removal \cite{Syl}. However, his results do not apply directly to our setting. It should also be noted that Katzarkov-Kerr have constructed partially wrapped mirrors to toric varieties in general in \cite{KK} using an entirely different approach. 

\subsection{Line bundles are thimbles} \label{thimblesec}

The monodromy action of \eqref{monodromyeq} can also be seen without the language of monomial admissibility. For instance, if we assume that $W^{\theta, D}_\Sigma$ is a Lefschetz fibration for all $\theta \in [0, 2\pi]$, then we can follow the critical points, Lefschetz thimbles, and their instersections as $\theta$ goes from $0$ to $2\pi$ giving a monodromy functor on the Fukaya-Seidel category generated by thimbles (cf. Figure \ref{pnex}). Note that although the assumption that $W^{\theta, D}_\Sigma$ is a Lefschetz fibration for all $\theta \in [0, 2\pi]$ may not hold in general, it will hold if we allow the norms of the coefficients to be perturbed by small functions of $\theta$ given that $W_\Sigma$ is a Lefschetz fibration as the degenerate Laurent polynomials have real codimension two. In fact, it is shown in \cite{DKK1} and discussed in the toric setting in \cite{BDFKK} that certain choices of coefficients make each $W_\Sigma^{\theta,D}$ a Lefschetz fibration with critical values lying on concentric circles referred to as a radar screen. In light of the monodromy action on Lefschetz thimbles and our understanding of the monodromy in Theorem \ref{mirrorfunctors}, we are lead to expect the following.

\begin{conj} \label{thimbleconj} If $W_\Sigma$ is a Lefschetz fibration, there is a Lagrangian thimble mirror to each line bundle on $X$. 
\end{conj}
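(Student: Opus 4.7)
The plan is to bootstrap from a single identification using the monodromy action of Theorem \ref{mainthm}. Suppose $T_0$ is a Lagrangian thimble of $W_\Sigma$ that is the mirror of some fixed line bundle $\mathcal{O}(D_0)$. For any toric divisor $D$, one may generically perturb the norms of the coefficients as a function of $\theta$ so that each $W_\Sigma^{\theta, D}$ is Lefschetz for all $\theta \in [0, 2\pi]$, since the non-Lefschetz locus has real codimension two. Following the critical points and vanishing path of $T_0$ through this family produces a new thimble $T_D$ of $W_\Sigma$. In any ambient Fukaya-Seidel category that contains both thimbles and monomially admissible Lagrangian sections, this geometric continuation realizes the functor $F_D$ of Theorem \ref{mainthm}, so $T_D$ is mirror to $\mathcal{O}(D_0 + D)$. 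Since every line bundle on $X$ is of the form $\mathcal{O}(D_0 + D)$ for some toric divisor $D$, this step produces a thimble representative for every line bundle.

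The principal obstacle is the base case: producing such a $T_0$ together with its mirror identification. The natural candidate is the thimble over a horizontal vanishing path emanating from a critical value of $W_\Sigma$ lying on the positive real axis, which exists for appropriate coefficient choices as in the Hori-Vafa radar screen setup of \cite{DKK1, BDFKK}, and one expects $T_0$ to correspond to $\mathcal{O}$, matching the example of $\mathbb{P}^n$ in Section \ref{introexample}. Making this rigorous amounts to constructing the embedding labelled arrow 6 in Section \ref{relate} from $\mathcal{F}^s_\Delta(W_\Sigma)$ into the thimble-generated Fukaya-Seidel category and verifying that the image of $\mathcal{L} = (\mathbb{R}_{>0})^n$ is Hamiltonian isotopic to $T_0$; combined with Corollary \ref{hmscor}, this gives $T_0 \leftrightarrow \mathcal{O}$, so one may take $D_0 = 0$.

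The technical core of constructing this embedding, and hence the hardest part of the proof, lies in reconciling two incompatible asymptotic conditions: monomial admissibility demands that individual monomials be real positive in their assigned cones, while a Lefschetz thimble requires $W_\Sigma$ itself to approach a fixed critical value along the vanishing path. Following the outline in Section \ref{relate}, I would first modify the twisting Hamiltonians to be homogeneous of degree one per Remark \ref{conical}, producing sections that are conical at infinity and whose ends lie in narrow angular intervals. A deformation argument to a tropical localization of $W_\Sigma$, in the style of \cite{Ab1}, would then allow a direct comparison in the tropical limit, where both types of Lagrangian end can be described combinatorially by their interaction with the fan. A careful compactness estimate for perturbed holomorphic disks, adapting the arguments of Section \ref{control} to allow almost complex structures making $W_\Sigma$ itself holomorphic outside a compact set, would then promote the object-level identification to a fully faithful $A_\infty$-embedding, and the monodromy argument of the first paragraph would complete the proof of the conjecture.
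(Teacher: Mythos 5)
Your overall strategy---bootstrap from a single thimble identification via the monodromy of Theorem~\ref{mainthm}---is the same one the paper pursues in Section~\ref{thimblesec}, but you overcomplicate the base case and as a result propose machinery heavier than what the paper actually uses in its partial result.

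Your main bottleneck is producing $T_0$ and identifying it with $\mathcal{O}$, which you propose to do by constructing the embedding labelled arrow~6 in full. The paper sidesteps this entirely with a simple observation: if all $c_\alpha \in \bb{R}_{>0}$, then $\mathcal{L} = (\bb{R}_{>0})^n$ is \emph{simultaneously} a monomially admissible Lagrangian section and the Lefschetz thimble of $W_\Sigma$ for the critical point at which $W_\Sigma|_{\mathcal{L}}$ attains its minimum (over the horizontal vanishing path). So $T_0 = \mathcal{L}$ on the nose, and the base case is free. What remains is the monodromy step, for which the paper's mechanism is not an $A_\infty$-embedding but an isotopy-through-admissible-Lagrangians argument: one seeks an admissibility condition preserved by $\phi_D$ for which the twisted thimbles $\phi_D^{-\theta/2\pi}(\mathcal{L}^\theta)$ remain admissible for all $\theta \in [0,2\pi]$. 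Since these Lagrangians are simply connected, the resulting isotopy is automatically Hamiltonian, so $\phi_D(\mathcal{L})$ and the thimble $\mathcal{L}^{2\pi}$ become quasi-isomorphic in that admissible category, and Theorem~\ref{mirrorfunctors} does the rest. This is a strictly lighter-weight argument than constructing arrow~6 (it needs only the object-level identification, not a fully faithful functor), though of course arrow~6, if established, would also give the result.

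Your third paragraph reaches for Remark~\ref{conical} (conical twisting Hamiltonians), but that remark is attached to arrow~7 and is not what the paper uses here. The candidate admissibility condition the paper uses is Abouzaid's tropical admissibility from \cite{Ab1}. The genuine technical obstacle---which you do not mention---is that the tropically localized superpotential $W_{TL}^{\theta,D}$ fails to be Lefschetz in Abouzaid's original setup, so the thimbles $\mathcal{L}^\theta$ cannot directly be interpreted as tropically admissible Lagrangians. The content of the paper's Proposition~\ref{lefschetz} is precisely to modify the tropical localization (removing the origin from $A$, since we have no constant term) and prove under Fano plus a smallness condition on the coefficient ratios that $W_{t,1}^\theta$ \emph{is} a symplectic Lefschetz fibration for all $\theta$. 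That estimate is what makes the special case in Proposition~\ref{thimbleexample} go through. Without addressing this, your third paragraph's ``deformation to a tropical localization in the style of \cite{Ab1}'' is underspecified and runs into exactly the issue the paper flags.
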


Although there is always a full strong exceptional collection of thimbles, Conjecture \ref{thimbleconj} would not guarantee that $D^b\Coh(X)$ admits a full strong exceptional collection of line bundles. In fact, it is known by work of Efimov \cite{Ef2} that even Fano toric varieties do not admit such an exceptional collection in general. A necessary condition for Conjecture \ref{thimbleconj} to produce an exceptional collection of line bundles is that the monodromy action must be transitive on critical points of $W_\Sigma$. However, a transitive action on critical points is not sufficient as we would also need to guarantee that the monodromy can produce thimbles for all critical points that correspond to disjoint paths. Jerby has used a related method to produce full strong exceptional collections of line bundles in $D^b \Coh (X)$ for some low-dimensional examples in \cite{Jerb1, Jerb2, Jerb3} (cf. Section \ref{introexample}).

We now outline how one might attempt to prove Conjecture \ref{thimbleconj} and prove a particular case. First, we assume that $c_\alpha \in \bb{R}_{>0}$ for all $\alpha \in A$. As a result, the Lagrangian $\mathcal{L} = (\bb{R}_{>0})^n$ is both a monomially admissible Lagrangian section and the thimble for the critical point of $W_\Sigma$ corresponding to its minimum value when restricted to $\mathcal{L}$. Suppose there is an admissibility condition preserved by $\phi_D$ and such that $\phi_D^{-\theta/2\pi}(\mathcal{L}^\theta)$ is admissible for all $\theta \in [0, 2\pi]$ where $\mathcal{L}^\theta$ is a thimble for $W_\Sigma^{\theta, D}$ obtained from following $\mathcal{L}$. Then, we have an isotopy, which must be Hamiltonian as our Lagrangians are simply connected, of admissible Lagrangians from $\mathcal{L}$ to $\phi_D^{-1}(\mathcal{L}^{2\pi})$. It follows that $\phi_D(\mathcal{L})$ and the thimble $\mathcal{L}^{2\pi}$ are quasi-isomorphic in a Fukaya-Seidel category of Lagrangians subject to this hypothetical admissibility condition. Therefore, Conjecture \ref{thimbleconj} follows from Theorem \ref{mirrorfunctors} given the existence of such an admissibility condition. 

When we have an adapted division with $k_\alpha = 1$ in the second condition of Definition \ref{div}, e.g., when $X$ is Fano by Corollary \ref{tropicalfanoadapts}, admissibility with respect to Abouzaid's tropical localization of $W_\Sigma^{\theta,D}$ as defined in \cite{Ab1} provides a candidate admissibility condition. In that case, the flow of $H_D$ commutes with the tropically localized superpotential $W_{TL}^{\theta, D}$ in the sense that 
\[ W_{TL}^{\theta, D} \circ \phi^{\theta'/2\pi} = W_{TL}^{\theta + \theta', D} \]
when the compact set where the derivatives of $H_D$ are not controlled is small enough. However, $W_{TL}^{\theta, D}$ is not a Lefschetz fibration. Thus, one needs to be able to map the Lagrangians $\mathcal{L}^\theta$ to Lagrangians admissible with respect to $W_{TL}^{\theta, D}$ in a way that preserves Floer cohomology to carry out the proof. Although it is shown in \cite{Ab1} that the pairs $\left( (\bb{C}^*)^n, (W_{\Sigma}^{\theta,D})^{-1}(1)\right)$ and $\left((\bb{C}^*)^n, (W_{TL}^{\theta,D})^{-1}(1)\right)$ are symplectomorphic, this is not enough to construct a quasi-equivalence of the Fukaya-Seidel categories with boundaries in these hypersurfaces (defined in \cite{Ab2}). It is also not clear how to define the tropical localization when working with a nonstandard toric K\"{a}hler form as needed in the proof of Corollary \ref{tropicalfanoadapts}.

If we make further assumptions, it is possible to get around such difficulties by modifying the tropical localization construction in Section 4.1 of \cite{Ab1}. The main difference from Abouzaid's construction will be that our set $A$ does not contain the origin. We assume that $X$ is Fano. We look at a family of Lefschetz fibrations
\[ W_t^{\theta} = \frac{1}{t} \sum_{\alpha \in A} c_\alpha e^{in_\alpha \theta} z^\alpha \]
with $c_\alpha = c_\alpha(\theta) \in \bb{R}_{>0}$ and $ \frac{c_{\alpha}(\theta)}{c_\beta(\theta)} <  e^{\eps_0}$ for all $\alpha, \beta \in A$ and $\theta \in [0, 2\pi]$ and for some $\eps_0> 0$. We assume that the toric K\"{a}hler form is the standard one.  For all $\alpha, \beta \in A$, let 
\[ V_\alpha = \{ \Log(z) \, | \, | z^\alpha| \geq | z^\gamma| \text{ for all } \gamma \in A \} \]
and $\mathcal{H}(\alpha, \beta)$ the hyperplane given by 
\[ \alpha \cdot u  = \beta \cdot u .\]
Before proceeding further, we will need the following lemma, which is almost exactly Lemma 4.1 in \cite{Ab1} and follows from the same argument. 
\begin{lem} \label{Ab141} There is a constant $c > 0$ such that
\[ d(p, V_\alpha) \geq \eps \implies d(p, \mathcal{H}(\alpha, \beta)) \geq 2c\eps \]
for all sufficiently small $\eps$, every $p \in V_\beta$, and all pairs $\alpha \neq \beta \in A$ and where $d$ is the Euclidean metric.
\end{lem}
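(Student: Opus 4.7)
The plan is to establish the equivalent contrapositive: there is a constant $C$ depending only on $A$ such that
\[ d(p, V_\alpha) \leq C\, d(p, \mathcal{H}(\alpha, \beta)) \]
for every $p \in V_\beta$ and every pair $\alpha \neq \beta$, from which the claim follows with $2c = 1/C$ (and, as a bonus, without any smallness restriction on $\eps$, since the estimate is linear and each $V_\gamma$ is a cone). The key geometric observation is that $d(p, \mathcal{H}(\alpha, \beta)) = (\beta - \alpha)\cdot p/\|\alpha - \beta\|$ is controlled solely by the single defining inequality $(\beta - \alpha)\cdot p \geq 0$ that distinguishes $V_\beta$ from $V_\alpha$ across the hyperplane, so it suffices to produce a nearby point satisfying all of the inequalities defining $V_\alpha$.

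The construction will be linear. Using the Fano hypothesis in force here (which forces the origin to lie in the interior of the convex hull of $A$ and hence each $V_\gamma$ to be a full-dimensional rational polyhedral cone), I would fix for each $\alpha \in A$ a unit vector $u_\alpha \in \mathrm{int}(V_\alpha)$ and set
\[ c' = \min_{\alpha \in A}\min_{\gamma \in A\setminus\{\alpha\}} (\alpha - \gamma) \cdot u_\alpha, \]
which is strictly positive since $A$ is finite and each $u_\alpha$ lies in the open cone interior. Then I would show that the ray $p_s = p + s u_\alpha$ enters $V_\alpha$ once $s \geq s_0 := (\beta - \alpha)\cdot p/c'$. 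The inequality $(\alpha - \beta)\cdot p_s \geq 0$ is the binding constraint by construction; for any $\gamma \neq \alpha, \beta$, the a priori bound $(\gamma - \beta)\cdot p \leq 0$ coming from $p \in V_\beta$ yields $(\gamma - \alpha)\cdot p \leq (\beta - \alpha)\cdot p$, so the same threshold $s_0$ forces $(\alpha - \gamma) \cdot p_s \geq 0$ via the lower bound $(\alpha - \gamma)\cdot u_\alpha \geq c'$.

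This gives $d(p, V_\alpha) \leq s_0 = \|\alpha - \beta\|\, d(p, \mathcal{H}(\alpha, \beta))/c'$, and one finishes by setting $2c = \min_{\alpha \neq \beta \in A} c'/\|\alpha - \beta\|$, a positive constant depending only on $A$. I do not anticipate any substantive obstacle: the one place where geometry enters is in producing the vectors $u_\alpha$, which rests entirely on the Fano assumption ensuring each $V_\alpha$ is full-dimensional. Without Fanoness some $V_\alpha$ could be lower-dimensional and $c'$ would collapse to zero, as seen in the $\mathbb{F}_m$ example of Section \ref{tropvcomb}; but this case is explicitly excluded in the setting of this lemma.
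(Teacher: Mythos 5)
Your proof is correct, and since the paper itself does not supply an argument (it simply cites Lemma~4.1 of \cite{Ab1} and asserts that the same proof applies), your write-up is a genuinely useful self-contained alternative. The essential ingredients are the same as in Abouzaid's lemma -- that each $V_\alpha$ is a full-dimensional convex cone, that $A$ is finite, and that everything in sight is homogeneous so the estimate is automatically linear in $\eps$ -- but where Abouzaid's argument (as I recall it) proceeds by a dilation reduction and compactness, yours is a direct construction: fix a unit interior direction $u_\alpha$ for each cone and observe that the ray $p + s u_\alpha$ out of $p \in V_\beta$ enters $V_\alpha$ within distance $s_0 = (\beta - \alpha)\cdot p / c'$, comparable to $d(p,\mathcal{H}(\alpha,\beta))$. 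This buys two small things: the constant $c$ is made explicit, and the smallness restriction on $\eps$ in the statement becomes superfluous.

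One imprecision worth fixing: you write that Fanoness ``forces the origin to lie in the interior of the convex hull of $A$ and \emph{hence} each $V_\gamma$ to be a full-dimensional rational polyhedral cone.'' That ``hence'' is not a valid deduction. Having $0 \in \mathrm{int}(\mathrm{conv}(A))$ does not imply that every $\alpha \in A$ is a vertex of $\mathrm{conv}(A)$; for the fan of $\mathbb{F}_3$ with $A = \{(1,3),(0,1),(-1,0),(0,-1)\}$ one has $0 \in \mathrm{int}(\mathrm{conv}(A))$ yet $(0,1)$ lies strictly inside the segment from $(1,3)$ to $(-1,0)$, so $V_{(0,1)}$ has empty interior. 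The correct statement is that Fanoness makes $\mathrm{conv}(A)$ the (polar of the) anticanonical polytope with each $\alpha \in A$ a vertex -- equivalently each $V_\alpha$ full-dimensional -- which is exactly what the proof of Lemma~\ref{notinab} invokes as well. Your conclusion is right; only the stated intermediate implication should be replaced by this one.
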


We now fix such a sufficiently small $\eps$ and choose smooth functions $\psi_\alpha \colon \bb{R}^n \to [0,1]$ for all $\alpha \in A$ such that 
\begin{equation} \label{cutoff1} d(p, V_\alpha) \leq \frac{\eps \log(t)}{2} \iff \psi_\alpha(p) = 0, \end{equation}
\begin{equation} \label{cutoff2} d(p, V_\alpha) \geq \eps \log(t) \iff \psi_\alpha (p) = 1, \end{equation}
and
\begin{equation} \label{cutoff3} \sum_{i=1}^n \left| \frac{\partial\psi_{\alpha}(p)}{\partial u_i} \right| < \frac{4}{\eps \log(t)} \end{equation}
for all $p \in \bb{R}^n$ as in equations (4-3) to (4-5) in \cite{Ab1}. As in \cite{Ab1}, we will write $\psi_\alpha(z)$ for $\psi_\alpha(\Log(z))$. With all that in hand, we set
\[ W_{t,1}^\theta = \frac{1}{t} \sum_{\alpha \in A} c_\alpha e^{i n_\alpha \theta} (1- \psi_\alpha(z))z^\alpha .\]
Our goal will be to prove that this is a family of symplectic Lefschetz fibrations. We will first need a few calculations. The following lemma is essentially the same as Lemma 4.5 of \cite{Ab1} and the same proof applies and uses Lemma \ref{Ab141}.
\begin{lem} \label{Ab145} If $\Log(z) \in V_\beta$ and $\psi_\alpha(z) \neq 0$, then 
\[ c_\alpha |z^\alpha | < e^{\eps_0 - c\eps \log(t) | \alpha - \beta|} c_\beta | z^\beta| .\]
\end{lem}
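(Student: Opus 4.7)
The plan is to unwind the definition of $\psi_\alpha$ to convert the hypothesis $\psi_\alpha(z) \neq 0$ into a distance estimate in the base $\mathbb{R}^n$, then apply Lemma \ref{Ab141} to pass from a distance estimate relative to $V_\alpha$ to a distance estimate relative to the wall $\mathcal{H}(\alpha,\beta)$, and finally exponentiate to get the multiplicative bound on monomial norms.

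First, from \eqref{cutoff1}, the hypothesis $\psi_\alpha(z) \neq 0$ forces $d(\Log(z), V_\alpha) > \tfrac{\eps \log(t)}{2}$. Since also $\Log(z) \in V_\beta$, Lemma \ref{Ab141} (applied with its small parameter replaced by $\eps\log(t)/2$; the estimate persists in the scale-invariant conical regime where $\Log(z)$ lives for $t$ large) yields
\[ d(\Log(z), \mathcal{H}(\alpha,\beta)) \geq c\,\eps \log(t). \]
Since $\mathcal{H}(\alpha,\beta) = \{u \colon (\alpha-\beta)\cdot u = 0\}$ is a hyperplane through the origin, this distance equals $|(\alpha-\beta)\cdot \Log(z)|/|\alpha-\beta|$. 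Because $\Log(z) \in V_\beta$ means $\beta \cdot \Log(z) \geq \alpha \cdot \Log(z)$, the sign of $(\beta-\alpha)\cdot \Log(z)$ is nonnegative, so
\[ (\beta-\alpha) \cdot \Log(z) \geq c\,\eps \log(t)\,|\alpha-\beta|. \]

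Exponentiating this inequality produces $|z^\beta|/|z^\alpha| \geq e^{c \eps \log(t) |\alpha - \beta|}$, and combining with the standing assumption $c_\alpha/c_\beta < e^{\eps_0}$ gives
\[ \frac{c_\alpha |z^\alpha|}{c_\beta |z^\beta|} < e^{\eps_0} \cdot e^{-c \eps \log(t) |\alpha - \beta|} = e^{\eps_0 - c\eps \log(t) |\alpha - \beta|}, \]
which is the desired bound.

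The one point requiring slight care, and the only real obstacle to this otherwise direct computation, is applying Lemma \ref{Ab141} with its $\eps$ replaced by $\eps \log(t)/2$: the lemma is stated for sufficiently small parameter, whereas $\eps \log(t)/2$ grows with $t$. This is resolved by observing that $V_\alpha$, $V_\beta$, and $\mathcal{H}(\alpha,\beta)$ are all invariant under positive scaling away from a compact neighborhood of the origin, so the constant $c$ obtained for small parameters controls the ratio $d(p,\mathcal{H}(\alpha,\beta))/d(p,V_\alpha)$ uniformly on the conical region where $\Log(z)$ necessarily lies once $t$ is large enough to push the support of all the $\psi_\alpha$ out of any fixed compact set.
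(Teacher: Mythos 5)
Your proof is correct and follows the same route as the paper, which simply invokes Abouzaid's Lemma 4.5 argument via Lemma \ref{Ab141}: convert $\psi_\alpha(z)\neq 0$ into $d(\Log(z),V_\alpha)>\eps\log(t)/2$ via \eqref{cutoff1}, upgrade to a lower bound on $d(\Log(z),\mathcal{H}(\alpha,\beta))$, rewrite as $(\beta-\alpha)\cdot\Log(z)\geq c\eps\log(t)|\alpha-\beta|$, and exponentiate together with $c_\alpha/c_\beta<e^{\eps_0}$. The one wording I would tighten is the scaling step: since in this paper $0\notin A$, the sets $V_\alpha$, $V_\beta$, and $\mathcal{H}(\alpha,\beta)$ are genuinely cones through the origin, so the implication $d(p,V_\alpha)\geq\eps'\Rightarrow d(p,\mathcal{H}(\alpha,\beta))\geq 2c\eps'$ is literally invariant under $p\mapsto\lambda p$, $\eps'\mapsto\lambda\eps'$ and hence extends from small $\eps'$ to all $\eps'>0$ — there is no need to appeal to a conical regime "away from a compact neighborhood of the origin" or to the support of the $\psi_\alpha$.
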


The next lemma is required due to the lack of a constant term in our setup. 

\begin{lem} \label{notinab} There is a constant $\kappa$ such that $\eps \geq \kappa > 0$ and if $p = \Log(z) \not \in B_{\eps /2}(0)$, then there is a cone $\sigma \in \Sigma$ such that for all $\alpha \in A$, either $\alpha \in \sigma$ or $d(p, V_\alpha) > \kappa/2$.

Moreover, if $p \not \in B_{\eps \log(t)/2}, p \in V_\beta,$ and $\alpha \not\in \sigma$, then 
\[ c_\alpha |z^\alpha| < e^{\eps_0 - c\kappa \log(t)|\alpha - \beta|} c_\beta |z^\beta| .\]
\end{lem}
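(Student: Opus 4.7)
The plan is to exploit the Fano hypothesis on $X$, which implies that $\text{conv}(A)$ is a convex polytope (a so-called Fano polytope) whose face lattice matches that of $\Sigma$: every face of $\text{conv}(A)$ has as vertex set precisely the primitive generators of some cone of $\Sigma$. The key observation is that each $V_\alpha$ is a genuine cone in $N_\bb{R}$, since the defining inequalities $\alpha \cdot u \geq \gamma \cdot u$ do not involve the coefficients $c_\alpha$. Consequently distances scale linearly: $d(\lambda p, V_\alpha) = \lambda\, d(p, V_\alpha)$ for all $\lambda > 0$. Moreover, for any $p \in N_\bb{R} \setminus \{0\}$ the set $F(p) = \{\alpha \in A \mid p \in V_\alpha\}$ is exactly the vertex set of the face of $\text{conv}(A)$ maximizing the linear functional $u \mapsto u \cdot p$; by the Fano condition, $F(p)$ is contained in the set of primitive generators of some cone $\sigma_p \in \Sigma$.

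Next I would establish the first statement by a compactness argument on the unit sphere $S^{n-1}$. For each $v \in S^{n-1}$ and each $\alpha \notin \sigma_v$, one has $d(v, V_\alpha) > 0$. By joint continuity of the distance functions $d(\cdot, V_\alpha)$ and compactness of $S^{n-1}$, there exists $\kappa_0 > 0$ (depending only on $\Sigma$) such that for every $v \in S^{n-1}$, any $\alpha \in A$ with $d(v, V_\alpha) < \kappa_0$ necessarily lies in $\sigma_v$. The first claim then follows by setting $\kappa = \eps \min(1, \kappa_0)/2$ (so that both $\kappa \leq \eps$ and $\kappa_0/2 \geq \kappa/\eps$ hold), and using the cone scaling: for $p$ with $|p| \geq \eps/2$ and $\alpha \notin \sigma_{p/|p|}$ one has
\[ d(p, V_\alpha) \;=\; |p|\, d(p/|p|, V_\alpha) \;\geq\; \tfrac{\eps}{2}\,\kappa_0 \;>\; \tfrac{\kappa}{2}. \]

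For the ``moreover'' part, given $p \notin B_{\eps \log(t)/2}(0)$ with $p \in V_\beta$ and $\alpha \notin \sigma$, the same scaling argument applied to $p/\log(t)$ (which satisfies $|p/\log(t)| \geq \eps/2$) yields $d(p, V_\alpha) > \kappa \log(t)/2$. Then Lemma \ref{Ab141} (applicable because of the scale invariance of the $V_\alpha$, possibly after rescaling distances) gives $d(p, \mathcal{H}(\alpha, \beta)) \geq c \kappa \log(t)$ for the constant $c$ of that lemma. Since $u \mapsto (\alpha - \beta)\cdot u$ has Euclidean gradient of norm $|\alpha - \beta|$ and vanishes on $\mathcal{H}(\alpha, \beta)$, and since $p \in V_\beta$ lies on the half-space where this functional is non-positive, we deduce $(\alpha - \beta) \cdot p \leq -c\kappa \log(t)\,|\alpha - \beta|$, i.e., $|z^\alpha|/|z^\beta| \leq e^{-c\kappa \log(t)|\alpha - \beta|}$. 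Combined with the hypothesis $c_\alpha/c_\beta \leq e^{\eps_0}$ this gives the stated inequality.

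The proof is not deep, and the main obstacle is essentially bookkeeping: one must ensure that $\kappa$ can be chosen uniformly in $p$ while simultaneously satisfying $\kappa \leq \eps$, and that the range of distances $\kappa \log(t)/2$ arising in the second part remains compatible with the quantifier ``for all sufficiently small $\eps$'' in Lemma \ref{Ab141}. The latter is unproblematic because the scale invariance of the $V_\alpha$ means that Lemma \ref{Ab141} is really a statement about the directions $p/|p|$, so the conclusion extends to all scales with the same constant $c$. The essential input is the Fano face-cone correspondence; without it, the set $F(p)$ could contain vertices that do not lie in any common cone of $\Sigma$, and no cone $\sigma$ as required would exist.
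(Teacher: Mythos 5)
Your plan has the right ingredients (Fano face--cone correspondence, scale invariance of the $V_\alpha$, and an application of Lemma \ref{Ab141} to $p/\log(t)$), and your treatment of the ``moreover'' part is fine, but the compactness argument establishing the first part is wrong as stated.

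You set $\sigma_v = \operatorname{cone}(F(v))$ with $F(v) = \{\alpha : v \in V_\alpha\}$ and claim there is a uniform $\kappa_0 > 0$ such that for every $v \in S^{n-1}$ and every $\alpha \in A$, $d(v, V_\alpha) < \kappa_0$ forces $\alpha \in \sigma_v$. This is false. Take $\Sigma$ the fan of $\bb{P}^2$ and let $v$ lie just inside the interior of $V_{(1,0)}$, close to its wall with $V_{(0,1)}$: then $F(v) = \{(1,0)\}$, so $\sigma_v = \langle (1,0)\rangle$, while $d(v, V_{(0,1)})$ is arbitrarily small and $(0,1) \notin \sigma_v$. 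The problem is structural: $F(\cdot)$ can only jump \emph{up} in a limit, so $\sigma_v$ can shrink along a sequence $v_n \to v$; consequently the function $v \mapsto \min_{\alpha \notin \sigma_v} d(v, V_\alpha)$ is not lower semi-continuous and compactness of $S^{n-1}$ gives no positive uniform lower bound. The cone asserted by the lemma cannot, in general, be $\operatorname{cone}(F(p))$ --- it must be the cone generated by \emph{all} $\alpha$ whose $V_\alpha$ comes close to $p$, and the substantive content of the first part is that this larger collection still generates a cone of $\Sigma$. That is the step your write-up does not prove.

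The paper handles this directly: for each subset $B \subset A$ that does \emph{not} generate a cone, the Fano hypothesis says $\bigcap_{\alpha \in B} V_\alpha = \{0\}$, and by scale invariance the $r/2$-neighborhoods of the $V_\alpha$ ($\alpha \in B$) have empty intersection outside $B_{\eps/2}(0)$ for $r$ small enough. Taking $\kappa$ to be the minimum over the finitely many bad $B$'s (and then with $\eps$) makes $U_{\kappa}(p) = \{\alpha : d(p, V_\alpha) \leq \kappa/2\}$ avoid every bad subset, so it generates a cone. Your compactness-on-the-sphere argument can be repaired by applying it to each such bad subset $B$ separately (showing $\min_v \max_{\alpha \in B} d(v, V_\alpha) > 0$), rather than to the $v$-dependent family $\sigma_v$; as written, though, the argument has a genuine gap.
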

\begin{proof} Outside of the origin, $V_{\alpha_1} \cap \hdots \cap V_{\alpha_k} \neq \emptyset$ if and only if $\langle \alpha_1, \hdots, \alpha_k \rangle$ is a cone of $\Sigma$ because we have assumed that $X$ is Fano and the equations $u\cdot \alpha \leq 1$ for all $\alpha \in A$ define the polytope of the anticanonical bundle.  For any $q \in \bb{R}^n$ and $r\geq 0$, let $U_r(q)$ be the set of $\alpha \in A$ such that $V_\alpha \cap B_{r/2}(p) \neq \emptyset$. Take $\kappa$ to be the smaller of $\eps$ and
\[ \min_{\{\alpha_1, \hdots, \alpha_k\} \in Z} \ \inf_{q \in \bb{R}^n \setminus B_{\eps/2}(0)} \left\{ r \, | \, \{ \alpha_1, \hdots, \alpha_k \} \subset U_r(q) \right\} \] 
where $Z$ consists of all sets of elements of $A$ which do not generate a cone of $\Sigma$. Each infimum is nonzero as the cones $V_{\alpha_1}, \hdots, V_{\alpha_k}$ have empty intersection away from the origin, and so do their $r/2$-neighborhoods for $r$ sufficiently small compared to $\eps$. This choice of $\kappa$ ensures that the set $U_\kappa(p)$ of $\alpha \in A$ for which $d(p,V_\alpha) \leq \kappa/2$ is not in $Z$, i.e., generates a cone of $\Sigma$.

The last part of the lemma follows from the same computation that proves Lemma \ref{Ab145}.
\end{proof}

Let $N$ be the maximum $\ell^1$-norm of a vector in $A$, $|A|$ be the number of elements in $A$, and $\rho > 1$ be the maximal length distortion of the injective linear maps taking $\{ \alpha_1, \hdots, \alpha_k \}$ to standard basis vectors in $\bb{R}^n$ among cones $\langle \alpha_1, \hdots, \alpha_k \rangle$ in $\Sigma$. We assume that $t$ is large enough so that
\begin{equation} \label{constants1} e^{\eps_0 - c\kappa\log(t)} < \frac{1}{2|A|N \rho} \end{equation}
and
\begin{equation} \label{constants2} \frac{e^{\eps_0 - c\eps \log(t)}}{\eps \log(t)} < \frac{1}{16|A| \rho} .\end{equation} 

We are now prepared to put these estimates to use in a similar fashion to the proof of Proposition 4.2 in \cite{Ab1}.

\begin{prop} \label{lefschetz} For $t$ sufficiently large, $W_{t,1}^\theta$ is a symplectic Lefschetz fibration for all $\theta \in [0, 2\pi]$ given that the same is true for $W_{t}^\theta$. 
\end{prop}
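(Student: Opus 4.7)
The plan is to follow the strategy of Proposition 4.2 in \cite{Ab1} while using the cone structure given by Lemma \ref{notinab} to compensate for the absence of a constant term in our set $A$. Write
\[
W_{t,1}^\theta = W_t^\theta - \frac{1}{t}\sum_{\alpha \in A} c_\alpha e^{in_\alpha \theta}\psi_\alpha(z)z^\alpha,
\]
so it suffices to show that the error term and its derivative are strictly dominated by the derivative of $W_t^\theta$ (as a $(1,0)$-form) wherever the cutoffs are nontrivial. This will simultaneously force critical points of $W_{t,1}^\theta$ to lie in the region where every $\psi_\alpha$ vanishes (so that they coincide with those of $W_t^\theta$ and inherit the Lefschetz property), and imply symplecticity of the regular fibers via the standard criterion that $\partial W_{t,1}^\theta \wedge \overline{\partial W_{t,1}^\theta}$ is nonvanishing transverse to the kernel of $dW_{t,1}^\theta$.

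Fix $z$ with $p = \Log(z) \in V_\beta$ and let $\sigma$ be the cone of $\Sigma$ given by Lemma \ref{notinab} (applied whenever $p \notin B_{\eps/2}(0)$; if $p \in B_{\eps/2}(0)$ then all $\psi_\alpha$ vanish by \eqref{cutoff1} since $B_{\eps\log(t)/2}(0)$ lies in every $V_\alpha$ for $t$ large, and there is nothing to prove). I would first estimate the $(1,0)$-form coming from the cutoff term. Using the product rule $\partial(\psi_\alpha z^\alpha) = (\partial \psi_\alpha) z^\alpha + \psi_\alpha \,\partial z^\alpha$, together with the derivative bound \eqref{cutoff3}, the inequality $|\partial z^\alpha| \leq N |z^\alpha| \sum |dz_i/z_i|$, and the exponential decay of $|z^\alpha|/|z^\beta|$ furnished by Lemma \ref{Ab145} (resp.\ Lemma \ref{notinab} for $\alpha \notin \sigma$), I would bound the sum $\frac{1}{t}\sum c_\alpha \partial(\psi_\alpha z^\alpha)$ in terms of $\frac{1}{t} c_\beta |z^\beta|$ times
\[
|A|\left( \frac{4}{\eps \log t} + N\right) e^{\eps_0 - c\eps \log t} \;+\; |A|N e^{\eps_0 - c\kappa \log t},
\]
which by \eqref{constants1} and \eqref{constants2} is strictly less than $\frac{1}{2}$ times the norm of $\frac{1}{t} c_\beta \partial z^\beta$. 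Since $\psi_\beta \equiv 0$ on a neighbourhood of $p$ (as $p \in V_\beta$), the unmodified monomial $\frac{1}{t}c_\beta e^{in_\beta\theta}z^\beta$ appears in $W_{t,1}^\theta$, so $\partial W_{t,1}^\theta$ is controlled from below by the dominant contribution from $\beta$ and $\partial W_{t,1}^\theta$ never vanishes outside the region where every $\psi_\alpha$ is identically zero.

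With that estimate, Step 1 is immediate: any critical point $z$ of $W_{t,1}^\theta$ must lie in the intersection of $B_{\eps\log(t)/2}$-neighborhoods of every $V_\alpha$, which in particular forces $p \in V_\alpha$ for all relevant $\alpha$ and places $z$ in the locus where $W_{t,1}^\theta$ and $W_t^\theta$ agree along with all their derivatives; nondegeneracy at $z$ is then inherited from $W_t^\theta$. For Step 2, symplecticity of the fiber at a regular point $z$ reduces to showing that $\partial W_{t,1}^\theta$ and $\overline{\partial W_{t,1}^\theta}$ are linearly independent as $\mathbb{C}$-valued one-forms; since $W_t^\theta$ is holomorphic this follows for it from $\partial W_t^\theta \neq 0$, and the same linear-independence persists after the perturbation once the perturbation is bounded by a factor strictly smaller than $1$ relative to the dominant term, which is exactly what the estimate above provides (applied to both the $(1,0)$ and $(0,1)$ parts of the perturbation).

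The main obstacle I expect is Step 1, and specifically ensuring that the cone $\sigma$ from Lemma \ref{notinab} genuinely controls the $\alpha$'s outside a fixed neighbourhood of the origin --- this is the place where the absence of a constant monomial in $W_\Sigma$ (compared to Abouzaid's setting) really intervenes, since one has to rule out the possibility that the error $\partial\psi_\alpha \cdot z^\alpha$ for an $\alpha$ outside any fixed cone of $\Sigma$ could conspire with the non-Fano directions to spoil the dominance. The Fano hypothesis and the bound \eqref{constants1}, obtained by taking $t$ large relative to the lattice constants $N$, $|A|$, $\rho$, $\kappa$, are exactly what is needed to close this estimate. Once that is verified, the Lefschetz property of the fibration $W_{t,1}^\theta$ follows from that of $W_t^\theta$ for every $\theta \in [0,2\pi]$ uniformly, completing the proof.
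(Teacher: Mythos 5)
The decomposition you use (write $W_{t,1}^\theta = W_t^\theta - \text{error}$ and try to dominate the error by a single monomial) has a genuine gap exactly at the step ``$\partial W_{t,1}^\theta$ is controlled from below by the dominant contribution from $\beta$.'' When $p = \Log(z) \in V_\beta$ lies near a wall $\mathcal{H}(\beta,\gamma)$, the magnitudes $|z^\beta|$ and $|z^\gamma|$ are comparable, so the $\beta$ monomial does \emph{not} dominate the holomorphic part $\partial W_t^\theta$ on its own --- there can be near-cancellation with the other unmodified terms. You invoke Lemma \ref{notinab} and the cone $\sigma$ at the start, but then never use $\sigma$ in the lower bound, and $\rho$ never actually enters your estimate. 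The necessary ingredient is exactly to treat the entire block $\sum_{\gamma \in \sigma} c_\gamma e^{in_\gamma\theta}(1-\psi_\gamma) \partial z^\gamma$ as a single quantity to be lower-bounded, which one does (as in Abouzaid's Proposition 4.2) by expanding in the orthonormal basis $dz_i/z_i$ and using the linear independence of the $\gamma \in \sigma$, picking up a loss of at most $\rho$; this yields the lower bound $c_\beta|z^\beta|/\rho$ with no single-monomial-dominance assumption.

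The paper's proof therefore runs with a three-way split of $\partial W_{t,1}^\theta$: the block over $\gamma \in \sigma$ (lower-bounded by $c_\beta|z^\beta|/\rho$ via the orthonormal basis argument), the block over $\alpha \notin \sigma$ (upper-bounded via Lemma \ref{notinab} and \eqref{constants1}), and the cutoff-derivative error (upper-bounded via Lemma \ref{Ab145}, \eqref{cutoff3}, and \eqref{constants2}); then $|\partial W_{t,1}^\theta| > |\overline\partial W_{t,1}^\theta|$ follows and one appeals to Donaldson's criterion. Your estimates of the error term and of the $\alpha \notin \sigma$ pieces are essentially correct, so the missing step is only the lower bound on the $\sigma$-block; you correctly sense that the absence of the constant monomial is the delicate point, but the concrete place it bites is the \emph{lower} bound (you must work with all of $\sigma$ rather than $\beta$ alone), not the upper bound on the cutoff error that you flag.

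A second, smaller issue: the reduction to inequality \eqref{Ab14-6} in the paper uses that each $\psi_\alpha$ depends only on $|z|$, so $\overline\partial(\psi_\alpha z^\alpha) = z^\alpha\,\overline\partial\psi_\alpha$ and $|\partial\psi_\alpha| = |\overline\partial\psi_\alpha|$; this is what lets the paper compare $|\partial W_{t,1}^\theta|$ directly against twice the magnitude of the $z^\alpha\partial\psi_\alpha$ terms. Your proposal mentions bounding ``both the $(1,0)$ and $(0,1)$ parts'' of the perturbation but does not isolate this identity, which is what makes the factor of $2$ in \eqref{Ab14-6} (and hence the specific constants \eqref{constants1}, \eqref{constants2}) the right thresholds.
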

\begin{proof} Note that if $\Log(z) \in B_{\eps \log(t)/2}(0)$, we have that $W_{t,1}^\theta(z) = W_t^\theta(z)$ is a symplectic Lefschetz fibration by \eqref{cutoff1}. Outside of $\Log^{-1}(B_{\eps \log(t)/2})$, we will deduce the result by showing that 
\[ \left| \partial W_{t,1}^\theta \right| > \left| \overline{\partial} W_{t,1}^\theta \right| \]
which implies that $W_{t,1}^\theta$ is a symplectic fibration by an observation of Donaldson in \cite{Don} and that it has no critical points outside of $B_{\eps \log(t)/2}(0)$. We have 
\[ \partial W_{t,1}^\theta = \sum_{\alpha \in A} c_\alpha e^{i n_\alpha \theta} (1-\psi_\alpha(z)) \partial z^\alpha - \sum_{\alpha \in A} c_\alpha e^{i n_\alpha \theta} z^\alpha \partial \psi_\alpha(z)  \]
and
\[ \overline{\partial} W_{t,1}^\theta = - \sum_{\alpha \in A} c_\alpha e^{i n_\alpha \theta} z^\alpha \overline{\partial} \psi_\alpha(z) .\]
Since $\psi_\alpha(z)$ is a function only of $|z|$, it is enough to show that
\begin{equation} \label{Ab14-6}  \left| \sum_{\alpha \in A} c_\alpha e^{i n_\alpha \theta} (1-\psi_\alpha(z)) \partial z^\alpha \right| > 2 \left| \sum_{\alpha \in A} c_\alpha e^{i n_\alpha \theta} z^\alpha \partial \psi_\alpha(z) \right|.\end{equation}
Suppose that $\Log(z) \in V_\beta$ and $\sigma$ is as in Lemma \ref{notinab}. We write
\[ \sum_{\alpha \in A} c_\alpha e^{i n_\alpha \theta} (1-\psi_\alpha(z)) \partial z^\alpha = \sum_{\gamma \in \sigma} c_\gamma e^{i n_\gamma \theta} (1-\psi_\gamma(z)) \partial z^\gamma +  \sum_{\alpha \in A \setminus \sigma} c_\alpha e^{i n_\alpha \theta} (1-\psi_\alpha(z)) \partial z^\alpha. \]
Then, \eqref{Ab14-6} follows from
\begin{equation} \label{thimbleestimate}  \left| \sum_{\gamma \in \sigma} c_\gamma e^{i n_\gamma \theta} (1-\psi_\gamma(z)) \partial z^\gamma \right| > \left| \sum_{\alpha \in A\setminus \sigma} c_\alpha e^{i n_\alpha \theta} (1-\psi_\alpha(z)) \partial z^\alpha \right| + 2 \left| \sum_{\alpha \in A} c_\alpha e^{i n_\alpha \theta} z^\alpha \partial \psi_\alpha(z) \right|. \end{equation}
We first bound the left-hand side of \eqref{thimbleestimate}. We have
\begin{align*}  \left| \sum_{\gamma \in \sigma} c_\gamma e^{i n_\gamma \theta} (1-\psi_\gamma(z)) \partial z^\gamma \right|^2 &=  \left| \sum_{\gamma \in \sigma} c_\gamma e^{i n_\gamma \theta} (1-\psi_\gamma(z)) \sum_{i=1}^n \gamma_i z^{\gamma - e_i} \, dz_i  \right|^2 \\
&= \sum_{i=1}^n  \left| \sum_{\gamma \in \sigma} c_\gamma e^{i n_\gamma \theta} (1-\psi_\gamma(z)) \gamma_i z^\gamma \right|^2 \\
&\geq \frac{c_\beta^2 \left| z^\beta \right|^2}{\rho^2}
\end{align*}
using that $\frac{dz_i}{z_i}$ is an orthonormal basis and the same logic as in the proof of Proposition 4.2 in \cite{Ab1} for the last inequality. We now move to the right-hand side of \eqref{thimbleestimate}. For the first term, we have
\begin{align*} \left| \sum_{\alpha \in A\setminus \sigma} c_\alpha e^{i n_\alpha \theta} (1-\psi_\alpha(z)) \partial z^\alpha \right|  & \leq \sum_{\alpha \in A \setminus \sigma} c_\alpha | \partial z^\alpha | \\
& \leq \sum_{\alpha \in A \setminus \sigma} c_\alpha \sum_{i=1}^n |\alpha_i| |z^\alpha| \\
& \leq N |A| e^{\eps_0 -c\kappa \log(t)} c_\beta \left| z^\beta \right| \\
&< \frac{c_\beta \left| z^\beta \right|}{2\rho} 
\end{align*}
using Lemma \ref{notinab} and \eqref{constants1}. For the second term, we have
\begin{align*} 
\left| \sum_{\alpha \in A} c_\alpha e^{i n_\alpha \theta} z^\alpha \partial \psi_\alpha(z) \right| &\leq \sum_{\psi_\alpha(z) \neq 0} c_\alpha \left| z^\alpha \right| \left|\partial\psi_\alpha(z) \right| \\
&\leq \sum_{\psi_\alpha(z) \neq 0} c_\alpha \left| z^\alpha \right| \sum_{i=1}^n \left| \frac{\partial \psi_\alpha}{\partial u_i} \right| \\ 
& < \frac{4|A| e^{\eps_0-c\eps \log(t)}}{\eps \log(t)} c_\beta \left| z^\beta \right| \\
& < \frac{c_\beta \left| z^\beta \right|}{4 \rho}
\end{align*}
using Lemma \ref{Ab145}, \eqref{cutoff1}, \eqref{cutoff3}, and \eqref{constants2}. Combining these estimates gives \eqref{thimbleestimate} as desired.
\end{proof}

As a result, we get the following special case of Conjecture \ref{thimbleconj}. 

\begin{prop} \label{thimbleexample} Suppose that $X$ is Fano, $c_\alpha \in \bb{R}_{>0}$ make $W_\Sigma$ a Lefschetz fibration, and $\frac{c_\alpha}{c_\beta} < e^{\eps_0}$ for all $\alpha, \beta \in A$ and $\eps_0$ sufficiently small. If there is a monomial division $\Delta$ for $W_\Sigma$ and $\mu = \Log$ that is adapted to the fan and has all $k_\alpha = 1$, then for every line bundle $\mathcal{O}(D)$ on $X$ there is a Lagrangian thimble of $W_\Sigma$ quasi-isomorphic to a monomially admissible Lagrangian section mapped to $\mathcal{O}(D)$ by the quasi-equivalence of Corollary \ref{hmscor}.
\end{prop}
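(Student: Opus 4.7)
The plan is to implement the monodromy argument sketched at the beginning of this subsection using the Lefschetz fibrations $W_{t,1}^\theta$ from Proposition \ref{lefschetz} in place of $W_\Sigma^{\theta,D}$, and to use the $k_\alpha = 1$ hypothesis to translate admissibility with respect to $W_{t,1}$ into monomial admissibility. First I would verify the commutation $W_{t,1}^\theta \circ \phi_D^{\theta'/2\pi} = W_{t,1}^{\theta+\theta'}$ outside a fixed compact set: both the cutoffs $\psi_\alpha$ and the twisting Hamiltonian $H_D$ depend only on $\mu = \Log(z)$, and the flow $\phi_D^{\theta'/2\pi}$ multiplies $z^\alpha$ by $e^{i n_\alpha \theta'}$ on the region where $\nabla H_D \cdot \alpha$ takes its asymptotic value $-2\pi n_\alpha$. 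Because $\Delta$ is adapted to $\Sigma$ with $k_\alpha = 1$, the support of each $1-\psi_\alpha$ (for $t$ large) can be arranged to lie inside $C_\alpha$, which is contained in the open star of $\alpha$, and there $H_D$ can be chosen to satisfy the required linear relation exactly.

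Next I would identify $\mathcal{L} = (\bb{R}_{>0})^n$ as a Lefschetz thimble of $W_{t,1}$: since the $c_\alpha$ are real positive, $W_{t,1}|_\mathcal{L}$ is real, proper, and positive with a unique interior minimum, and the negative gradient flow of this real-valued function retracts $\mathcal{L}$ onto its critical point. Symplectic parallel transport through the Lefschetz family $W_{t,1}^\theta$ then produces a smooth family $\mathcal{L}^\theta$ of thimbles with $\mathcal{L}^0 = \mathcal{L}$, each projecting under $W_{t,1}^\theta$ to $\bb{R}_{>0}$. The commutation of the first paragraph shows that $\wt{\mathcal{L}}^\theta := \phi_D^{-\theta/2\pi}(\mathcal{L}^\theta)$ satisfies $W_{t,1}(\wt{\mathcal{L}}^\theta) \subset \bb{R}_{>0}$ outside a fixed compact set for every $\theta \in [0,2\pi]$, yielding a smooth Lagrangian isotopy from $\wt{\mathcal{L}}^0 = \mathcal{L}$ to $\wt{\mathcal{L}}^{2\pi} = \phi_D^{-1}(\mathcal{L}^{2\pi})$ through Lagrangians all admissible in this sense.

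To conclude, I would upgrade admissibility with respect to $W_{t,1}$ to monomial admissibility using $k_\alpha = 1$: the second condition of Definition \ref{div} forces $|c_\alpha z^\alpha|$ to strictly dominate every other monomial in $C_\alpha$ outside a compact set, so for $t$ large one has $\psi_\alpha = 0$ and $\psi_\beta = 1$ for $\beta \neq \alpha$ in $C_\alpha$ outside a compact set, giving $W_{t,1} = c_\alpha z^\alpha/t$ there. Hence each $\wt{\mathcal{L}}^\theta$ is monomially admissible, and the isotopy runs through monomially admissible Lagrangians. Since all $\wt{\mathcal{L}}^\theta$ are exact and simply connected, the isotopy is Hamiltonian, and a monomially admissible version of Proposition \ref{invarianthf} then gives a quasi-isomorphism $\mathcal{L} \simeq \phi_D^{-1}(\mathcal{L}^{2\pi})$ in $\mathcal{F}_\Delta(W_\Sigma)$; applying the autoequivalence $F_D$ of Section \ref{catmono} yields $\phi_D(\mathcal{L}) \simeq \mathcal{L}^{2\pi}$. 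Theorem \ref{mirrorfunctors} identifies the section $\phi_D(\mathcal{L})$ with $\mathcal{O}(D)$ under the quasi-equivalence of Corollary \ref{hmscor}, and $\mathcal{L}^{2\pi}$ may be regarded as a genuine Lefschetz thimble of $W_\Sigma$ because $W_{t,1}$ coincides with $W_\Sigma/t$ on a large compact set containing all critical points and all of the monodromy data. The hard part will be extracting an explicitly monomially admissible Hamiltonian generating the isotopy $\wt{\mathcal{L}}^\theta$ so that the invariance theorem genuinely applies in $\mathcal{F}_\Delta(W_\Sigma)$, and coordinating the free parameters $t$, $\eps_0$, and the monomial division $\Delta$ so that Proposition \ref{lefschetz} and the support condition on the cutoffs $\psi_\alpha$ hold simultaneously; this simultaneous coordination is why the hypothesis requires $X$ Fano, $\eps_0$ small, and $k_\alpha = 1$ together rather than any one of these in isolation.
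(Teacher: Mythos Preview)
Your overall strategy matches the paper's: use the tropically localized Lefschetz fibrations $W_{t,1}^\theta$ from Proposition~\ref{lefschetz}, verify the commutation $W_{t,1}^\theta \circ \phi_D^{\theta'/2\pi} = W_{t,1}^{\theta+\theta'}$ outside a compact set, follow the thimble $\mathcal{L}$ through the family, and pull back by $\phi_D^{-\theta/2\pi}$ to get an isotopy starting at $\mathcal{L}$.

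The divergence, and the gap, is in your third paragraph. You claim that the second condition of Definition~\ref{div} with $k_\alpha = 1$ forces $|c_\alpha z^\alpha|$ to \emph{strictly} dominate all other monomials in $C_\alpha$ outside a compact set, and hence that $\psi_\beta = 1$ for $\beta \neq \alpha$ there, reducing $W_{t,1}$ to the single monomial $c_\alpha z^\alpha/t$. This is false on the overlaps $C_\alpha \cap C_\beta$, which (whether $\Delta$ is tropical or combinatorial) run along the walls $\mathcal{H}(\alpha,\beta)$ and extend to infinity. On such an overlap $|c_\alpha z^\alpha|$ and $|c_\beta z^\beta|$ are comparable, so $d(\Log(z), V_\beta)$ stays bounded and $\psi_\beta \neq 1$ no matter how large $t$ is. Consequently $W_{t,1}$ is a genuine sum of several monomials there, and the single real condition $W_{t,1}(\wt{\mathcal{L}}^\theta) \in \bb{R}_{>0}$ does not pin down $\arg(c_\alpha z^\alpha)$ and $\arg(c_\beta z^\beta)$ separately. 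So you have not shown that $\wt{\mathcal{L}}^\theta$ is monomially admissible, and the ``hard part'' you flag (producing a monomially admissible generating Hamiltonian) cannot even be attempted.

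The paper avoids this entirely by not trying to prove monomial admissibility of the isotopy. Instead it observes that $\phi_D^{-\theta/2\pi}(\mathcal{L}^{\theta,1})$ is admissible \emph{in the sense of Definition~2.4 of \cite{Ab1}} with respect to $W_{t,1}$ --- this is exactly the condition $W_{t,1} \in \bb{R}_{>0}$ outside a compact set, which is what the commutation relation delivers directly. The quasi-isomorphism $\phi_D(\mathcal{L}) \simeq \mathcal{L}^{2\pi,1}$ is then drawn in Abouzaid's framework of tropical Lagrangian sections, and the link to $\mathcal{F}^s_\Delta(W_\Sigma)$ and Corollary~\ref{hmscor} goes through the comparison of categories discussed in Section~\ref{relate} (arrow~6) rather than through a direct monomial-admissibility argument.
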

\begin{proof} Assume that $t$ is large enough so that Proposition \ref{lefschetz} applies to $W_{t,1}^\theta$ (as defined above for any toric divisor $D$ on $X$). Then, note that Lagrangian thimbles for $W_\Sigma^{D, \theta}$ can be identified with Lagrangian thimbles for $W_{t,1}^{\theta}$ as $W_{t,1}^{\theta} = \frac{1}{t} W_\Sigma^{D, \theta}$ on the open set $B_{\eps \log(t)/2}(0)$. Let $\mathcal{L}^\theta$ be a family of Lagrangian thimbles for $W_\Sigma^{D,\theta}$ such that $\mathcal{L}^\theta$ fibers over $\bb{R}_{>0}$ away from a compact subset and is obtained by following $\mathcal{L}$ in the family of Lefschetz fibrations, and let $\mathcal{L}^{\theta,1}$ be the corresponding family of Lagrangian thimbles for $W_{t,1}^\theta$.

Further, we can assume $\Delta$ is a combinatorial division. As a result, $\Delta$ will be a monomial division for $W_{\Sigma}^{D,\theta}$ for all $\theta \in [0, 2\pi]$ given that $\eps_0$ is sufficiently small. It follows that outside of a compact subset, $\phi_D$ satisfies
\[ W_{t,1}^{\theta} \circ \phi_D^{\theta'/2\pi} = W_{t,1}^{\theta + \theta'} \]
when $\eps_0$ is sufficiently small. 

Therefore, $\psi^{-\theta/2\pi}(\mathcal{L}^{\theta,1})$ is a family of simply-connected Lagrangians that are admissible in the sense of Definition 2.4 of \cite{Ab1} (given that the reference fiber is taken sufficiently far out along the $\bb{R}_{>0}$ axis). Thus, all Lagrangians in the family are Hamiltonian isotopic among admissible Lagrangians. In particular, $\phi_D^{-1}(\mathcal{L}^{2\pi,1})$ is Hamiltonian isotopic to $\mathcal{L}$. It follows that $\phi_D(\mathcal{L})$ and $\mathcal{L}^{2\pi,1}$ are quasi-isomorphic.
\end{proof}

It is likely that the arguments used to prove Proposition \ref{thimbleexample} can be applied in a more general setting. However, a significantly modified argument is needed in general, particularly to go beyond the Fano case.  

\appendix

\section{$A_\infty$-pre-categories} \label{precat}

In this appendix, we recall some relevant definitions for $A_\infty$-pre-categories and show that $A_\infty$-pre-categories and localized $A_\infty$-categories give equivalent definitions of Fukaya-Seidel categories. Our terminology and definitions are closer to those in \cite{KoS} than in \cite{Ab2}. 

\begin{df} A non-unital $A_\infty$-pre-category $\mathcal{C}$ is given by
\begin{itemize} 
\item A class of objects $\text{Ob}(\mathcal{C})$;
\item A subclass $\text{Ob}_n^{tr}(\mathcal{C}) \subset \text{Ob}(\mathcal{C})^n$ for each $n \in \bb{N}$ called the transverse sequences of objects of length $n$;\footnote{The word transverse here has a purely formal meaning. Although it is related to transversality of the Lagrangian objects in a Fukaya category, the notion of transverse objects does not always exactly correspond with geometric transversality.}
\item A $\bb{Z}$-graded vector space $\Hom(Z_0,Z_1)$ for every $(Z_0,Z_1) \in \text{Ob}_2^{tr}(\mathcal{C})$; 
\item For every $d \geq 1$ and every transverse sequence $(Z_0, \hdots, Z_d)$ of length $d + 1$, a degree $2-d$ map
\[ m^d\colon \Hom(Z_{d-1}, Z_d) \otimes \hdots \otimes \Hom(Z_0, Z_1) \to \Hom(Z_0, Z_d); \]
\end{itemize}
such that $\text{Ob}_1^{tr} = \text{Ob}(\mathcal{C})$, every subsequence of a transverse sequence is transverse, and the maps $m^d$ satisfy the $A_\infty$-relations. 
\end{df}

In order to get a reasonably behaved theory, it is necessary to make some further requirements on these categories. 

\begin{df} \label{preqiso} A degree zero morphism $p \in \Hom(Z_0, Z_1)$ is a pre-quasi-isomorphism if for every transverse sequence $(Z_0, Z_1, Y)$ the map
\[ m^2(\cdot, p) \colon \Hom(Z_1, Y) \to \Hom(Z_0, Y) \]
is a quasi-isomorphism and for every transverse sequence of the form $(Y, Z_0, Z_1)$ the map
\[ m^2(p, \cdot)\colon \Hom(Y, Z_0) \to \Hom(Y, Z_1) \]
is a quasi-isomorphism. When such a morphism exists, we will say that $Z_0$ and $Z_1$ are pre-quasi-isomorphic.
\end{df} 

The morphisms satisfying the conditions of Definition \ref{preqiso} are called quasi-isomorphisms in \cite{Ab2, KoS}, but we prefer the term pre-quasi-isomorphism due to the relationship with localization in Proposition \ref{prelocal} below. The pre-quasi-isomorphisms play an important role in restricting non-unital $A_\infty$-pre-categories to a more well-behaved class of objects.

\begin{df} \label{preunital} An $A_\infty$-pre-category is a non-unital $A_\infty$-pre-category such that for every object $Z$ and finite set $\{S_i\}_{i \in I}$ of transverse sequences, there are objects $Z_{-}$ and $Z_+$ that are pre-quasi-isomorphic to $Z$ and $(Z_{-}, S_i, Z_{+})$ is a transverse sequence for all $i \in I$.
\end{df}

Finally, we need the notions of functors and quasi-equivalences between pre-$A_\infty$-categories. 

\begin{df} A functor $F\colon \mathcal{C} \to \mathcal{D}$ between $A_\infty$-pre-categories is given by the following.
\begin{itemize}
\item A map of objects $F\colon \text{Ob}(\mathcal{C}) \to \text{Ob}(\mathcal{D})$ such that the image of any transverse sequence is a transverse sequence.
\item For every $d \geq 1$ and every transverse sequence $(Z_0, \hdots, Z_d)$, a degree $1 - d$ map of vector spaces 
\[ F^d \colon \Hom_\mathcal{C} (Z_{d-1}, Z_d) \otimes \hdots \otimes \Hom_\mathcal{C} (Z_0, Z_1) \to \Hom_\mathcal{D} (F(Z_0), F(Z_d) ) \]
satisfying the $A_\infty$-functor equations and such that $F^1$ sends pre-quasi-isomorphisms to pre-quasi-isomorphisms. 
\end{itemize}
\end{df}

\begin{df} A functor $F\colon \mathcal{C} \to \mathcal{D}$ between $A_\infty$-pre-categories is a quasi-equivalence if every object of $\mathcal{D}$ is pre-quasi-isomorphic to the image of an object in $\mathcal{C}$ and the functor is a quasi-isomorphism on all transverse sequences. 

Two $A_\infty$-pre-categories are quasi-equivalent when they can be related by a sequence of quasi-equivalences (in both directions). 
\end{df}

With all the background terminology in hand, we want to show that setting up a Fukaya-Seidel $A_\infty$-category using localization as done here is equivalent to setting one up instead as an $A_\infty$-pre-category as done in \cite{Ab2}. The following definition captures the key feature that makes this equivalence apparent.

\begin{df} An $A_\infty$-pre-category $\mathcal{C}$ is left-ordered if there is a subset $\text{Ob}^\circ (\mathcal{C}) \subset \text{Ob}(\mathcal{C})$ such that for every $Z \in \text{Ob}^\circ (\mathcal{C})$, there is a sequence $\{ Z^j \}_{j \in \bb{Z}} \subset \text{Ob}(\mathcal{C})$ with $Z^0 = Z$ such that all of the following hold.
\begin{itemize} 
\item A sequence of the form $(Z_0^{j_0}, \hdots, Z_d^{j_d})$ is transverse if and only if $j_0 > j_1 > \hdots > j_d$. 
\item There is a pre-quasi-isomorphism $c_{Z, k \to j} \in \Hom_\mathcal{C} (Z^j, Z^k)$ for all $Z \in \text{Ob}^\circ(\mathcal{C})$ and $j > k \in \bb{Z}$. 
\item The inclusion of the sub-$A_\infty$-pre-category consisting of all $Z^j$ for $Z \in \text{Ob}^\circ (\mathcal{C})$  and $j \in \bb{Z}$ is a quasi-equivalence.
\end{itemize}
\end{df}

Given a left-ordered $A_\infty$-pre-category $\mathcal{C}$, we can construct an associated $A_\infty$-category via localization. First, we have an $A_\infty$-category $\mathcal{A}(\mathcal{C})^\circ$ whose objects are $Z^j$ for $Z \in \text{Ob}^\circ(\mathcal{C})$ and $j \in \bb{Z}$. Morphisms are given by
\[ \Hom_{\mathcal{A}(\mathcal{C})^\circ} (Z_0^j, Z_1^k) = \begin{cases} \Hom_{\mathcal{C}} (Z_0^j, Z_1^k) & j > k \\
 \bb{K} \cdot e_{Z_0^j} & Z_0^j = Z_1^k \\ 0 & \text{otherwise} \end{cases} \]
where $e_{Z_0^j}$ is formal element and $\bb{K}$ is our ground field. The nontrivial $A_\infty$ structure maps are defined to make the $e_{Z_0^j}$ strict units and coincide with those of $\mathcal{C}$ otherwise. We define $\mathcal{A}(\mathcal{C})$ to be the localization of $\mathcal{A}(\mathcal{C})$ at the set of all the pre-quasi-isomorphisms $c_{Z, k \to j}$. 

\begin{rem} One may notice that this setup does not match exactly with the definition $\mathcal{F}_\Delta(W)$ given in Section \ref{flocalize}. Namely, we set up $\mathcal{F}_\Delta(W)$ using only a sequence of angles in $\bb{N}$. This is simply a matter of convention as we could have also flowed the Lagrangian branes in $O_\Delta$ backwards using $K_\Delta$ approaching $-\pi$ to get a sequence in $\bb{Z}$ and obtain a quasi-equivalent $A_\infty$-category. 
\end{rem}

The following proposition was the goal of this appendix. 

\begin{prop} \label{prelocal} If $\mathcal{C}$ is a left-ordered $A_\infty$-pre-category and $\mathcal{A}(\mathcal{C})$ its associated $A_\infty$-category, then $\mathcal{C}$ and $\mathcal{A}(\mathcal{C})$ are quasi-equivalent as $A_\infty$-pre-categories.
\end{prop}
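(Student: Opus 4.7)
The plan is first to invoke the third defining property of a left-ordered $A_\infty$-pre-category to reduce to comparing $\mathcal{A}(\mathcal{C})$ with the full sub-$A_\infty$-pre-category $\mathcal{C}'$ of $\mathcal{C}$ on the objects $\{Z^j\}_{Z \in \mathrm{Ob}^\circ(\mathcal{C}),\, j \in \mathbb{Z}}$. Since $\mathcal{C}' \hookrightarrow \mathcal{C}$ is by assumption a quasi-equivalence, it will suffice to exhibit a quasi-equivalence $F \colon \mathcal{C}' \to \mathcal{A}(\mathcal{C})$.

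To construct $F$, I will first observe that, by the first defining property of left-orderedness, the transverse sequences of $\mathcal{C}'$ are precisely the strictly decreasing ordered sequences $(Z^{j_0}_0, \ldots, Z^{j_d}_d)$ with $j_0 > \ldots > j_d$. On these sequences both the morphism spaces and the $A_\infty$-operations of $\mathcal{C}'$ coincide tautologically with those of $\mathcal{A}(\mathcal{C})^\circ$. Viewing $\mathcal{A}(\mathcal{C})^\circ$ as an $A_\infty$-pre-category with every sequence declared transverse, this produces a tautological functor of pre-categories $\mathcal{C}' \to \mathcal{A}(\mathcal{C})^\circ$, and I will take $F$ to be the composition with the Lyubashenko--Ovsienko localization functor $\mathcal{A}(\mathcal{C})^\circ \to \mathcal{A}(\mathcal{C})$. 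The only check to promote $F$ to a functor of pre-categories is that $F^1$ sends pre-quasi-isomorphisms to pre-quasi-isomorphisms, and this will be automatic once the final step below is established (since a morphism already invertible on cohomology in $\mathcal{A}(\mathcal{C})^\circ$ remains so after localization, and the map $\mathcal{A}(\mathcal{C})^\circ \to \mathcal{A}(\mathcal{C})$ is an inclusion at the $p=0$ level).

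Next, I will verify that $F$ is a quasi-equivalence. Surjectivity up to pre-quasi-isomorphism is automatic because $F$ is the identity on the class of objects. For the quasi-isomorphism condition on transverse sequences, I must show that for every pair $(Z_0^j, Z_1^k)$ with $j > k$ the map
\[ F^1 \colon \Hom_{\mathcal{C}'}(Z_0^j, Z_1^k) \;=\; \Hom_{\mathcal{A}(\mathcal{C})^\circ}(Z_0^j, Z_1^k) \;\longrightarrow\; \Hom_{\mathcal{A}(\mathcal{C})}(Z_0^j, Z_1^k) \]
induced by localization is a quasi-isomorphism. Using the homotopy-colimit description of hom spaces in the Lyubashenko--Ovsienko quotient (cf.~Lemma 7.18 of \cite{SeNotes}, applied exactly as in the proof of Proposition~\ref{localhoms}), the target is identified with the homotopy colimit $\operatorname*{hocolim}_{j' \to \infty} \Hom_{\mathcal{A}(\mathcal{C})^\circ}(Z_0^{j'}, Z_1^k)$, where the transition maps are pre-composition with the $c_{Z_0,\, j'' \to j'}$. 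By the second defining property of left-orderedness, each transition is multiplication by a pre-quasi-isomorphism and hence a quasi-isomorphism of chain complexes. Consequently the inclusion of the finite level $j' = j$ into the colimit is already a quasi-isomorphism, which gives the desired conclusion. The analogous statement for the unit line $Z_0 = Z_1$ with $j = k$ follows from the strict unitality built into $\mathcal{A}(\mathcal{C})^\circ$ together with the corresponding pre-unit-type property of pre-categories (applied via the $c_{Z, j \to j-1}$).

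The main obstacle I expect is the homotopy-colimit computation of hom spaces in the localized $A_\infty$-category: this is the purely algebraic input that makes the argument work, and is the same mechanism invoked in Proposition~\ref{localhoms}. Once that is in hand, the remainder of the proof is bookkeeping, matching the pre-category transverse-sequence data of $\mathcal{C}$ against the honest $A_\infty$-category data of $\mathcal{A}(\mathcal{C})$, and using the condition on localization of pre-quasi-isomorphisms to see that the structural maps of $F$ are well-defined as a functor of pre-categories.
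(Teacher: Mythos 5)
Your overall strategy is the same as the paper's: restrict to the sub-pre-category $\mathcal{C}'$ on the objects $Z^j$, define $F$ as the tautological map followed by the localization functor, and use the homotopy-colimit description of the quotient hom spaces (Lemma~7.18 of \cite{SeNotes}) together with the fact that the quasi-units $c_{Z,k\to j}$ are pre-quasi-isomorphisms to see that $F^1$ is a quasi-isomorphism for $j>k$. This part of the argument is fine, and the remark about the diagonal case is unnecessary since $(Z_0^j,Z_0^j)$ is never a transverse pair in $\mathcal{C}'$.

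The gap is in your treatment of the functor-property: the claim that $F^1$ sends pre-quasi-isomorphisms to pre-quasi-isomorphisms. Your parenthetical justification, that ``a morphism already invertible on cohomology in $\mathcal{A}(\mathcal{C})^\circ$ remains so after localization,'' does not apply here: a pre-quasi-isomorphism $e \in \Hom_{\mathcal{C}'}(Z_0^j, Z_1^k)$ with $j > k$ is \emph{not} invertible on cohomology in $\mathcal{A}(\mathcal{C})^\circ$, because $\Hom_{\mathcal{A}(\mathcal{C})^\circ}(Z_1^k, Z_0^j) = 0$ (the category $\mathcal{A}(\mathcal{C})^\circ$ is directed, with nonzero morphisms between distinct objects only in the direction of decreasing index). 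The paper handles this step with a dedicated argument: using the pre-quasi-isomorphism property of $e$ on suitable transverse sequences, one finds closed morphisms $x$ and $y$ with $[m^2(x,e)] = [c_{Z_0,\ell\to j}]$ and $[m^2(e,y)] = [c_{Z_1,k\to m}]$; since the quasi-units become invertible in $\mathcal{A}(\mathcal{C})$, this exhibits $Q(e)$ as having left and right inverses on cohomology, so it is a quasi-isomorphism. Your conclusion is true, and the order of deductions you propose (first establish $F^1$ is a quasi-isomorphism on hom spaces, then deduce preservation of pre-quasi-isomorphisms by a commuting-square argument, replacing objects $Z_2^\ell$ by quasi-isomorphic $Z_2^{\ell'}$ with $\ell' < k$) can be made to work, but the justification you actually wrote down does not, and this is the one genuinely nontrivial step in the proposition.
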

\begin{proof} Let $\mathcal{D}$ be the sub-$A_\infty$-pre-category of $\mathcal{C}$ consisting of all $Z^j$ for $Z \in \text{Ob}^\circ (\mathcal{C})$ and $j \in \bb{Z}$. It is enough to show that $\mathcal{D}$ is quasi-equivalent to $\mathcal{A}(\mathcal{C})$. 

There is a ``functor" $G\colon \mathcal{D} \to \mathcal{A}(\mathcal{C})^\circ$ which is the identity on objects and morphisms with no higher terms. This is not a functor of $A_\infty$-pre-categories as it does not send pre-quasi-isomorphisms to pre-quasi-isomorphisms. However, we claim that the composition $F\colon \mathcal{D} \to \mathcal{A}(\mathcal{C})$ of $G$ with the localization functor is a functor of $A_\infty$-pre-categories which is a quasi-equivalence.

The only property that we need to check for $F$ to be a functor of $A_\infty$-pre-categories that is not immediate is that $F^1$ takes pre-quasi-isomorphisms to pre-quasi-isomorphisms. To see that this property holds, suppose that $e \in \Hom_{\mathcal{D}} (Z_0^j, Z_1^k)$ with $j > k$ is a pre-quasi-isomorphism. Choose some $\ell$ such that $k > \ell$. Since
\[ m^2( \cdot, e) \colon \Hom_{\mathcal{D}} (Z_1^k, Z_0^\ell) \to \Hom_{\mathcal{D}} (Z_0^j, Z_0^\ell) \]
is a quasi-isomorphism, we can find a closed morphism $x$ such that $[m^2(x,e)] = [c_{Z_0 , \ell \to j}]$ in $H^\bullet \Hom_{\mathcal{D}} (Z_0^j, Z_0^\ell)$. Similarly, we can take $m > j$ find a closed morphism $y$ such that $[m^2(e,y)] = [c_{Z_1, k \to m}]$ in $H^\bullet \Hom_\mathcal{D} (Z_1^m, Z_1^k)$. In particular, we conclude that $e$ is a quasi-isomorphism in $\mathcal{A}(\mathcal{C})$ and hence a pre-quasi-isomorphism.

Finally, we can conclude that $F$ is a quasi-equivalence by the fact that the inclusion $\Hom_\mathcal{C} (Z_0^j,  Z_1^k) \to \Hom_{\mathcal{A}(\mathcal{C})} (Z_0^j, Z_1^k)$ is a quasi-isomorphism for $j > k$ by Lemma 7.18 of \cite{SeNotes}. 
\end{proof}

It should be noted that Efimov has shown in \cite{Ef} that in general the quasi-equivalence classes of essentially small $A_\infty$-categories are in bijection with the quasi-equivalence classes of essentially small $A_\infty$-pre-categories.

\small Department of Mathematics, UC Berkeley, Berkeley CA 94720-3840, USA 

\emph{E-mail:} \href{mailto:a.hanlon@berkeley.edu}{a.hanlon@berkeley.edu}

\end{document}